\newcommand{\mathsym}[1]{{}}
\newcommand{\unicode}[1]{{}}
\newtheorem{theorem}{Theorem}[section]
\newtheorem{proposition}[theorem]{Proposition}
\newtheorem*{theoremA}{Theorem A}
\newtheorem*{theoremB}{Theorem B}
\newtheorem*{theoremC}{Theorem C}
\newtheorem{lemma}[theorem]{Lemma}
\newtheorem{sublemma}[theorem]{Sublemma}
\newtheorem{corollary}[theorem]{Corollary}
\theoremstyle{definition}
\newtheorem{definition}[theorem]{Definition}
\newtheorem{remark}[theorem]{Remark}
\newtheorem{question}{Question}
\newtheorem{conjecture}[theorem]{Conjecture}
    \def\maxwidth{\ifdim\Gin@nat@width>\linewidth\linewidth
    \else\Gin@nat@width\fi}
    \let\Oldincludegraphics\includegraphics
    \renewcommand{\includegraphics}[1]{\Oldincludegraphics[width=.8\maxwidth]{#1}}
\title{On the geometry of the higher dimensional Voisin maps}
\author{Chenyu Bai}
\date{}
\begin{document}

\maketitle
\begin{abstract}
Voisin constructed self-rational maps of Calabi-Yau manifolds obtained as varieties of $r$-planes in cubic hypersurfaces of adequate dimension. This map has been thoroughly studied in the case $r=1$, which is the Beauville-Donagi case. In this paper, we compute the action of $\Psi$ on holomorphic forms for any $r$. For $r=2$, we compute the action of $\Psi$ on the Chow group of $0$-cycles,  and confirm  that it is as expected from the generalized Bloch conjecture.
\end{abstract}

\section{Introduction}
Let $X$ be a smooth projective variety of dimension $n$ defined over the field of complex numbers $\mathbb C$. Let $CH^k(X)_\mathbb Q$ denote the Chow group of $X$ with rational coefficients. In this work we will study the ring structure on $CH^*(X)_\mathbb Q$ for certain Calabi-Yau manifolds. This article is primarily motivated by a conjecture put forth by Voisin, which we will explore in detail:
\begin{conjecture}[Voisin]\label{ConjVoisinCY}
    Let $X$ be a strict Calabi-Yau manifold of dimension $n$. Let $C\subset CH_0(X)_\mathbb Q$ be the subgroup generated by the intersections of divisors and of Chern classes of $X$. Then the cycle class map 
    \[
    cl: CH_0(X)_\mathbb Q\to H^{2n}(X,\mathbb Q)
    \]
    is injective on $C$.
\end{conjecture}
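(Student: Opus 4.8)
The target is a statement about $0$-cycles, so the first thing I would do is unwind the cohomological side. Since $X$ is connected of complex dimension $n$, we have $H^{2n}(X,\mathbb Q)\cong\mathbb Q$ and the cycle class map on $CH_0(X)_\mathbb Q$ is simply the degree homomorphism $\deg$. Thus the conjecture is equivalent to the assertion that $C$ is a one-dimensional $\mathbb Q$-subspace of $CH_0(X)_\mathbb Q$: every element of $C$ of degree zero already vanishes in $CH_0(X)_\mathbb Q$. Concretely, the plan is to produce a single class $o_X\in CH_0(X)_\mathbb Q$ of degree $1$ and to show that each generator of $C$ — each top intersection $D_1\cdots D_n$ of divisors and each monomial in the $c_i(X)$ of total codimension $n$ — is a rational multiple of $o_X$. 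This is precisely the higher-dimensional analogue of the Beauville--Voisin theorem for $K3$ surfaces, so I would take that proof as the template.

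The first step is to construct the canonical class $o_X$. Following the $K3$ model, I would look for a sufficiently ample supply of rational curves (or, more generally, a sweeping family of subvarieties with trivial $CH_0$) on $X$ and define $o_X$ as the class of a point lying on such a curve; the first thing to verify is that this class is independent of the chosen curve. The second, and harder, step is to show that the tautological generators are supported, up to rational equivalence, on this distinguished locus. For intersections of divisors I would attempt a moving argument inside the relevant linear systems combined with a Bloch--Srinivas type decomposition of the diagonal: if $\Delta_X$ admits a decomposition modulo cycles supported on $D\times X$ and $X\times Z$ for a suitable divisor $D$ and small-dimensional $Z$, then the induced action on $CH_0$ collapses the ambiguity onto the fixed class. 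For the Chern monomials I would exploit the compatibility of Chern classes with such a decomposition to reduce $c_n(X)$ and its relatives to the same canonical cycle.

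The main obstacle is exactly the input that makes Beauville--Voisin work and that is not available for a general Calabi--Yau manifold: a rich, connected family of rational curves pinning down a single $0$-cycle class, together with the control of where products of divisors are supported. In dimension $n\geq 3$ a strict Calabi--Yau need not be uniruled, so neither the construction of $o_X$ nor the support statements are automatic, and the decomposition of the diagonal one would like is itself out of reach in general — it is what the Bloch--Beilinson philosophy predicts, since $h^{p,0}(X)=0$ for $0<p<n$ should force the non-top graded pieces of $CH_0$ to be tautologically trivial, but establishing it unconditionally is the crux of the difficulty. For this reason I expect that, rather than attacking the conjecture in full generality, the realistic route — and presumably the one taken here — is to verify it for the specific Calabi--Yau manifolds at hand, namely the varieties of $r$-planes in cubic hypersurfaces, where the ambient projective geometry and the Voisin self-map $\Psi$ should supply explicit rational equivalences among the tautological $0$-cycles that substitute for the missing general structure.
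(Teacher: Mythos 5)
You have correctly recognized the essential point, which is worth stating bluntly: the statement you were given is Conjecture~\ref{ConjVoisinCY}, and the paper contains no proof of it, so there is no proof to reproduce. Your opening reduction is correct and standard: since $X$ is connected, $H^{2n}(X,\mathbb Q)\cong\mathbb Q$ and $cl$ restricted to $CH_0$ is the degree map, so injectivity on $C$ is equivalent to $\dim_{\mathbb Q} C\leq 1$, i.e.\ to all tautological $0$-cycles of degree zero vanishing in $CH_0(X)_{\mathbb Q}$. Your diagnosis of why the Beauville--Voisin template breaks is also accurate: a strict Calabi--Yau of dimension $\geq 3$ need not carry the sweeping families of constant-cycle subvarieties that pin down a canonical class $o_X$, and the diagonal decomposition you would need is exactly the Bloch--Beilinson--type prediction (from $h^{p,0}(X)=0$ for $0<p<n$) that is unavailable unconditionally. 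To this one can add a caution your proposal omits: the naive extension of the Beauville--Voisin property to strict Calabi--Yau manifolds is actually \emph{false} in higher codimension --- Beauville's Example 1.7 gives a Calabi--Yau threefold where $cl$ fails to be injective on intersections of divisors in $CH_1$ --- so any correct argument must genuinely use that one works with $0$-cycles; a too-soft moving/decomposition argument would prove too much.

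Where your proposal overshoots is in its final sentence: the paper does \emph{not} verify the conjecture even for its special examples $X=F_r(Y)$. What it proves (for $r=2$) is a conditional statement. The mechanism is the one you guessed --- the Voisin self-map supplies explicit rational equivalences --- but quantitatively: Theorem B gives $\Psi_* z=-8z$ on $CH_0(X)_{hom}$ (itself resting on the decomposition $\Psi_*z=(-2)^{r+1}z+\gamma\cdot I_{r*}z$, the triviality of $CH_1(F_1(Y))_{\mathbb Q}$, and the constant-cycle property of the fixed locus $F$), while Lemma~\ref{LmmActionOfPsiOnDivisor} gives $\Psi^*h=10h$ and $\Psi^*$ preserves $T_X$ away from the indeterminacy locus. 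Comparing the eigenvalues $10^k$ and $-8$ forces every degree-zero tautological monomial to be supported on $\mathrm{Ind}$ (Theorem C). The conjecture for $F_2(Y)$ then follows \emph{only if} $\mathrm{Ind}$ is a constant-cycle subvariety, which the paper leaves open as Question~\ref{QuestionConstantCycleInd}. So as a proof of Conjecture~\ref{ConjVoisinCY} your proposal has an unfillable gap --- necessarily so, since the statement is open --- but as a research plan it matches the paper's actual strategy, modulo replacing ``verify it for the specific Calabi--Yau manifolds'' by ``reduce it, for those manifolds, to a concrete open question about the indeterminacy locus.''
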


\subsection{Voisin's examples of Calabi-Yau manifolds and Voisin maps}\label{SectionVoisinExample}
\begin{definition}
A \emph{strict Calabi-Yau manifold} is a simply connected compact Kähler manifold  $X$ of dimension at least $3$, with trivial canonical bundle and such that $H^0(X,\Omega_X^k)=0$ for $0<k<{\rm dim}\,X$.
\end{definition}

\begin{remark}
According to Kodaira's embedding theorem, such $X$ is automatically projective.
\end{remark}

Strict Calabi-Yau manifolds are pivotal in the study of algebraic varieties with trivial canonical bundles, known as $K$-trivial varieties. This class also includes hyper-Kähler manifolds, which are simply connected manifolds and possess a non-degenerate holomorphic $2$-form, for which Conjecture~\ref{ConjVoisinCY} is also expected to be true when they are projective (this is the Beauville-Voisin conjecture~\cite{BeauvilleSplitting, 0CycleHK}), and complex tori, which are defined as quotients of $\mathbb{C}^n$ by a lattice $\Gamma$, for which Conjecture~\ref{ConjVoisinCY} is definitely wrong, when they are projective.

Now we focus on specific families of $K$-trivial varieties as constructed in~\cite{KCorr}, generalizing the Beauville-Donagi construction~\cite{BeauvilleDonagi}. Let $Y \subset \mathbb{P}^n$ be a smooth cubic hypersurface of dimension $n-1$, and let $r \geq 0$ denote a nonnegative integer. Define $X=F_r(Y)$ as the Hilbert scheme that parametrizes the $r$-dimensional linear subspaces in $Y$. As proved in~\cite[(4.41)]{KCorr}, for $n+1=\binom{r+3}{2}$ and a general $Y$, the variety $X$ is a $K$-trivial variety of dimension $N = (r+1)(n-r)-\binom{r+3}{3}$. Specifically, when $r=0$, $X$ is an elliptic curve; and as established in~\cite{BeauvilleDonagi}, $X$ is a hyper-Kähler manifold for $r=1$. We have the following

\begin{lemma}\label{LmmStrictCY}
For $r \geq 2$ and $n+1 = \binom{r+3}{2}$, the constructed variety $X=F_r(Y)$ is a strict Calabi-Yau manifold.
\end{lemma}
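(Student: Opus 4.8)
The plan is to verify three defining properties of a strict Calabi-Yau manifold for $X = F_r(Y)$ with $r \geq 2$ and $n+1 = \binom{r+3}{2}$: namely that $X$ is simply connected, that its canonical bundle is trivial, and that the intermediate Hodge numbers $h^{k,0}(X) = 0$ vanish for $0 < k < N$.The plan is to verify the three properties that are not yet supplied by the cited construction: that $X=F_r(Y)$ is simply connected, that $H^0(X,\Omega_X^k)=0$ for $0<k<N$, and that $\dim X = N\geq 3$. The triviality of $K_X$ and the smoothness of $X$ of dimension $N$ for general $Y$ are already provided by \cite{KCorr}; the triviality of $K_X$ can also be checked by adjunction from the numerical condition $n+1=\binom{r+3}{2}$. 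The essential input is to realize $X$ as the zero locus of a section of an ample vector bundle on a Grassmannian. Writing $V=\mathbb{C}^{n+1}$ and $\mathbb{G}=G(r+1,n+1)$ for the Grassmannian of $(r+1)$-planes in $V$, so that $\dim\mathbb{G}=(r+1)(n-r)$, with tautological subbundle $\mathcal{S}$ of rank $r+1$, a cubic form $F$ cutting out $Y$ induces a section $s_F$ of $E:=\mathrm{Sym}^3\mathcal{S}^*$ whose zero locus is exactly $F_r(Y)$. Here $\mathrm{rk}\,E=\binom{r+3}{3}$, consistent with $N=\dim\mathbb{G}-\mathrm{rk}\,E$.

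The key tool is the Lefschetz theorem for ample vector bundles (Sommese). First I would check that $E$ is ample: the dual tautological bundle $\mathcal{S}^*$ is globally generated and ample on $\mathbb{G}$, and symmetric powers of an ample bundle are ample, so $E=\mathrm{Sym}^3\mathcal{S}^*$ is ample. Since $X=Z(s_F)$ is smooth of the expected dimension $N=\dim\mathbb{G}-\mathrm{rk}\,E$, Sommese's theorem yields that the inclusion $\iota\colon X\hookrightarrow\mathbb{G}$ induces isomorphisms $\pi_i(X)\xrightarrow{\sim}\pi_i(\mathbb{G})$ for $i<N$ together with bijections $H^k(\mathbb{G},\mathbb{Z})\xrightarrow{\sim} H^k(X,\mathbb{Z})$ for $k<N$ (and injectivity at $k=N$).

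From this the two remaining properties follow at once. Since $N\geq 3>1$ (for $r=2$ one has $n=9$ and $N=21-10=11$, and $N$ only grows with $r$), simple-connectedness of $X$ follows from $\pi_1(X)\cong\pi_1(\mathbb{G})=0$, the Grassmannian being simply connected. For the Hodge vanishing, the isomorphism $\iota^*\colon H^k(\mathbb{G},\mathbb{C})\xrightarrow{\sim} H^k(X,\mathbb{C})$ for $k<N$ is a morphism of Hodge structures; as $H^*(\mathbb{G})$ is of Tate type, being generated by Schubert classes (so $H^k(\mathbb{G})$ vanishes for $k$ odd and is purely of type $(k/2,k/2)$ for $k$ even), we obtain $H^{k,0}(X)\cong H^{k,0}(\mathbb{G})=0$ for all $0<k<N$, that is $H^0(X,\Omega_X^k)=0$ in that range. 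Combined with $K_X\cong\mathcal{O}_X$ this shows $X$ is a strict Calabi-Yau manifold. The only delicate point is to invoke Sommese's theorem in exactly the right range: the isomorphism holds precisely for $k<N$, which matches the range $0<k<\dim X$ required by the definition, so the nonzero class $H^{N,0}(X)=H^0(X,K_X)\cong\mathbb{C}$ lives in the middle cohomology $k=N$, outside the Lefschetz range, creating no conflict.
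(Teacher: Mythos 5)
There is a genuine gap, and it is located at the single load-bearing claim of your argument: the assertion that $\mathcal{S}^*$, and hence $E=\mathrm{Sym}^3\mathcal{S}^*$, is ample on $\mathbb{G}=G(r+1,n+1)$. This is false for every $r\geq 1$. Take a line $L\subset\mathbb{G}$, i.e.\ the pencil $\{\Lambda: W_r\subset\Lambda\subset W_{r+2}\}$ for a fixed flag $W_r\subset W_{r+2}\subset V$. Then $\mathcal{S}|_L\cong\mathcal{O}^{\oplus r}\oplus\mathcal{O}(-1)$, so $\mathcal{S}^*|_L\cong\mathcal{O}^{\oplus r}\oplus\mathcal{O}(1)$ has a trivial quotient, and $\mathrm{Sym}^3\mathcal{S}^*|_L$ contains $\mathcal{O}$ as a direct summand; an ample bundle restricted to a curve has only positive-degree quotient line bundles, so $E$ is not ample. (Only for $r=0$, where $\mathbb{G}=\mathbb{P}^n$ and $\mathcal{S}^*=\mathcal{O}(1)$, does your argument work.) Consequently Sommese's Lefschetz theorem does not apply, and the conclusions you draw from it in the range $k<N$ are not merely unjustified but false: your argument nowhere uses $r\geq 2$, so run it for $r=1$, $n=5$. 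It would give $H^{2,0}(F_1(Y))\cong H^{2,0}(G(2,6))=0$ for a cubic fourfold $Y$, whereas $F_1(Y)$ is the Beauville--Donagi hyper-K\"ahler fourfold with $h^{2,0}=1$. The bundle $\mathrm{Sym}^3\mathcal{S}^*$ is only $k$-ample in Sommese's sense (it is globally generated with positive-dimensional contracted fibers), which is exactly why the Lefschetz-type theorem of Debarre--Manivel invoked in the paper holds only in the truncated range $i<\min\{\dim F_r(Y),\,n-2r-1\}$; for $r=2$, $n=9$ this means degrees $i<4$, far short of your claimed $i<N=11$.

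The paper's proof works around precisely this obstruction, and the workaround is the idea your proposal is missing. From the truncated Debarre--Manivel range one only extracts low-degree information: simple connectedness (via their Proposition 3.1(a)) and, since $2<\min\{\dim F_r(Y),n-2r-1\}$ when $r\geq 2$, the two facts $H^{2,0}(X)=0$ and $\rho(X)=1$. The vanishing of the \emph{higher} intermediate Hodge numbers $H^{k,0}(X)$ for $2<k<N$ is not proved directly at all; instead the paper feeds the three low-degree facts into the Beauville--Bogomolov decomposition theorem: $X$ simply connected rules out torus factors, $H^{2,0}(X)=0$ rules out hyper-K\"ahler factors, and Picard number $1$ forces the remaining product of strict Calabi--Yau manifolds to consist of a single factor, whence all the vanishing $H^0(X,\Omega_X^k)=0$ for $0<k<N$ comes for free from the definition of that factor. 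If you want to salvage your approach, you would need either this decomposition step or a Lefschetz theorem for $k$-ample bundles with an honest computation of the amplitude defect, which would again land you in the Debarre--Manivel range rather than $k<N$.
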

\begin{proof}
Reference \cite[Proposition 3.1(a)]{DebarreManivel} indicates that, generally, if $n \geq \frac{2}{r+1}\binom{r+3}{r} + r + 1$, then $F_r(Y)$ is simply connected. It follows  that our varieties $X=F_r(Y)$ are simply connected. The Beauville-Bogomolov decomposition theorem~\cite[Théorème 1]{Beauville} gives a decomposition of $X \cong T \times W \times CY$, where $T$ is a complex torus, $W$ a product of hyper-Kähler manifolds, and $CY$ a product of strict Calabi-Yau manifolds. As shown in \cite[Théorème 3.4]{DebarreManivel}, the restriction morphism $H^i(\mathrm{Gr}(r+1, n+1), \mathbb{Q}) \to H^i(F_r(Y), \mathbb{Q})$ is injective for $i < \min\{\dim F_r(Y), n-2r-1\}$. Under the condition $n+1 = \binom{r+3}{2}$, $2 < \min\{\dim F_r(Y), n-2r-1\}$ for any $r \geq 2$. Thus, $H^{2,0}(X) = H^{2,0}(\mathrm{Gr}(r+1, n+1)) = 0$, and the Picard number of $X$ is $1$ for $r \geq 2$. Hence, in the Beauville-Bogomolov decomposition of $X$, only the strict Calabi-Yau manifold component remains, and it is irreducible since the Picard number is $1$.
\end{proof}

The distinct feature of Voisin's examples $X = F_r(Y)$ among all $K$-trivial manifolds revolves around the presence of a self-rational map, $\Psi: X \dashrightarrow X$, called the Voisin map. This map was constructed in~\cite{KCorr} as follows: Consider a general point $x \in X$, representing an $r$-dimensional linear space $P_x$ within $Y$. As shown in~\cite[Lemma 8]{KCorr}, there exists a unique $(r+1)$-dimensional linear subspace $H_x$ in $\mathbb{P}^n$ tangent to $Y$ along $P_x$. The intersection $H_x \cap Y$ forms a cubic hypersurface containing $P_x$ doubly, leaving a residual $r$-dimensional linear subspace in $Y$ represented by a point $x' \in X$. Then we set $\Psi(x) = x'$.

\subsection{Conjectures on Chow groups}
This section provides the necessary background and motivation for Conjecture~\ref{ConjVoisinCY}, with our analysis closely following the insights and frameworks presented in \cite{BBFiltration}, \cite{VoisinCitrouille}, and \cite{VoisinCoisotrope}.

Bloch and Beilinson have conjectured the existence of a descending filtration, denoted as $F^i CH_k(X)_\mathbb{Q}$ and called Bloch-Beilinson filtration, on the Chow groups with rational coefficients for any smooth complex projective variety $X$ subject to a set of axioms. In the case of zero-cycles their conjecture takes the following form.

\begin{conjecture}[Generalized Bloch Conjecture for $0$-Cycles]\label{ConjGBC0Cycle}
Given a correspondence $Z\in CH^n(X \times Y)_\mathbb{Q}$ between smooth projective varieties $X$ and $Y$, both of dimension $n$, if the map $[Z]^*: H^{i, 0}(Y) \to H^{i, 0}(X)$ vanishes for some $i \leq n$, then the pushforward $Z_*: Gr_F^iCH_0(X)_\mathbb{Q} \to Gr_F^iCH_{m-k}(Y)_\mathbb{Q}$ also vanishes for that $i$. Here, $F^\bullet$ represents the Bloch-Beilinson filtration and $Gr_F^\bullet$ its graded part.
\end{conjecture}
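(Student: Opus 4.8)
The statement is the zero-cycle case of the Bloch--Beilinson conjectures, and it cannot be proved unconditionally: it presupposes the very filtration $F^\bullet$ whose existence is itself conjectural. What I would do is twofold. First, derive the implication formally from the standard Bloch--Beilinson axioms, so that it becomes a consequence of the expected functoriality of $F^\bullet$ under correspondences. Second, explain how, in the geometric situations that concern this paper, one dispenses with the conjectural filtration altogether and verifies the predicted vanishing on unconditionally defined pieces of $CH_0$.

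For the formal derivation I would isolate three properties that a Bloch--Beilinson filtration must satisfy: (i) it is finite and exhaustive, with $F^0CH_0(X)_\mathbb{Q}=CH_0(X)_\mathbb{Q}$ and $F^1CH_0(X)_\mathbb{Q}$ the subgroup of homologically trivial cycles; (ii) it is functorial for correspondences, so that every $Z\in CH^n(X\times Y)_\mathbb{Q}$ sends $F^iCH_0(X)_\mathbb{Q}$ into $F^iCH_0(Y)_\mathbb{Q}$ and hence induces a graded map $Z_*\colon Gr_F^iCH_0(X)_\mathbb{Q}\to Gr_F^iCH_0(Y)_\mathbb{Q}$; and, crucially, (iii) this graded map depends only on the cohomology class $[Z]$, and in fact only on the component of $[Z]$ acting on the weight-$i$ part of the cohomology.

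The decisive step is to specialize (iii) to zero-cycles. For $Gr_F^i$ of a $0$-cycle the only part of the Hodge structure on $H^i$ that can act is the bottom Hodge piece $H^{i,0}$; concretely, the graded action of $Z_*$ on $Gr_F^iCH_0$ is governed by the transpose action $[Z]^*$ on holomorphic $i$-forms, since the associated graded of the filtration is a Hodge-theoretic invariant whose coordinates are pairings against global sections of $\Omega_X^i$. Granting this, the hypothesis that $[Z]^*$ vanishes on $H^{i,0}(Y)$ forces $Z_*=0$ on $Gr_F^i$, which is exactly the asserted conclusion.

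The genuine obstacle is step (iii): it carries all the depth of the conjecture and has no unconditional proof. For the varieties of this paper I would therefore bypass it. When $X$ is a strict Calabi--Yau manifold the Hodge input collapses, since $H^{i,0}(X)=0$ for $0<i<\dim X$, leaving only $Gr_F^0$ (the degree) and the deepest piece, which is governed by the one-dimensional space $H^{\dim X,0}(X)$ spanned by the holomorphic volume form. Having computed the scalar by which $\Psi^*$ acts on this volume form, I would establish the matching identity directly in $CH_0(X)_\mathbb{Q}$ by analyzing the residual-intersection correspondence that defines $\Psi$ and applying a Bloch--Srinivas decomposition of the diagonal together with a nilpotence argument, upgrading the cohomological identity to one of rational equivalence. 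This upgrade---controlling rational equivalence by Hodge theory---is where the difficulty lies, and carrying it out in the cases $r=2$ is what verifies the prediction of Conjecture~\ref{ConjGBC0Cycle}.
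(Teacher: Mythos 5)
You have correctly diagnosed the situation: this statement is a conjecture, the paper contains no proof of it, and none could be expected, since the filtration $F^\bullet$ it refers to is itself conjectural. The paper's only engagement with Conjecture~\ref{ConjGBC0Cycle} is to use it as a black box: in the introduction it is applied twice (first to $\Delta_X$, using $H^{k,0}(X)=0$ for $0<k<N$ to collapse the filtration so that $CH_0(X)_{hom}=F^1=F^N=Gr^N_F CH_0(X)$, and then to $Z=\Gamma_\Psi-(-2)^{r+1}\Delta_X$) to derive Conjecture~\ref{PropConditionalActionOfPsiOnChow}, whose $r=2$ case is then proved unconditionally as Theorem B. Your conditional derivation matches this usage, and your identification of where the depth lies is accurate: your axiom (iii) — that the action on $Gr^i_F CH_0$ is governed by the action of $[Z]^*$ on $H^{i,0}$ alone — is strictly stronger than bare functoriality of $F^\bullet$ and carries essentially the entire content of the conjecture, as you acknowledge. (You also silently corrected the typo in the statement, where $CH_{m-k}(Y)$ should read $CH_0(Y)_\mathbb{Q}$.)

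One point of comparison on the unconditional part, since you sketch it: your proposed route (Bloch--Srinivas decomposition of the diagonal plus a nilpotence argument) is not how the paper verifies the prediction for $r=2$. Instead, Theorem~\ref{ThmDecOfActionOfPsiGeneralCase} decomposes the \emph{universal graph} of $\Psi$ inside $CH(\mathcal X\times_B\mathcal X)$ over the family $B$ of all cubics, exploiting the stratified projective-bundle structure of $q:\mathcal X\times_B\mathcal X\to \mathrm{Gr}\times\mathrm{Gr}$ over the incidence strata $I_k^G$; this yields the exact identity $\Psi_*z=(-2)^{r+1}z+\gamma\cdot I_{r*}z$ on $CH_0$, with the coefficient $(-2)^{r+1}$ pinned down by Theorem A. The correction term is then killed not by nilpotence but by explicit geometry: Propositions~\ref{PropVanishingOfc3} and~\ref{PropVanishingOfc2}, the triviality of $CH_1(F_1(Y))_\mathbb{Q}$ (Theorem~\ref{ThmChowOneOfF1}), and the constant-cycle property of the fixed locus (Theorem~\ref{ThmFixedLocusIsConstantCycle}). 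That distinction, however, concerns the proof of Theorem B rather than the statement under review, for which your assessment — conjectural, formally implied by the Bloch--Beilinson axioms, and verifiable only in special cases — is the correct one.
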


The original Bloch conjecture stated in~\cite{Bloch} is Conjecture~\ref{ConjGBC0Cycle} in the special case where $n = 2$. That is the reason why Conjecture~\ref{ConjGBC0Cycle} is named the generalized Bloch conjecture.

Now let $X$ be a projective hyper-Kähler manifold. Beauville's splitting conjecture, as introduced in~\cite{BeauvilleSplitting}, suggests that the Bloch-Beilinson filtration on the Chow ring of $X$ admits a natural multiplicative splitting. One weak version is now often referred to as the "weak splitting conjecture," detailed in the introduction of~\cite{VoisinCoisotrope}.

\begin{conjecture}[Beauville's Weak Splitting Conjecture~\cite{BeauvilleSplitting}]\label{ConjBeauvilleWeakSplitting}
For a projective hyper-Kähler manifold $X$, the cycle class map is injective on the subalgebra of $CH^*(X)$ that is generated by divisors.
\end{conjecture}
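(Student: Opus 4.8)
The plan is to prove injectivity of $cl$ on the divisor subalgebra $R^\bullet \subset CH^\bullet(X)_\mathbb{Q}$ one graded piece at a time, where $R^k$ denotes the image of $\mathrm{Sym}^k \mathrm{NS}(X)_\mathbb{Q}$ in $CH^k(X)_\mathbb{Q}$. In degree $1$ there is nothing to prove: a projective hyper-Kähler manifold is simply connected, so $\mathrm{Pic}^0(X)=0$ and $CH^1(X)_\mathbb{Q}$ injects into $H^2(X,\mathbb{Q})$. For $k\geq 2$ the assertion is that the kernel of $\mathrm{Sym}^k \mathrm{NS}(X)_\mathbb{Q}\to CH^k(X)_\mathbb{Q}$ coincides with the kernel of the cohomological realization $\mathrm{Sym}^k \mathrm{NS}(X)_\mathbb{Q}\to H^{2k}(X,\mathbb{Q})$. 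Writing $\dim_\mathbb{C} X = d$, the latter kernel is well understood: by the Fujiki relation together with Verbitsky's structure theorem (the Looijenga--Lunts--Verbitsky description of the subring of $H^\bullet(X,\mathbb{Q})$ generated by $H^2$), it is generated by the isotropic relations $q(\alpha)=0 \Rightarrow \alpha^{d/2+1}=0$ and, in top degree, by the Fujiki relation $\alpha^{d} = c_X\, q(\alpha)^{d/2}\cdot[\mathrm{pt}]$, where $q$ is the Beauville--Bogomolov--Fujiki form. It therefore suffices to lift these specific cohomological relations to $CH^\bullet(X)_\mathbb{Q}$.

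The deepest case is the top degree $R^{d}\subset CH_0(X)_\mathbb{Q}$. Here the target is to produce a canonical zero-cycle $o_X\in CH_0(X)_\mathbb{Q}$ of degree $1$ and to prove the \emph{Chow-theoretic Fujiki relation} $D^{d} = c_X\, q(D)^{d/2}\cdot o_X$ in $CH_0(X)_\mathbb{Q}$ for every $D\in\mathrm{NS}(X)_\mathbb{Q}$; by polarization this controls all top-degree monomials and shows $R^{d}=\mathbb{Q}\cdot o_X$, on which $cl$ is injective via the degree map. The cycle $o_X$ should be constructed in the spirit of Beauville--Voisin, as the common rational equivalence class of points lying on a family of rational curves sweeping out $X$ (a constant-cycle subvariety), and the relation is then obtained by restricting divisor intersections to such loci. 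For intermediate $2\leq k<d$ I would lift Verbitsky's isotropic relations $\alpha^{d/2+1}=0$: since the monomial in question is homologically trivial, a spreading-out argument over a deformation family preserving the relevant sublattice of $\mathrm{NS}(X)$, combined with a Bloch--Beilinson-type decomposition of the diagonal, should force rational triviality.

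The main obstacle is precisely the construction of $o_X$ together with the Chow-theoretic Fujiki relation in full generality: this is the open heart of the conjecture. It is known for K3 surfaces (Beauville--Voisin) and for hyper-Kähler varieties carrying enough extra structure, but there is no uniform construction of $o_X$ for an arbitrary projective hyper-Kähler manifold. The relevance of the present paper is that a dominant self-rational map $\Psi\colon X\dashrightarrow X$, such as the Voisin map on $X=F_r(Y)$ for $r=1$, supplies exactly the missing symmetry: $\Psi$ acts on $CH_0(X)_\mathbb{Q}$ and on the spaces $H^{2i,0}(X)$ with controlled, generically incompatible eigenvalues, and this incompatibility can be leveraged to pin down the divisor subalgebra of $CH_0(X)_\mathbb{Q}$ as one-dimensional. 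Establishing the statement for a general $X$ lacking such geometric input, however, remains beyond the reach of this strategy, which is why the conjecture is still open in full generality.
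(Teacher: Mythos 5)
The statement you were asked to prove is a conjecture: the paper records Beauville's weak splitting conjecture purely as motivation (alongside Conjectures~\ref{conjVoisinChern} and~\ref{ConjVoisinCY}) and contains no proof of it, because none is known --- it is open for a general projective hyper-K\"ahler manifold. So there is no paper argument to compare yours against, and your proposal, by your own admission in its final paragraph, is a strategy outline rather than a proof. The gap you name is the genuine one: everything in your reduction up to the lifting step is standard --- degree $1$ is immediate from simple connectedness, and the kernel of $\mathrm{Sym}^k H^2(X,\mathbb{Q})\to H^{2k}(X,\mathbb{Q})$ is controlled by Verbitsky's theorem and the Fujiki relation --- but producing the canonical zero-cycle $o_X$ and the Chow-theoretic Fujiki relation $D^{d}=c_X\,q(D)^{d/2}\,o_X$ in $CH_0(X)_\mathbb{Q}$ for arbitrary $X$ is precisely the content of the conjecture in top degree, not a lemma one can quote. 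One further technical caveat in your intermediate-degree step: over $\mathbb{Q}$ the isotropic classes lying in $\mathrm{NS}(X)_\mathbb{Q}$ need not generate the kernel of $\mathrm{Sym}^k\mathrm{NS}(X)_\mathbb{Q}\to H^{2k}(X,\mathbb{Q})$ (the Beauville--Bogomolov quadric may have no rational points on the Picard part), so the deformation/spreading argument you gesture at is not optional bookkeeping but an essential and delicate ingredient, as in Voisin's work on this conjecture.

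Your closing observation about the role of a self-rational map is accurate and is exactly where this paper lives: its actual theorems concern the strict Calabi--Yau analogue (Conjecture~\ref{ConjVoisinCY}) for $X=F_2(Y)$, and the mechanism is the eigenvalue mismatch you describe --- $\Psi^*\omega=(-2)^{r+1}\omega$ on $H^0(X,K_X)$ (Theorem A) against $\Psi_*=(-2)^{r+1}\cdot\mathrm{id}$ on $CH_0(X)_{hom}$ (Theorem B), exploited via the decomposition $\Psi_*z=(-2)^{r+1}z+\gamma\cdot I_{r*}z$ of Theorem~\ref{ThmDecOfActionOfPsiGeneralCase} and the constant-cycle property of the fixed locus (Theorem~\ref{ThmFixedLocusIsConstantCycle}). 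In the $r=1$ hyper-K\"ahler case this is how Voisin~\cite{0CycleHK} verified Conjecture~\ref{conjVoisinChern} for the Fano variety of lines of a cubic fourfold; but, as you correctly conclude, such geometric input is special to these examples, and no argument of this type is available for an arbitrary projective hyper-K\"ahler manifold, which is why the statement remains a conjecture.
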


This conjecture was further expanded by Voisin in~\cite{0CycleHK} to include not only divisors but also Chern classes within the generating elements of the subalgebra.

\begin{conjecture}[Voisin~\cite{0CycleHK}]\label{conjVoisinChern}
For a projective hyper-Kähler manifold $X$, consider $C^*$ to be the subalgebra of $CH^*(X)$ generated by divisors and Chern classes. The cycle class map is injective on $C^*$.
\end{conjecture}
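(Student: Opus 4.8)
The plan is to reduce the statement to a question about \emph{relations}: the cycle class map is injective on $C^*$ if and only if every polynomial in divisor classes and Chern classes that is homologically trivial is already rationally trivial. Since the divisor classes span the finite-dimensional space $\mathrm{NS}(X)_{\mathbb Q}$ and the Chern classes $c_i(X)$ form a fixed finite collection of cycles, $C^*$ is a finitely generated graded $\mathbb Q$-subalgebra of $CH^*(X)_{\mathbb Q}$, so it suffices to lift a generating set of the ideal of cohomological relations to rational equivalence. First I would isolate the subalgebra $R^* \subset C^*$ generated by divisors alone, and treat the Chern classes as finitely many additional generators afterward.

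For $R^*$, the cohomological relations are essentially understood: by the Fujiki relations and the description of the $H^{1,1}$-generated subring of $H^*(X,\mathbb Q)$ due to Verbitsky and Bogomolov, the ideal of relations among divisor classes in cohomology is controlled by the single Beauville-Bogomolov quadratic form $q$, in the sense that for $\dim X = 2n$ the only relations are consequences of $\alpha^{n+1}$-type identities governed by $q$. Thus the cohomological content is encoded in one quadratic form on $\mathrm{NS}(X)_{\mathbb Q}$, and the problem becomes: lift each such Beauville-Bogomolov relation from $H^*(X,\mathbb Q)$ to $CH^*(X)_{\mathbb Q}$.

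The heart of the argument, and the step I expect to be the main obstacle, is this lifting. The strategy I would pursue is to produce a multiplicative Chow-Künneth decomposition in the sense of Shen-Vial, so that $CH^*(X)_{\mathbb Q}$ carries a bigrading $CH^*(X)_{\mathbb Q} = \bigoplus CH^*(X)_{(s)}$ compatible with the intersection product, with $CH^*(X)_{(0)}$ conjecturally injecting into cohomology. One then shows that divisors and Chern classes lie in the distinguished piece $CH^*(X)_{(0)}$ and that a Beauville-Voisin type canonical $0$-cycle generates $CH_0(X)_{(0)}$; this forces every cohomological relation among the generators to hold already in $CH^*(X)_{(0)}$, since the kernel of the cycle class map lives in $\bigoplus_{s>0} CH^*(X)_{(s)}$. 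The genuine difficulty is establishing such a decomposition — equivalently, controlling the Chow groups of \emph{positive-dimensional} cycles — because the Fujiki relations are statements in cohomology and there is no a priori reason for the corresponding intersection of divisors to be rationally proportional to a fixed cycle.

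For the examples $X = F_1(Y)$ of this paper (the Beauville-Donagi hyper-Kähler fourfolds), I would exploit the extra structure coming from the cubic $Y$ and the self-rational map $\Psi \colon X \dashrightarrow X$. The incidence correspondence between $X$ and $Y$, together with the graph of $\Psi$, yields self-correspondences of $X$ whose action on $CH^*$ can be computed from $CH^*(Y)$; the proportionality relations lifting $q$ should then follow from the known structure of the Chow ring of a cubic fourfold together with the Beauville-Voisin relations for fourfolds of $K3$-type. In full generality the conjecture remains open precisely because this last geometric input — an explicit supply of self-correspondences forcing the quadratic relation in $CH^*$ — is not available for an arbitrary projective hyper-Kähler manifold.
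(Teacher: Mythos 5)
You are attempting to prove Conjecture~\ref{conjVoisinChern}, but this statement is not proved in the paper at all: it is quoted from Voisin's work~\cite{0CycleHK} purely as background motivating Conjecture~\ref{ConjVoisinCY}, and it remains open for general projective hyper-K\"ahler manifolds. Read as a proof attempt, your text has a genuine gap that you half-acknowledge in your final sentence: every load-bearing step is itself an open conjecture. Concretely, (1) the existence of a multiplicative Chow--K\"unneth decomposition $CH^*(X)_{\mathbb Q}=\bigoplus_s CH^*(X)_{(s)}$ is known only for special families (K3 surfaces, some Hilbert schemes of K3's, generalized Kummer varieties, and the Fano variety of lines of a cubic fourfold via the Fourier transform of Shen--Vial~\cite{ChowHK}) and is conjectural in general; (2) even granting such a decomposition, your assertion that ``the kernel of the cycle class map lives in $\bigoplus_{s>0} CH^*(X)_{(s)}$'' is precisely the conclusion to be established --- it is part of the Beauville splitting package~\cite{BeauvilleSplitting}, not an available tool --- so the argument is circular at its central step; and (3) the reduction to ``lifting the Beauville--Bogomolov relations from $H^*(X,\mathbb Q)$ to $CH^*(X)_{\mathbb Q}$'' is correct in spirit, but there is no formal mechanism performing this lift; in every known case it is done by hard, example-specific geometry, which is exactly why the conjecture is open.

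Your remarks about $X=F_1(Y)$ are in the right direction --- Voisin's proof of the conjecture for the Beauville--Donagi fourfold in~\cite{0CycleHK} does use the incidence correspondence with the cubic $Y$ and relations in $CH^*(Y)$, later recast by Shen--Vial~\cite{ChowHK} --- but sketching that a proof ``should follow'' from these inputs does not supply one. Note also a mismatch with the present paper's setting: for $r\geq 2$ the varieties $X=F_r(Y)$ are strict Calabi--Yau manifolds, not hyper-K\"ahler, and the full analogue of Conjecture~\ref{conjVoisinChern} is \emph{false} for strict Calabi--Yau manifolds by Beauville's threefold counterexample cited in the introduction; only the $0$-cycle version, Conjecture~\ref{ConjVoisinCY}, is expected to survive. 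That weaker statement for $X=F_2(Y)$ is what the paper actually attacks, via the decomposition of $\Psi_*$ in Theorem~\ref{ThmDecOfActionOfPsiGeneralCase} and Theorems B and C, and even there the conclusion is conditional on Question~\ref{QuestionConstantCycleInd}. If you want a tractable target, aim at that conditional statement rather than at Conjecture~\ref{conjVoisinChern} itself.
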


However, generalizing these conjectures to strict Calabi-Yau manifolds is known to be false. In~\cite[Examples 1.7]{BeauvilleSplitting}, a strict Calabi-Yau threefold $X$ whose cycle class map $cl: CH_1(X)_\mathbb{Q} \to H^4(X,\mathbb{Q})$ fails to be injective on the subgroup generated by intersections of divisors is constructed,  invaliding the ``strict Calabi-Yau version" of Conjectures~\ref{ConjBeauvilleWeakSplitting} and~\ref{conjVoisinChern}. Despite the counterexamples, it remains anticipated that Conjecture~\ref{conjVoisinChern} holds true for $0$-cycles in the context of strict Calabi-Yau manifolds. This expectation takes the form of Conjecture~\ref{ConjVoisinCY}, with compelling evidence provided by the constructions in~\cite[Theorem 1.2]{Bazhov}.

\begin{theorem}[Bazhov~\cite{Bazhov}]\label{ThmBazhov}
Let $Y$ be a projective homogeneous variety of dimension $n+1\geq 4$, and let $X$ be a general element of the anti-canonical system $|-K_Y|$. Then $X$ is a strict Calabi-Yau manifold satisfying Conjecture~\ref{ConjVoisinCY}.
\end{theorem}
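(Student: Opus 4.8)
The plan is to prove the two assertions separately: first that $X$ is a strict Calabi-Yau manifold, and then that the cycle class map is injective on the subgroup $C\subset CH_0(X)_{\mathbb Q}$. Write $j\colon X\hookrightarrow Y$ for the inclusion of a general smooth anticanonical divisor. By adjunction $K_X=(K_Y+X)|_X=\mathcal O_X$, so $X$ is $K$-trivial, and $\dim X=n\geq 3$. Since $-K_Y$ is ample on the rational homogeneous space $Y$, the Lefschetz hyperplane theorem gives that $j^*\colon H^k(Y,\mathbb Q)\to H^k(X,\mathbb Q)$ is an isomorphism for $k<n$, whence $\pi_1(X)\cong\pi_1(Y)=0$ and $H^{k,0}(X)\cong H^{k,0}(Y)$ for $0<k<n$. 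A homogeneous variety has cohomology of Hodge-Tate type, i.e. $H^{p,q}(Y)=0$ for $p\neq q$; therefore $H^{k,0}(Y)=0$ for $k>0$, and the remaining condition $H^0(X,\Omega_X^k)=0$ for $0<k<n$ follows. This establishes that $X$ is a strict Calabi-Yau manifold.

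For Conjecture~\ref{ConjVoisinCY}, the first step is to show that $C$ lies in the image of the restriction $j^*\colon CH^*(Y)_{\mathbb Q}\to CH^*(X)_{\mathbb Q}$. Divisors on $X$ are restrictions of divisors on $Y$ by the Grothendieck-Lefschetz theorem ($\mathrm{Pic}(X)\cong\mathrm{Pic}(Y)$ since $X$ is ample and $\dim X\geq 3$), and the normal bundle sequence $0\to T_X\to T_Y|_X\to N_{X/Y}\to 0$ with $N_{X/Y}=\mathcal O_X(X)=(-K_Y)|_X$ expresses $c(T_X)=j^*c(T_Y)\cdot j^*c(-K_Y)^{-1}$, so the Chern classes of $X$ are restrictions as well. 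Hence every top-degree monomial in $C$ has the form $j^*\beta$ for some $\beta\in CH^n(Y)_{\mathbb Q}=CH_1(Y)_{\mathbb Q}$.

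The second step exploits that $Y$, being homogeneous and hence cellular, has trivial Chow groups: the cycle class map $cl_Y\colon CH^*(Y)_{\mathbb Q}\to H^{2*}(Y,\mathbb Q)$ is an isomorphism, and in particular $CH_0(Y)_{\mathbb Q}=\mathbb Q$. Combined with the compatibility $cl_X\circ j^*=j^*_{\mathrm{coh}}\circ cl_Y$ and the projection formula $\deg_X(j^*\beta)=\deg_Y(\beta\cdot X)$, the problem reduces to the following assertion: if $\beta\in CH_1(Y)_{\mathbb Q}$ satisfies $\deg_Y(\beta\cdot X)=0$, then $j^*\beta=0$ in $CH_0(X)_{\mathbb Q}$. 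Equivalently, I must show that the image of $j^*\colon CH_1(Y)_{\mathbb Q}\to CH_0(X)_{\mathbb Q}$ is one-dimensional, spanned by a canonical class $o_X$ with $j^*\beta=\deg_Y(\beta\cdot X)\,o_X$; since $cl_X$ is simply the degree on $CH_0(X)_{\mathbb Q}$, injectivity of $cl_X$ on $C$ is then immediate.

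I expect this last point to be the main obstacle. The strategy is to represent each Schubert curve class of $Y$ by rational curves moving in a family over a rationally connected base $B$ (furnished by the $G$-action or the relevant Grassmannian of curves), and to intersect the universal curve with $X$. This yields a map $B\to CH_0(X)_{\mathbb Q}$, $b\mapsto [C_b\cap X]$, which is constant because $CH_0$ of a rationally connected variety is trivial (a Mumford-type argument). The resulting common value, suitably normalized, is the canonical class $o_X$, and tracking degrees gives $j^*\beta=\deg_Y(\beta\cdot X)\,o_X$. The delicate part will be ensuring transversality of $C_b\cap X$ and organizing the Schubert generators into such families consistently; once this is done, $\deg_Y(\beta\cdot X)=0$ forces $j^*\beta=0$, completing the proof.
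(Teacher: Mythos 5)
The paper does not actually prove this statement: Theorem~\ref{ThmBazhov} is quoted verbatim from~\cite{Bazhov} as supporting evidence for Conjecture~\ref{ConjVoisinCY}, so the only comparison available is with Bazhov's original argument --- and your proposal reconstructs it essentially correctly. The first two steps are complete as written: adjunction plus the Lefschetz hyperplane theorem and the Hodge--Tate property of rational homogeneous spaces give the strict Calabi--Yau property (implicitly you are using, as Bazhov does, that a projective homogeneous variety with ample $-K_Y$ is a generalized flag variety $G/P$, hence cellular and simply connected), and Grothendieck--Lefschetz together with the normal bundle sequence reduce $C$ to $j^*CH_1(Y)_{\mathbb Q}$. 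For the step you flag as delicate, the clean implementation is Bazhov's: take the base $B$ to be the group $G$ itself rather than an auxiliary space of curves. Kleiman's transversality theorem guarantees that for general $g\in G$ the translate $g\Sigma$ of a Schubert curve meets $X$ properly, which disposes of the transversality worry, and the incidence cycle over (a smooth compactification of) $G$ gives a correspondence $Z$ with $j^*[\Sigma]=Z_*[g]$. The constancy of $g\mapsto [g\Sigma\cap X]$ then follows because $Z_*$ factors through $CH_0$ of the rational base, which is $\mathbb Q$; note that this is the correct form of the argument --- your phrasing ``because $CH_0$ of a rationally connected variety is trivial (a Mumford-type argument)'' invokes the right fact but about the wrong variety ($X$ is Calabi--Yau, so its $CH_0$ is certainly not trivial; it is $CH_0(B)_{\mathbb Q}=\mathbb Q$ for the rational parameter space $B$ that does the work, via the standard spread of rational equivalences along chains of rational curves in $B$). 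Since $G$ acts transitively on $Y$ and $CH_1(Y)_{\mathbb Q}$ is spanned by Schubert classes, this produces the canonical class $o_X$ with $j^*\beta=\deg_Y(\beta\cdot X)\,o_X$ for all $\beta\in CH_1(Y)_{\mathbb Q}$, and injectivity of the degree map on $\mathbb Q\,o_X$ concludes, exactly as you say. So, modulo that one misattribution of the constancy argument, your route coincides with Bazhov's proof.
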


\subsection{Main results of the work}\label{SectionVoisinMaps}


Our first result is a computation of two basic invariants of the Voisin map $\Psi$.

\begin{theoremA}\label{theoremA}
Let $X=F_r(Y),\, r\geq 0$ be as in \ref{SectionVoisinExample}, and let $\Psi: X\dashrightarrow X$ be the Voisin map. Then
\begin{itemize}
\item[(i)] For any $\omega \in H^0(X,K_X)$, we have 
\[\Psi^*\omega = (-2)^{r+1}\omega.\]
\item[(ii)] The map $\Psi: X \dashrightarrow X$ has a degree of $4^{r+1}$.
\end{itemize}
\end{theoremA}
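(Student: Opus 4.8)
The plan is to reduce both parts to explicit computations in the local model of $X=F_r(Y)$ at a general point $x=[P]$, using the identification $T_{[P]}X=H^0(P,N_{P/Y})\subset \operatorname{Hom}(V,\mathbb C^{n+1}/V)$ together with the determinantal description of the tangent $(r+1)$-plane $H_x$. For (i), note first that $X$ is an elliptic curve for $r=0$, hyper-Kähler for $r=1$, and a strict Calabi–Yau for $r\ge 2$ by Lemma~\ref{LmmStrictCY}; in every case $H^0(X,K_X)$ is one-dimensional, so $\Psi^*$ acts on it by a single scalar, which equals the Jacobian determinant $\det(d\Psi_x)$ computed in a trivialization of $K_X$. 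Thus (i) reduces to computing this determinant. Writing $P=\mathbb P(V)$ and $Y=\{f=0\}$ with $f$ cubic and associated symmetric trilinear form $B$, the condition $P\subset Y$ forces $B|_{V\times V\times V}=0$, and the tangency condition defining $H_x=\mathbb P(V\oplus\langle v\rangle)$ is precisely $B(v,\cdot,\cdot)|_{V\times V}=0$. The hypothesis $n+1=\binom{r+3}{2}$ makes the linear map $\beta\colon\mathbb C^{n+1}\to\operatorname{Sym}^2V^*$, $w\mapsto B(w,\cdot,\cdot)|_V$, have kernel exactly $V\oplus\langle v\rangle$, which is what singles out $v$ (this is the content of the cited uniqueness lemma of~\cite{KCorr}). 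Expanding $f$ along the extra direction then gives $\Psi([P])=[P']$ with $V'=\bigl\{x-\tfrac{3B(x,v,v)}{B(v,v,v)}\,v : x\in V\bigr\}$.

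The computation of $d\Psi_x$ proceeds by differentiating this recipe: one moves $V$ in a direction $\phi\in T_{[P]}X$, computes the induced first-order motion $\dot v$ of the tangency vector from the kernel equation for $\beta$, and then the resulting motion of $V'$, obtaining $d\Psi_x\in\operatorname{Hom}(T_{[P]}X,T_{[P']}X)$ explicitly. The target is to exhibit a decomposition showing that $d\Psi_x$ has eigenvalue $-2$ with multiplicity $r+1$ (concentrated on the $(r+1)$-dimensional space of ``$v$-directions,'' matching $\dim V=r+1$) and eigenvalue $1$ on a complementary subspace, so that $\det(d\Psi_x)=(-2)^{r+1}$. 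The main obstacle here is organizing the Jacobian computation cleanly: disentangling the contribution of $\dot v$, governed by the quadratic form $B(\cdot,v,v)$, from the reparametrization of $V'$, and identifying the natural $(r+1)$-dimensional subspace on which the factor $-2$ is concentrated.

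For (ii), I would count the preimages of a general $[P']$ directly. If $\Psi([P])=[P']$ then $H:=\langle P,P'\rangle$ is an $(r+1)$-plane containing $P'$ with $Y\cap H=2P+P'$; conversely, for each $(r+1)$-plane $H\supset P'$ the cubic $Y\cap H$ contains $P'$, so $Y\cap H=P'+Q_H$ with $Q_H$ a residual quadric in $H\cong\mathbb P^{r+1}$, and $[P]$ is a preimage exactly when $Q_H$ is a double hyperplane, i.e.\ $\operatorname{rank}Q_H\le 1$. Parametrizing $H$ by $[w]\in\mathbb P(\mathbb C^{n+1}/V')\cong\mathbb P^{n-r-1}$, the same expansion of $f$ gives the explicit residual quadric $Q_w(x,s)=3B(x,x,w)+3sB(x,w,w)+s^2B(w,w,w)$ on $U_w=V'\oplus\langle w\rangle$. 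The codimension of the rank-$\le 1$ locus in $\mathbb P(\operatorname{Sym}^2U^*)$ equals $\binom{r+2}{2}=n-r-1=\dim\mathbb P(\mathbb C^{n+1}/V')$, so the count is finite and equals the corresponding intersection number; evaluating it gives $\deg\Psi=4^{r+1}$.

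Here the dimension count is immediate, but I expect the genuine difficulty to be the evaluation of this intersection number. The rank-$\le 1$ locus is the Veronese $v_2(\mathbb P^{r+1})$, of degree $2^{r+1}$, while the family $\{Q_w\}$ is \emph{not} given by forms of a single degree: in the three blocks of its symmetric matrix the entries have degrees $1$, $2$, and $3$ in $w$. The naive pullback of the Veronese class therefore does not apply, and the additional factor $2^{r+1}$ producing $4^{r+1}=2^{r+1}\cdot 2^{r+1}$ must come from this graded structure. The plan is to carry out the computation by a Thom–Porteous / Segre-class argument on a suitable projective bundle that absorbs the non-uniform degrees, which is where the main bookkeeping lies.
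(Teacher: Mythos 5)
Your part (i) has a genuine gap at its central step. At a general point $[P]$ the differential you construct is, as you yourself write, an element $d\Psi_{[P]}\in\operatorname{Hom}(T_{[P]}X,\,T_{[P']}X)$ between the tangent spaces at two \emph{different} points, so it has no eigenvalues, and its ``determinant'' is not a well-defined number: the scalar $\lambda$ with $\Psi^*\omega=\lambda\omega$ is the ratio $\omega_{[P']}\bigl(d\Psi(e_1),\dots,d\Psi(e_N)\bigr)\,/\,\omega_{[P]}(e_1,\dots,e_N)$, which compares the canonical form at $[P]$ with the canonical form at $[P']$. Your local recipe $V'=\bigl\{x-\tfrac{3B(x,v,v)}{B(v,v,v)}\,v\bigr\}$ and its derivative cannot produce this ratio unless you also write down the trivialization of $K_X$ explicitly in the local model (say via a residue or Jacobian-ring description of $\omega$), which your outline does not do. The eigenvalue decomposition you aim for ($-2$ on the $(r+1)$-dimensional space of $v$-directions, $1$ on a complement) is meaningful exactly when $[P']=[P]$, i.e.\ at a fixed point of $\Psi$, and that is how the paper proceeds (Proposition~\ref{PropEigenPoly}): at a generic fixed point one may take $f=Y_{r+1}^3+\sum_{i\geq r+2}Y_iQ_i$, the differential is then upper triangular with $r+1$ diagonal entries $-2$ and $N-r-1$ entries $1$, and evaluating $\Psi^*\omega=\lambda\omega$ at that point gives $\lambda=(-2)^{r+1}$. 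But this route requires knowing $\mathrm{Fix}(\Psi)\neq\emptyset$, a nontrivial fact the paper establishes first (nonemptiness and codimension $r+1$ for general $Y$, via the incidence variety $\mathcal I\subset B\times\mathrm{Fl}$ over the space of all cubics, surjectivity of $\tilde{\mathcal I}\to B$ coming from $F_{r+1}(Y)\neq\emptyset$). Your outline is missing this existence step; without it, or without an explicit $\omega$, your computation can at best recover what (ii) plus a rationality argument already give (Lemma~\ref{LmmNearlyEquiv}): $\lambda=\pm2^{r+1}$, with the sign---the actual content of (i)---undetermined.

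Part (ii), by contrast, is correct in outline and is essentially the paper's direct proof: the same parametrization of the $(r+1)$-planes $H\supset P'$ by $\mathbb P^{n-r-1}$, the same residual quadric with matrix entries of degrees $1,2,3$ in $w$, the same identification of the preimages with the rank-$\leq1$ locus, and the same codimension count $\binom{r+2}{2}=n-r-1$. The bookkeeping you defer is carried out in the paper as follows: the family $\{Q_w\}$ is a section $q$ of $\mathrm{Sym}^2\mathcal E^*\otimes\mathcal O(1)$ with $\mathcal E\cong\mathcal O^{r+1}\oplus\mathcal O(-1)$ (this twist is exactly your non-uniform grading), and the half-twist is removed by pulling back along the coordinate-squaring map $f:\mathbb P^{n-r-1}\to\mathbb P^{n-r-1}$ of degree $2^{n-r-1}$, turning $f^*q$ into an honest section of $\mathrm{Sym}^2\mathcal F$ with $\mathcal F=\mathcal O(1)^{r+1}\oplus\mathcal O(3)$; the rank-$\leq1$ locus is then evaluated as the degree-$(n-r-1)$ part of $p_*\bigl(c(\mathcal S^{\otimes2})^{-1}\cdot p^*c(\mathrm{Sym}^2\mathcal F)\bigr)$ via the relative Veronese $v_2:\mathbb P(\mathcal F)\to\mathbb P(\mathrm{Sym}^2\mathcal F)$---precisely the Segre-class computation on a projective bundle you anticipate---yielding $2^{n-r-1}\cdot4^{r+1}$, hence $\deg Z_q=4^{r+1}$ after dividing by $\deg f$. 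Two small points to add when you write this up: the rank-$0$ case (i.e.\ $H\subset Y$) must be excluded, which the paper does by noting that for generic $P'$ no $(r+1)$-plane through $P'$ lies in $Y$; and the enumerative count equals $\deg\Psi$ only after a genericity/transversality remark ensuring the fiber over a general $P'$ is reduced.
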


\begin{remark}
For $r=1$, the results have been previously established in~\cite{KCorr}.
\end{remark}


Having Theorem A,  Conjecture~\ref{ConjGBC0Cycle} leads us to the following

\begin{conjecture}\label{PropConditionalActionOfPsiOnChow}
    Let $X = F_r(Y)$ with $r \geq 2$. Then for any $z \in CH_0(X)_{hom}$,
    \[
    \Psi_*z = (-2)^{r+1}z.
    \]
\end{conjecture}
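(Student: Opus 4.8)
The plan is to deduce the identity from the generalized Bloch conjecture (Conjecture~\ref{ConjGBC0Cycle}) together with Theorem~A, exploiting the fact that a strict Calabi--Yau manifold has the simplest possible Hodge numbers $h^{i,0}$. Write $N=\dim X$ and let $\Gamma\subset X\times X$ be the closure of the graph of $\Psi$, so that $\Gamma_*=\Psi_*$ on $CH_0(X)_\mathbb{Q}$ (a general $0$-cycle is representable away from the indeterminacy locus of $\Psi$) and $[\Gamma]^*=\Psi^*$ on cohomology. I would work throughout with a (conjectural) Bloch--Beilinson filtration $F^\bullet$ and with the modified correspondence
\[
Z:=\Gamma-(-2)^{r+1}\,\Delta_X\in CH^N(X\times X)_\mathbb{Q},
\]
whose pushforward is $Z_*=\Psi_*-(-2)^{r+1}\,\mathrm{id}$; showing that $Z_*$ vanishes on $CH_0(X)_{hom}$ is exactly the assertion to be proved.

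First I would collapse the filtration using the Hodge-theoretic input. Since $X$ is a strict Calabi--Yau of dimension $N$, one has $H^{i,0}(X)=0$ for $0<i<N$ and $H^{N,0}(X)=\mathbb{C}\,\omega$. Applying Conjecture~\ref{ConjGBC0Cycle} to the diagonal $\Delta_X$ for each index $0<i<N$ — where $[\Delta_X]^*$ is the identity of the zero space $H^{i,0}(X)$ and hence vanishes — forces $\mathrm{id}=(\Delta_X)_*=0$ on $Gr_F^i CH_0(X)_\mathbb{Q}$, so that
\[
Gr_F^i CH_0(X)_\mathbb{Q}=0\qquad(0<i<N).
\]
Because $F^1 CH_0(X)_\mathbb{Q}=CH_0(X)_{hom}$ and $F^{N+1}CH_0(X)_\mathbb{Q}=0$, the filtration collapses to $CH_0(X)_{hom}=F^N CH_0(X)_\mathbb{Q}=Gr_F^N CH_0(X)_\mathbb{Q}$. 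Thus everything is concentrated in the single graded piece governed by $H^{N,0}$, which is what avoids any nilpotence ambiguity later.

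Next I would feed in Theorem~A. Part~(i) gives $\Psi^*\omega=(-2)^{r+1}\omega$, so on the one-dimensional space $H^{N,0}(X)$ we have
\[
[Z]^*=\Psi^*-(-2)^{r+1}\,\mathrm{id}=0 .
\]
Applying Conjecture~\ref{ConjGBC0Cycle} to $Z$ with $i=N$ then yields $Z_*=0$ on $Gr_F^N CH_0(X)_\mathbb{Q}$, which by the previous step is all of $CH_0(X)_{hom}$. Unwinding the definition of $Z_*$ gives $\Psi_* z=(-2)^{r+1}z$ for every $z\in CH_0(X)_{hom}$, as claimed.

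The honest caveat — and what I expect to be the main obstacle — is that this derivation is only as strong as Conjecture~\ref{ConjGBC0Cycle}, which is open, so it establishes the statement only conditionally. An unconditional proof (for instance in the case $r=2$) must replace the black-box appeal to generalized Bloch by concrete geometry on $X=F_r(Y)$. I expect the decisive difficulty to lie precisely there: one needs to show directly that $CH_0(X)_{hom}$ is controlled only by $H^{N,0}$ and that $Z_*$ annihilates it, presumably through a decomposition-of-the-diagonal or spreading-of-cycles argument adapted to the family of $r$-planes, using the explicit residuation underlying $\Psi$ (the unique $(r+1)$-plane $H_x$ tangent to $Y$ along $P_x$, with residual plane $P_{x'}$) to compute $\Psi_*$ on an explicit generating set of $0$-cycles.
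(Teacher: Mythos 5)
Your derivation is correct and coincides with the paper's own argument for this statement: the paper likewise collapses the Bloch--Beilinson filtration by applying Conjecture~\ref{ConjGBC0Cycle} to $\Delta_X$ on $H^{i,0}(X)=0$ for $0<i<N$, then applies it once more to $Z=\Gamma_\Psi-(-2)^{r+1}\Delta_X$ using Theorem~A to get $[Z]^*|_{H^{N,0}(X)}=0$, hence $Z_*=0$ on $Gr_F^N CH_0(X)_\mathbb{Q}=CH_0(X)_{hom}$. Your closing caveat is also accurate: the statement is a conjecture in the paper precisely because this argument is conditional on Conjecture~\ref{ConjGBC0Cycle}, and the unconditional case $r=2$ (Theorem~B) is proved separately by the concrete geometric decomposition $\Psi_*z=(-2)^{r+1}z+\gamma\cdot I_{r*}z$ you anticipate.
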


Indeed, let us explain how Conjecture~\ref{ConjGBC0Cycle} implies Conjecture~\ref{PropConditionalActionOfPsiOnChow}. Let $N = \dim X$. As $X$ is a strict Calabi-Yau manifold, we have $H^{k, 0}(X) = 0$ for all $0 < k < N$. Consider $\Delta_X \in CH^n(X \times X)$ as the diagonal of $X \times X$. Then, for $0 < k < N$, $[\Delta_X]^*|_{H^{k,0}(X)} = 0$. Conjecture~\ref{ConjGBC0Cycle} implies that $\Delta_{X*}|_{Gr^k_FCH_0(X)} = 0$ for $0 < k < N$, leading to $Gr^k_FCH_0(X) = 0$ for these values of $k$. Thus, the Bloch-Beilinson filtration on $CH_0(X)$ simplifies to:
    \[
    0 = F^{N+1}CH_0(X) \subseteq F^NCH_0(X) = \ldots = F^1CH_0(X) = CH_0(X)_{hom} \subseteq F^0CH_0(X) = CH_0(X).
    \]
    Now, define $Z = \Gamma_\Psi - (-2)^{r+1}\Delta_X$. Given Theorem A, $[Z]^*|_{H^{N, 0}(X)} = 0$. Applying Conjecture~\ref{ConjGBC0Cycle} once more, we find $Z_*|_{Gr^N_FCH_0(X)} = 0$, confirming that $Z_*$ is zero on  $Gr^N_FCH_0(X) = F^NCH_0(X) = F^1CH_0(X) = CH_0(X)_{hom}$, yielding the desired conclusion.

Our second main result is the proof of Conjecture~\ref{PropConditionalActionOfPsiOnChow} when $r=2$
\begin{theoremB}\label{theoremB}
    Let $Y\subset \mathbb P^9$ be  a general cubic $8$-fold, let $X=F_2(X)$, and let $\Psi: X\dashrightarrow X$ be the Voisin map. Then, for any $z\in CH_0(X)_{hom}$, we have
$$\Psi_*(z)=-8z\,\,  {\rm  in\, }  CH_0(X)_{hom}.$$
\end{theoremB}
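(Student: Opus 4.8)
The plan is to reduce Theorem B to a constancy statement for zero-cycles, to prove that statement by producing explicit rational equivalences from the residual-plane geometry together with the triviality of $CH_0(Y)$, and finally to fix the scalar by spreading Theorem A(i) out over the family of cubic eightfolds. For the reduction, observe that $CH_0(X)_{hom}$ is generated by differences $[x]-[x_0]$ with $x$ general and $x_0$ a fixed base point, and that $\Psi_*([x]-[x_0]) = [\Psi(x)]-[\Psi(x_0)]$ since $\Psi$ is a morphism at general $x$. Hence $\Psi_*z = -8z$ for all such $z$ is equivalent to the assertion that the degree-nine zero-cycle $\gamma(x) := [\Psi(x)]+8[x]$ is independent of the general point $x$. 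Writing $\mathcal{Z} := \Gamma_\Psi - (-2)^3\Delta_X \in CH_N(X\times X)_\mathbb{Q}$, this says exactly that $\mathcal{Z}_* = 0$ on $CH_0(X)_{hom}$. By Theorem A(i) the cohomological action $[\mathcal{Z}]^*$ already vanishes on $H^{N,0}(X)$, so the whole content is to upgrade this Hodge-theoretic vanishing to rational equivalence, i.e. to prove unconditionally the $r=2$ case of Conjecture~\ref{PropConditionalActionOfPsiOnChow}.

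Since $X$ is a strict Calabi-Yau manifold it is not uniruled, so the constancy of $\gamma(x)$ cannot be obtained by sliding $x$ along rational curves on $X$; instead I would read the equivalences off the cubic. For general $x$ the tangent $3$-plane $H_x\cong\mathbb{P}^3$ meets $Y$ in the totally degenerate cubic surface $2P_x+P_{\Psi(x)}$, and I would globalize this via the universal plane $\mathcal{P} = \{(x,y) : y\in P_x\}\subset X\times Y$ with its projections $p\colon\mathcal{P}\to X$ (a $\mathbb{P}^2$-bundle) and $q\colon\mathcal{P}\to Y$. The residual decomposition allows one to rewrite $\Gamma_\Psi + 8\Delta_X$, modulo correspondences factoring through $Y$ via $\mathcal{P}$ and its fibre product $\mathcal{P}\times_Y\mathcal{P}$, as cycles pulled back from $Y$. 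The decisive point is that $Y$ is a smooth cubic hypersurface and hence rationally connected, so $CH_0(Y)_\mathbb{Q} = \mathbb{Q}$; a Bloch-Srinivas decomposition of the diagonal of $Y$ then collapses every $Y$-factored contribution to a fixed multiple of a distinguished class, which is precisely the constancy of $\gamma(x)$ up to a possible homologically trivial error.

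To both fix the scalar at $-8$ and control that error I would spread the whole construction over the parameter space $B$ of smooth cubic eightfolds, forming the relative correspondence $\mathcal{Z}$ over $B$ and studying its class on $\mathcal{X}\times_B\mathcal{X}$. The transcendental cohomology of $X_b$ is governed by the primitive Hodge structure on $H^8(Y_b)$, which has maximal monodromy over $B$; consequently any self-correspondence acting as a scalar on this transcendental part is forced to act by the same scalar throughout, and Theorem A(i) identifies that scalar as $-8$. The step I expect to be the main obstacle is the final Chow-to-cohomology upgrade: one must show $\mathcal{Z}_*$ leaves no homologically trivial residue on $CH_0$, i.e. that the equivalences manufactured above account for the full class $\gamma(x)-\gamma(x_0)$ and not merely its cohomology class. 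Here the strict Calabi-Yau hypothesis is indispensable, since $h^{N,0}(X) = 1$ confines any such residue to a single transcendental direction; combining this with the triviality of $CH_0(Y)$ and Voisin's universal-cycle criterion on the total space over $B$ should annihilate the residue on the very general fibre, after which a specialization argument returns the conclusion for the given general $Y$.
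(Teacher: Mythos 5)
Your reduction of Theorem B to the vanishing of $\mathcal Z_*=(\Gamma_\Psi+8\Delta_X)_*$ on $CH_0(X)_{hom}$ is correct, and you rightly identify the Chow-to-cohomology upgrade as the crux; but the mechanism you propose for that step does not work, and this is a genuine gap. First, the residual correspondence does not factor through $Y$. Spreading over the family $B$ of all cubics, the paper proves (Theorem~\ref{ThmDecOfActionOfPsiGeneralCase}) that $\Gamma_\Psi=-8\Delta_X+\delta$ with $\delta$ supported on the incidence locus $I_2=\{(x,x'):\dim(P_x\cap P_{x'})\geq 1\}$, and $I_2={}^t\mathcal P_{2,1}\circ\mathcal P_{2,1}$ factors through the twelve-dimensional variety of lines $F_1(Y)$, not through $Y$ (Remark~\ref{RmkI2AsSelfCorr}). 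The resulting residue $\gamma\cdot I_{2*}z$, with $\gamma\in CH^3(X)$ a polynomial in $c_1,c_2,c_3$, is therefore governed by $CH_1$- and $CH_2$-level information on $F_1(Y)$, which is invisible to $CH_0(Y)_{\mathbb Q}=\mathbb Q$: a Bloch--Srinivas decomposition of $\Delta_Y$ controls actions on cohomology and on $CH_0$ \emph{modulo} homologically trivial contributions, i.e.\ it can only reprove what you already have from Theorem A, namely that $[\mathcal Z]^*$ kills $H^{N,0}$. Likewise your monodromy argument pins down the scalar on the transcendental cohomology --- Theorem A again --- and the closing appeal to ``$h^{N,0}=1$ confines the residue'' plus a universal-cycle criterion over $B$ is, as stated, a paraphrase of Conjecture~\ref{ConjGBC0Cycle} in the very case at hand: no known spreading technique annihilates a homologically trivial zero-cycle residue from cohomological vanishing alone, otherwise the generalized Bloch conjecture for such families would follow wholesale.

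What actually closes the gap in the paper is a chain of concrete geometric inputs with no counterpart in your sketch: (1) $c_3\cdot I_{2*}z=0$ because the relevant intersection factors through $F_1(Y\cap H_8)$ for a general hyperplane $H_8$, a Fano manifold with $CH_0=\mathbb Z$ (Proposition~\ref{PropVanishingOfc3}); (2) $c_2\cdot I_{2*}z=0$, which requires the substantial new Theorem~\ref{ThmChowOneOfF1} that $CH_1(F_1(Y))_{\mathbb Q}=\mathbb Q$, proved via the correspondence $F_{3,1}$, a Bloch--Srinivas-type spreading lemma over the $\mathbb P^5$-bundle $\mathcal P_5$ (Lemma~\ref{ThmSpreading}), and Otwinowska's $CH_2(Y)_{hom}=0$; (3) the remaining $c_1^3$-term is traded, using the class computation $[F]=-20c_1^3+110c_1c_2+49c_3$ (Proposition~\ref{ThmChowClassOfF}), for a cycle supported on the fixed locus $F$, which is then shown to be a constant-cycle subvariety (Theorem~\ref{ThmFixedLocusIsConstantCycle}) by proving that $P_{x'}^\vee-4P_x^\vee$ is constant in $CH_2(F_1(Y))_{\mathbb Q}$ (Proposition~\ref{PropPx'-4PxConstant}), via Voisin's quadratic relation $P^\vee=\alpha D_P^2+\beta D_P\cdot l+\gamma$ on cubic fourfold linear sections $Y\cap\mathbb P^5$, extended to nodal sections by specialization. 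If you want to salvage your outline, the correct target of your ``collapse the $Y$-factored contributions'' step is precisely this package of statements about one- and two-cycles on $F_1(Y)$ and on fourfold linear sections of $Y$, not the triviality of $CH_0(Y)$.
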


In the proof of Theorem B, we will use the notion of constant cycle subvariety~\cite{ConstantCycle, VoisinCoisotrope}. Let us recall the definition.

\begin{definition}[\cite{ConstantCycle, VoisinCoisotrope}]
    Let $X$ be a smooth algebraic variety. A closed subvariety $j: Z \hookrightarrow X$ is a \emph{constant-cycle subvariety} if every two points $z_1, z_2 \in Z$ are rationally equivalent in $X$. Equivalently, the image of the morphism, $j_*: CH_0(Z) \to CH_0(X)$, is $\mathbb{Z}$.
\end{definition}

Let $F$ denote the closure of the fixed locus under the Voisin map $\Psi: X \dashrightarrow X$. We next observe that   Conjecture~\ref{PropConditionalActionOfPsiOnChow} immediately  leads us to   the following

\begin{conjecture}\label{conjFixedLocusConstantCycle}
     The variety $F\subset X$ is a constant-cycle subvariety for $r\geq 2$.
\end{conjecture}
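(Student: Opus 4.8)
The plan is to deduce the constant-cycle property from the pointwise behaviour of $\Psi$ on its fixed points, using Conjecture~\ref{PropConditionalActionOfPsiOnChow} as the sole nontrivial input. Write $\mathrm{dom}(\Psi)\subseteq X$ for the open locus on which $\Psi$ is a morphism, and set $F^\circ:=\{x\in\mathrm{dom}(\Psi):\Psi(x)=x\}$, the genuine fixed locus, so that $F=\overline{F^\circ}$. Since $F^\circ$ is closed in the open set $\mathrm{dom}(\Psi)$, we have $F\cap\mathrm{dom}(\Psi)=F^\circ$; thus $F^\circ$ is a dense Zariski-open subset of $F$, and every point of $F\setminus F^\circ$ lies in the indeterminacy locus of $\Psi$. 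I would first prove that any two points of $F^\circ$ are rationally equivalent in $X$, and then upgrade this to all of $F$.

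For the first step, fix $x_1,x_2\in F^\circ$ and set $z=[x_1]-[x_2]$, a degree-zero cycle, hence $z\in CH_0(X)_{hom}$. The pushforward $\Psi_*$ is computed through the closed graph $\Gamma_\Psi\subseteq X\times X$; over a point of $\mathrm{dom}(\Psi)$ the fibre of $\Gamma_\Psi$ is the single point $\Psi(x_i)=x_i$, so $\Psi_*[x_i]=(\Gamma_\Psi)_*[x_i]=[x_i]$ and therefore $\Psi_*z=z$. On the other hand Conjecture~\ref{PropConditionalActionOfPsiOnChow} gives $\Psi_*z=(-2)^{r+1}z$. Subtracting,
\[
\bigl(1-(-2)^{r+1}\bigr)\,z=0\quad\text{in }CH_0(X)_\mathbb{Q}.
\]
For $r\geq 2$ the integer $1-(-2)^{r+1}$ is nonzero, so $z=0$, i.e. $[x_1]=[x_2]$ in $CH_0(X)_\mathbb{Q}$. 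Moreover, since $X$ is simply connected its Albanese variety vanishes, so by Roitman's theorem $CH_0(X)_{hom}$ is torsion-free; hence $[x_1]=[x_2]$ already holds integrally in $CH_0(X)$.

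It remains to pass from the dense open subset $F^\circ$ to the whole of $F$, which is the one genuinely technical point. Fixing a base point $x_0\in F^\circ$, the locus $\{x\in F:[x]=[x_0]\text{ in }CH_0(X)\}$ is, by spreading out rational equivalences, a countable union $\bigcup_i W_i$ of Zariski-closed subsets of $F$, and by the previous step it contains the dense open subset $F^\circ$. Over the uncountable field $\mathbb{C}$ an irreducible variety cannot be a countable union of proper closed subsets (equivalently, $F^\circ(\mathbb{C})$ is a Baire space in the Euclidean topology and proper closed subsets are nowhere dense), so on each irreducible component some $W_i$ contains a dense open subset, hence the whole component; running this over all components and using that $F^\circ$ meets, and is mutually rationally equivalent across, every component, we conclude that every point of $F$ is rationally equivalent to $x_0$. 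Consequently the image of $j_*:CH_0(F)\to CH_0(X)$ is $\mathbb{Z}\cdot[x_0]\cong\mathbb{Z}$, which is exactly the assertion that $F$ is a constant-cycle subvariety. I expect the main obstacle to be this last extension: one must check that the indeterminacy points of $\Psi$, where the graph-theoretic computation of $\Psi_*$ breaks down, do not escape the constant-cycle locus, and the countable-union-over-$\mathbb{C}$ argument is precisely what controls them.
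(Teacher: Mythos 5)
Your proposal is correct and takes essentially the same route as the paper's own (conditional) derivation of Conjecture~\ref{conjFixedLocusConstantCycle} from Conjecture~\ref{PropConditionalActionOfPsiOnChow}: for fixed points set $z = x_1 - x_2$, compare $\Psi_*z = z$ with $\Psi_*z = (-2)^{r+1}z$ to get $z = 0$ in $CH_0(X)_{\mathbb{Q}}$, and invoke Roitman's theorem (via $\mathrm{Alb}(X)=0$) for the integral conclusion. Your explicit countable-union argument passing from the genuine fixed locus $F^\circ$ to its closure $F$ is a sound and worthwhile patch of a step the paper leaves implicit (the paper only argues for points actually fixed by $\Psi$); note that the unconditional proof for $r=2$ given later in the paper (Theorem~\ref{ThmFixedLocusIsConstantCycle}) proceeds by an entirely different, unconditional cycle-theoretic method and is not what your argument reproduces.
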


Let us explain how  Conjecture~\ref{conjFixedLocusConstantCycle} is implied by  Conjecture~\ref{PropConditionalActionOfPsiOnChow}. Consider two points $x, y \in F$. Assuming $x, y$ are within the fixed locus of $\Psi$, we have $\Psi(x) = x$ and $\Psi(y) = y$, leading to $\Psi_*(x - y) = x - y$. However, Conjecture~\ref{PropConditionalActionOfPsiOnChow} indicates $\Psi_*(x - y) = (-2)^r(x - y)$. Consequently, $x - y = 0 \in CH_0(X)_\mathbb{Q}$. Roitman's theorem~\cite{Roitman} implies $CH_0(X)_{hom}$ is torsion-free due to the triviality of $Alb(X)$, so it follows that $x = y \in CH_0(X)$. Thus, $F \subset X$ forms a constant-cycle subvariety.

In the present paper, we will prove  directly  Conjecture~\ref{conjFixedLocusConstantCycle} for $r=2$, and this will be   one step in our proof of Theorem B, that is, Conjecture~\ref{PropConditionalActionOfPsiOnChow} for $r=2$.

\begin{remark}
    Interestingly, even in the scenario of $r=1$—not addressed in Conjecture~\ref{conjFixedLocusConstantCycle}—it has been proved in~\cite{0CycleHK} that $F \subset X$ is a constant-cycle subvariety.
\end{remark}


Let Ind be the indeterminacy locus of the Voisin map $\Psi: X\dashrightarrow X$. We prove as a consequence of Theorem B the following result

\begin{theoremC}
    Assume $r=2$. Then any $0$-cycle of $X$ which is a polynomial in the Chern classes of $X$ and divisor classes is rationally equivalent to a cycle supported on Ind.
\end{theoremC}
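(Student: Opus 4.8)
The plan is to translate the statement into the vanishing of a restriction map, and then to derive a contradiction from two incompatible scalars by which $\Psi$ acts on $0$-cycles. Write $U=X\setminus\mathrm{Ind}$ and let $j\colon U\hookrightarrow X$ be the inclusion. Since $X$ is smooth, $\mathrm{Ind}$ has codimension at least $2$, and the localization sequence
\[
CH_0(\mathrm{Ind})_{\mathbb Q}\to CH_0(X)_{\mathbb Q}\xrightarrow{\,j^*\,}CH_0(U)_{\mathbb Q}\to 0
\]
shows that a $0$-cycle is rationally equivalent to one supported on $\mathrm{Ind}$ if and only if its restriction to $U$ vanishes. Thus it suffices to prove that $c|_U=0$ in $CH_0(U)_{\mathbb Q}$ for every $0$-cycle $c$ that is a polynomial in the Chern classes of $X$ and in divisor classes; call such a $c$ \emph{distinguished} and let $C\subset CH_0(X)_{\mathbb Q}$ be the subgroup they span.

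The mechanism is a scalar dichotomy for $\Psi_*$. On the one hand, pushforward of $0$-cycles preserves degree, so the class of $\Psi_*\alpha$ in $H^{2N}(X,\mathbb Q)=\mathbb Q$ equals that of $\alpha$; in particular $\Psi_*\alpha-\alpha\in CH_0(X)_{hom}$ for every $0$-cycle $\alpha$. I would strengthen this on the distinguished part to an equality modulo $\mathrm{Ind}$, namely $\Psi_*c\equiv c$ in $CH_0(U)_{\mathbb Q}$ for every distinguished $c$. On the other hand, Theorem~B gives $\Psi_*z=-8\,z$ for every $z\in CH_0(X)_{hom}$. Because $1\neq -8$, these two facts together will force every distinguished homologically trivial cycle to vanish on $U$.

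To exploit this I would first produce a single distinguished $0$-cycle $\tau$ of positive degree supported on $\mathrm{Ind}$: the indeterminacy locus is a degeneracy locus for the bundle map defining $\Psi$, so by Thom--Porteous its class $[\mathrm{Ind}]$ is a polynomial in Chern classes, and $\tau:=[\mathrm{Ind}]\cdot H^{\,N-\mathrm{codim}\,\mathrm{Ind}}$ (with $H$ the ample generator of the Picard-rank-one group $CH^1(X)_{\mathbb Q}$) is then distinguished, of positive degree $e$, and supported on $\mathrm{Ind}$. Given an arbitrary distinguished $c$ of degree $d$, set $w:=c-\tfrac{d}{e}\,\tau\in C\cap CH_0(X)_{hom}$. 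Applying Theorem~B yields $\Psi_*w=-8\,w$, while the modulo-$\mathrm{Ind}$ invariance gives $(\Psi_*w)|_U=w|_U$; comparing the two produces $9\,w|_U=0$ in $CH_0(U)_{\mathbb Q}$, hence $w|_U=0$, i.e.\ $w$ is supported on $\mathrm{Ind}$. As $\tau$ is supported on $\mathrm{Ind}$ by construction, $c=w+\tfrac{d}{e}\tau$ is rationally equivalent to a cycle supported on $\mathrm{Ind}$, which is the assertion.

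The hard part will be the modulo-$\mathrm{Ind}$ invariance $\Psi_*c\equiv c$ on $U$ for distinguished $c$. Over $U$ the map $\psi:=\Psi|_U\colon U\to X$ is a genuine morphism, generically finite of degree $4^{r+1}=64$ and, by Theorem~A(i), unramified in codimension one: the pulled-back volume form $\psi^*\omega=-8\,\omega$ is nowhere vanishing, so the Jacobian never degenerates in codimension one and the ramification divisor is empty. Using the graph $\Gamma_\Psi$ with its projections $p,q$, I would compute $\Psi_*c=q_*\,p^*c$ by restricting to $q^{-1}(U)$, write $p^*c=M(p^*c_i(X),p^*H)$ by functoriality of Chern classes, and control the differences $\psi^*c_i(X)-c_i(X)|_U$ and $\psi^*H-H|_U$; the correction terms are supported on the exceptional and ramification loci of the correspondence, whose contributions I expect to be killed after restriction to $U$. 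Making this intersection-theoretic bookkeeping rigorous—and confirming that $[\mathrm{Ind}]$ is genuinely a polynomial in Chern classes—is where the real work lies; the scalar comparison above is then formal.
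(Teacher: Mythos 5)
Your localization reduction and your appeal to Theorem~B are fine, but the load-bearing claim --- the ``modulo-$\mathrm{Ind}$ invariance'' $\Psi_*c\equiv c$ in $CH_0(U)_{\mathbb Q}$ for distinguished $c$ --- is both unproven and unprovable by the route you sketch, and without it your argument is circular: for the homologically trivial cycle $w=c-\tfrac{d}{e}\tau$ to which you apply it, Theorem~B already gives $\Psi_*w=-8w$, so the invariance $(\Psi_*w)|_U=w|_U$ is \emph{literally equivalent} to the desired conclusion $w|_U=0$. All the content of Theorem~C is thus deferred to your ``hard part,'' and the sketch of that part rests on a false premise. You expect $\psi^*H-H|_U$ to be a correction term supported on exceptional or ramification loci, hence killed on $U$; in fact the paper proves in Lemma~\ref{LmmActionOfPsiOnDivisor} (whose proof takes place on an open set with complement of codimension $\geq 2$, so the identity holds in $CH^1(U)=CH^1(X)=\mathbb Q h$) that $\Psi^*h=(3r+4)h=10h$ \emph{exactly}: the divisor class is an eigenvector with eigenvalue $10$, not $1$, and there is no $\mathrm{Ind}$-supported correction to discard. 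Relatedly, your computation of $\Psi_*c=q_*p^*c$ over $q^{-1}(U)$ conflates the two projections: over $U$ the fibers of $q$ realize the $64$-to-$1$ inverse correspondence, so pushforward does not transform a polynomial in $c_i(X)$ and $H$ by the identity scalar; by the projection formula the natural scalars are $\deg\Psi\cdot 10^{-k}$, not $1$.

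The paper's actual mechanism is the \emph{pullback} version of your scalar dichotomy, and this is exactly what your proposal is missing. On $X^0=X\setminus\mathrm{Ind}$ the map $\psi=\Psi|_{X^0}$ is a morphism and even \'etale (your ramification observation is correct; it is Lemma~\ref{LmmActionOfPsiOnTangent}), so $\psi^*$ is multiplicative with $\psi^*c_i(X)=c_i(X)|_{X^0}$ and $\psi^*h=10h$, giving $(\Psi^*M)|_{X^0}=10^k\,M|_{X^0}$ for a monomial $M=h^k\prod c_i(X)$; against this one plays $\Psi^*M=-8M$, deduced from Theorem~B via $\Psi_*\Psi^*=\deg\Psi=64$ (consistently, $(-8)\cdot(-8)=64$). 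Since $10^k\neq-8$, one gets $M|_{X^0}=0$ directly, with no need for your auxiliary class $\tau$. That construction is also shaky on its own terms: $\mathrm{Ind}$ is the union of two loci of different codimensions ($\mathrm{Ind}_0$, of codimension $2$, where the tangent $(r+1)$-plane is not unique, and $\mathrm{Ind}_1$, of codimension $r+2$, where the bundle map $\phi$ fails to be surjective), and the Thom--Porteous classes you would obtain are polynomials in the Chern classes of the tautological bundles $\mathcal E$, $\mathcal F$, which you have not shown to be expressible in $c_i(T_X)$ and $h$ --- the only classes Theorem~C allows.
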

\begin{corollary}
    If $\mathrm{Ind}\subset X$ is a constant cycle subvariety, Conjecture~\ref{ConjVoisinCY} holds true for $X = F_2(Y)$.
\end{corollary}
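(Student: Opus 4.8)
The plan is to deduce the corollary formally from Theorem C together with the constant-cycle hypothesis, by a degree argument. First I would unwind Conjecture~\ref{ConjVoisinCY} in the present setting: the subgroup $C \subset CH_0(X)_\mathbb{Q}$ appearing there is precisely the group of $0$-cycles expressible as polynomials in the Chern classes of $X$ and in divisor classes, which is exactly the class of cycles to which Theorem C applies (landing in codimension $N = \dim X$, i.e.\ in $CH_0$). Injectivity of the cycle class map on $C$ is then equivalent to the assertion that if $z \in C$ satisfies $cl(z) = 0$ in $H^{2N}(X,\mathbb{Q})$, then $z = 0$ in $CH_0(X)_\mathbb{Q}$.

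Next I would use that $X$ is connected and projective, so that $H^{2N}(X,\mathbb{Q}) \cong \mathbb{Q}$ and the cycle class map on $0$-cycles is, up to the canonical generator, the degree map; hence the hypothesis $cl(z) = 0$ is equivalent to $\deg z = 0$. By Theorem C, $z$ is rationally equivalent in $X$ to a $0$-cycle $w$ whose support lies in $\mathrm{Ind}$. Fixing a base point $o \in \mathrm{Ind}$ and invoking the hypothesis that $\mathrm{Ind}$ is a constant-cycle subvariety, every point of $\mathrm{Ind}$ is rationally equivalent to $o$ in $X$, so that
\[
w = (\deg w)\,[o] \quad \text{in } CH_0(X)_\mathbb{Q}.
\]
Since the degree is a rational-equivalence invariant, $\deg w = \deg z = 0$, and therefore $w = 0$, whence $z = 0$ in $CH_0(X)_\mathbb{Q}$. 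This establishes injectivity of $cl$ on $C$, i.e.\ Conjecture~\ref{ConjVoisinCY} for $X = F_2(Y)$.

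In this argument the entire substance is carried by Theorem C and by the constant-cycle hypothesis; the remaining ingredients are only the standard identifications $H^{2N}(X,\mathbb{Q}) \cong \mathbb{Q}$ and $cl = \deg$ on $0$-cycles, together with the triviality of the degree map on a cycle rationally equivalent to zero. Consequently I do not expect a genuine obstacle in the corollary itself. The two points meriting a line of care are to confirm that the group $C$ of Conjecture~\ref{ConjVoisinCY} coincides verbatim with the class of cycles handled in Theorem C, and to note that the reduction is carried out with rational coefficients throughout, so no torsion subtlety arises. The real work lies entirely upstream, in Theorem C (assumed here) and in verifying the constant-cycle hypothesis on $\mathrm{Ind}$, which is left as a hypothesis.
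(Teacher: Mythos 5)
Your argument is correct and is exactly the (implicit) proof the paper intends: the corollary is stated as an immediate consequence of Theorem C, via the identification of $cl$ with the degree map on $0$-cycles of the connected variety $X$ and the observation that a degree-zero cycle supported on a constant-cycle subvariety vanishes in $CH_0(X)_\mathbb{Q}$. Your two points of care (matching the subgroup $C$ with the cycles covered by Theorem C, and working with rational coefficients) are also the right ones, so nothing is missing.
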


This leaves us with an open
\begin{question}\label{QuestionConstantCycleInd}
    Is $\mathrm{Ind} \subset X$ a constant cycle subvariety for $r \geq 2$?
\end{question}

\begin{remark}
    In the hyper-Kähler case ($r=1$), the indeterminacy locus $\mathrm{Ind}_0$ is \emph{not} a constant-cycle subvariety, nor is it Lagrangian, as indicated by~\cite[Lemma 2]{Amerik} and~\cite{KCorr}.
\end{remark}

Theorem A, B and C will be proved in Section~\ref{SectionActionOnForms}, \ref{SectionActionOnChow} and \ref{SectionProofOfC}, respectively.

\section*{Acknowledgments}

I extend my deepest gratitude to Claire Voisin, my PhD advisor, for proposing this both enriching and fascinating subject and for guiding through for the project. Her revision of previous versions of the article has greatly improved the presentation. It has been an immense honor to journey through my academic pursuits under her mentorship. I would like to thank Katia Amerik for her meticulous revision. I would like to express my gratitude to Frank Gounela for his constructive suggestions and for bringing the article~\cite{FixedLocusII} to our attention. I am also grateful to Jieao Song for providing the code snippets in Appendix~\ref{SectionCalculations} that have not only simplified the calculations but also significantly enhanced the presentation of this article compared to its earlier versions. Special appreciation is directed to Pietro Beri, Laurent Manivel, Mauro Varesco, and Zhi Jiang for their invaluable support and encouragement. This work was conducted in the stimulating environment of the Institut de Mathématiques de Jussieu - Paris Rive Gauche, to which I owe my thanks for providing an excellent research atmosphere. Additionally, my PhD thesis, within which this work was developed, received support from the ERC Synergy Grant HyperK (Grant Agreement No. 854361), an acknowledgment I proudly share.

Thank you all for your indispensable contributions to my academic journey.

\section{Action of the Voisin map on top degree holomorphic forms}\label{SectionActionOnForms}

In this section, we compute the action $\Psi^*: H^0(X,K_X)\rightarrow H^0(X,K_X)$ of the Voisin map and its degree, proving Theorem A.

\begin{remark}
    For $r=0$, where $Y$ is a plane cubic curve and $X=F_0(Y)=Y$ denotes an elliptic curve, $\Psi: X \to X$ acts by mapping $x$ to $-2x$, in accordance with the addition law of the elliptic curve. Consequently, the degree of $\Psi: X \to X$ is $4$. In the scenario where $r=1$ and $X$ is a hyper-Kähler fourfold, the result $\deg \Psi = 16$ is first discovered by Voisin~\cite{KCorr}, and can be derived using either Chow-theoretic techniques~\cite{KCorr}, \cite[Corollary 1.7]{AmerikVoisin} or vector bundle methods \cite[Lemma 4.12 and Proposition 4.17]{Cubic}.
\end{remark}

Let us first note the following
\begin{lemma}\label{LmmNearlyEquiv}
    The two assertions in Theorem A are equivalent, up to sign.
\end{lemma}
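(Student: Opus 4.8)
The plan is to reduce both assertions to a single numerical identity relating the degree of $\Psi$ to the scalar by which it acts on holomorphic top-forms. For every $r \geq 0$ the variety $X = F_r(Y)$ has trivial canonical bundle with $h^0(X, K_X) = 1$: it is an elliptic curve for $r = 0$, a hyper-Kähler fourfold for $r = 1$, and a strict Calabi--Yau manifold for $r \geq 2$ by Lemma~\ref{LmmStrictCY}. Fix a nonzero, hence nowhere-vanishing, generator $\omega \in H^0(X, K_X)$, and set $N = \dim X$. Since $H^0(X, K_X)$ is one-dimensional and $\Psi$ is dominant, the pullback acts by a scalar, $\Psi^*\omega = \lambda\,\omega$ with $\lambda \in \mathbb C$. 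In this notation assertion (i) of Theorem A reads $\lambda = (-2)^{r+1}$ and assertion (ii) reads $\deg\Psi = 4^{r+1}$, so it suffices to prove the identity $\deg\Psi = |\lambda|^2$.

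To establish this identity I would argue by change of variables. Let $\mathrm{Ind} \subset X$ be the indeterminacy locus of $\Psi$; it has codimension at least $2$, so, $X$ being smooth, the holomorphic form $\Psi^*\omega$ defined on $X \setminus \mathrm{Ind}$ extends by Hartogs to a global section of $K_X$. Hence $\Psi^*\omega = \lambda\,\omega$ holds on all of $X$, and therefore $\Psi^*(\omega \wedge \bar\omega) = |\lambda|^2\,\omega \wedge \bar\omega$. Choosing a resolution of indeterminacy $\pi \colon \tilde X \to X$ for which $\tilde\Psi := \Psi \circ \pi$ is a morphism of degree $\deg\Psi$, and using that a generically finite holomorphic map is orientation-preserving so that its geometric degree equals its topological degree, the change-of-variables formula gives
\[
|\lambda|^2 \int_X \omega \wedge \bar\omega \;=\; \int_X \Psi^*(\omega \wedge \bar\omega) \;=\; \deg(\Psi)\int_X \omega \wedge \bar\omega .
\]
Because $\omega$ is nowhere zero, the suitably normalized volume form $\omega \wedge \bar\omega$ has strictly positive integral, and dividing through yields $\deg\Psi = |\lambda|^2$.

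The equivalence is then immediate. If (i) holds, then $\deg\Psi = |(-2)^{r+1}|^2 = 4^{r+1}$, which is (ii). Conversely, if (ii) holds, then $|\lambda|^2 = 4^{r+1}$, so $|\lambda| = 2^{r+1}$; knowing in addition that $\lambda$ is real then forces $\lambda = \pm 2^{r+1} = \pm(-2)^{r+1}$, which is (i) up to the sign of $\lambda$. I expect the two genuinely delicate points to be, first, the justification of the change-of-variables identity across $\mathrm{Ind}$ (the Hartogs extension together with the passage to a resolution), and second, the fact that the degree determines only $|\lambda|$: this is precisely why the equivalence can only hold ``up to sign,'' the actual sign being the extra information carried by assertion (i) that no degree computation can recover.
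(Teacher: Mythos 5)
Your forward direction ((i) $\Rightarrow$ (ii)) is correct and matches the paper's argument; your treatment of the indeterminacy locus via Hartogs extension and a resolution is in fact more careful than the paper, which writes the change-of-variables identity directly. The identity $\deg\Psi = |\lambda|^2$ is right.

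The converse, however, has a genuine gap, and you half-acknowledge it without closing it: the phrase ``knowing in addition that $\lambda$ is real'' is an unproven hypothesis, not something the degree computation supplies. A priori $\lambda$ is only a complex number, and $\deg\Psi = 4^{r+1}$ gives merely $|\lambda| = 2^{r+1}$, i.e., $\lambda$ lies on a circle; nothing so far excludes $\lambda = 2^{r+1}e^{i\theta}$ with $\theta \notin \pi\mathbb{Z}$. One cannot easily fix this by soft means: since $\Psi^*$ acts on $H^N(X,\mathbb{Q})$ as a rational correspondence, $\lambda$ is an eigenvalue of a rational matrix and hence an algebraic number, but algebraic numbers of modulus $2^{r+1}$ need not equal $\pm 2^{r+1}$; and complex conjugation only tells you that $\Psi^*$ acts by $\bar\lambda$ on $H^{0,N}(X)$, which is no constraint. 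The paper closes exactly this gap by a spreading argument: the Voisin map is defined uniformly over the family of all smooth cubics, so the degree of $\Psi$ is independent of the choice of generic $Y$, and one may therefore assume $Y$ --- hence $X$ and $\Psi$ --- is defined over $\mathbb{Q}$. Then $\Psi^*$ preserves the \emph{one-dimensional $\mathbb{Q}$-vector space} $H^0(X, K_{X/\mathbb{Q}})$, forcing $\lambda \in \mathbb{Q}$, whereupon $\lambda^2 = 4^{r+1}$ yields $\lambda = \pm 2^{r+1}$. Without this (or some substitute argument for rationality or reality of $\lambda$), your proof of (ii) $\Rightarrow$ (i)-up-to-sign does not go through.
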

 \begin{proof}
     (i) leads to (ii) as follows: Given that $\sigma \wedge \bar\sigma$ constitutes a volume form on $X$, it follows that
    \[
    \deg \Psi \int_X \sigma \wedge \bar\sigma = \int_X \Psi^* \sigma \wedge \Psi^* \bar\sigma = \int_X (-2)^{r+1} \sigma \wedge (-2)^{r+1} \bar\sigma = 4^{r+1} \int_X \sigma \wedge \bar\sigma.
    \]
    Conversely, let us show that (ii) implies $\Psi^* \sigma = \pm 2^{r+1} \sigma$, aligning closely with (i). The rational map can be consistently defined across the family of cubic hypersurfaces, implying the degree of $\Psi$ remains invariant across different selections of the generic cubic hypersurface $Y$. Therefore, we may presume $Y$ is defined over $\mathbb{Q}$. Consequently, the rational map $\Psi: X \dashrightarrow X$ is also defined over $\mathbb{Q}$. Thus, $\Psi^*: H^0(X, K_{X/\mathbb{Q}}) \to H^0(X, K_{X/\mathbb{Q}})$ operates by multiplication by a rational number, given that $H^0(X, K_{X/\mathbb{Q}})$ is a one-dimensional $\mathbb{Q}$-vector space. If $\Psi^*\sigma = \lambda \sigma$ with $\lambda \in \mathbb{Q}$, then $\lambda^2 = 4^{r+1}$, leading to $\lambda = \pm 2^{r+1}$.
 \end{proof}

\subsection{Proof of Theorem A (i)}
Let $\mathrm{Fix}(\Psi):=\{x\in X: \Psi \textrm{ is defined at } x \textrm{ and } \Psi(x) = x\}$ be the fixed locus of $\Psi: X\dashrightarrow X$.
\begin{proposition}
    For $Y$ general, the fixed locus $\mathrm{Fix}(\Psi)$ is not empty and is of codimension $r+1$ in $X$.
\end{proposition}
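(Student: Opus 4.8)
The plan is to identify the fixed locus with a Veronese-type incidence condition on the tangent planes $H_x$, carry out the parameter count over the space of all cubics, and pin down the exact codimension by a transversality (dominance) argument.

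\emph{Step 1 (characterizing fixed points).} Write $\mathbb{P}^n = \mathbb{P}(V)$ with $V \cong \mathbb C^{n+1}$, let $f \in \mathrm{Sym}^3 V^*$ cut out $Y$, and for a general $x \in X$ let $W \subset U \subset V$ be the subspaces with $P_x = \mathbb P(W)$ ($\dim W = r+1$) and $H_x = \mathbb P(U)$ ($\dim U = r+2$). Tangency of $H_x$ to $Y$ along $P_x$ means $f|_U = \ell^2 m$ for linear forms $\ell, m \in U^*$ with $\ker \ell = W$, and the residual plane is $\mathbb P(\ker m)$. Hence $\Psi(x) = x$ if and only if $m$ is proportional to $\ell$, i.e. $f|_U \in \mathbb C\,\ell^3$, equivalently $Y \cap H_x = 3P_x$ as divisors in $H_x$.

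\emph{Step 2 (incidence variety).} Let $\mathcal P = \mathbb P(\mathrm{Sym}^3 V^*)$ be the space of cubics, and over $G = \mathrm{Gr}(r+2, n+1)$ let $B = \mathbb P(\mathcal U^*)$ be the projective bundle whose points are pairs $(U, [\ell])$ with $[\ell] \in \mathbb P(U^*)$, so that $\dim B = (r+2)(n-r-1) + (r+1)$. Consider
\[
\mathcal I = \{(f, U, [\ell]) \in \mathcal P \times B : f|_U \in \mathbb C\,\ell^3\}.
\]
For fixed $(U, [\ell])$ the condition is linear in $f$ and, since the restriction $\mathrm{Sym}^3 V^* \to \mathrm{Sym}^3 U^*$ is surjective, cuts out a linear subspace of $\mathcal P$ of codimension $\binom{r+4}{3} - 1$. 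Therefore $\mathcal I$ is a projective subbundle over $B$, in particular irreducible and smooth, with $\dim \mathcal I = \dim \mathcal P + \dim B - (\binom{r+4}{3} - 1)$. A short computation using the defining relation $n + 1 = \binom{r+3}{2}$ shows
\[
\dim \mathcal I - \dim \mathcal P = \dim B - \Big(\tbinom{r+4}{3} - 1\Big) = N - (r+1),
\]
so it is exactly this numerical identity that forces the expected codimension to be $r+1$.

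\emph{Step 3 (reduction to dominance).} The projection $\mathrm{pr}_1 : \mathcal I \to \mathcal P$ has the fixed locus of $Y$ as its fiber: a point of the fiber over $f$ is a pair $(U, [\ell])$ with $f|_U = \ell^3$, which by the uniqueness of the tangent plane (\cite[Lemma 8]{KCorr}) recovers, and is recovered by, a point $x$ of $\mathrm{Fix}(\Psi)$ where $\Psi$ is defined. Thus it suffices to prove that $\mathrm{pr}_1$ is dominant: then for general $f$ the fiber is nonempty and, since $\mathcal I$ is irreducible, equidimensional of dimension $\dim\mathcal I - \dim\mathcal P = N - (r+1)$, which gives that $\mathrm{Fix}(\Psi)$ is nonempty of codimension $r+1$.

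\emph{Step 4 (dominance; the main obstacle).} Since $\mathcal I$ is smooth it is enough to produce one point where $d\,\mathrm{pr}_1$ is surjective. I would choose coordinates with $U_0 = \{x_{r+2} = \cdots = x_n = 0\}$, $\ell_0 = x_0$, and a cubic $f_0$ general subject to $f_0|_{U_0} = x_0^3$. Differentiating the incidence condition, a deformation $\dot f$ of $f_0$ lies in the image of $d\,\mathrm{pr}_1$ precisely when
\[
\dot f|_{U_0} \in x_0^2 \cdot U_0^* + \sum_{j \geq r+2} g_j \cdot U_0^*, \qquad g_j := (\partial f_0 / \partial x_j)|_{U_0} \in \mathrm{Sym}^2 U_0^*,
\]
the terms $g_j \cdot U_0^*$ arising from moving $U$ and $x_0^2 \cdot U_0^*$ being the tangent space to the line $\mathbb C\,\ell^3$ at $x_0^3$. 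As $\dot f|_{U_0}$ is arbitrary, surjectivity reduces to the multiplication statement $x_0^2 \cdot U_0^* + \sum_{j} g_j \cdot U_0^* = \mathrm{Sym}^3 U_0^*$. For general $f_0$ the $g_j$ are general quadrics on $U_0$, and there are $n - r - 1 = \binom{r+2}{2}$ of them; I expect the main work of the proof to be verifying this surjectivity, which should follow from a maximal-rank (general position) argument once the numerical inequality $\big(\binom{r+2}{2}+1\big)(r+2) \ge \binom{r+4}{3}$ is in hand, and which can be confirmed by exhibiting a single convenient $f_0$.
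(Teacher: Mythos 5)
Your Steps 1--3 reconstruct, in different notation, exactly the paper's incidence correspondence: your $\mathcal I$ is the paper's $\tilde{\mathcal I}=\{(f,t,s): Y_f\cap \Pi_s\supset 3P_t\}$, your $B=\mathbb P(\mathcal U^*)$ is its flag variety $\mathrm{Fl}$, and your identity $\dim B-\bigl(\binom{r+4}{3}-1\bigr)=N-(r+1)$ is the same dimension count (the paper computes it fibering over $\mathrm{Fl}$, you over the space of cubics; both hinge on $n+1=\binom{r+3}{2}$). Where you genuinely diverge is the dominance step, and that is also where the gap sits: Step 4 is not carried out. You defer the surjectivity $x_0^2\cdot U_0^*+\sum_j g_j\cdot U_0^*=\mathrm{Sym}^3U_0^*$ to an ``expected'' maximal-rank argument, and the numerical inequality alone does not deliver it --- e.g.\ taking the $g_j$ to be the monomials $x_ix_j$ with $0\le i<j\le r+1$ satisfies the count yet misses every cube $x_k^3$, $k\ge1$. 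The gap is fillable by your own suggested device of one convenient $f_0$: take $f_0=x_0^3+\sum_{j=r+2}^n x_jq_j$ with the $q_j\in \mathrm{Sym}^2U_0^*$ running over the $\binom{r+2}{2}=n-r-1$ monomials $x_ix_k$, $1\le i\le k\le r+1$; then every cubic monomial with at least two indices $\ge 1$ lies in $\sum_j g_jU_0^*$, and the remaining $x_0^3$, $x_0^2x_i$ lie in $x_0^2U_0^*$. The paper avoids the infinitesimal computation entirely: it notes that the projection from the full incidence variety $\tilde{\mathcal I}$ is surjective for soft reasons --- $F_{r+1}(Y)\neq\emptyset$ for every cubic when $r\ge 2$ by a dimension count, and any flag $P\subset\Pi$ with $\Pi\in F_{r+1}(Y)$ satisfies $\Pi\cap Y\supset 3P$ --- and then gets dominance of the good locus from density. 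Your transversality route, once completed as above, is a legitimate alternative (it even gives generic smoothness of the fiber), but as written the decisive step is only asserted.

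A second, related imprecision is your Step 3 identification ``fiber $=\mathrm{Fix}(\Psi)$.'' The fiber of $\mathrm{pr}_1$ over a general $f$ strictly contains the fixed locus: it also contains (a) all pairs with $f|_U=0$, i.e.\ $U\in F_{r+1}(Y_f)$ with $[\ell]$ arbitrary --- nonempty for $r\ge2$, of dimension $N-(r+1)-1$, so not a component, but this needs saying --- and (b) pairs where $\mathbb P(U)$ is not the \emph{unique} $(r+1)$-plane tangent to $Y$ along $P$, at which $\Psi$ is undefined, so the corresponding $x$ is not in $\mathrm{Fix}(\Psi)$; citing \cite[Lemma 8]{KCorr} does not cover these, since that uniqueness holds only for generic $x$. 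The paper deals with this by introducing the open subset $\mathcal I\subset\tilde{\mathcal I}$ cut out by $\alpha\neq 0$ and linear independence of the restricted quadrics $Q_j|_P$ (the latter being precisely the uniqueness criterion), checking it is fiberwise dense over $\mathrm{Fl}$, and running the fiber-dimension argument for $p|_{\mathcal I}$, whose general fiber is $\mathrm{Fix}(\Psi)$ on the nose. Your argument repairs the same way: the explicit point above lies in this good open locus (the $q_j$ restrict to a basis of the quadrics on $\{x_0=0\}$), so dominance of the restricted projection gives that its general fiber, which is exactly $\mathrm{Fix}(\Psi)$, is nonempty of pure dimension $N-(r+1)$.
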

\begin{proof}
    We will suppose that $r\geq 2$ since the case $r=1$ is shown in~\cite[Proposition 3.1]{AmerikBogomolovRovinsky}  already (see also~\cite[Corollary 3.13]{FixedLocus}). Let $B=\mathbb PH^0(\mathbb P^n, \mathcal O(3))$ be the parametrizing space of cubic hypersurfaces. Let $\mathrm{Fl} = \{(t, s)\in \mathrm{Gr}(r+1, n+1)\times \mathrm{Gr}(r+2, n+1): P_t\subset \Pi_s\}$ be the flag variety of pairs of linear subspaces in $\mathbb P^n$. Let 
    \[\mathcal I = \{(f, t, s)\in B\times \mathrm{Fl}: \Pi_s\textrm{ is the only }\mathbb P^{r+1}\textrm{ such that } Y_f\cap \Pi_s = 3P_t\},
    \]
    \[
    \tilde{\mathcal I} = \{
    (f, t, s)\in B\times \mathrm{Fl}: Y_f\cap \Pi_s \supset 3P_t
    \}.
    \]
     Let $p: \mathcal I\to B$ (resp. $\tilde p: \tilde{\mathcal I}\to B$) and $q: \mathcal I\to \mathrm{Fl}$ (resp. $\tilde q: \tilde{\mathcal I}\to \mathrm{Fl}$) be the canonical projection maps. By definition of $\Psi$, for any $f\in B$, the fiber $p^{-1}(f)\subset \mathcal I$ coincides with the fixed locus of $\Psi$ for the cubic hypersurface $Y_f$. Hence, it suffices to show that the map $p:
    \mathcal I\to B$ is dominant and that the dimension of the general fibers is $\dim X - r - 1$.
    \begin{lemma}\label{LmmVarietyMathcalI}
        The variety $\mathcal I$ is open dense in $\tilde{\mathcal I}$. The dimension of $\mathcal I$ is $\dim X + \dim H^0(\mathbb P^n, \mathcal O(3)) - r - 2$.
    \end{lemma}
    \begin{proof}
        Let us consider the fibers of the map $\tilde q: \tilde{\mathcal I}\to \mathrm{Fl}$. For any element $(t,s)\in \mathrm{Fl}$ representing linear subspaces $P$ and $\Pi$ respectively, we may assume without loss of generality that 
        \[
    P=\{(x_0, x_1,\ldots, x_r, 0,\ldots, 0)\},
    \]
    \[
    \Pi=\{(x_0, x_1, \ldots, x_r, x_{r+1}, 0,\ldots,0)\}.
    \]
    The fiber $\tilde q^{-1}((t, s))$ parametrizes the cubic hypersurfaces $Y$ such that $\Pi\cap Y\supset 3P$. The last condition implies that the defining equation $f$ of $Y$ in $\mathbb P^n$ is given by 
    \[
    f(Y_0,\ldots, Y_n) = \alpha Y_{r+1}^3 + Y_{r+2}Q_{r+2} + \ldots + Y_nQ_n,
    \]
    where $\alpha\in \mathbb C$ is a constant and $Q_{r+1},\ldots, Q_n$ are quadratic polynomials. To write the above fact more formally, the fiber $\tilde q^{-1}((t, s))$ is identified with the image of the following map
    \[
    \begin{array}{cccc}
        \Phi: & (\mathbb C\oplus H^0(\mathbb P^n, \mathcal O(2))^{\oplus n-r-1}) - \{0\} & \to & B  \\
         & (\alpha, Q_{r+1}, \ldots, Q_n) & \mapsto & \alpha Y_{r+1}^3 + Y_{r+2}Q_{r+2} + \ldots + Y_nQ_n.
    \end{array}
    \]
    It is not hard to see that $\mathcal I\cap \tilde q^{-1}((t, s))$ parametrizes the cubic hypersurfaces $Y$ whose defining equation $f$ is given by 
    \[ f(Y_0,\ldots, Y_n) = \alpha Y_{r+1}^3 + Y_{r+2}Q_{r+2} + \ldots + Y_nQ_n,
    \]
    subject to $\alpha \neq 0$ and $Q_{r+1}(Y_0, \ldots, Y_r, 0, \ldots, 0), \ldots, Q_n(Y_0, \ldots, Y_r, 0, \ldots, 0)$ are linearly independent in $H^0(\mathbb P^r, \mathcal O(2))$. Notice that $n - r - 1 = \dim H^0(\mathbb P^r, \mathcal O(2))$, the above conditions give an open dense subset in $(\mathbb C\oplus H^0(\mathbb P^n, \mathcal O(2))^{\oplus n-r-1}) - \{0\}$, which implies that $\mathcal I\cap \tilde q^{-1}((t, s))$ is open dense in $\tilde q^{-1}((t, s))$ for any $(t, s)\in \mathrm{Fl}$. Therefore, $\mathcal I$ is open dense in $\tilde{\mathcal I}$.

    For the next step, let us calculate the dimension of $\mathcal I$. To this end, we use now the projection $\mathcal I \rightarrow \mathrm{Fl}$ and compute the dimensions of its fibers. Since $\mathrm{Fl}$ is a $\mathbb P^{r+1}$-bundle over $\mathrm{Gr}(r+2, n+1)$, it is clear that $\dim \mathrm{Fl} = (r+2)(n-r-1) + (r+1)$. By the above description of the fiber $q^{-1}((t, s))$, the dimension of the fiber of $q: \mathcal I\to \mathrm{Fl}$ is the dimension of the space of cubic polynomials in $Y_0, \ldots, Y_n$ such that each monomial of it contains either one of the variables $Y_{r+2}, \ldots, Y_n$, and the latter is $\dim H^0(\mathbb P^n, \mathcal O(3)) - \dim H^0(\mathbb P^{r+1}, \mathcal O(3))$. Taken together, we have
    \[
    \dim {\mathcal I} = (r+2)(n-r-1) + (r+1) + \dim H^0(\mathbb P^n, \mathcal O(3)) - \dim H^0(\mathbb P^{r+1}, \mathcal O(3)).
    \]
    Taking into account of the relation $n+1 = \dim H^0(\mathbb P^{r+1}, \mathcal O(2))$ and the fact that $\dim X = (n-r)(r+1) - \dim H^0(\mathbb P^r, \mathcal O(3))$, we find that $\dim {\mathcal I} = \dim X + \dim H^0(\mathbb P^n, \mathcal O(3)) - r - 2$, as desired.
    \end{proof}
    \begin{lemma}
        The map $\tilde p: \tilde{\mathcal I}\to B$ is surjective.
    \end{lemma}
    \begin{proof}
        It is equivalent to showing that for any cubic hypersurface $Y$, there are linear subspaces $P, \Pi\subset\mathbb P^n$ of dimension $r$, $r+1$ respectively such that $\Pi\cap Y\supset 3P$. To construct such examples, it is worthwhile to notice that for $r\geq 2$, the variety $F_{r+1}(Y)$ is non-empty by a dimension counting argument. For any $\Pi\in F_{r+1}(Y)$ and any $P\subset \Pi$, we have $\Pi\cap Y\supset 3P$. That concludes the proof.
    \end{proof}
    By the two lemmas above, we conclude that the map $p: \mathcal I\to B$ is dominant. Since the map $p: \mathcal I\to B$ is dominant, the dimension of the general fiber is $\dim\mathcal I - \dim B $, which is equal to $\dim X - r - 1$.
\end{proof}

\begin{proposition}\label{PropEigenPoly}
Let $x\in X$ be a generic fixed point of $\Psi: X\dashrightarrow X$ representing an $r$-dimensional linear subspace $P\subset Y$. Then the linear map
\[
\Psi_{*,x}: T_{X,x}\to T_{X, x}
\]
 has $N-r-1$ eigenvalues equal to $1$, corresponding to the tangent space to the fixed locus $F$ of $\Psi$ at $x$, and $r+1$ eigenvalues equal to $-2$, corresponding to the action of $\Psi_*$ in the normal direction to $F$ at $x$.
\end{proposition}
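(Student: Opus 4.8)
The plan is to compute the Jacobian $\Psi_{*,x}$ explicitly in suitable coordinates, by differentiating the residuation procedure that defines $\Psi$ along an arc in $X$ through the fixed point $x$. First I would normalize coordinates so that $P=P_x=\{x_{r+1}=\cdots=x_n=0\}$ and the tangent space $H_x=\{x_{r+2}=\cdots=x_n=0\}$, so that the fixed-point condition reads $f|_{H_x}=c\,x_{r+1}^3$ with $c\neq 0$. This forces the restricted partials $B_j:=\frac{\partial f}{\partial x_j}\big|_P$ to satisfy $B_i=0$ for $i\le r$ and $B_{r+1}=0$, while $B_{r+2},\dots,B_n$ are, for $Y$ general, linearly independent quadrics on $P$. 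Using the standard identification $T_{X,x}=\ker\big(H^0(P,N_{P/\mathbb P^n})\to H^0(P,\mathcal O_P(3))\big)$, a tangent vector is a tuple $\phi=(\phi_{r+1},\dots,\phi_n)$ of linear forms on $P$ subject to $\sum_{j>r}\phi_j B_j=0$; since $B_{r+1}=0$ this constraint involves only $\phi_{r+2},\dots,\phi_n$. Hence $T_{X,x}=T_A\oplus T_B$, where $T_A=\{\phi_{r+1}\ \text{free},\ \phi_{\ge r+2}=0\}\cong H^0(\mathcal O_P(1))$ has dimension $r+1$ and $T_B=\{\phi_{r+1}=0,\ \sum_{j\ge r+2}\phi_jB_j=0\}$ has dimension $N-r-1$ (using that $X$ is smooth of dimension $N$). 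I expect $T_A$ to carry the normal ($-2$) directions and $T_B$ the directions tangent to $F$.

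Next I would record the residuation in coordinates. For a plane $P'$ near $P$, the auxiliary space is $H_{P'}=\mathbb P(V'\oplus\langle v\rangle)$, where $v$ solves the linear system $\sum_{j>r} v_j B_j^{(P')}=0$; writing $s$ for the coordinate along $v$, one has $f|_{H_{P'}}=s^2\big(b(u)+a\,s\big)$ with $b(u)=\tfrac12\,d^2f_u(v,v)$ linear in $u$ and $a=f(v)$, so that $\Psi(P')=\{b(u)+a\,s=0\}$. At $x$ this gives $v=e_{r+1}$, $b\equiv 0$ and $a=c$.

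The main computation is then to differentiate these data along a genuine arc $\iota_t$ in $X$ with $\iota_0=\mathrm{id}_V$ and $\dot\iota=\phi$. Using the frame $\iota_t(e_i)-\tfrac{b_t(e_i)}{a_t}v_t$ of $\Psi(P_t)$ and $b_0\equiv 0$ (which kills the $\dot a$ and $\dot v$ contributions to the frame derivative), the normal vector $\Psi_{*,x}(\phi)$ has $j$-th component $\phi_j$ for $j\ge r+2$ and $(r+1)$-st component $\phi_{r+1}-\tfrac1c\dot b$. Differentiating $b_t(u)=\tfrac12\sum_{j,k}(v_t)_j(v_t)_k\,\partial^2_{jk}f(\iota_t(u))$ and using $B_{r+1}=0$, $\partial^2_{(r+1)(r+1)}f|_P=0$, and $\partial^3_{(r+1)(r+1)(r+1)}f=6c$ (this is where the cubic degree enters), one finds $\dot b=3c\,\phi_{r+1}+G(\phi_{\ge r+2})$ for a linear form $G$ depending only on $\phi_{r+2},\dots,\phi_n$ and on $\dot v$, the latter being determined by $\phi_{\ge r+2}$ alone through the differentiated system $\sum_j(\dot v)_jB_j+\dot B_{r+1}=0$. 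Therefore $\Psi_{*,x}(\phi)_{r+1}=(1-3)\phi_{r+1}-\tfrac1cG(\phi_{\ge r+2})=-2\phi_{r+1}-\tfrac1cG(\phi_{\ge r+2})$, so in the splitting $T_A\oplus T_B$ the Jacobian is block upper triangular, $\Psi_{*,x}=\left(\begin{smallmatrix}-2\,\mathrm{Id}&*\\0&\mathrm{Id}\end{smallmatrix}\right)$. Since $-2\neq 1$ this is diagonalizable with eigenvalue $-2$ of multiplicity $r+1$ (eigenspace $T_A$, transverse to $F$) and eigenvalue $1$ of multiplicity $N-r-1$; as $F$ is fixed pointwise, $T_xF\subseteq\ker(\Psi_{*,x}-\mathrm{Id})$, and comparing dimensions with the previous proposition identifies the $1$-eigenspace with $T_xF$.

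The main obstacle is the bookkeeping in differentiating the implicitly defined direction $v_t$ and residual form along an arc that genuinely lies in $X$ rather than merely in the Grassmannian, since $\Psi$ is defined only on $X$; I would handle this by fixing a family of frames $\iota_t$ with $f\circ\iota_t\equiv 0$ and differentiating the characterizing equations $\sum_j(v_t)_jB_j^{(t)}=0$ to pin down $\dot v$. The remaining points requiring genericity are the linear independence of $B_{r+2},\dots,B_n$ (so that $\dot v$ is uniquely determined) and the smoothness of $F$ of dimension $N-r-1$ at a general fixed point, which together legitimize identifying the $1$-eigenspace with $T_xF$. As a final consistency check, $\det\Psi_{*,x}=1^{\,N-r-1}(-2)^{r+1}=(-2)^{r+1}$, exactly the scalar by which $\Psi^*$ must act on the one-dimensional space $H^0(X,K_X)$, which is how this proposition yields Theorem A(i).
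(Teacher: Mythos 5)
Your proposal is correct and takes essentially the same route as the paper's proof: the same normalization $f = c\,x_{r+1}^3 + \sum_{j\ge r+2} x_j Q_j$ at the fixed point, the same first-order differentiation of the residuation along an arc in $X$ (the paper parametrizes the moving tangent $(r+1)$-planes by linear maps $G_t$ and expands $f\circ G_t$, which is your computation of $\dot v$ and $\dot b$ in different notation), and the same conclusion that the Jacobian is block upper triangular with diagonal blocks $-2\,\mathrm{Id}_{r+1}$ and $\mathrm{Id}_{N-r-1}$. Your explicit identification of the $1$-eigenspace with $T_xF$ via $T_xF\subseteq\ker(\Psi_{*,x}-\mathrm{Id})$ and a dimension count is a small addition that the paper leaves implicit.
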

\begin{proof}
    Since $x\in X$ is a generic fixed point of $\Psi$ representing a plane $P\subset Y$, there is a unique $(r+1)$-plane $\Pi\subset \mathbb P^n$ such that $\Pi\cap Y=3P$ as algebraic cycles. Without loss of generality, we may assume that 
    \[
    P=\{(x_0, x_1,\ldots, x_r, 0,\ldots, 0)\},
    \]
    \[
    \Pi=\{(x_0, x_1, \ldots, x_r, x_{r+1}, 0,\ldots,0)\}.
    \]
    The fact $\Pi\cap Y=3P$ implies that the defininig equation $f$ of $Y$ in $\mathbb P^n$ is given by
    \begin{equation}\label{EqDefiningEquationOfY}
        f(Y_0, \ldots, Y_n)=Y_{r+1}^3+Y_{r+2}Q_{r+2}+\ldots+ Y_nQ_n,
    \end{equation}
    where $Q_{r+1}, \ldots, Q_n$ are quadratic polynomials. Let $v_1\in T_{X, x}$ be a nonzero tangent vector given by a path $\{P_{1,t}\}_{t\in \Delta}$ with $P_{1,0}=P$, where $\Delta$ is the unit disc in the complex plane. To understand $\Psi_{*, x}(v_1)\in T_{X, x}$, we have to determine $P_{2,t}:=\Psi(P_{1,t})$ for each small $t\in \Delta$. By the construction, for each small $t\in \Delta$, there is a unique $(r+1)$-plane $\Pi_t\subset \mathbb P^n$ such that $\Pi_t\cap Y=2P_{1,t}+P_{2,t}$. Since the inclusions $P_{1,t}\subset \Pi_t\subset \mathbb P^n$ vary holomorphically with respect to $t\in \Delta$, we may assume that $\Pi_t$ is given by the image of a linear map $G_t: \mathbb P^{r+1}\to \mathbb P^n$ varying holomorphically with $t\in \Delta$ and that 
    \begin{equation}
        P_{1,t}=\{G_t(x_0,\ldots, x_r, 0)\}.
    \end{equation}
    Then the relation $\Pi_t\cap Y= 2 P_{1,t}+P_{2,t}$ can be translated as 
    \begin{equation}\label{EqFormOfFcircGt}
        f\circ G_t(x_0, \ldots, x_{r+1})=x_{r+1}^2L_t(x_0, \ldots, x_{r+1})
    \end{equation}
    for some linear function $L_t$ and that 
    \begin{equation}\label{EqDefiningEquationOfP2t}
    P_{2,t}=\{G_t(x_0,\ldots, x_{r+1}): L_t(x_0, \ldots, x_{r+1})=0\}.
    \end{equation}
    Let us simplify the notation and write $\mathbf x=(x_0,\ldots, x_{r+1})$. Let us write $G_t(\mathbf x)$ as 
    \begin{equation}\label{EqExpansionOfGt}
        G_t(\mathbf x) = (x_0+tY_0(\mathbf x), x_1+tY_1(\mathbf x), \ldots, x_{r+1}+tY_{r+1}(\mathbf x),\\ tY_{r+2}(\mathbf x), \ldots, tY_n(\mathbf x))+O(t^2).
    \end{equation}
    Then by (\ref{EqDefiningEquationOfY}), we have
    \begin{equation}\label{EqExpansionOfFcircGt}
        f\circ G_t(\mathbf x)=x_{r+1}^3+ 3tx_{r+1}^2Y_{r+1}(\mathbf x)+t\sum_{i=r+2}^{n}Y_i(\mathbf x)Q_i(\mathbf x, \Vec{0})+O(t^2).
    \end{equation}
    Comparing (\ref{EqFormOfFcircGt}) and (\ref{EqExpansionOfFcircGt}), we get that $x_{r+1}^2$ divides $\sum_{i=r+2}^n Y_i(\mathbf x)Q_i(\mathbf x, \Vec{0})$, and that 
    \begin{equation}
        L_t(\mathbf x)=x_{r+1}+t\left\{3Y_{r+1}(\mathbf x)+\frac{Y_i(\mathbf x)Q_i(\mathbf x, \Vec{0})}{x_{r+1}^2}\right\}+O(t^2).
    \end{equation}
    Hence, the equation $L_t(\mathbf x)=0$ has an explicit expression as 
    \begin{equation}\label{EqExplicitFormOfLt}
        x_{r+1}=-t\left\{
        3Y_{r+1}(\mathbf x', 0)+\frac{\sum_{i=r+2}^n Y_iQ_i}{X_{r+1}^2}(\mathbf x', 0)
        \right\}+O(t^2),
    \end{equation}
    where $\mathbf x'=(x_0,\ldots, x_r)$. Comparing (\ref{EqDefiningEquationOfP2t})(\ref{EqExpansionOfGt})(\ref{EqExplicitFormOfLt}), we get
    \begin{align*}
        P_{2,t}=&\{(x_0+tY_0(\mathbf x', 0), \ldots, x_r+tY_r(\mathbf x', 0), -2tY_{r+1}(\mathbf x',0)-t\frac{\sum Y_iQ_i}{X_{r+1}^2}(\mathbf x, 0), \\
        &tY_{r+2}(\mathbf x',0), \ldots, tY_n(\mathbf x',0))+O(t^2)\}.
    \end{align*}
    We view now $v_1\in T_{X, x}$ as a $(r+1)\times (n-r)$-matrix via the following natural inclusion and identification $T_{X,x}\subset T_{\mathrm{Gr}(r+1, n+1), x}=Hom(\mathbb C^{r+1}, \mathbb C^{n+1}/\mathbb C^{r+1})$. Then $v_1$ is represented by the matrix 
    \[
    \begin{pmatrix}
     Y_{r+1}\\
     Y_{r+2}\\
     \vdots\\
     Y_{n}
    \end{pmatrix},
    \]
    where we view each $Y_i$ as a row vector $(a_0, a_1,\ldots, a_r)$ if $Y_i(x_0, \ldots, x_r, 0)=a_0x_0+\ldots +a_rx_r$. By the above calculations, we see that $\Psi_{*, x}(v_1)$ is represented by the matrix 
    \[
    \begin{pmatrix}
        -2Y_{r+1}-\frac{\sum_{i=r+2}^n Y_iQ_i}{X_{r+2}^2}\\
        Y_{r+2}\\
        \vdots\\
        Y_n
    \end{pmatrix}
    \]
    with the same explanation about the notations. Hence, $\Psi_{*,P}$ sends 
    $
    \begin{pmatrix}
     Y_{r+1}\\
     Y_{r+2}\\
     \vdots\\
     Y_{n}
    \end{pmatrix}
    $
    to 
    $
    \begin{pmatrix}
        -2Y_{r+1}-\frac{\sum_{i=r+2}^n Y_iQ_i}{X_{r+2}^2}\\
        Y_{r+2}\\
        \vdots\\
        Y_n
    \end{pmatrix}.
    $
    The representing matrix of $\Psi_{*, x}$ is uppper triangular with the diagonal $r+1$ copies of $-2$ and $N - r - 1$ copies of $1$. Therefore, the eigenpolynomial of $\Psi_{*,x}:T_{X, x}\to T_{X,x}$ is $(t+2)^{r+1}(t-1)^{N-r-1}$, as desired.
\end{proof}

\begin{proof}[Proof of Theorem A (i)]
    By Proposition~\ref{PropEigenPoly}, the eigenpolynomial of $\Psi^*_P: \Omega_{X,P}\to \Omega_{X, x}$ is $(t+2)^{r+1}(t-1)^{N-r-1}$ for a generic fixed point $P$ of $\Psi$. Hence, the map $\Psi^*_P: K_{X,P}\to K_{X,P}$ is given as the multiplication by $(-2)^{r+1}$. Let $\omega\in H^0(X,K_X)$ be a nowhere zero top degree holomorphic differential form on $X$. Then, as $H^0(X,K_X)$ has dimension $1$,
 \begin{equation}\label{EqARelation}
        \Psi^*\omega=\lambda\omega
    \end{equation} 
    for some $\lambda\in \mathbb C$. By evaluating the equation (\ref{EqARelation}) on the point $P\in X$, we find $\lambda=(-2)^{r+1}$, and we finish the proof of Theorem A (i).
\end{proof}

\subsection{A direct proof of Theorem A (ii)}
While Theorem A (ii) has been established as a consequence of Theorem A (i) using Lemma~\ref{LmmNearlyEquiv}, we present a direct proof, using an enumerative geometry viewpoint.
\begin{proof}[Proof of Theorem A
(ii)]
The beginning of the proof is similar to that of \cite[Lemma 4.12]{Cubic}. Let $P'\in X$ be a generic $r$-plane in $Y$. The preimage of $P'$ via $\Psi$ is the set of $r$-planes $P$ in $Y$ such that there is an $(r+1)$-plane $\Pi$ in $\mathbb P^n$ such that $\Pi\cap Y=2P+P'$ as algebraic cycles. In general, let $\Pi\subset \mathbb P^n$ be an $(r+1)$-plane containing $P'$. Then $\Pi\cap Y=Q\cup P'$ where $Q\subset \Pi$ is a quadratic hypersurface that corresponds to a quadratic form $q_{\Pi}$. The preimage of $P'$ via $\Psi$ corresponds to the $(r+1)$-planes $\Pi$ such that the quadratic form $q_{\Pi}$ is of rank $1$. Now let $\pi: \mathbb P^n\dashrightarrow \mathbb P^{n-r-1}$ be the projection map induced by the $r$-plane $P'$. One can resolve the indeterminacies of $\pi:\mathbb P^n\dashrightarrow \mathbb P^{n-r-1}$ by blowing up $\mathbb P^n$ along $P'$. Let $p:\mathcal P\to \mathbb P^n$ be the blowup map. Then the induced map $q: \mathcal P\to \mathbb P^{n-r-1}$ is a projective bundle induced by a vector bundle $\mathcal E$ over $\mathbb P^{n-r-1}$. It is proven in~\cite[Section 1.5.1-1.5.2]{Cubic} the following
\begin{lemma}
    (i) We have $\mathcal E\cong \mathcal O_{\mathbb P^{n-r-1}}^{r+1}\oplus \mathcal O_{\mathbb P^{n-r-1}}(-1)$.\\
    (ii) There is a section $q\in H^0(\mathbb P^{n-r-1}, Sym^2\mathcal E^*\otimes \mathcal O_{\mathbb P^{n-r-1}}(1))$ such that on each point $\Pi\in\mathbb P^{n-r-1}$, $q$ coincides with $q_{\Pi}$.
\end{lemma}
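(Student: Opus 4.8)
The plan is to reconstruct $\mathcal E$ explicitly as a tautological subbundle coming from the blow-up, and then to obtain $q$ by dividing the restriction of $f$ to the fibres by the relative linear form cutting out $P'$. First I would choose coordinates so that $P'=\mathbb P(V')$ with $V=\mathbb C^{n+1}=V'\oplus V''$, $\dim V'=r+1$, $\dim V''=n-r$, let $\pi\colon V\to V''$ be the linear projection, and identify $\mathbb P^{n-r-1}=\mathbb P(V'')$ with the family of $(r+1)$-planes through $P'$: a point $[w]$ corresponds to $\Pi=\mathbb P(\pi^{-1}\langle w\rangle)$. Under this identification the blow-up $\mathcal P=\mathrm{Bl}_{P'}\mathbb P^n$ is the incidence variety $\{([v],[w]):\pi(v)\in\langle w\rangle\}\subset\mathbb P(V)\times\mathbb P(V'')$, and $q\colon\mathcal P\to\mathbb P(V'')$ is the second projection, whose fibre over $[w]$ is $\mathbb P(\pi^{-1}\langle w\rangle)$. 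Hence $\mathcal P=\mathbb P(\mathcal E)$, where $\mathcal E\subset V\otimes\mathcal O_{\mathbb P^{n-r-1}}$ is the rank $r+2$ subbundle with fibre $\mathcal E_{[w]}=\pi^{-1}\langle w\rangle$.

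For (i), quotienting by the constant subbundle $V'\otimes\mathcal O$ identifies $\mathcal E/(V'\otimes\mathcal O)$ with the tautological subbundle $\langle w\rangle=\mathcal O(-1)$ of $V''\otimes\mathcal O$, giving the exact sequence
\[
0\to V'\otimes\mathcal O_{\mathbb P^{n-r-1}}\to\mathcal E\to\mathcal O_{\mathbb P^{n-r-1}}(-1)\to 0 .
\]
Since $\mathrm{Ext}^1(\mathcal O(-1),\mathcal O^{\oplus(r+1)})=H^1(\mathbb P^{n-r-1},\mathcal O(1))^{\oplus(r+1)}=0$ (as $n-r-1\geq 1$), this extension splits, so $\mathcal E\cong\mathcal O^{\oplus(r+1)}\oplus\mathcal O(-1)$, which is (i).

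For (ii), the inclusion $\mathcal E\hookrightarrow V\otimes\mathcal O$ dualises to a surjection $Sym^3V^*\otimes\mathcal O\twoheadrightarrow Sym^3\mathcal E^*$, under which the cubic $f\in Sym^3V^*$ gives a global section $\tilde f\in H^0(\mathbb P^{n-r-1},Sym^3\mathcal E^*)$ with $\tilde f_{[w]}=f|_{\mathcal E_{[w]}}$, the cubic cutting out $Y\cap\Pi$. The quotient map $\rho\colon\mathcal E\to\mathcal O(-1)$ is a section of $\mathcal E^*\otimes\mathcal O(-1)$ whose zero locus on each fibre is the linear form cutting out $P'$ inside $\Pi$. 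The key is the short exact sequence
\[
0\to Sym^2\mathcal E^*\otimes\mathcal O(1)\xrightarrow{\ \cdot\rho\ }Sym^3\mathcal E^*\xrightarrow{\ \mathrm{res}\ }Sym^3(V')^*\otimes\mathcal O\to 0 ,
\]
where $\mathrm{res}$ restricts cubics to $V'\otimes\mathcal O\subset\mathcal E$: left exactness is injectivity of multiplication by the linear form $\rho$, and exactness in the middle is the (fibrewise) statement that a cubic vanishing on the hyperplane $\{\rho=0\}$ is divisible by $\rho$; the ranks are consistent by $\binom{r+4}{3}=\binom{r+3}{2}+\binom{r+3}{3}$. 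Because $P'\subset Y$ we have $\mathrm{res}(\tilde f)=f|_{V'}=0$, so the associated long exact sequence shows $\tilde f$ lifts uniquely to $q\in H^0(\mathbb P^{n-r-1},Sym^2\mathcal E^*\otimes\mathcal O(1))$ with $\tilde f=\rho\cdot q$. Reading this off over $[w]$ and comparing with $\Pi\cap Y=P'\cup Q$ gives $q_{[w]}=q_\Pi$, which is (ii).

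The main obstacle is bookkeeping the twists so that everything lands in the advertised bundles: one must pin down the normalisation $\mathcal E\subset V\otimes\mathcal O$ (rather than a twist of it) to match $\mathcal O^{\oplus(r+1)}\oplus\mathcal O(-1)$, and track that $\rho$ carries an $\mathcal O(-1)$ so that the residual factor $q$ is forced into $Sym^2\mathcal E^*\otimes\mathcal O(1)$. The only genuinely geometric input is the exactness of the multiplication-by-$\rho$ sequence, i.e. the divisibility of a fibrewise cubic by the linear form defining $P'$ carried out in families; the rest is the standard identification of a blow-up along a linear centre with a projective bundle.
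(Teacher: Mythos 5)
Your proposal is correct; note that the paper does not prove this lemma at all but simply cites \cite[Section 1.5.1--1.5.2]{Cubic}, and your argument is exactly the standard construction carried out there (blow-up along a linear centre as $\mathbb P(\mathcal E)$, restriction of $f$ to $Sym^3\mathcal E^*$, and division by the relative linear form $\rho$ via the exact sequence $0\to Sym^2\mathcal E^*\otimes\mathcal O(1)\to Sym^3\mathcal E^*\to Sym^3(V')^*\otimes\mathcal O\to 0$), so you have in effect supplied the proof the paper delegates to the reference. One micro-simplification: the $\mathrm{Ext}^1$ computation in (i) is unnecessary, since $\mathcal E_{[w]}=V'\oplus\langle w\rangle$ is compatible with the fixed decomposition $V=V'\oplus V''$, so $\mathcal E=(V'\otimes\mathcal O)\oplus\mathcal O(-1)$ splits canonically inside $V\otimes\mathcal O$.
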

Hence, to calculate the degree of $\Psi: X\dashrightarrow X$, we need to calculate the number of points in $\mathbb P^{n-r-1}$ over which $q$ is of rank $1$. Since $P'$ is generic, by a dimension counting argument we find that there is no plane $\Pi$ containing $P'$ that is contained in $Y$. Therefore, we are reduced to calculate the degree of the locus where a generic section of $Sym^2\mathcal E^*\otimes \mathcal O(1)$ has rank $\leq 1$. Let $f: \mathbb P^{n-r-1}\to \mathbb P^{n-r-1}$ be a morphism of degree $2^{n-r-1}$ (e.g., $f: (x_0,\ldots, x_{n-r-1})\mapsto (x_0^2, \ldots, x_{n-r-1}^2)$). Let $Z_q\subset \mathbb P^{n-r-1}$ be the locus where $q\in H^0(\mathbb P^{n-r-1}, Sym^2\mathcal E^*\otimes \mathcal O(1))$ is of rank $\leq 1$. Then $f^*Z_q$ is the locus where $f^*q\in H^0(Sym^2\mathcal F)$ is of rank $\leq 1$. Here, $\mathcal F=\mathcal O(1)^{r+1}\oplus \mathcal O(3)$. Then 
\begin{equation}\label{EqRelationOfF*}
    \deg f^*Z_q=\deg f\cdot\deg Z_q=2^{n-r-1}\deg Z_q.
\end{equation}
Let $p: \mathbb P(\mathcal F)\to \mathbb P^{n-r-1}$ and $\pi: \mathbb P(\mathrm{Sym}^2\mathcal F)\to \mathbb P^{n-r-1}$ be the projective bundles over $\mathbb P^{n-r-1}$ corresponding to the vector bundles $\mathcal F$ and $\mathrm{Sym}^2\mathcal F$, respectively. Consider the Veronese map of degree $2$ over the projective space $\mathbb P^{n-r-1}$.
\[\begin{array}{cccc}
v_2: &\mathbb P(\mathcal F) &\to &\mathbb P(\mathrm{Sym}^2\mathcal F)\\
& \alpha &\mapsto &\alpha^2.
\end{array}\]
The section $f^*q$ induces a section $\sigma_q$ of the projective bundle $\pi: \mathbb P(Sym^2\mathcal F)\to\mathbb P^{n-r-1}$ such that $f^*Z_q$ coincides with $\pi_*(\mathrm{Im}\sigma_q\cap \mathrm{Im}(v_2)) = p_*(v_2^*(\mathrm{Im}(\sigma_q)) $. Let $\mathcal S$ (resp. $\mathcal S'$) be the tautological subbundle of $\mathbb P(\mathcal F)$ (resp. $\mathbb P(Sym^2\mathcal F)$). Let $\mathcal Q'$ be the tautological quotient bundle of $\mathbb P(Sym^2\mathcal F)$. Then the pull-back of $f^*q\in H^0(\mathbb P^{n-r-1}, Sym^2\mathcal F)$ gives a section of $H^0(\mathbb P(Sym^2\mathcal F), \mathcal Q')$ whose zero locus coincides with $\mathrm{Im}(\sigma_q)$. Writing $c(\mathcal Q')$ as $\pi^*c(\mathrm{Sym}^2\mathcal F)\cdot c(\mathcal S')^{-1}$ and noticing that $v_2^*\mathcal S'=\mathcal S^{\otimes 2}$, we see that $f^*Z_q$ is the part of degree $n$ of $p_*(c(S^{\otimes 2})^{-1}\cdot p^* c(\mathrm{Sym}^2\mathcal F))$. Let $s(\mathcal F)$ be the formal series of the Segre classes of $\mathcal F$~\cite[Chapter 3]{Fulton} and let $h=c_1(\mathcal O_{\mathbb P^n}(1))$. By a direct calculation, we find that
\begin{align*}
    f^*Z_q & = 2^{r+1}\sum_i2^{i}s_i(\mathcal F)\cdot c_{n-r-1-i}(\mathrm{Sym}^2\mathcal F)\\
    & = \textrm{The degree $n-r-1$ part of } 2^{r+1} \frac{(1+2h)^{\frac{(r+1)(r+2)}{2}}\cdot (1+4h)^{r+1}\cdot (1+6h)}{(1+2h)^{r+1}\cdot (1+6h)}\\
    &= 2^{r+1}\cdot 2^{\frac{(r+1)r}{2}}\cdot 4^{r+1} h^{n-r-1}.
\end{align*}
Taking into account of the relation (\ref{EqRelationOfF*}), we find that $\deg Z_q=4^{r+1}$, as desired.
\end{proof}

\section{Action of the Voisin map on $CH_0(X)_{hom}$}\label{SectionActionOnChow}
The main purpose of this section is to prove Theorem B. Part of the argument will work for any $r$ and will be given in Section~\ref{SectionDecompOfTheAction}.

\subsection{Decomposition of the action of $\Psi_*$}\label{SectionDecompOfTheAction}
Let $X=F_r(Y)$ be as presented in Section~\ref{SectionVoisinExample}. Let $I_r = \{(x, x')\in X\times X: \dim (P_x\cap P_{x'})\geq r - 1 \}$.
\begin{theorem}\label{ThmDecOfActionOfPsiGeneralCase}
    There is an $(r+1)$-cocycle $\gamma\in CH^{ r+1}(X)$ lying in the image of the restriction map $CH^{ r+1}(\mathrm{Gr}(r+1,n+1))\to CH^{ r+1}(X)$ such that for any $z\in CH_0(X)$, we have in $CH_0(X)_{\mathbb Q}$
    \begin{equation}\label{EqDecOfTheActionOfPsi}
        \Psi_*z=(-2)^{r+1}z+\gamma\cdot I_{r,*}(z).
    \end{equation}
\end{theorem}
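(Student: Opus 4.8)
The plan is to prove the relation pointwise in a family and then propagate it to all of $CH_0(X)$. Since both sides of (\ref{EqDecOfTheActionOfPsi}) are linear in $z$ and $CH_0(X)$ is generated by classes of points, and since every cycle constructed below will vary algebraically with its base point, it suffices to establish, for a general $x\in X$, an identity of the shape
\[
[\Psi(x)] = (-2)^{r+1}[x]+\gamma\cdot I_{r,*}([x]) \quad\text{in } CH_0(X)_{\mathbb Q},
\]
holding in families over $X$; taking differences $[x]-[x']$ then yields (\ref{EqDecOfTheActionOfPsi}) for arbitrary $z$. Here $I_{r,*}([x])=[S_x]$, where $S_x=\{x'\in X:\dim(P_x\cap P_{x'})\ge r-1\}$ is the fibre of $I_r$ over $x$, so that the whole content is a comparison of $0$-cycles for a single general $x$ that is natural in $x$.

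The central object I would use is the $\mathbb P^{n-r-1}$-bundle
\[
\mathcal F=\{(x,\Pi)\in X\times \mathrm{Gr}(r+2,n+1): P_x\subset \Pi\}\xrightarrow{\ \pi\ } X,
\]
whose fibre over $x$ parametrizes the $(r+1)$-planes through $P_x$; this is exactly the $\mathbb P^{n-r-1}$ appearing in the proof of Theorem A~(ii). Over $\mathcal F$ the residual quadric $R_\Pi$, defined by $\Pi\cap Y=P_x+R_\Pi$, forms a relative quadric, and its discriminant locus $\mathcal D\subset\mathcal F$ (where $R_\Pi$ drops to rank $\le 2$) carries the residual $r$-planes: over a general point of $\mathcal D$ one has $R_\Pi=P'+P''$ with $P',P''\subset Y$ two $r$-planes each meeting $P_x$ in an $(r-1)$-plane, so $[P'],[P'']\in S_x$. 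This realises $S_x$, hence the correspondence $I_r$, through the residuation $\Pi\mapsto [P']+[P'']$, while the distinguished tangent plane $\Pi=H_x$ is precisely the point of $\mathcal D$ at which this pair degenerates to $\{P_x,\Psi(x)\}$, by $H_x\cap Y=2P_x+\Psi(x)$.

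I would then exploit that the fibres of $\pi$ are projective spaces, so have trivial $CH_0$: by the projective bundle formula every cycle on $\mathcal F$ pushes forward to $X$ as a polynomial in the tautological classes, i.e.\ in the Chern classes of the tautological bundles of $\mathrm{Gr}(r+1,n+1)$ restricted to $X$. Carrying out the attendant Segre/Chern-class computation on the bundle — the relative version, with $\mathcal E=\mathcal O^{r+1}\oplus\mathcal O(-1)$ and $\mathrm{Sym}^2\mathcal E^*\otimes\mathcal O(1)$, of the computation that yields $\deg\Psi=4^{r+1}$ in Theorem A~(ii) — should produce $\gamma$ as such a universal class restricted from $\mathrm{Gr}(r+1,n+1)$, and show that the genuinely moving part of the residuation cycle contributes exactly $\gamma\cdot I_{r,*}([x])$.

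The remaining, and hardest, point is the coefficient of $[x]$. Reading $H_x\cap Y=2P_x+\Psi(x)$ naively suggests the factor $-2$, but the correct coefficient is a signed local multiplicity measuring how the double plane $2P_x$ — equivalently the tangency of $H_x$ along all of $P_x$ — contributes to the residuation cycle near the degenerate point $\{P_x,\Psi(x)\}$ of $\mathcal D$. I expect this to be governed by the very linear algebra of Proposition~\ref{PropEigenPoly}: the differential $\Psi_{*,x}$ is, in suitable coordinates, upper triangular with $r+1$ diagonal entries equal to $-2$ in the normal directions to the fixed locus, so the relevant excess contribution is their product $(-2)^{r+1}$. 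Establishing this local term with the correct sign and multiplicity, through an excess-intersection analysis on $\mathcal D$ rather than the superficial factor $-2$, and verifying that the loci discarded in the first step contribute nothing to a general $0$-cycle, is the main obstacle; the rest is the bundle computation and the formal propagation from general points.
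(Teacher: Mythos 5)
There is a genuine gap here, and it sits exactly where you placed your bets. Your strategy is to work over the fixed variety $X$, fibering the space $\mathcal F$ of $(r+1)$-planes through $P_x$ over $X$ and reading the relation off the rank-$\le 2$ discriminant of the relative residual quadric. The first problem is your assertion that ``by the projective bundle formula every cycle on $\mathcal F$ pushes forward to $X$ as a polynomial in the tautological classes \ldots restricted to $X$.'' This is not what the projective bundle formula gives: it expresses $CH^*(\mathcal F)$ as a free module over $CH^*(X)$, so pushforwards to $X$ are a priori arbitrary classes of $X$; only cycles constructed universally from the tautological bundles have tautological pushforwards, and your residuation cycle depends on the specific cubic $Y$ (and on $x$). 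The paper obtains the tautological structure by the opposite fibration: it spreads over the parameter space $B=\mathbb P H^0(\mathbb P^n,\mathcal O(3))$ of \emph{all} cubics, where $q:\mathcal X\times_B\mathcal X\to \mathrm{Gr}(r+1,n+1)\times\mathrm{Gr}(r+1,n+1)$ is a projective bundle over each stratum $I_k^G-I_{k+1}^G$, so that $CH^*(\mathcal X\times_B\mathcal X)$ is generated by pullbacks from the Grassmannian strata. Since $\Gamma_{\Psi_{\mathrm{univ}}}\subset I_r^{\mathcal X}$, restricting to a fiber forces $\Gamma_\Psi=\alpha\Delta_X+\delta$ with $\delta$ supported on $I_r$ and of the universal form $\sum_i p_1^*\alpha_i\cdot p_2^*\gamma_i$; this decomposition of the correspondence class in $CH^N(X\times X)$ is what converts intersection-theoretic data into the identity in $CH_0(X)$, and it is unavailable on a fixed $X$, where $CH^N(X\times X)$ is not computable. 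Your fiberwise degeneracy-locus computations (Harris--Tu style, on $\mathcal F_x\cong\mathbb P^{n-r-1}$) only determine classes of loci, not a rational equivalence among the $0$-cycles $[\Psi(x)]$, $[x]$ and $\gamma\cdot I_{r,*}[x]$; some decomposition-of-the-diagonal or spreading input over the moduli of cubics is indispensable and is absent from your plan.

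The second problem is the coefficient $(-2)^{r+1}$, which you explicitly leave open. Your proposed route via an excess-intersection analysis at $\Pi=H_x$, guided by Proposition~\ref{PropEigenPoly}, is misdirected: that proposition computes the differential $\Psi_{*,x}$ at \emph{fixed} points of $\Psi$, whereas for a general (non-fixed) $x$ the degenerate point of your discriminant is the tangent plane $H_x$, where the residual quadric is $P_x+P_{\Psi(x)}$ of rank $2$ --- a different local situation, for which no computation is given. The paper sidesteps any local multiplicity analysis: once $\Gamma_\Psi=\alpha\Delta_X+\delta$ is in place, it shows $\delta^*$ annihilates $H^{N,0}(X)$ because $I_r={}^t\mathcal P_{r,r-1}\circ\mathcal P_{r,r-1}$ factors through $F_{r-1}(Y)$, so $I_r^*\omega=0$ for Hodge-type reasons; then Theorem~A(i) forces $\alpha=(-2)^{r+1}$ (Lemma~\ref{LmmCoeffAlpha}). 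Your instinct to invoke Theorem~A is right, but the cohomological identification of the diagonal coefficient requires the global decomposition first; as written, your proposal has neither the decomposition nor the coefficient.
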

\begin{proof}
    The argument is quite similar to that in the proof of~\cite[Theorem 4.16]{VoisinCitrouille}. Let $B=\mathbb PH^0(\mathbb P^n, \mathcal O(3))$ be the projective space parametrizing all cubic hypersurfaces in $\mathbb P^n$. For $f\in B$, we denote $Y_f\subset \mathbb P^9$ the hypersurface defined by $f$. There is a universal Fano variety of $r$-spaces defined as follows.
    \[
    \mathcal X:=\{(f,x)\in B\times \mathrm{Gr}(r+1,n+1): P_x\subset Y_f\}.
    \]
    The fiber of the projection map $\pi:\mathcal X\to B$ over a point $f\in B$ is the Fano variety of $r$-spaces of $Y_f$. The {Voisin} map $\Psi: X\dashrightarrow X$ can also be defined universally. In fact, we define the universal graph
    $\Gamma_{\Psi_{\mathrm{univ}}}$ as the closure of the set of pairs $(x, x')\in \mathcal X\times_B\mathcal X$ such that there exists a $(r+1)$-dimensional linear subspace $H_{r+1}\subset \mathbb P^n$ such that $H_{r+1}\cap Y_{\pi(x)}=2P_x+P_{x'}$. Denote
    \[
    \begin{tikzcd}
       \Gamma_{\Psi_{\mathrm{univ}}}\arrow[r, hookrightarrow, "i"] &\mathcal X\times_B\mathcal X\arrow{r}{q}\arrow{d}{p}& \mathrm{Gr}(r+1,n+1)\times \mathrm{Gr}(r+1,n+1)\\
        &B
    \end{tikzcd}
    \]
    the inclusion map and the natural projection maps. Then over a point $f\in B$ corresponding to a general smooth cubic hypersurface $Y_f$, the fiber $(p\circ i)^{-1}(f)$ is the graph of the {Voisin} map on $F_r(Y_f)$. There is a stratification of $\mathrm{Gr}(r+1,n+1)\times \mathrm{Gr}(r+1,n+1)$ given as follows. 
    \[
    \begin{tikzcd}
        \mathrm{Gr}(r+1,n+1)\times \mathrm{Gr}(r+1,n+1)\arrow[d, hookleftarrow]  \\
        I^G_1 \arrow[d, hookleftarrow] := \{(x,x'): P_x\cap P_{x'}\neq \emptyset\}\\
        I^G_2 \arrow[d, hookleftarrow]  :=\{(x,x'): P_x\cap P_{x'} \textrm{ contains a line }\}\\
        \vdots\arrow[d, hookleftarrow]\\
        I_r^G\arrow[d, hookleftarrow]:=\{(x,x'): P_x\cap P_{x'} \textrm{ contains an } (r-1) \textrm{-space}\}\\
        I_{r+1}^G=\Delta_G  :=\{(x,x)\}
    \end{tikzcd}
    \]
    {In other words, the subvariety $I_k^G$ is defined as} \[I_k^G=\{(x,x')\in \mathrm{Gr}(r+1, n+1)\times \mathrm{Gr}(r+1, n+1): \dim (P_x\cap P_{x'})\geq k-1\}.\]
    
    The map $q: \mathcal X\times_B\mathcal X\to \mathrm{Gr}(r+1, n+1)\times \mathrm{Gr}(r+1, n+1)$ { has a projective bundle structure over each stratum}. Precisely, let $d=\dim B$. Let $I_k^{\mathcal X}$ be the preimage of $I_k^G$ under the map $q: \mathcal X\times_B\mathcal X\to \mathrm{Gr}(r+1,n+1)\times \mathrm{Gr}(r+1,n+1)$. Over the open subset $I_{k}^G-I_{k+1}^G$, the map $q_{|I_{k}^\mathcal X}$ is a $\mathbb P^{d-\delta_k}$-bundle where $\delta_k=2h^0(\mathbb P^r,\mathcal O(3))-h^0(\mathbb P^{k-1},\mathcal O(3))$.  Let $i_k^{\mathcal X}: I_k^\mathcal X\hookrightarrow \mathcal X\times_B \mathcal X$ be the inclusion maps. The Chow ring of $\mathcal X\times_B\mathcal X$ is thus equal to
    \[\bigoplus_ih^i\cdot \left(\bigoplus_k \left(q_{|I_k^{\mathcal X}}\right)^*CH^*(I_k^G)\right),
    \]
    where $h=p^*\mathcal O_B(1)$.
    Now let us consider the Chow class of $\Gamma_{\Psi_{\mathrm{univ}}}$ in $CH^{N}(\mathcal X\times_B\mathcal X)$, where $N=(r+1)(n-r)-\binom{r+3}{3}$ is the dimension of $X$. By the construction, we have $\Gamma_{\Psi_{\mathrm{univ}}}\subset I_r^\mathcal X$, thus \[
    \Gamma_{\Psi_{\mathrm{univ}}}\in \bigoplus_i h^i\cdot \left(i_{r+1,*}^\mathcal X(q_{|\Delta_{\mathcal X}})^*CH^{-i}(\Delta_G) + i_{r*}^\mathcal X\left(q_{|I_k^{\mathcal X}}\right)^*CH^{m-i}(I_r^G)\right),
    \]
where $m$ is the relative dimension of $pr_1: I_r\to X$ that can be calculated as follows. Over a point $x\in X$, the fiber of $pr_1: I_r\to X$ is the set of points $x'\in X$ such that { the intersection $P_x\cap P_{x'}$ contains a $\mathbb P^{r-1}$ in $P_x$}. {The set of $\mathbb P^{r-1}$ contained in $P_x$ is a $\mathbb P^r$. Furthermore, for each given $\mathbb P^{r-1}\subset Y$, the set of $r$-spaces $P_{x'}\subset \mathbb P^n$ that contain the given $\mathbb P^{r-1}$ is a $\mathbb P^{n-r}$}, and the condition that $P_{x'}\subset Y$ is equivalent to saying that the defining equation of the residual quadric is {identically} zero, which gives $\frac{(r+1)(r+2)}{2}$ independent conditions. Taking everything into account, we find that $m=n-\frac{(r+1)(r+2)}{2}=r+1$.

    Now since $\Gamma_\Psi$ is a fiber of $p\circ i: \Gamma_{\Psi_{\mathrm{univ}}}\to B$, we conclude that 
    \[ \Gamma_\Psi=\Gamma_{\Psi_{\mathrm{univ}}|{X\times X}}\in (i_{r+1,*}^\mathcal Xq^*CH^0(\Delta_G) + i_{r*}^\mathcal X q^*CH^{r+1}(I_r^G))|_{X\times X}.
    \]
    Therefore, we can write 
    \begin{equation}\label{EqDecompOfGammaPsi}
        \Gamma_\Psi=\alpha \Delta_X+\delta,
    \end{equation} 
    where $\alpha\in \mathbb Z$ is a coefficient and $\delta\in (i_{{r}*}^\mathcal X q^*CH^{r+1}(I_r^G))|_{X\times X}$. {Write $\delta= i_{r*}^\mathcal X (q^*\delta_G)$ for some $\delta_G\in CH^{r+1}(I_r^G)$}. Notice that $I_r^G$ is a fiber bundle over $\mathrm{Gr}(r+1, n+1)$ whose fiber is a closed Schubert subvariety $\Sigma$ of $\mathrm{Gr}(r+1, n+1)$, that is defined as the closure of the set of $r$-spaces in $\mathbb P^n$ that intersects with a given $r$-space along a $(r-1)$-space. This fiber bundle has a universal cellular decomposition into affine bundles in the sense of~\cite[Example 1.9.1]{Fulton}. By~\cite[Example 19.1.11]{Fulton} and~\cite[Théorème 7.33]{Voisin}, we have
    \[
    CH^*(I_r^G)=CH^*(\mathrm{Gr}(r+1, n+1))\otimes CH^*(\Sigma).
    \]
    Therefore, we can write 
    \[
    \delta_G=\sum_{i=0}^{r+1} (p_1^*\alpha_i).\beta_i,
    \]
    where $\alpha_i\in CH^i(\mathrm{Gr}(r+1, n+1)$ and $\beta_i\in CH^{r+1-i}(\Sigma)$. {One easily checks that the morphism $pr_2^*: CH^*(Gr(r+1, n+1))\to CH^*(\Sigma)$ is surjective.} Hence, there exists $\gamma_i\in CH^{r+1-i}(\mathrm{Gr}(r+1, n+1))$ such that $\beta_i=pr_2^*\gamma_i$. Therefore, 
    {
    \begin{equation}\label{EqDecompOfDeltaG}
    \delta_G=\sum_{i=0}^{r+1}p_1^*\alpha_i.p_2^*\gamma_i.
    \end{equation}
    }
    {Now we prove
    \begin{lemma}\label{LmmCoeffAlpha}
        The coefficient $\alpha$ in the decomposition (\ref{EqDecompOfGammaPsi}) equals $(-2)^{r+1}$ .
    \end{lemma}
    }
    \begin{proof}
         For $\omega\in H^{N,0}(X)$, we have $\delta^*\omega=p_{2*}(\sum_i p_1^*([\alpha_i]\cup \omega)\cup \gamma_{i|X}\cup [I_r])$. Since $\omega$ is a top degree form,  $[\alpha_i]\cup \omega=0$ unless $i=0$ and thus, $\delta^*\omega=c[\gamma_{0|X}]\cup I_r^*\omega$, where $c$ is some constant number. Now let us prove that $I_r^*\omega=0$. Let $\mathcal P_{r,r-1}=\{(x,
\lambda)\in F_r(Y)\times F_{r-1}(Y): P_\lambda\subset P_x\}$ be the flag variety. Since $I_r=\mathcal P_{r,r-1}^t\circ \mathcal P_{r,r-1}$ as {correspondences}, we have $I_r^*\omega=\mathcal P_{r,r-1}^*(\mathcal P_{r,r-1*}(\omega))=0$ since $\mathcal P_{r,r-1*}\omega\in H^{N-1,-1}(F_1(Y))$. Therefore, $\Psi^*\omega = \alpha\omega + \delta^*\omega = \alpha\omega$. By Theorem A, we get $\alpha=(-2)^{r+1}$.
    \end{proof}
    Since $z\in CH_0(X)_{\mathbb Q}$ is a {$0$-cycle}, $\delta_*z=p_{2*}(\sum_i p_1^*(\alpha_i\cdot z)\cdot p_2^*\gamma_{i|X}\cdot I_r)=p_{2*}(p_1^*(\alpha_0\cdot z)\cdot p_2^*\gamma_{0|X}\cdot I_r)=\gamma\cdot I_{r*}z$ where $\gamma$ is some multiple of $\gamma_{0|X}$, which is an element in the image of the restriction map $CH^{r+1}(\mathrm{Gr}(r+1,n+1))\to CH^{r+1}(X)$. {Taking Lemma~\ref{LmmCoeffAlpha}, formula (\ref{EqDecompOfGammaPsi}) and formula (\ref{EqDecompOfDeltaG}) into account, we prove the formula 
    \begin{equation}\label{EqFormulaForPsiz}
        \Psi_*z=(-2)^{r+1}z+\gamma\cdot I_{r*}z,
    \end{equation}
    as desired.}
\end{proof}
{
The cycle gamma appearing in (\ref{EqFormulaForPsiz}) is a polynomial in the Chern classes $c_i$ of the tautological bundle of the Grassmannian, restricted to $X$. Let us now prove}
\begin{proposition}\label{PropVanishingOfc3}
    We have $c_{r+1}\cdot I_{r*}z$=0 for $z\in CH_0(X)_{hom}$.
\end{proposition}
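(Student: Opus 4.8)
The plan is to use the factorization $I_r=\mathcal P_{r,r-1}^{t}\circ\mathcal P_{r,r-1}$ through the flag variety established in the proof of Theorem~\ref{ThmDecOfActionOfPsiGeneralCase}, and to exploit that $c_{r+1}$ is the \emph{top} Chern class of a rank-$(r+1)$ bundle, hence splits off a line-bundle factor on the flag variety. Write $a\colon \mathcal P_{r,r-1}\to X$ and $b\colon \mathcal P_{r,r-1}\to F_{r-1}(Y)$ for the two projections. The map $a$ is the $\mathbb P^{r}$-bundle of $(r-1)$-planes contained in $P_x$, hence flat, while $b$ has one-dimensional fibres (the family of $r$-planes of $Y$ through a fixed $(r-1)$-plane, of dimension $n-r-\binom{r+2}{2}=1$). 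For general $Y$ both $\mathcal P_{r,r-1}$ and $F_{r-1}(Y)$ are smooth, so $b$ is an lci morphism and admits a Gysin pullback $b^{*}$; this is the point to handle with care, since $b$ need not be flat. The factorization gives $I_{r*}z=a_*\bigl(b^{*}b_*a^{*}z\bigr)$, and the projection formula for the proper map $a$ yields $c_{r+1}\cdot I_{r*}z=a_*\bigl(a^{*}c_{r+1}\cdot b^{*}(b_*a^{*}z)\bigr)$.

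Next I would rewrite $a^{*}c_{r+1}$. Let $\mathcal S$ be the rank-$(r+1)$ tautological bundle restricted to $X$ and $\mathcal S'$ the rank-$r$ tautological bundle on $F_{r-1}(Y)\subset \mathrm{Gr}(r,n+1)$. On $\mathcal P_{r,r-1}$ one has the tautological exact sequence $0\to b^{*}\mathcal S'\to a^{*}\mathcal S\to L\to 0$ with $L$ a line bundle. Taking the top-degree part of the Whitney formula and using that the rank-$r$ bundle $b^{*}\mathcal S'$ has vanishing $(r+1)$-st Chern class, we get $a^{*}c_{r+1}=c_{r+1}(a^{*}\mathcal S)=b^{*}c_r(\mathcal S')\cdot c_1(L)$. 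Substituting and using the multiplicativity of the lci pullback $b^{*}$, we obtain $c_{r+1}\cdot I_{r*}z=a_*\bigl(c_1(L)\cdot b^{*}w\bigr)$, where $w:=c_r(\mathcal S')\cdot b_*a^{*}z$. Since $a$ has relative dimension $r$, the cycle $b_*a^{*}z$ lies in $CH_r(F_{r-1}(Y))$, so capping with $c_r(\mathcal S')$ places $w$ in $CH_0(F_{r-1}(Y))$; and because $z$ is homologically trivial so are $a^{*}z$, $b_*a^{*}z$ and hence $w$. Thus $w\in CH_0(F_{r-1}(Y))_{hom}$, and the whole statement reduces to showing this group vanishes.

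This last vanishing is the geometric heart of the argument: I would show that $F_{r-1}(Y)$ is Fano. It is cut out in $\mathrm{Gr}(r,n+1)$ by a section of $\mathrm{Sym}^{3}\mathcal S'^{*}$, whose first Chern class is $\binom{r+2}{2}\sigma_1$; since $K_{\mathrm{Gr}(r,n+1)}=-(n+1)\sigma_1$, adjunction gives $K_{F_{r-1}(Y)}=\bigl(-(n+1)+\binom{r+2}{2}\bigr)\sigma_1|_{F_{r-1}(Y)}$, which equals $-(r+2)\,\sigma_1|_{F_{r-1}(Y)}$ after substituting $n+1=\binom{r+3}{2}$. As $\sigma_1$ restricts to an ample class, $F_{r-1}(Y)$ is a smooth Fano variety for general $Y$, hence rationally connected, so $CH_0(F_{r-1}(Y))_{hom}=0$. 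This forces $w=0$ and therefore $c_{r+1}\cdot I_{r*}z=0$. The genuinely nonformal input is this Fano/rational-connectedness step; everything else is the bookkeeping of the factorization and the splitting of the top Chern class, the only delicate point being that $b^{*}$ must be invoked as an lci Gysin map rather than a flat pullback.
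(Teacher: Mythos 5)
Your proposal is correct, but it follows a genuinely different route from the paper's. The paper argues with geometric representatives: it represents $c_{r+1}$ by the Schubert cycle $\Sigma_{r+1}^{H_{n-1}}$ of $r$-planes contained in a general hyperplane $H_{n-1}$, represents $I_{r*}x$ by $\Theta_x$ for a general point $x\in X$, identifies the intersection $\Sigma_{r+1}^{H_{n-1}}\cap \Theta_x$ with the set of $P_y$ satisfying $P_x\cap H_{n-1}\subset P_y\subset Y\cap H_{n-1}$ — that is, with a fibre of the incidence variety $\mathcal P_{r,r-1}^{H_{n-1}}$ over the point $\delta_x\in F_{r-1}(Y\cap H_{n-1})$ — and concludes because $F_{r-1}(Y\cap H_{n-1})$ is Fano, hence has $CH_0=\mathbb Z$, so the class is independent of $x$. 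You never cut by a hyperplane: you split the top Chern class on the flag variety via the tautological sequence $0\to b^*\mathcal S'\to a^*\mathcal S\to L\to 0$ and push everything to $F_{r-1}(Y)$ itself, reducing to $CH_0(F_{r-1}(Y))_{hom}=0$, which you establish by the same Fano/rationally-connected mechanism (your computations $c_1(\mathrm{Sym}^3\mathcal S'^{*})=\binom{r+2}{2}\sigma_1$ and Fano index $r+2$ are correct, as is the fibre dimension $n-r-\binom{r+2}{2}=1$ of $b$). Your route buys a purely cycle-theoretic argument, valid for arbitrary $z$ with no genericity or transversality of representatives (the paper's identification of $\Sigma_{r+1}^{H_{n-1}}\cap\Theta_x$ with multiplicity one is exactly such a genericity point), and it explains structurally why the \emph{top} Chern class is special: it is the only $c_i$ factoring through $b$ up to a line-bundle class, which is coherent with the fact that Proposition~\ref{PropVanishingOfc2} for $c_r$ needs the extra input $CH_1(F_{r-1}(Y))_{\mathbb Q}$ trivial. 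The costs are mild: you need smoothness of $X$, $\mathcal P_{r,r-1}$ and $F_{r-1}(Y)$ (true for general $Y$) to invoke the Gysin map $b^{*}$ and its compatibility with Chern classes — which you correctly flag — plus the cycle-level identity $I_r={}^{t}\mathcal P_{r,r-1}\circ\mathcal P_{r,r-1}$ with multiplicity one, which the paper itself asserts (Remark~\ref{RmkI2AsSelfCorr} and the proof of Lemma~\ref{LmmCoeffAlpha}) and which your argument therefore legitimately inherits; the only blemish is notational sloppiness about $\mathcal S$ versus $\mathcal S^{*}$, a sign that is irrelevant for the vanishing.
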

\begin{proof}
    Let $H_{n-1}\subset \mathbb P^n$ be a general hyperplane. Let us define the Schubert variety { $\Sigma_{r+1}^{H_{n-1}}$ as}
    \[
    \Sigma_{r+1}^{H_{n-1}}:=\{y\in \mathrm{Gr}(r+1,n+1): P_y\subset H_{n-1}\}.
    \]
    Then $c_{r+1}$ is represented by the class of $\Sigma_{r+1}^{H_{n-1}}$. Let $x\in X$ be a general point and let 
    \[
    \Theta_{x}:=\{y\in X: \dim (P_y\cap P_x)\geq r-1
    \}.
    \]
    Then {by the definition of $I_r$, }$\Theta_x$ represents $I_{r *}x$. {By definition, the intersection  $\Sigma_{r+1}^{H_{n-1}} \cap \Theta_x$ is the set
    $$\{y\in X: P_y \subset H_{n-1}\cap Y, \,{\rm dim}\,P_x\cap P_y\geq r-1\}, $$
    which we can also rewrite, if $P_x$ is not contained in $H_{n-1}$, as
    $$\Sigma_{r+1}^{H_{n-1}} \cap \Theta_x=\{y\in X:\, Y\cap H_{n-1}\supset P_y\supset P_x\cap H_{n-1}\}.$$}
    Let $\Delta_x=P_x\cap H_{n-1}$. {Then $\Delta_x$ provides a point $\delta_x$ of the variety $F_{r-1}(Y\cap H_{n-1})$}. Let $i: F_r(Y\cap H_{n-1})\hookrightarrow X$ be the natural embedding map. Let $\mathcal P_{r,r-1}^{H_{n-1}}:=\{(y, \lambda)\in F_r(Y\cap H_{n-1})\times F_{r-1}(Y\cap H_{n-1}): P_\lambda\subset P_y\}$ be the {incidence} variety. By the above description of $ \Sigma_{r+1}^{H_{n-1}}\cap \Theta_x$, one finds that 
    \[
    c_{r+1}\cdot I_{r*}x=i_*(\mathcal P_{r,r-1}^{H_{n-1}*}({\delta_x})),
    \]
    where $\Delta_x$ is viewed as an element in $CH_0(F_{r-1}(Y\cap H_{n-1}))$. One can verify that $F_{r-1}(Y\cap H_{n-1})$ is a Fano manifold and thus, $CH_0(F_{r-1}(Y\cap H_{n-1})) = \mathbb Z$. Therefore, the Chow class of $c_{r+1}\cdot I_{r*}x$ does not depend on $x$. That is, for any $z\in CH_0(X)_{hom}$, we get $c_{r+1}\cdot I_{r*}z=0$ in $CH_0(X)$, as desired. 
\end{proof}

\begin{proposition}\label{PropVanishingOfc2}
    If $CH_1(F_{r-1}(Y))_{\mathbb Q}$ is trivial, then $c_r\cdot I_{r*}(z)=0 $ in $CH_1(X)$ for any $z\in CH_0(X)_{hom}$.
\end{proposition}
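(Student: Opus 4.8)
The plan is to exploit the factorization $I_r = \mathcal P_{r,r-1}^t \circ \mathcal P_{r,r-1}$ already used in the proof of Lemma~\ref{LmmCoeffAlpha}, and to track how the Chern class $c_r = c_r(\mathcal S^\vee)$ (with $\mathcal S$ the rank-$(r+1)$ tautological subbundle of $\mathrm{Gr}(r+1,n+1)$ restricted to $X$) interacts with this factorization. Write $\mathcal P := \mathcal P_{r,r-1} \subset X \times F_{r-1}(Y)$, with projections $p : \mathcal P \to X$ and $\bar p : \mathcal P \to F_{r-1}(Y)$. Then $I_{r*} = \mathcal P^* \circ \mathcal P_*$, so that $I_{r*} z = p_*\,\bar p^*\,\bar p_*\, p^* z$. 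Setting $w := \mathcal P_* z = \bar p_* p^* z \in CH_r(F_{r-1}(Y))_{\mathbb Q}$, I first note that $w$ is homologically trivial, since $z$ is and correspondences preserve homological equivalence.

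Next I would record the key splitting of $p^* c_r$. On $\mathcal P$ the incidence condition $P_\lambda \subset P_x$ furnishes a tautological exact sequence $0 \to \bar p^*\mathcal S_{F_{r-1}} \to p^* \mathcal S_X \to \mathcal L \to 0$, where $\mathcal L$ is the line bundle recording the extra direction. Dualizing and comparing total Chern classes gives $p^* c_r = \bar p^* \eta_1 + \ell \cdot \bar p^* \eta_2$, with $\eta_1 = c_r(\mathcal S_{F_{r-1}}^\vee) \in CH^r(F_{r-1}(Y))$, $\eta_2 = c_{r-1}(\mathcal S_{F_{r-1}}^\vee) \in CH^{r-1}(F_{r-1}(Y))$ and $\ell = c_1(\mathcal L^\vee)$. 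The first summand descends from $F_{r-1}(Y)$, whereas the second does not; this is exactly the failure of $c_r$ to be ``constant along the fibers'' of $\bar p$, and it is where the hypothesis will enter.

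Applying the projection formula to $p$, I then obtain
\[
c_r \cdot I_{r*} z = \mathcal P^*(\eta_1 \cdot w) + p_*\big(\ell \cdot \bar p^*(\eta_2 \cdot w)\big).
\]
For the first term, $\eta_1 \cdot w \in CH_0(F_{r-1}(Y))_{\mathbb Q}$ is homologically trivial; since $F_{r-1}(Y)$ is a Fano manifold (hence rationally connected), $CH_0(F_{r-1}(Y)) = \mathbb Z$, so $\eta_1 \cdot w = 0$ and the first term vanishes, exactly as in Proposition~\ref{PropVanishingOfc3}. For the second term, $\eta_2 \cdot w \in CH_1(F_{r-1}(Y))_{\mathbb Q}$ is again homologically trivial, and therefore vanishes once $CH_1(F_{r-1}(Y))_{\mathbb Q}$ is assumed trivial; hence $\bar p^*(\eta_2 \cdot w) = 0$ and the second term vanishes too. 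This yields $c_r \cdot I_{r*} z = 0$ in $CH_1(X)_{\mathbb Q}$.

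The main obstacle is precisely the second term: the line-bundle factor $\ell$ genuinely prevents $p^* c_r$ from being a pullback from $F_{r-1}(Y)$, so, unlike in Proposition~\ref{PropVanishingOfc3} where the analogous class landed in $CH_0$ of a Fano variety and was automatically rigid, one cannot conclude by rational connectedness alone. The contribution of $\ell$ is controlled exactly by $CH_1(F_{r-1}(Y))_{\mathbb Q}$, which is why that group must be assumed trivial (for $r=2$ this is the statement $CH_1(F_1(Y))_{hom,\mathbb Q} = 0$, established separately). The two routine points to verify along the way are the identification of the paper's $c_r$ with $c_r(\mathcal S^\vee)$, and the dimension bookkeeping ($\eta_1$ of codimension $r$, $\eta_2$ of codimension $r-1$, against $w$ of dimension $r$) ensuring that $\eta_1 \cdot w$ and $\eta_2 \cdot w$ land in $CH_0$ and $CH_1$ of $F_{r-1}(Y)$ respectively.
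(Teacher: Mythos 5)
Your proposal is correct, and it reaches the conclusion by a genuinely different route than the paper. The paper's proof is representative-theoretic and written out only for $r=2$: it represents $c_2$ by the Schubert cycle $\Sigma_2^{H_7}$ and $I_{2*}x$ by $\Theta_x$, proves the general-position coincidence $\Sigma_2^{H_7}\cap \Theta_x=\Sigma_1^{H_6}\cap \Xi_x$ (Lemma~\ref{LmmEquivOfC2I2z}), deduces $c_2\cdot I_{2*}x=c_1\cdot \mathcal P_{2,1}^*(\delta_{P_x,z_x})$, and concludes for homologically trivial $z$ from $CH_1(F_1(Y))_{\mathbb Q,hom}=0$ (Theorem~\ref{ThmChowOneOfF1}), declaring the general $r$ ``similar''. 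You instead work functorially on the flag correspondence $\mathcal P_{r,r-1}$, which is the dual projective bundle $\mathbb P(\mathcal S_X^\vee)\to X$ (hence smooth, so all your Gysin pullbacks along $\bar p$ to the smooth $F_{r-1}(Y)$, projection formulas, and compatibilities of Chern classes with Gysin maps are standard), and extract the splitting $p^*c_r=\bar p^*c_r(\mathcal S_{F}^\vee)+\ell\cdot \bar p^*c_{r-1}(\mathcal S_F^\vee)$ from the tautological exact sequence and Whitney's formula; this yields a two-term decomposition valid uniformly in $r$, whose extra term $\mathcal P^*(\eta_1\cdot w)$ --- invisible at the paper's representative level --- you correctly kill via $CH_0(F_{r-1}(Y))=\mathbb Z$ for the rationally connected Fano $F_{r-1}(Y)$, the same mechanism as Proposition~\ref{PropVanishingOfc3}. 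The two arguments are in fact reconcilable: since $\ell=p^*c_1-\bar p^*c_1(\mathcal S_F^\vee)$, your second term equals $c_1\cdot \mathcal P^*(\eta_2\cdot w)$ up to another class of the form $\mathcal P^*(\,\cdot\,)$ with argument in $CH_0(F_{r-1}(Y))_{\mathbb Q,hom}=0$, which recovers the paper's formula $c_r\cdot I_{r*}z=c_1\cdot \mathcal P^*(Z)$ with $Z\in CH_1(F_{r-1}(Y))_{\mathbb Q,hom}$. What your route buys is uniformity in $r$ and the elimination of the general-position lemma; what the paper's buys is geometric explicitness (an actual pencil of lines $\delta_{P_x,z_x}$ realizing the class $Z$). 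The only point you take on faith, exactly as the paper does (Remark~\ref{RmkI2AsSelfCorr} and the proof of Lemma~\ref{LmmCoeffAlpha}), is the identity $I_r={}^t\mathcal P_{r,r-1}\circ \mathcal P_{r,r-1}$ with the correct multiplicity, so no gap is introduced relative to the paper; your dimension bookkeeping ($w\in CH_r$, $\eta_1\cdot w\in CH_0$, $\eta_2\cdot w\in CH_1$) and the identification $c_r=c_r(\mathcal S^\vee)$ (via Thom--Porteous for the paper's Schubert representatives) both check out.
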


\begin{proof}
    We only write down the case $r=2$, {for which the assumption of Proposition~\ref{PropVanishingOfc2} will be proved in the next section}. The general case is similar. Let $H_7$ be a $7$-dimensional linear subspace in $\mathbb P^9$. In the Grassmannian $\mathrm{Gr}(3,10)$, the class $c_2$ is represented by the subvariety $\Sigma_2^{H_7}$ defined as 
    \[
    \Sigma_2^{H_7}:=\{y\in \mathrm{Gr}(3,10): P_y\cap H_7 \textrm{ contains a line }{ \Delta_{y,H}}\}.
    \]
    For general $x\in X$, the plane $P_x$ intersects $H_7$ at a single point {$z_x$}, and the class $I_{2,*}x$ is represented by the following subvariety $\Theta_x$ in $X$ defined as
    \[
    \Theta_x:=\{y\in X: P_y\cap P_x \textrm{ contains a line }{\Delta_{xy}}\}.
    \]
    Let $H_6\subset H_7$ be a linear subspace of dimension $6$ not containing the point ${z_x}$.
    \begin{lemma}\label{LmmEquivOfC2I2z}
        { For general $x\in X$, and any $y\in \Theta_x$, we have $z_x\in \Delta_{xy}$.}
    \end{lemma}
    \begin{proof}
        Let $P_y$ be a plane in $Y$ that intersects $P_x$ along the line {$\Delta_{xy}$} and intersects $H_7$ along the line {$\Delta_{y, H}$}. Then {$\Delta_{xy}$} and {$\Delta_{y, H}$} must intersect, since they are two lines in a projective plane; and the intersection point of {$\Delta_{xy}$} and {$\Delta_{y, H}$} must be ${z_x}$ since ${z_x}$ is the only intersection point of $P_x$ and $H_7$. {Therefore, $z_x\in \Delta_{xy}$, as desired.} 
    \end{proof}
        { Let 
    $\Sigma^{H_6}_1:=\{y\in \mathrm{Gr}(3,10): P_y\cap H_6\neq \emptyset \}.$
    Let $\Xi_x$ be the variety of points $y\in X$ such that $P_y\cap P_x$  contains a line $\Delta_{xy}$ containing ${z_x}$. By Lemma~\ref{LmmEquivOfC2I2z}, we now conclude that
    \begin{equation}\label{EqEquivOfC2I2z}
        \Sigma_2^{H_7}\cap \Theta_x
 = \Sigma_1^{H_6} \cap \Xi_x.
    \end{equation}
    Indeed, by Lemma~\ref{LmmEquivOfC2I2z}, $\Sigma_2^{H_7}\cap \Theta_x$ contains all the points $y\in X$ such that $P_y\cap P_x$ contains a line $\Delta_{xy}$ containing ${z_x}$  and such that $P_y\cap H_7$ contains a line $\Delta_{y, H}$. This variety coincides with $\Sigma_1^{H_6}\cap \Xi_x$ since knowing that ${z_x}\in P_y$ and that $z_x\not\in H_6$, $P_y\cap H_7$ contains a line if and only if $P_y\cap H_6$ is nonempty.
    }
    
    Let $\mathcal P_{2,1}:=\{(x', [l])\in F_2(Y)\times F_1(Y): l\subset P_{x'}\}$ be the flag variety. Let $\Delta_{P_x, {z_x}}^*=\{[l]\in F_1(Y): {z_x}\in l\subset P_x\}$. { Then $\Delta_{P_x, z_x}^*$ provides a class $\delta_{P_x, z_x}\in CH_1(F_1(Y))$. The equation (\ref{EqEquivOfC2I2z}) shows that $c_2\cdot I_{2,*}x=c_1\cdot \mathcal P_{2,1}^*(\delta_{P_x,{z_x}}^*)$}. Therefore, for $z\in CH_0(X)_{\mathbb Q,hom}$, we have $c_2\cdot I_{2,*}z=c_1\cdot \mathcal P_{2,1}^*(Z)$ {for some} $Z\in CH_1(F_1(Y))_{\mathbb Q, hom}$. {But we have $CH_1(F_1(Y))_{\mathbb Q, hom}=0$ by Theorem~\ref{ThmChowOneOfF1} below (or by assumption for $r>2$)}. Hence, $c_2\cdot I_{2,*}z=0\in CH_1(X)_{\mathbb Q}$, as desired.
\end{proof}

\subsection{Triviality of $CH_1(F_1(Y))_{\mathbb{Q}}$}

Let $Y\subset \mathbb P^9$ be a cubic $8$-fold. It has been established that $H^{p,q}(F_1(Y)) = 0$ for $p \leq 1$ and $p \neq q$~\cite{DebarreManivel}, indicating that the coniveau of $F_1(Y)$ is at least $2$. According to the generalized Bloch conjecture, this suggests that $CH_i(F_1(Y))_{\mathbb{Q}, hom} = 0$ for $i \leq 1$. {This section is devoted to proving this statement, namely
\begin{theorem}\label{ThmChowOneOfF1}
The Chow group $CH_1(F_1(Y))_{\mathbb{Q}}$ is isomorphic to $\mathbb{Q}$.
\end{theorem}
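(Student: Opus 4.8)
The plan is to reduce the statement to the vanishing of the \emph{Griffiths group} of $1$-cycles on $F:=F_1(Y)$, and then to kill that group by relating $F$ to the cubic $Y$ through the universal family of lines. Since $CH_1(F)_{\mathbb Q}\cong\mathbb Q$ is equivalent to $CH_1(F)_{\mathbb Q,hom}=0$ (the cycle class map surjects onto $H_2(F,\mathbb Q)$, which is $\mathbb Q$ by the Debarre--Manivel injectivity used already for Lemma~\ref{LmmStrictCY}), I would first record two structural facts. On the one hand, $F$ is the zero locus in $\mathrm{Gr}(2,10)$ of a section of the globally generated bundle $\mathrm{Sym}^3\mathcal S^\vee$, and adjunction gives $K_F=-4\sigma_1|_F$, so $F$ is Fano of index $4$, hence rationally connected and $CH_0(F)_{\mathbb Q}=\mathbb Q$. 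On the other hand, the vanishing $H^{p,q}(F)=0$ for $p\le 1$, $p\ne q$ of~\cite{DebarreManivel} forces $H^1(F)=0$ and, since $H^{2,1}=\overline{H^{1,2}}=0$ and $H^{3,0}=\overline{H^{0,3}}=0$, also $H^3(F)=0$. By hard Lefschetz the Abel--Jacobi target $H^{2\dim F-3}(F)\cong H^3(F)^\vee$ vanishes, so the Abel--Jacobi map on $1$-cycles is automatically zero and the entire content of the theorem is the vanishing of the Griffiths group $CH_1(F)_{\mathbb Q,hom}$.

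To attack that, I would use the universal line $P=\{(x,y)\in F\times Y:\ y\in P_x\}$ with its two projections $p\colon P\to F$, a $\mathbb P^1$-bundle, and $q\colon P\to Y$, the evaluation map, and I would exploit that $Y$ is a smooth cubic $8$-fold whose low Chow groups are as small as possible: by the connectivity theorems of Paranjape and Otwinowska for hypersurfaces of small degree one has $CH_i(Y)_{\mathbb Q}=\mathbb Q$ for $i\le 2$, so $CH_1(Y)_{\mathbb Q,hom}=CH_2(Y)_{\mathbb Q,hom}=0$. Because $p$ is a $\mathbb P^1$-bundle, $p^*\colon CH_1(F)_{\mathbb Q,hom}\hookrightarrow CH_1(P)_{\mathbb Q,hom}$ is split injective, so it suffices to control $1$-cycles on $P$; and $q$ has rationally connected fibres, since the lines of $Y$ through a general point form a $(2,3)$-complete intersection in $\mathbb P^7$ (a Fano fivefold). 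The idea is then to run a cylinder/fibration analysis: a homologically trivial $1$-cycle on $P$ should decompose, after moving, into a part supported in the fibres of $q$ and a part whose image in $CH_2(Y)$ is homologically trivial, hence rationally trivial. The fibre contribution is governed by $CH_1$ of the $(2,3)$-complete-intersection fibres, which is again $\mathbb Q$ by connectivity, while the base contribution dies because $CH_2(Y)_{\mathbb Q,hom}=0$; descending along $p$ would give $CH_1(F)_{\mathbb Q,hom}=0$.

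The delicate point, and the step I expect to be the main obstacle, is making this reduction rigorous: a homologically trivial $1$-cycle need not be placed in good position relative to $q$, so the clean ``$CH_0$ of an RC fibration'' principle does not literally transport to $CH_1$. I would therefore proceed through a Bloch--Srinivas decomposition of the diagonal: from $CH_0(F)_{\mathbb Q}=\mathbb Q$ one gets $N\Delta_F=\Gamma_1+\Gamma_2$ with $\Gamma_1$ supported on $F\times\{pt\}$ and $\Gamma_2$ on $D\times F$ for a divisor $D$. Acting on $z\in CH_1(F)_{\mathbb Q,hom}$, the term $\Gamma_1$ vanishes (it lands in $CH_1$ of a point), so $Nz=\Gamma_{2*}(z|_{\tilde D})$ with $z|_{\tilde D}\in CH_0(\tilde D)_{\mathbb Q,hom}$; the theorem thus reduces to controlling $CH_0$ of the auxiliary divisor $D$, equivalently to separating the genuinely lower-order part of the incidence correspondence $I_1=\{(x,x')\in F\times F:\ P_x\cap P_{x'}\ne\emptyset\}$ from its excess along the diagonal.

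Here I would import the machinery already built in Section~\ref{SectionDecompOfTheAction}: exactly as in Theorem~\ref{ThmDecOfActionOfPsiGeneralCase}, the relevant class in $CH^*(F\times F)$ splits into a diagonal piece and pieces pulled back from $CH^*(\mathrm{Gr}(2,10))\times CH^*(\mathrm{Gr}(2,10))$, and the Grassmannian-induced pieces, when paired against $I_{1*}(z)$, are annihilated on homologically trivial $z$ by precisely the linear-section argument of Propositions~\ref{PropVanishingOfc3} and~\ref{PropVanishingOfc2}, namely that the Fano varieties $F_k(Y\cap H)$ have trivial $CH_0$. The hard work is to pin down the excess/diagonal contribution (and the locus of coplanar lines) so that the surviving correction terms are manifestly Grassmannian-induced and hence killed, while the factorisation of the whole correspondence through $CH_2(Y)_{\mathbb Q,hom}=0$ leaves no transcendental contribution; verifying the two connectivity bounds in the exact dimensions at hand is the remaining routine ingredient. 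Combining the vanishing of the homologically trivial part with $H_2(F,\mathbb Q)=\mathbb Q$ then yields $CH_1(F_1(Y))_{\mathbb Q}\cong\mathbb Q$, as claimed.
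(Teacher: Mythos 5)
You have correctly identified the reduction to $CH_1(F)_{\mathbb Q,hom}=0$ and correctly isolated where the difficulty lies, but the Bloch--Srinivas plan you substitute for it has a genuine gap at exactly the step you flag. After writing $N\Delta_F=\Gamma_1+\Gamma_2$ with $\Gamma_2$ supported on $D\times F$, you obtain $Nz=\tilde\Gamma_{2*}(z|_{\tilde D})$ where $\tilde D$ resolves an \emph{uncontrolled} divisor $D$: the cycle $z|_{\tilde D}$ is only of total degree zero (if $\tilde D$ is disconnected it need not even be homologically trivial component by component), $CH_0(\tilde D)_{\mathbb Q,hom}$ has no reason to vanish, and nothing relates $\Gamma_2$ to the incidence correspondence $I_1$. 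Your hope that ``the relevant class in $CH^*(F\times F)$ splits into a diagonal piece and pieces pulled back from the Grassmannian'' is not available on a single $F\times F$: in Theorem~\ref{ThmDecOfActionOfPsiGeneralCase} that splitting comes from spreading over the parameter space $B$ of all cubics and from the projective-bundle structure of $\mathcal X\times_B\mathcal X$ over the strata $I_k^G$, and it is applied to $\Gamma_{\Psi_{\mathrm{univ}}}$, which is \emph{contained} in the stratum $I_r^{\mathcal X}$ — whereas your $\Gamma_2$ arises from a fibrewise decomposition, is supported on an unknown divisor, lies in no incidence stratum, and does not spread canonically. (Also note that Proposition~\ref{PropVanishingOfc2}, which you propose to import, has for $r=2$ the very statement of Theorem~\ref{ThmChowOneOfF1} as its hypothesis; one would have to replace it by its $F_1$-analogue resting on $CH_1(Y)_{hom}=CH_2(Y)_{hom}=0$, which you do gesture at but do not set up.)

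Even if you redo everything universally over $B$ so that a strata decomposition of the spread Bloch--Srinivas pieces exists, there is a second, more fatal obstruction: the coefficient of the diagonal. In the paper's decomposition the coefficient $(-2)^{r+1}$ of $\Delta_X$ is pinned down (Lemma~\ref{LmmCoeffAlpha}) by letting both sides act on $H^{N,0}(X)\neq 0$, i.e.\ by Theorem A; on the Fano variety $F_1(Y)$ all groups $H^{k,0}$ with $k>0$ vanish, so there is no holomorphic form to test against, and your scheme can at best produce an identity $(N-\mu)z=0$ with $\mu$ an unknown diagonal contribution of the supported piece, which may well equal $N$ — nothing in the proposal excludes this, so no conclusion follows. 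The paper's actual proof is built precisely to avoid decomposing any self-correspondence of $F_1(Y)$: it introduces the classes $\Delta_P^*$ of lines through a point of a plane, proves their independence of $P$ via rational connectedness of the space of cone cubic surfaces (Proposition~\ref{PropTrianglePlanesConstant} and Proposition~\ref{PropLinesOfLinesAreConstant}), and transports a homologically trivial $1$-cycle through the correspondence $\widetilde{F_{3,1}}\to\mathcal P_5$ over $F_3(Y)$, whose fibres are families of lines on residual $3$-dimensional quadrics, using the spreading Lemma~\ref{ThmSpreading}, a careful codimension count (Lemma~\ref{LmmCodim}), and the factorization of $p_*\circ q^*$ through $CH_2(Y)_{hom}=0$ (Lemma~\ref{LmmMorphismF1F31}). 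Your sketch shares with this only the Otwinowska vanishing and the general spreading principle; the geometric core ($F_3(Y)$, the residual quadrics, the class $\Delta^*$) is absent, and without a substitute for the diagonal-coefficient computation the argument does not close.
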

}
\subsubsection{Lines of lines}
Let $P$ be a plane contained in $Y$ and let $x\in P$ be a point. Let $\Delta_{P,x}^*$ be the variety of lines in $P$ passing through $x$. Viewed as a $1$-cycle of $F_1(Y)$, it is clear that the Chow class of $\Delta_{P,x}^*$ does not depend on the choice of $x\in P$ and we let $\Delta_P^*\in CH_1(F_1(Y))$ denote the Chow class of $\Delta_{P,x}^*$ for some (and thus any) $x\in P$.

\begin{proposition}\label{PropTrianglePlanesConstant}
    Let $L\cong\mathbb P^3\subset \mathbb P^9$ be a {$3$}-dimensional linear subspace whose intersection with $Y$ contains three planes $P_1, P_2$ and $P_3$ (not necessarily distinct). Then the Chow class
    \[
\Delta_{P_1}^*+\Delta_{P_2}^*+\Delta_{P_3}^*\in CH_1(F_1(Y))
    \]
    does not depend on the choice of $L$.
\end{proposition}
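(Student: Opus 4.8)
The plan is to realise the sum $\Delta_{P_1}^*+\Delta_{P_2}^*+\Delta_{P_3}^*$ as one member of an algebraic family of $1$-cycles over a \emph{rationally connected} base, and then to invoke the standard principle that a correspondence $\Gamma\subset T\times F_1(Y)$ whose base satisfies $CH_0(T)_{\mathbb Q}=\mathbb Q$ sends all points of $T$ to one and the same class in $CH_1(F_1(Y))_{\mathbb Q}$. The naive parameter space here—triples of planes spanning a common $\mathbb P^3$, equivalently (since each two of three planes in $\mathbb P^3$ meet in a line) pairs $(P_1,P_2)$ of planes meeting along a line with $P_3$ the residual plane of $L\cap Y$—will \emph{not} work directly: this space dominates $F_2(Y)$, which is a strict Calabi--Yau with enormous $CH_0$, so its own $CH_0$ is far from trivial. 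The crux of the argument is therefore to enlarge the family to a manifestly rational one.

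The enlargement comes from organising a configuration around its vertex. Three planes in $L\cong\mathbb P^3$ are hyperplanes of $L$, hence meet at a point $p:=P_1\cap P_2\cap P_3$, and each $\Delta_{P_i}^*$ may be taken to be the pencil of lines in $P_i$ through $p$ (using the base-point independence of $\Delta_{P_i}^*$). Writing $f(sp+v)=3s^2\lambda_p(v)+3s\,Q_p(v)+C_p(v)$ for $v$ in the direction space, where $\lambda_p,Q_p,C_p$ are the polars of $f$ at $p$ and $f(p)=0$, the hypothesis that $L\cap Y$ is three planes through $p$ forces $f|_L=m_1m_2m_3$ with each $m_i$ vanishing at $p$; hence $\lambda_p$ and $Q_p$ vanish on the $3$-dimensional direction space $V\subset\{\lambda_p=0\}=\mathbb P^7$ of $L$, and $f|_L=C_p|_{\mathbb P(V)}$ splits into three linear forms. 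Thus a triangle configuration with vertex $p$ is precisely a plane $\Pi=\mathbb P(V)$ contained in the polar quadric $\mathcal Q_p:=\{Q_p=0\}\subset\mathbb P^7$ on which $C_p|_\Pi$ is totally reducible; and if $F_1(Y,p):=\{Q_p=0,\,C_p=0\}\subset\mathbb P^7$ denotes the variety of lines of $Y$ through $p$, then $F_1(Y,p)\cap\Pi=\{C_p|_\Pi=0\}=r_1+r_2+r_3$ with $r_i=\{m_i=0\}\subset\Pi$, and the tautological identification ``direction $\mapsto$ line through $p$'' carries $r_i$ to $\Delta_{P_i}^*$. In short $\Delta_{P_1}^*+\Delta_{P_2}^*+\Delta_{P_3}^*=[F_1(Y,p)\cap\Pi]\in CH_1(F_1(Y))$.

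Now drop the reducibility condition. For \emph{every} plane $\Pi\subset\mathcal Q_p$ the intersection $F_1(Y,p)\cap\Pi=\{C_p|_\Pi=0\}$ is a plane cubic curve, defining a class $w_p(\Pi)\in CH_1(F_1(Y))$ which equals the triangle class exactly when $C_p|_\Pi$ degenerates to three lines. The planes $\Pi$ contained in the smooth quadric $\mathcal Q_p\subset\mathbb P^7$ are parametrised by the orthogonal Grassmannian $OG(3,8)$ of isotropic $3$-spaces, a rational homogeneous variety; the universal plane cubic over it is an algebraic family of $1$-cycles, giving a correspondence $OG(3,8)\vdash F_1(Y)$ and hence a map $CH_0(OG(3,8))_{\mathbb Q}\to CH_1(F_1(Y))_{\mathbb Q}$. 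Since $CH_0(OG(3,8))_{\mathbb Q}=\mathbb Q$, the class $w_p(\Pi)$ is independent of $\Pi$; in particular the triangle class is independent of $L$ \emph{for a fixed vertex} $p$. To remove the dependence on $p$, globalise over the open locus $U\subset Y$ where $\mathcal Q_p$ is smooth: one obtains an $OG(3,8)$-bundle $\mathcal T\to U$ and a universal family $\mathcal W\subset\mathcal T\times F_1(Y)$ with fibres $w_p(\Pi)$. As $Y$ is a smooth cubic $8$-fold it is Fano, hence rationally connected, so $CH_0(U)_{\mathbb Q}=\mathbb Q$; the fibres $OG(3,8)$ are rationally connected too, so the fibration is $CH_0$-trivial (Graber--Harris--Starr) and $CH_0(\mathcal T)_{\mathbb Q}=\mathbb Q$. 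The correspondence $\mathcal W$ then makes $w_p(\Pi)$ a single class independent of $(p,\Pi)$, whence $\Delta_{P_1}^*+\Delta_{P_2}^*+\Delta_{P_3}^*$ is independent of $L$.

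The main obstacle is the geometric core of the second step: checking cleanly that a triangle of planes is encoded by a plane in the polar quadric $\mathcal Q_p$ and, above all, that the triangle $1$-cycle coincides with the cubic-section cycle $w_p(\Pi)$, since it is exactly this identification that licenses the enlargement to the rational parameter space $OG(3,8)$. The remaining points are routine but must be handled: the degenerate configurations permitted by the statement (coincident planes, i.e. non-reduced cubics $C_p|_\Pi$) are absorbed by passing to cycle classes; the possible singularity of $\mathcal Q_p$ at special vertices is dealt with by working over the open locus $U$ and using $CH_0(Y)=\mathbb Z$ to move any vertex to a general one; and one must verify that $\Pi\mapsto\{C_p|_\Pi=0\}$ genuinely defines an algebraic family of $1$-cycles so that the correspondence formalism applies.
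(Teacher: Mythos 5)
Your proposal is correct and is essentially the paper's own argument: the paper likewise enlarges the family of triangles to the space $\mathcal M$ of cubic surfaces in $Y$ that are cones, fibred over the vertex $p\in Y$ with fibre the variety of planes in the Hessian (polar) quadric $\mathcal Q_p$ --- your $OG(3,8)$ --- and concludes via Graber--Harris--Starr that this space is rationally connected, hence $CH_0$-trivial, so the induced correspondence to $CH_1(F_1(Y))_{\mathbb Q}$ sends every configuration to the same class. The only differences are presentational: you make the cycle identification explicit as the plane cubic $F_1(Y,p)\cap\Pi$ and work over the open locus where $\mathcal Q_p$ is smooth (your remark about ``moving the vertex'' via $CH_0(Y)=\mathbb Z$ should really be replaced by passing to the full compact family, as the paper does, whose $CH_0$-triviality covers the special vertices directly), whereas the paper phrases the same correspondence through the incidence variety of lines through the vertex of the cone.
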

\begin{remark}
    The linear spaces $L\cong\mathbb P^3$ whose intersection with $Y$ is the union of three planes form a projective variety of general type, which is not rationally connected. To prove the proposition, we consider a larger space, namely the variety of cubic surfaces in $Y$ which are cones and prove that this space is rationally connected. 
\end{remark}

\begin{proof}[Proof of Proposition~\ref{PropTrianglePlanesConstant}]
    Let $\mathcal M$ be the space of cubic surfaces in $Y$ which are cones. An element of $\mathcal M$ is of the form $(S, y_0)$ where $S$ is a cubic surface that is a cone from a vertex $y_0$. If $P_1, P_2, P_3$ are planes as in the Proposition, and $y_0$ is a point in the intersection of the three planes, then $P_1\cup P_2\cup P_3$ is a cone with vertex $y_0$, hence $(P_1\cup P_2\cup P_3, y_0)$ is an element of $\mathcal M$. Consider the incidence variety 
\[
\mathcal E=\{((S,y_0), l)\in \mathcal M\times F_1(Y): l \textrm{ is a line in } S \textrm{ passing through } y_0\}.
\]
Then $\Delta_{P_1}^*+\Delta_{P_2}^*+\Delta_{P_3}^*$ is the $1$-cycle given by $\mathcal E_*(P_1\cup P_2\cup P_3, y_0)$. Therefore, to prove the Proposition, it suffices to show that $\mathcal M$ is rationally connected.

Let $\pi: \mathcal M\to Y$ be the rational map that sends a cubic surface $(S, y_0)$ that is a cone onto its vertex $y_0$. The fiber of $\pi$ over $y_0$ parametrizes the cubic surfaces in $Y$ that are cones with vertex $y_0$. Then $S$ is contained in the singular hyperplane section $Y_{y_0}:=Y\cap \bar T_{Y,y_0}$ where $\bar T_{Y,y_0}$ is the projective tangent space of $Y$ at the point $y_0$. We may take $y_0$ a general point so that the cubic hypersurface $Y_{y_0}$ of dimension $7$ has an ordinary double point at $y_0$, and does not contain any $\mathbb P^3$, and so that no $3$-dimensional linear subspaces in $\mathbb P^9$ passing through $y_0$ is contained in $Y$. The last condition is satisfied since the $3$-dimensional linear subspaces contained in $Y$ form a divisor in $Y$, as can be shown by a simple dimension counting argument. 

\begin{lemma}\label{LmmFiberOfPiOverGeneralPoint}
    Under the assumptions above on $y_0$, the fiber of $\pi$ over $y_0$ is in bijection with the set of planes contained in the Hessian quadric $Q_{y_0}$ of $Y_{y_0}$ at the point $y_0$.
\end{lemma}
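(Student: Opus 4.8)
The plan is to choose coordinates adapted to the node and read off both sides of the claimed bijection from a single normal form of the defining cubic. Put $y_0=[1:0:\cdots:0]$ and arrange linear coordinates $x_0,\dots,x_9$ on $\mathbb P^9$ so that $\bar T_{Y,y_0}=\{x_9=0\}$. Since $y_0\in Y$ and $\{x_9=0\}$ is the tangent hyperplane, the defining cubic $f$ of $Y$ can be written, after rescaling, as
\[
f=x_0^2x_9+x_0\,g_2(x_1,\dots,x_9)+g_3(x_1,\dots,x_9),
\]
with $g_2,g_3$ homogeneous of degrees $2$ and $3$. Restricting to $\bar T_{Y,y_0}$ and passing to the affine chart $x_0=1$ exhibits $Y_{y_0}$ near $y_0$ as $\{\bar g_2+\bar g_3=0\}$, where $\bar g_i:=g_i(x_1,\dots,x_8,0)$; hence the tangent cone at the node is $\{\bar g_2=0\}$, and the Hessian quadric is $Q_{y_0}=\{\bar g_2=0\}\subset\mathbb P^7$, a smooth quadric sixfold by the ordinary-double-point hypothesis (equivalently, $\bar g_2$ is nondegenerate in the eight variables $x_1,\dots,x_8$).

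Next I would build the bijection from this normal form. Given a cubic surface $S$ in the fiber, i.e. a cone with vertex $y_0$ contained in $Y$, every ruling line of $S$ passes through $y_0$ and lies in $Y$, hence is tangent to $Y$ at $y_0$; therefore the lines of $S$, and so the linear span $L:=\langle S\rangle$, are contained in $\bar T_{Y,y_0}$. As $S$ is a nondegenerate cubic cone, $L\cong\mathbb P^3$, and $S=Y\cap L$: indeed $S\subseteq Y\cap L=\{f|_L=0\}$ and both are cubic divisors in $L$, so they coincide once $f|_L\not\equiv0$, which holds because no $\mathbb P^3$ through $y_0$ lies in $Y$. Writing $\Pi:=\mathbb P(L/\langle y_0\rangle)\subset\mathbb P^7$ and using $x_9|_L=0$, the crucial identity is
\[
f|_L=x_0\,\bar g_2|_\Pi+\bar g_3|_\Pi .
\]
Now the elementary observation that a hypersurface in $L$ is a cone with vertex $y_0=[1:0:\cdots:0]$ if and only if its equation is independent of the vertex coordinate $x_0$ translates ``$S$ is a cone'' into $\bar g_2|_\Pi\equiv0$, that is, $\Pi\subseteq Q_{y_0}$. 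Thus $S\mapsto\Pi$ sends the fiber into the set of planes contained in the Hessian quadric.

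For the inverse I would assign to a plane $\Pi\subseteq Q_{y_0}$ the cubic surface $S_\Pi:=Y\cap\langle y_0,\Pi\rangle$. Setting $L=\langle y_0,\Pi\rangle$, the same identity gives $f|_L=\bar g_3|_\Pi$, a cubic form independent of $x_0$ and nonzero because $L\not\subset Y$, so $S_\Pi$ is a genuine cubic cone with vertex $y_0$ lying in $Y$; and the two assignments are mutually inverse since $\langle y_0,\Pi\rangle$ recovers $L$ while $\mathbb P(L/\langle y_0\rangle)$ recovers $\Pi$. The step I expect to require the most care is the bookkeeping around nondegeneracy: verifying that $S$ indeed spans a full $\mathbb P^3$ (so that $\Pi$ is a plane and not a line) and that $S=Y\cap L$ holds schematically, which is exactly where the genericity assumptions on $y_0$—no $\mathbb P^3\subset Y$ through $y_0$, together with the ordinary-double-point condition—enter, and controlling the degenerate cones (such as triple planes, corresponding to $\bar g_3|_\Pi$ a perfect cube) that a priori threaten injectivity. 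All remaining verifications are direct substitutions into the normal form above.
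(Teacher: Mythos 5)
Your proof is correct and takes essentially the same route as the paper's: the paper's one-sentence argument is exactly your key computation, namely that for $S=L\cap Y_{y_0}$ with $y_0\in L\subset \bar T_{Y,y_0}$ and $f=x_0^2x_9+x_0g_2+g_3$, the cone condition is equivalent to the Hessian quadratic form $\bar g_2$ vanishing on the plane of directions $\Pi=\mathbb P(L/\langle y_0\rangle)$, i.e.\ $\Pi\subset Q_{y_0}$. The additional bookkeeping you carry out (that $S$ lies in the tangent hyperplane and spans a $\mathbb P^3$, that $S=Y\cap L$ via the no-$\mathbb P^3$-through-$y_0$ assumption, and the explicit inverse $\Pi\mapsto Y\cap\langle y_0,\Pi\rangle$) is left implicit in the paper's proof, so your write-up is simply a fleshed-out version of the same argument.
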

\begin{proof}
    A cubic surface  $S=\mathbb P^3\cap Y_{y_0}$ in $Y_{y_0}$ passing through $y_0$ is a cone  with vertex $y_0$ if and only if the equation $f_{y_0}$ defining $Y_{y_0}$ has vanishing Hessian, which means that the Hessian quadric vanishes on the  tangent space $T_{\mathbb P^3, y_0}$ .
\end{proof}

One knows that the variety of planes in a quadric sixfold is a connected Fano manifold. Thus, by Lemma~\ref{LmmFiberOfPiOverGeneralPoint}, the map $\pi$ is a fibration whose base and general fiber are rationally connected. Therefore, the total space $\mathcal M$ is rationally connected \cite{GraberHarrisStarr}, as desired.
\end{proof}

\begin{definition}\label{DefOfDelta}
    Let $\Delta^*$ be an element of $CH_1(F_1(Y))_{\mathbb Q}$ defined by $\frac13(\Delta_{P_1}^*+\Delta_{P_2}^*+\Delta_{P_3}^*)$ where $P_1, P_2, P_3$ are the planes as in Proposition~\ref{PropTrianglePlanesConstant}.
\end{definition}

\begin{corollary}
    Let $L\subset Y$ be a $3$-dimensional linear space in $Y$. For any plane $P\subset L$, we have $\Delta^*_P=\Delta^*$ in $CH_1(F_1(Y))_{\mathbb Q}$.
\end{corollary}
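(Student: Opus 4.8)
The plan is to reduce the statement to the rational connectedness of the space $\mathcal M$ of cone cubic surfaces in $Y$ already exploited in Proposition~\ref{PropTrianglePlanesConstant}, by exhibiting a \emph{degenerate} triangle of three planes lying inside the given $\mathbb P^3$. First I would prove the auxiliary claim that, once $L\cong\mathbb P^3\subset Y$ is fixed, the class $\Delta_P^*$ does not depend on the plane $P\subset L$; denote this common value $\Delta_L^*$. Given two planes $P,P'\subset L$, their intersection $\ell_0=P\cap P'$ is a line, and the pencil of planes in $L$ through $\ell_0$ gives a family $\{P_t\}_{t\in\mathbb P^1}$ with $P_0=P$ and $P_\infty=P'$. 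Fixing a point $x_0\in\ell_0$, the incidence variety
\[
\mathcal F=\{(t,[\ell])\in\mathbb P^1\times F_1(Y): x_0\in\ell\subset P_t\}
\]
is a $\mathbb P^1$-bundle over $\mathbb P^1$ whose fibre over $t$ maps isomorphically onto $\Delta_{P_t,x_0}^*\subset F_1(Y)$. Since the fibres of $\mathcal F\to\mathbb P^1$ over $0$ and $\infty$ are rationally equivalent in $\mathcal F$, their images $\Delta_{P}^*$ and $\Delta_{P'}^*$ are rationally equivalent in $F_1(Y)$, which proves the claim.

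Next I would choose three \emph{distinct} planes $P_1,P_2,P_3\subset L$ in general position, so that they meet in a single point $y_0=P_1\cap P_2\cap P_3$. Their union $S=P_1\cup P_2\cup P_3$ is a cubic surface that is a cone with vertex $y_0$, and since $L\subset Y$ it is contained in $Y$; hence $(S,y_0)$ is a point of $\mathcal M$. The computation in Proposition~\ref{PropTrianglePlanesConstant} identifies $\mathcal E_*(S,y_0)$ with $\Delta_{P_1}^*+\Delta_{P_2}^*+\Delta_{P_3}^*$, and the rational connectedness of $\mathcal M$ established there shows that this class is the same for every point of $\mathcal M$, namely $3\Delta^*$ by Definition~\ref{DefOfDelta}. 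By the auxiliary claim, $\Delta_{P_1}^*=\Delta_{P_2}^*=\Delta_{P_3}^*=\Delta_L^*$, so $3\Delta_L^*=3\Delta^*$, whence $\Delta_L^*=\Delta^*$ in $CH_1(F_1(Y))_{\mathbb Q}$. As the plane $P\subset L$ in the statement was arbitrary, $\Delta_P^*=\Delta_L^*=\Delta^*$, as desired.

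The main point, and the only genuinely non-formal step, is the observation that a triangle of three planes contained in a single $\mathbb P^3\subset Y$ is a legitimate point of $\mathcal M$, even though it is \emph{not} of the form $M\cap Y$ for a $\mathbb P^3$ $M\not\subset Y$ (indeed the only $\mathbb P^3$ containing such a triangle is $L$ itself, and $L\cap Y=L$). It is precisely the irreducibility and rational connectedness of the full space $\mathcal M$, which contains both the generic triangles cutting out $\Delta^*$ and these degenerate ones lying inside $L$, that bridges the two constructions. A minor technical caveat to verify is that $\mathcal E_*$ is genuinely constant across all of $\mathcal M$, including the possibly special vertices $y_0\in L$ and the boundary of $\mathcal M$; this is guaranteed by the rational connectedness argument of Proposition~\ref{PropTrianglePlanesConstant} applied to a projective rationally connected model of $\mathcal M$, for which $CH_0$ is $\mathbb Q$ and $\mathcal E_*$ therefore factors through a single class.
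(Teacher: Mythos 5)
Your proof is correct, but it is considerably longer than the paper's, which disposes of the corollary in one line: since $L\subset Y$, the intersection $L\cap Y=L$ contains the totally degenerate triple $(P,P,P)$, so Proposition~\ref{PropTrianglePlanesConstant} --- whose statement explicitly allows non-distinct planes --- applies directly and gives $3\Delta_P^*=3\Delta^*$, hence $\Delta_P^*=\Delta^*$ in $CH_1(F_1(Y))_{\mathbb Q}$. The underlying mechanism is the same in both arguments (rational connectedness of the space $\mathcal M$ of cone cubic surfaces), but your reduction to the Proposition is different and has a genuine merit: by using three \emph{distinct} planes of $L$ through a common point, together with your pencil lemma (which is correct, and in fact proves constancy of $\Delta^*_{P}$ for $P\subset L$ with integral coefficients), you only ever evaluate $\mathcal E_*$ at a reduced triangle, where the fiber of the incidence correspondence visibly carries each pencil $\Delta^*_{P_i,y_0}$ with multiplicity one. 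The paper's one-liner instead needs the fiber cycle over the non-reduced cone $(3P,y_0)$ to count $\Delta^*_{P,y_0}$ with multiplicity $3$; this is true (for instance by degree in the Pl\"ucker embedding: the curve of lines through the vertex of a nearby smooth cone is a plane cubic, of degree $3$, degenerating onto the single pencil $\Delta^*_{P,y_0}$, which is a line), but it is glossed over. Your closing caveat --- that constancy of $\mathcal E_*$ must hold at the special points of $\mathcal M$ whose vertex lies on $L$, and not only at general points --- is exactly what Proposition~\ref{PropTrianglePlanesConstant} and Definition~\ref{DefOfDelta} silently use in either proof, and your fix (pass to a projective rationally connected model, on which $CH_0(\cdot)_{\mathbb Q}=\mathbb Q$, so that $\mathcal E_*$ of a point is independent of the point) is the standard one. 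In short: your route is a correct and somewhat more robust variant, trading the paper's degenerate-triple shortcut for one extra, elementary rational-equivalence argument in a pencil of planes.
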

\begin{proof}
    This is because the triple $(P,P,P)$ satisfies Proposition~\ref{PropTrianglePlanesConstant} and thus $3\Delta^*_P=3\Delta^*$, from which we conclude.
\end{proof}
\begin{corollary}\label{CorImOfFanoOf3SpacesDoesNotDependOn3Spaces}
    Let $L\subset Y$ be a $3$-dimensional linear space in $Y$. Then the image of the natural morphism 
    \[
    CH_1(F_1(L))_{\mathbb Q}\to CH_1(F_1(Y))_{\mathbb Q}
    \]
    is of dimension $1$ and is generated by $\Delta^*$. In particular, the image does not depend on the choice of $L\in F_3(Y)$.
\end{corollary}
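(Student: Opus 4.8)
The plan is to combine the Schubert calculus of the sub-Grassmannian $F_1(L)$ with the identification $\Delta_P^* = \Delta^*$ already obtained. Observe first that $F_1(L)$ is the Grassmannian $\mathrm{Gr}(2,4)$ of lines in $L \cong \mathbb{P}^3$, and that the natural morphism $j \colon F_1(L) \to F_1(Y)$ is the closed immersion induced, via $L \subset \mathbb{P}^9$, by the inclusion of Grassmannians $\mathrm{Gr}(2,4) \hookrightarrow \mathrm{Gr}(2,10)$; it lands in $F_1(Y)$ because $L \subset Y$. In particular $j_*$ carries a Schubert subvariety of $F_1(L)$ to its image cycle in $F_1(Y)$, with multiplicity one.

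Next I would recall that $CH_*(\mathrm{Gr}(2,4))_\mathbb{Q}$ is freely generated by Schubert classes, so that the degree we care about, $CH_1(F_1(L))_\mathbb{Q} = CH^3(\mathrm{Gr}(2,4))_\mathbb{Q}$, is one-dimensional, spanned by the single class $\sigma_{2,1}$. Geometrically $\sigma_{2,1}$ is represented by the pencil of lines lying in a fixed plane $P \subset L$ and passing through a fixed point $x \in P$ — that is, exactly the cycle $\Delta_{P,x}^*$ of the previous subsection. Hence $j_* \sigma_{2,1} = [\Delta_{P,x}^*] = \Delta_P^*$, and the preceding corollary ($\Delta_P^* = \Delta^*$ for every plane $P \subset L$) gives $j_* \sigma_{2,1} = \Delta^*$. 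Since $\sigma_{2,1}$ generates $CH_1(F_1(L))_\mathbb{Q}$, the image of $j_*$ is $\mathbb{Q}\cdot\Delta^*$.

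It remains to see that this image is genuinely one-dimensional and independent of $L$. For nonvanishing, note that $\Delta_{P,x}^*$, being a pencil of lines through a point in a plane, is a line in the Plücker embedding of $\mathrm{Gr}(2,10)$, hence has Plücker degree $1$ and a nonzero cohomology class; thus $\Delta^* = \Delta_P^* \neq 0$. Independence of $L$ is then immediate, because $\Delta^*$ is a single class in $CH_1(F_1(Y))_\mathbb{Q}$ fixed once and for all by Proposition~\ref{PropTrianglePlanesConstant} and Definition~\ref{DefOfDelta}. There is essentially no obstacle here: the only points requiring a word of justification are the identification of the generator $\sigma_{2,1}$ with $\Delta_{P,x}^*$ and the nonvanishing of $\Delta^*$, both routine, while all the substantive content — the independence of $\Delta_P^*$ on $P$ and on $L$ — has already been supplied by Proposition~\ref{PropTrianglePlanesConstant} and the foregoing corollary.
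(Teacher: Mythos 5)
Your proposal is correct and follows essentially the same route as the paper: the corollary is deduced from the preceding one ($\Delta_P^*=\Delta^*$ for every plane $P\subset L$) together with the standard fact that $F_1(L)\cong \mathrm{Gr}(2,4)$ has $CH_1(\mathrm{Gr}(2,4))_{\mathbb Q}\cong\mathbb Q$ spanned by $\sigma_{2,1}$, whose geometric representative is exactly $\Delta_{P,x}^*$. Your additional remark that $\Delta^*\neq 0$ (via Plücker degree $1$) is a routine check the paper leaves implicit, and it correctly justifies the phrase ``of dimension $1$'' in the statement.
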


\begin{remark}
    We will see, in Proposition~\ref{PropLinesOfLinesAreConstant}, that for any $P\subset Y$, we have $\Delta^*_P=\Delta^*$.
\end{remark}

\subsubsection{The geometry of $F_1(Y)$  and its $1$-cycles}
The following Lemma is proved by an easy dimension count.

\begin{lemma}\label{LmmP3CoverADivisor}
    Assume $Y$ is general. The linear subspaces  $\mathbb P^3\subset Y$ cover a divisor $D$  in $Y$.
\end{lemma}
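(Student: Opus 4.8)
The plan is the standard incidence-correspondence dimension count. First I would record that, for general $Y$, the Fano variety $F_3(Y)$ of $3$-planes contained in the cubic $8$-fold $Y\subset\mathbb P^9$ is nonempty and smooth of the expected dimension
\[
\dim F_3(Y)=(3+1)(9-3)-\binom{3+3}{3}=24-20=4,
\]
which follows from the Debarre--Manivel results already invoked in the paper, the expected dimension being nonnegative. I would then introduce the universal $3$-plane
\[
\mathcal P:=\{(L,y)\in F_3(Y)\times Y:\ y\in L\},
\]
which is a $\mathbb P^3$-bundle over $F_3(Y)$ via the first projection, so that $\dim\mathcal P=\dim F_3(Y)+3=7$. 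The divisor to be produced is the image $D:=\mathrm{pr}_Y(\mathcal P)\subset Y$ under the second projection $\mathrm{pr}_Y:\mathcal P\to Y$, which by definition is exactly the union of all $\mathbb P^3\subset Y$.

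Next I would extract the two bounds that pin down $\dim D$. Since $\dim\mathcal P=7<8=\dim Y$, the map $\mathrm{pr}_Y$ cannot be dominant, so $D$ is a proper closed subvariety with $\dim D\le 7$; equivalently, a general point of $Y$ lies on no $3$-plane contained in $Y$, in agreement with the fact that the $3$-planes through a fixed point form an $18$-dimensional family cut out by $19$ conditions. It then remains only to prove $\dim D=7$, i.e.\ that $D$ is genuinely a divisor and not of smaller dimension, and this is precisely the assertion that $\mathrm{pr}_Y:\mathcal P\to D$ is generically finite: one must rule out that through a general point of $D$ there passes a positive-dimensional family of $3$-planes contained in $Y$.

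This generic finiteness is the one point requiring an actual argument, and I expect it to be the main (though mild) obstacle. I would settle it infinitesimally via the normal bundle: for general $L\cong\mathbb P^3\subset Y$ the sequence
\[
0\to N_{L/Y}\to N_{L/\mathbb P^9}\to N_{Y/\mathbb P^9}|_L\to 0
\]
reads $0\to N_{L/Y}\to \mathcal O_L(1)^{\oplus 6}\to\mathcal O_L(3)\to 0$, so $N_{L/Y}$ has rank $5$ and $h^0(N_{L/Y})=\dim T_LF_3(Y)=4$. The tangent space to the fibre of $\mathrm{pr}_Y$ over $(L,y)$ is the kernel of the evaluation map $H^0(N_{L/Y})\to N_{L/Y,y}\cong\mathbb C^5$; since the source has dimension $4$ and the target dimension $5$, it suffices to check that this evaluation is injective for general $(L,y)$, which is the expected behaviour and can be verified on one explicit pair, or deduced by upper semicontinuity of fibre dimension from a single nonempty finite fibre. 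Injectivity forces the general fibre of $\mathrm{pr}_Y$ to be $0$-dimensional, whence $\dim D=\dim\mathcal P=7$ and $D$ is a divisor in $Y$, as claimed.
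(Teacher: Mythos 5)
Your proposal takes exactly the route the paper intends: the paper offers no written argument beyond the remark that the lemma ``is proved by an easy dimension count,'' and your incidence construction is that count carried out properly --- $\dim F_3(Y)=4$ (Debarre--Manivel smoothness of the expected dimension, which the paper invokes elsewhere), $\mathcal P$ a $\mathbb P^3$-bundle of dimension $7$, hence $\dim D\le 7$, with all the content concentrated in the generic finiteness of $\mathrm{pr}_Y:\mathcal P\to D$. You correctly isolate that point, and your infinitesimal criterion is right: the tangent space to the fibre of $\mathrm{pr}_Y$ at $(L,y)$ is $\ker\bigl(\mathrm{ev}_y: H^0(N_{L/Y})\to N_{L/Y,y}\bigr)$, with $h^0(N_{L/Y})=4$ computed from the normal bundle sequence $0\to N_{L/Y}\to \mathcal O_L(1)^{\oplus 6}\to\mathcal O_L(3)\to 0$.

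The one genuine loose end is the step you yourself flag and then defer: injectivity of $\mathrm{ev}_y$ for general $(L,y)$ is asserted as ``expected behaviour'' but never verified, and it is not automatic --- since $h^0(N_{L/Y})=4>0$, the union over $\sigma\in\mathbb P H^0(N_{L/Y})$ of the vanishing loci $Z(\sigma)\subset L$ could a priori cover $L$, in which case every evaluation map would have nontrivial kernel and every fibre of $\mathrm{pr}_Y$ would be positive-dimensional. So one finite nonempty fibre (or one injective evaluation) must actually be exhibited. The cleanest way to close this is your second option, but run over the universal family rather than over the fixed $Y$: the incidence variety $I=\{(f,L,y): L\subset Y_f,\ y\in L\}$ over $B=\mathbb P H^0(\mathbb P^9,\mathcal O(3))$ is a projective-space bundle over the flag variety of pairs $y\in L$, hence irreducible of dimension $\dim B+7$; a single explicit triple $(f_0,L_0,y_0)$ with only finitely many $3$-planes of $Y_{f_0}$ through $y_0$ (a finite, checkable computation on a sufficiently explicit cubic --- on the Fermat cubic one must take $y_0$ off its $\mathbb P^4$'s, through whose points $3$-planes move in families) then gives generic finiteness of $I\to B\times\mathbb P^9$ by upper semicontinuity, whence $\dim D_f=7$ for general $f$. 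Working over $B$ also repairs a tacit assumption in your fixed-$Y$ semicontinuity argument: a single finite fibre only helps if it lies on the irreducible component of $\mathcal P$ dominating (a component of) $D$, so you would otherwise need irreducibility of $F_3(Y)$ for the general $Y$, whereas irreducibility of $I$ is free. With that one verification supplied, your proof is complete and coincides with the dimension count the paper asserts.
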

\begin{corollary}\label{CorLineIntersectFiniteP3}
    A general  line $\Delta\subset Y$ meets finitely many $\mathbb P^3\subset Y$.
\end{corollary}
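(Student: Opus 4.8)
The plan is to reformulate the assertion as the generic finiteness of a projection from an incidence variety and to establish it by a dimension count. I would work with
\[
\mathcal Z=\{(\Delta,L)\in F_1(Y)\times F_3(Y):\ \Delta\cap L\neq\emptyset\},
\]
together with its projections $pr_1\colon\mathcal Z\to F_1(Y)$ and $pr_2\colon\mathcal Z\to F_3(Y)$. For a line $\Delta$, the fiber $pr_1^{-1}(\Delta)$ is exactly the set of $\mathbb P^3\subset Y$ meeting $\Delta$, so the Corollary is equivalent to saying that $pr_1$ is dominant and generically finite. Recalling the standard computations $\dim F_1(Y)=2\dim Y-4=12$ and $\dim F_3(Y)=4$ (obtained from $\mathrm{Gr}(2,10)$ and $\mathrm{Gr}(4,10)$ after imposing the incidence conditions), it suffices to show that $pr_1$ is dominant and that $\dim\mathcal Z=12$, which I would prove as two separate facts.

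First I would check dominance of $pr_1$. Since $\dim Y=8\geq 3$, the Lefschetz theorem gives $\mathrm{Pic}(Y)=\mathbb Z\cdot\mathcal O_Y(1)$, so the divisor $D$ covered by the $\mathbb P^3\subset Y$ from Lemma~\ref{LmmP3CoverADivisor} is linearly equivalent to a positive multiple of the hyperplane class. Hence $\Delta\cdot D>0$ for every line $\Delta\subset Y$, and in particular $\Delta\cap D\neq\emptyset$. Moreover a general line is not contained in $D$: otherwise every line of $Y$ would lie in $D$, and since $Y$ is covered by lines this would force $Y\subseteq D$, contradicting $D\subsetneq Y$. Thus for general $\Delta$ the intersection $\Delta\cap D$ is a nonempty finite set, and as every point of $D$ lies on some $\mathbb P^3\subset Y$, the line $\Delta$ meets a $3$-plane; this shows $pr_1$ is dominant.

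Next I would bound $\dim\mathcal Z$ from above via $pr_2$. Over a general $L\in F_3(Y)$ the fiber $pr_2^{-1}(L)$ is the family of lines of $Y$ meeting the fixed $\mathbb P^3$. Parametrizing such a line by its intersection point $x\in L$ together with a line of $Y$ through $x$, and using that the lines of $Y$ through a point form a family of dimension $\dim Y-3=5$ (the generic fiber dimension of the universal line $\mathcal L\to Y$, since $\dim\mathcal L=\dim F_1(Y)+1=13$), I obtain $\dim pr_2^{-1}(L)\leq\dim L+5=8$. With $\dim F_3(Y)=4$ this gives $\dim\mathcal Z\leq 4+8=12$. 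Combined with the dominance of $pr_1$ and $\dim F_1(Y)=12$, this forces $\dim\mathcal Z=12$ and $pr_1$ generically finite, which is the Corollary.

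The main obstacle is the last estimate: the bound $\dim pr_2^{-1}(L)\leq 8$ rests on the family of lines through a point being $5$-dimensional, and a priori this dimension can jump precisely at special points lying on a $3$-plane, i.e.\ on $D$. What must be controlled is that, for $Y$ general, the excess loci are small enough to produce no component of $\mathcal Z$ of dimension $>12$ dominating $F_1(Y)$. I would handle this by factoring $pr_1$ through the variety $\mathcal L|_D=\{(x,\Delta):x\in D\cap\Delta\}$, which has dimension $12$ (it maps to $F_1(Y)$ generically finitely and dominantly, every line meeting $D$ in finitely many points), and through the universal $\mathbb P^3$-family $\mathcal P_3=\{(x,L):x\in L\in F_3(Y)\}$ of dimension $\dim F_3(Y)+3=7$. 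Since $\mathcal P_3\to D$ is generically finite, the locus of points lying on infinitely many $3$-planes pulls back to a subvariety of $\mathcal P_3$ of dimension $\leq 6$; putting a line through such a point cannot over-fill the $12$-dimensional $F_1(Y)$, so every component of $\mathcal Z$ stays at dimension $\leq 12$, securing the generic finiteness of $pr_1$.
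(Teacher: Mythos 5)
Most of your proposal is sound and is in essence a fleshed-out, incidence-variety version of the paper's one-line argument: the paper simply observes that every $\mathbb P^3\subset Y$ lies in the divisor $D$ of Lemma~\ref{LmmP3CoverADivisor}, so for $\Delta\not\subset D$ the $3$-planes meeting $\Delta$ are accounted for by the finitely many points of $\Delta\cap D$, through a general one of which only finitely many $3$-planes pass. Your dominance argument for $pr_1$ is correct (via $\mathrm{Pic}(Y)=\mathbb Z$ and ampleness of $D$) but is not actually needed for the statement, since ``meets finitely many'' allows zero; your dimension bookkeeping ($\dim F_1(Y)=12$, $\dim F_3(Y)=4$, $\dim\mathcal P_3=7$, generic fiber dimension $5$ for $\mathcal L\to Y$, $\dim\mathcal L|_D=12$) is all correct, and your reduction of the Corollary to ``every component of $\mathcal Z$ has dimension $\leq 12$'' is valid.

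However, the last step — which you yourself flag as the main obstacle — is asserted, not proved: the sentence ``putting a line through such a point cannot over-fill the $12$-dimensional $F_1(Y)$'' is exactly the claim that needs an argument, and none is given. Let $D_\infty\subset D$ be the locus of points lying on infinitely many $3$-planes, so that (granting generic finiteness of $\mathcal P_3\to D$ on every component, which itself requires knowing that no component of $\mathcal P_3$ maps to a proper subvariety of $D$, e.g.\ via irreducibility of $F_3(Y)$ for $Y$ general) its preimage in $\mathcal P_3$ has dimension $\leq 6$. The only estimate you have for the fibers of $\mathcal L\to Y$ is the \emph{generic} dimension $5$, extracted from $\dim\mathcal L=13$; it gives no control precisely over the special points of $D_\infty$, where jumping can occur. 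The relevant piece of your incidence variety has dimension at most $6+e$, where $e$ bounds the dimension of the family of lines through a point of $D_\infty$: with $e\leq 6$ you get $\leq 12$ and the argument closes, but with $e=7$ you get $13$ and it collapses — and nothing in your proposal bounds $e$ at all. The missing lemma is short: for $x\in Y$ the lines through $x$ inside $Y$ are cut out in $\mathbb P^8$ by the Taylor coefficients $f_1,f_2,f_3$ of $f$ at $x$; since $Y$ is smooth, $f_1\neq 0$, and a $7$-dimensional fiber would force the irreducible linear form $f_1$ to divide $f_2$ and $f_3$, i.e.\ $f=f_1\cdot(1+\ell+q)$ in affine coordinates at $x$, contradicting irreducibility of $Y$. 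Hence $e\leq 6$ everywhere on a smooth cubic, and $6+6=12$ completes your estimate. Note that the paper's terse proof quietly relies on the same genericity (a general line must meet $D$ away from $D_\infty$), so your instinct to isolate this point was right; you just stopped short of supplying the bound that makes it work.
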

Indeed, if $\Delta \not \subset D$, the  $\mathbb P^3\subset Y$ intersecting $\Delta$ are in bijection with the intersection points of $\Delta$ and $D$.
Let $F_{3,1}\subset  F_3(Y)\times F_1(Y)$ be the set of pairs $(x,s)$ such that $P_x\cap \Delta_s\not=\emptyset$. The second projection $q:F_{3,1}\rightarrow F_1(Y)$ is dominant by Lemma~\ref{LmmP3CoverADivisor} and  generically finite by  Corollary~\ref{CorLineIntersectFiniteP3}.

\begin{lemma}\label{LmmMorphismF1F31}
     (i) The morphism  $q^*: CH_1(F_1(Y))\rightarrow CH_1(F_{3,1})$ is injective.\\
(ii) The morphism  $p_*\circ q^*: CH_1(F_1(Y))_{hom}\rightarrow CH_1(F_3(Y))_{hom}$ is  zero.
\end{lemma}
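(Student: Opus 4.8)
The plan is to handle the two parts by exploiting the two projections from the incidence variety and the geometry of lines and $3$-planes on $Y$.

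\emph{Part (i).} For general $Y$ the Fano variety $F_1(Y)$ is smooth, so the pullback $q^*$ is defined through the refined Gysin map attached to the graph of $q$, and it preserves codimension; since $q$ is generically finite one has $\dim F_{3,1}=\dim F_1(Y)$, so $q^*$ indeed lands in $CH_1(F_{3,1})$. By Lemma~\ref{LmmP3CoverADivisor} and Corollary~\ref{CorLineIntersectFiniteP3} the proper morphism $q$ is dominant and generically finite, say of degree $d>0$. The projection formula then yields $q_*q^*=d\cdot\mathrm{id}$ on $CH_*(F_1(Y))_{\mathbb Q}$, so $q^*$ is injective after tensoring with $\mathbb Q$, which is what is needed for the theorem. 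The only points to check are the smoothness of $F_1(Y)$ and the positivity of $d$, both immediate from the quoted results.

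\emph{Part (ii).} The idea is to factor the action of $p_*q^*$ through $CH_2(Y)_{\mathbb Q}$. Consider the universal line $\mathcal L=\{(z,s)\in Y\times F_1(Y):z\in\Delta_s\}$, with projections $a:\mathcal L\to Y$ and $b:\mathcal L\to F_1(Y)$, and the universal $3$-plane $\mathcal U=\{(x,z)\in F_3(Y)\times Y:z\in P_x\}$, with projections $\pi:\mathcal U\to F_3(Y)$ and $\mu:\mathcal U\to Y$. Sending a pair $(x,s)\in F_{3,1}$ to the intersection point $z=\Delta_s\cap P_x$ identifies $F_{3,1}$, away from the locus $\{\Delta_s\subset P_x\}$, with the fibre product $\mathcal U\times_Y\mathcal L$; a dimension count shows this degenerate locus has codimension $\ge 1$, and after moving $\alpha$ into general position it is disjoint from the support of $q^*\alpha$. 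Hence, at the level of the $1$-cycles involved, the correspondence $[F_{3,1}]$ is the composite of the universal line and universal $3$-plane correspondences through $Y$, so that
\[
p_*q^*=(\pi_*\mu^*)\circ(a_*b^*):\ CH_1(F_1(Y))_{\mathbb Q}\xrightarrow{\,a_*b^*\,}CH_2(Y)_{\mathbb Q}\xrightarrow{\,\pi_*\mu^*\,}CH_1(F_3(Y))_{\mathbb Q}.
\]
Geometrically $a_*b^*\alpha=[\Sigma_\alpha]$ is the class of the surface swept out in $Y$ by the family of lines $\alpha$, and $\pi_*\mu^*$ sends it to the curve of $3$-planes meeting $\Sigma_\alpha$.

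Granting this factorization, part (ii) follows at once: if $\alpha\in CH_1(F_1(Y))_{hom}$ then $[\Sigma_\alpha]=a_*b^*\alpha$ lies in $CH_2(Y)_{hom,\mathbb Q}$, because the correspondence $a_*b^*$ is compatible with the cycle class map. Now $Y$ is a smooth cubic $8$-fold, and dimension-$2$ cycles lie strictly below its middle dimension $4$; for cubic hypersurfaces the cycle class map is rationally injective in this range, so $CH_2(Y)_{hom,\mathbb Q}=0$. Therefore $[\Sigma_\alpha]=0$ and $p_*q^*\alpha=\pi_*\mu^*[\Sigma_\alpha]=0$, as desired.

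\emph{Main obstacle.} The delicate step is the factorization $[F_{3,1}]=[\mathcal U]\circ[\mathcal L]^t$: one must verify that the composite of correspondences is computed with multiplicity one and that the degenerate locus $\{\Delta_s\subset P_x\}$, where the intersection point $z$ is not well defined, contributes nothing to the $1$-cycle $p_*q^*\alpha$. The remaining key input, $CH_2(Y)_{hom,\mathbb Q}=0$, is the known triviality of the Chow groups of a cubic hypersurface below the middle dimension, which I would invoke rather than reprove.
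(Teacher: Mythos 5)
Your proposal is correct and takes essentially the same route as the paper: part (i) is the same projection-formula argument ($q_*q^*=d\cdot\mathrm{id}$ for the dominant, generically finite $q$), and part (ii) is the same factorization of $p_*\circ q^*$ through $CH_2(Y)_{\mathbb Q}$ --- which the paper obtains via the birational map $\mathcal P_3\times_Y\mathcal P_1\to F_{3,1}$ from the fibre product of the two incidence correspondences, exactly your identification away from the locus $\{\Delta_s\subset P_x\}$ --- followed by the vanishing $CH_2(Y)_{hom}=0$. One small caution: that vanishing is not a general ``below the middle dimension'' phenomenon for cubic hypersurfaces, but Otwinowska's theorem for hypersurfaces of small degree, whose range ($2$-cycles on a cubic of dimension $8$) just covers this case, and this is precisely the reference the paper invokes.
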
 
\begin{proof} (i) follows from the fact that $q$ is dominant. For (ii), we observe that there is a natural birational map 
\[ \pi: \mathcal P_3\times_{Y} \mathcal P_1\to F_{3,1},\]
where $$\mathcal P_3\subset F_3(X)\times Y,\,\, \mathcal P_1\subset F_1(X)\times Y$$ are the incidence correspondences. The morphism $\pi$ is commutative with the projections to $F_3(Y)$ and to $F_1(Y)$. By direct calculations,
\begin{align*}
    \mathcal P_3^*\circ \mathcal P_{1*} & = p_*\circ \pi_* \circ \pi^* \circ q^* \\
    & = p_* \circ q^*
\end{align*}
Thus  the  morphism  $p_*\circ q^*$ factors through the morphism $(\mathcal P_1)_* :  CH_1(F_1(Y))_{hom}\rightarrow CH_2(Y)_{hom}$, which is zero because  $CH_2(Y)_{hom}=0$ by~\cite{Otwinowska}.
\end{proof}

The variety $F_{3,1}$ admits a rational map $f$  to the projective bundle $\mathcal{P}_5$ over $F_3(Y)$ whose fiber over $x\in F_3(Y)$ is the set of $\mathbb{P}^4$ containing $P_x$. To a pair $(P_x,\Delta_t)$ of a $\mathbb P^3$ and a line which intersect, this map associates $\langle P_x,\Delta_t\rangle$. We introduce now a desingularization $\tau: \widetilde{F_{3,1}} \rightarrow  F_{3,1}$ on which $f$ desingularizes to a morphism $\tilde{f}: \widetilde{F_{3,1}}\rightarrow \mathcal{P}_5$.  We observe now the following:  for each $x\in F_3(Y)$ and $4$-dimensional space $P'_x$ containing $P_x$, the intersection $P'_x\cap Y$ is the union  $P_x\cup Q_x$ where $Q_x$ is a $3$-dimensional quadric intersecting $P_x$ along a $2$-dimensional quadric.  The general fiber  of  $\tilde{f}$ over $(x, P'_x)$ is birational to the family of lines in the $3$-dimensional quadric $Q_x$. We are now going to  prove
\begin{proposition}\label{ThmF31ToF1}
    Let $z\in CH_1(\widetilde{F_{3,1}})_{\mathbb Q}$ be a $1$-cycle such that $\tilde f_*(z)=0\in CH_1(\mathcal P_5)$. Then $\tilde q_*(z)=\alpha \Delta^*\in CH_1(F_1(Y))_{\mathbb Q}$ for some $\alpha\in \mathbb Q$.
\end{proposition}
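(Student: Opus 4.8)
The plan is to use that $\tilde f\colon \widetilde{F_{3,1}}\to \mathcal P_5$ is a fibration whose general fibre, by the observation preceding the statement, is the variety $F(Q_x)$ of lines in the smooth quadric threefold $Q_x$, hence isomorphic to $\mathbb P^3$ and in particular rationally connected. (Since $\tau$ is an isomorphism over a general point of $\mathcal P_5$, for the fibral computations I may push forward by $\tau$ and work on $F_{3,1}$ with the honest fibre $F(Q_x)\cong\mathbb P^3$, for which $CH_1(\,\cdot\,)_{\mathbb Q}=\mathbb Q$.) The proof then splits into two inputs: (A) a geometric computation showing that $\tilde q_*$ sends every fibral $1$-cycle of $\tilde f$ into the line $\mathbb Q\cdot\Delta^*$; and (B) a reduction, using the hypothesis $\tilde f_*z=0$ together with the rational connectedness of the fibres, of $\tilde q_* z$ for an arbitrary $z\in\ker\tilde f_*$ to a fibral contribution. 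Granting (A) and (B), the conclusion $\tilde q_* z\in\mathbb Q\cdot\Delta^*$ is immediate.

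For input (A), fix a general $s=(x,P'_x)$ and write $P'_x\cap Y=P_x\cup Q_x$, where $L:=P_x\cong\mathbb P^3\subset Y$ and $Q_x$ is a quadric threefold meeting $L$ along the quadric surface $S_x=Q_x\cap L$. The key remark is that $S_x$ is a hyperplane section of $Q_x$, so every line of $Q_x$ meets $S_x$; in particular a ruling of $S_x$ is a \emph{nonzero} $1$-cycle $R_s$ inside $F(Q_x)$. Since each line of this ruling lies in $L$ as well as in $Q_x$, the cycle $R_s$ lies in the image of $F_1(L)\hookrightarrow F_1(Y)$, whence $\tilde q_* R_s\in\mathbb Q\cdot\Delta^*$ by Corollary~\ref{CorImOfFanoOf3SpacesDoesNotDependOn3Spaces}. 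As $R_s$ is a nonzero element of $CH_1(F(Q_x))_{\mathbb Q}=\mathbb Q$, it is a nonzero multiple of the generator, and therefore $\tilde q_*\big(CH_1(F(Q_x))_{\mathbb Q}\big)\subseteq\mathbb Q\cdot\Delta^*$. Finally, a fibral $1$-cycle over a special point of $\mathcal P_5$ specializes from one over the general locus along a curve in the base, and $\tilde q_*$ commutes with specialization, so its image again lies in $\mathbb Q\cdot\Delta^*$.

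For input (B), choose a multisection $\Sigma\subset\widetilde{F_{3,1}}$ of degree $d$ over $\mathcal P_5$ (a general complete intersection of complementary dimension), and let $\Phi:=\tilde q_*\big([\Sigma]\cdot \tilde f^{*}(-)\big)\colon CH_1(\mathcal P_5)_{\mathbb Q}\to CH_1(F_1(Y))_{\mathbb Q}$ be the associated correspondence homomorphism. Decompose $z=z_h+z_v$ into its horizontal and fibral parts, so that $\tilde f_*z=\tilde f_*z_h=0$. For a horizontal component $C$ over a curve $B=\tilde f(C)$ of degree $e$, the two zero-cycles $dC$ and $e\,(\Sigma\cap \tilde f^{-1}(B))$ have the same degree on the generic fibre of $\tilde f^{-1}(B)\to B$; since that fibre is smooth, proper and birational to $\mathbb P^3$ over $\mathbb C(B)$, and $CH_0$ is a birational invariant, their difference is rationally trivial there. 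Spreading out and using the localization sequence, $dC-e\,(\Sigma\cap \tilde f^{-1}(B))$ is a fibral cycle on $\tilde f^{-1}(B)$ up to a cycle supported over finitely many points of $B$, which is again fibral. Summing over components and using $[\Sigma]\cdot \tilde f^{*}[B]=[\Sigma\cap \tilde f^{-1}(B)]$ for a general representative, I obtain $d\,\tilde q_* z_h=\Phi(\tilde f_* z_h)+\tilde q_*(\text{fibral})=\Phi(0)+\tilde q_*(\text{fibral})$, so $\tilde q_* z_h\in\mathbb Q\cdot\Delta^*$ by (A). Together with $\tilde q_* z_v\in\mathbb Q\cdot\Delta^*$, this proves the proposition.

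The main obstacle is the spreading-out step in (B): one must check carefully that a rational equivalence valid on the generic fibre of $\tilde f^{-1}(B)\to B$ extends to a genuine rational equivalence on the total space \emph{modulo fibral cycles}, i.e. that the error term is supported on finitely many fibres. This is exactly the localization-sequence input $CH_*(\text{special fibres})\to CH_*(\tilde f^{-1}(B))\to CH_*(\text{generic fibre})\to 0$, combined with birational invariance of $CH_0$ for the smooth proper models; keeping degrees rational is what forces the passage to the degree-$d$ multisection. By contrast, input (A) is short once one notices that the rulings of the residual quadric surface $S_x$ provide fibral curves that already lie in $F_1(L)$, so that Corollary~\ref{CorImOfFanoOf3SpacesDoesNotDependOn3Spaces} applies directly.
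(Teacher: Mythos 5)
Your input (B) is essentially the paper's Lemma~\ref{ThmSpreading} (the Bloch--Srinivas-type spreading argument, proved there with the same multisection device), and your ruling argument in (A) is a nice explicit version of the paper's Case (a) of Lemma~\ref{LmmSurjectivity} for \emph{smooth} residual quadrics. But the last sentence of (A) is a genuine gap: you claim that any fibral $1$-cycle over a special point of $\mathcal P_5$ is a specialization of fibral cycles over general points, so that $\tilde q_*$ of it again lands in $\mathbb Q\cdot\Delta^*$. Specialization gives a map $CH_1(\text{general fibre})_{\mathbb Q}\to CH_1(\text{special fibre})_{\mathbb Q}$, but this map is far from surjective here, and it is surjectivity you need. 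Concretely: when the residual quadric $Q$ acquires one node off $L$, the fibre $F_1(Q)$ becomes a union of two $\mathbb P^2$-bundles over $\mathbb P^1$ (Sublemma~\ref{LmmFiberOverASimpleSingularQuadric}) with $CH_1$ of rank $\geq 2$, which cannot be hit from $CH_1(\mathbb P^3)_{\mathbb Q}=\mathbb Q$; when $Q$ has rank $2$ the fibre is a union of two $\mathrm{Gr}(2,4)$'s and even jumps in dimension from $3$ to $4$. So fibral cycles over special points are simply not covered by your argument, and your (B) produces its fibral error terms over \emph{unspecified} points of the base, possibly exactly these bad ones.

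The paper closes this hole with two ingredients your proposal omits. First, a codimension count (Lemma~\ref{LmmCodim}, via the stratification $R_4,R_3,R_2,R_1$ of the singular fibres) shows that the preimage of the bad locus has codimension $\geq 2$ in $\widetilde{F_{3,1}}$; this is assumption (a) of Lemma~\ref{ThmSpreading} and is what lets one move the cycle off the bad fibres before spreading, so that all fibral contributions sit over the good open set $\mathcal P_5^0$. Second, even over $\mathcal P_5^0$ the fibres include one-nodal quadrics with node off $L$, whose $CH_1$ has rank $2$; the paper's Lemma~\ref{LmmSurjectivity}, Case (b), verifies by an explicit computation with the classes $h^2\cdot p_1^*(\ell_1)$ and $h\cdot p_1^*(pt)$ that both generators still come from $CH_1(F_1(L\cap Q))_{\mathbb Q}$, whence from $F_1(L)$ and $\mathbb Q\cdot\Delta^*$ via Corollary~\ref{CorImOfFanoOf3SpacesDoesNotDependOn3Spaces}. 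Your ruling trick handles only the rank-one (smooth-fibre) case, so without an analogue of Lemma~\ref{LmmCodim} and of Case (b) of Lemma~\ref{LmmSurjectivity}, your reduction leaves the nodal and degenerate fibral cycles unaccounted for, and the proof does not go through as written.
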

We will use for this the following  general result of Bloch-Srinivas type~\cite{VoisinCitrouille}:
\begin{lemma}\label{ThmSpreading}
    Let $f: Z\to B$ be a surjective projective morphism between algebraic varieties. Let $B^0$ be an open dense subset of $B$ such that\\
    (a) $Z-f^{-1}(B^0)$ is of codimension at least $2$, and that\\
    (b) Every fiber of $f$ over $B^0$ has trivial $CH_0$.\\
    Let $z\in CH_1(Z)_{\mathbb Q}$ be a $1$-cycle in $Z$ such that $f_*(z)=0\in CH_1(B)_{\mathbb Q}$. Then $z$ is supported on the fibers of $f$ over $B^0$. More precisely, there are points $b_1, \ldots, b_r\in B^0$, such that $z$ is $\mathbb Q$-rationally equivalent to a $1$-cycle supported on $f^{-1}(b_1)\cup\ldots\cup f^{-1}(b_r)$.
\end{lemma}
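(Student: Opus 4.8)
The plan is to run a relative version of the Bloch--Srinivas ``spreading of rational equivalence'' technique, using (b) fiberwise and (a) to dispose of the part of $z$ lying over $B\setminus B^0$. Write $W:=f^{-1}(B\setminus B^0)$, so that $\dim W\le \dim Z-2$ by (a). The first step is to arrange that no component of $z$ is contained in $W$: when $Z$ is smooth this is immediate from Chow's moving lemma, since a $1$-cycle can be moved within its rational equivalence class to meet the subvariety $W$ of dimension $\le \dim Z-2$ in dimension $\le 1+\dim W-\dim Z<0$, hence to be disjoint from it (in general one argues by the localization sequence and induction on $\dim W$). After this reduction every irreducible component $C$ of $z$ meets $f^{-1}(B^0)$ in a dense open set; thus $C$ is either \emph{vertical}, i.e.\ $f(C)$ is a point of $B^0$ — in which case $C$ already sits on a fiber over $B^0$ — or \emph{horizontal}, i.e.\ $\Gamma_C:=f(C)$ is a curve meeting $B^0$. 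Setting aside the (good) vertical components, I am left with the horizontal part $z_h$, which still satisfies $f_*z_h=f_*z=0$.

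Next I would fix an auxiliary degree-one multisection. Since $f$ is surjective, a general complete intersection inside $f^{-1}(B^0)$ gives a subvariety finite and surjective of some degree $N$ over $B^0$; dividing by $N$ produces a cycle $O\in CH_{\dim B}(f^{-1}(B^0))_{\mathbb Q}$ of relative degree $1$ over $B^0$. For a curve $\Gamma$ meeting $B^0$ the refined intersection $O\cdot f^{-1}(\Gamma)$ is again a degree-one multisection over $\Gamma\cap B^0$. The heart of the argument is then the following claim: for each horizontal component $C$ with $d:=\deg(C/\Gamma_C)$, the difference $C-d\,\bigl(O\cdot f^{-1}(\Gamma_C)\bigr)$ is $\mathbb Q$-rationally equivalent to a $1$-cycle supported on finitely many fibers over points of $B^0$. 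Indeed this difference has relative degree $0$ over $\Gamma_C$, so over the generic point $\eta$ of $\Gamma_C$ it restricts to a degree-zero $0$-cycle on the fiber $Z_\eta$; by (b) and a standard limit argument the generic fiber $Z_\eta$ has trivial $CH_0$, so this $0$-cycle vanishes in $CH_0(Z_\eta)_{\mathbb Q}$, and spreading out the corresponding rational equivalence shows that the difference becomes equivalent to a cycle carried by the fibers over a finite subset of $\Gamma_C$, which one arranges to lie in $B^0$.

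Summing these equivalences over the horizontal components with their multiplicities $n_C$ gives
\[
z_h \sim_{\mathbb Q} O\cdot f^{*}\Bigl(\textstyle\sum_C n_C\,d_C\,[\Gamma_C]\Bigr)=O\cdot f^{*}(f_*z_h)
\]
modulo cycles supported on finitely many fibers over $B^0$, where $f^{*}$ denotes the (refined) pullback on $1$-cycles and the identity $\sum_C n_C d_C\,f^{-1}(\Gamma_C)=f^{*}(f_*z_h)$ is the compatibility (projection) formula for the multisection $O$. Since $f_*z_h=f_*z=0$ in $CH_1(B)_{\mathbb Q}$, the right-hand side vanishes, and therefore $z=z_v+z_h$ is $\mathbb Q$-rationally equivalent to a $1$-cycle supported on finitely many fibers over $B^0$, as required.

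The step I expect to be the main obstacle is the spreading argument of the third paragraph: turning the fiberwise (generic-fiber) rational triviality coming from (b) into an honest rational equivalence on $Z$ valid modulo only finitely many fibers, while keeping all the exceptional fibers over $B^0$. Making the intersection-theoretic bookkeeping precise — in particular the definition of $f^{*}$ on $1$-cycles and the compatibility identity $\sum_C n_C d_C\,f^{-1}(\Gamma_C)=f^{*}(f_*z_h)$ — together with the clean use of (a) to remove the components over $B\setminus B^0$, is where the real care is needed.
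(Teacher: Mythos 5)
Your proposal is correct and follows essentially the same route as the paper's proof: move $z$ off the codimension-$\geq 2$ locus $f^{-1}(B\setminus B^0)$ by the moving lemma, compare each horizontal component with a degree-normalized multisection pulled back over its image curve, and invoke the Bloch--Srinivas spreading argument (your ``standard limit argument'') to conclude the difference is supported on finitely many fibers, then sum and use $f_*z=0$. The only cosmetic difference is that you keep the relative degree $d_C$ of each component, whereas the paper first applies the moving lemma a second time (using $\dim B\geq 2$) to reduce to components birational onto their image curves.
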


\begin{proof}
    The case when $\dim B=1$ is rather trivial, so let us assume $\dim B\geq 2$. Let $W\subset Z$ be a multisection of degree $N$ of $f:Z\to B$. Let $f_W: W\to B$ be the restriction of $f$ to $W$. Since $Z-f^{-1}(B^0)$ is of codimension at least $2$, by Chow moving lemma, we may assume that $z$ is supported on $f^{-1}(B^0)$. We may also assume that none of the components of $z$ lies entirely in the branched locus of $f_W$. Write $z=z_1+z_2+\ldots+z_s-z_{s+1}-\ldots -z_t$ with $z_i$ irreducible curves in $f^{-1}(B^0)$. Consider $B_{z_i}=f(z_i)\subset B^0$. We may assume $B_{z_i}$ is dimension $1$ since otherwise the component $z_i$ is supported on the fibers of $f$. Let $f_i: f^{-1}(B_{z_i})\to B_{z_i}$ be the restriction of $f$. Since $\dim B\geq 2$, we may further assume, by Chow moving lemma, that the map $f_i: f^{-1}(B_{z_i})\to B_{z_i}$ restricted to $z_i$ is a birational map. Since every fiber of $f$ over $B^0$ has trivial $CH_0$, the $1$-cycle $z_i-\frac1N f^{-1}_W(B_{z_i})$ in $CH_1(f^{-1}(B_{z_i}))$ restricts to $0$ for the general fiber of $f_i: f^{-1}(B_{z_i})\to B_{z_i}$, so by the Bloch-Srinivas construction \cite{Voisin}, the cycle $z_i-\frac1N f^{-1}_W(B_{z_i})$ is supported on the fibers of $f_i$. Summing up the components, we find that the $1$-cycle
    $z-\frac1N f^*_Wf_*(z)\in CH_1(Z)_{\mathbb Q}
    $
    is supported on the fibers of $f$ over $\bigcup_i B_{z_i}\subset B^0$. However, $f_*(z)=0$ by assumption. Hence, $z\in CH_1(Z)_{\mathbb Q}$ is supported on the fibers of $f$ over $B^0$.
\end{proof}
We are going to apply Lemma~\ref{ThmSpreading} to $\tilde f: \widetilde{F_{3,1}}\to \mathcal P_5$ and to the following open set $\mathcal P_5^0$ defined as the set parametrizing the pairs $(L_3, P_4)$ such that \\
    (a) either $Q$ is smooth,\\
    (b) or $Q$ is singular at only one point $y$ and $L$ does not contain $y$.\\
It is clear that the fibers of $\tilde f$ over $\mathcal P^0_5$ are $CH_0$ trivial. Our first step is thus to check assumption (a) in Lemma~\ref{ThmSpreading}. We prove by a case by case analysis the following
\begin{lemma}\label{LmmCodim}
    $\tilde f^{-1}(\mathcal P_5^0)\subset \widetilde{F_{3,1}}$ has complement of codimension $\geq 2$.
\end{lemma}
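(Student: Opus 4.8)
The plan is to stratify the target $\mathcal P_5$ by the isomorphism type of the residual quadric threefold $Q = Q_x \subset P'_x \cong \mathbb P^4$ and to bound, stratum by stratum, the dimension of its preimage under $\tilde f$. First I record the relevant dimensions. For a general cubic $8$-fold one has $\dim F_1(Y) = 12$ and $\dim F_3(Y) = 4$, so that $\mathcal P_5$, being a $\mathbb P^5$-bundle over $F_3(Y)$, has dimension $9$; since $q\colon F_{3,1}\to F_1(Y)$ is dominant and generically finite, $\dim \widetilde{F_{3,1}} = 12$, while the general fibre of $\tilde f$ is birational to the variety of lines on a smooth quadric threefold, which is $\cong\mathbb P^3$ and hence of dimension $3$ (consistently, $9+3 = 12$). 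It therefore suffices to show that $\tilde f^{-1}(\mathcal P_5\setminus \mathcal P_5^0)$ has dimension at most $10$.

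Next I would write $\mathcal P_5\setminus \mathcal P_5^0$ as a union of locally closed strata indexed by the rank of the symmetric form defining $Q$: the stratum $S_1$ where $Q$ has rank $4$ (a single node) with vertex lying on $P_x$; the stratum $S_2$ where $Q$ has rank $3$ (singular along a line); and the stratum $S_3$ where $Q$ has rank $\le 2$ (a union of hyperplanes). Rank $0$ would force $P'_x\subset Y$, which is excluded since $F_4(Y)=\emptyset$ for general $Y$, and rank $1$ imposes so many conditions that it contributes nothing of dimension $>3$. For general $Y$ the classifying map $(x,P'_x)\mapsto [Q_x]$ meets the rank loci in the expected codimension, so the rank-$4$ discriminant is a divisor in $\mathcal P_5$ of dimension $8$; imposing moreover that the node lie on $P_x$ is one further condition (generically the node lies off $P_x$, since otherwise the quadric surface $Q_x\cap P_x$ would be singular), whence $\dim S_1 = 7$. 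Using that the loci of rank $\le 3$ and rank $\le 2$ have codimension $3$ and $6$ in the space of quadrics, one gets $\dim S_2 = 6$ and $\dim S_3 \le 3$.

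The third step bounds the fibre dimension of $\tilde f$ over each stratum by $\dim F_1(Q)$ for the corresponding degenerate quadric. Projecting from the vertex shows that a rank-$4$ or a rank-$3$ quadric threefold still carries only a $3$-dimensional family of lines (every line lies in one of the $\mathbb P^2$'s swept out by the vertex), whereas a quadric of rank $\le 2$, being supported on copies of $\mathbb P^3$, carries a $4$-dimensional family. Combining with the dimensions above yields $\dim \tilde f^{-1}(S_1)\le 7+3 = 10$, $\dim \tilde f^{-1}(S_2)\le 6+3 = 9$, and $\dim \tilde f^{-1}(S_3)\le 3+4 = 7$; taking the maximum gives the desired bound $10$, i.e. codimension $\ge 2$.

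I expect the main obstacle to be the stratum $S_1$, which is the only one producing codimension exactly $2$, so every estimate there must be sharp. Two points require genuine care: (i) justifying, for general $Y$, that the residual quadrics $Q_x$ are generically smooth and that the rank loci are cut out in the expected codimension, i.e. that the classifying map to the space of quadrics is suitably transverse; and (ii) controlling the fibres of $\tilde f$ over the singular strata after the desingularization $\tau$, ensuring that the resolution of the indeterminacy locus (lines contained in $P_x$) does not create fibres of dimension exceeding $\dim F_1(Q)$, which over $S_1$ would otherwise destroy the codimension-$2$ estimate. Once these are settled, the strata $S_2$ and $S_3$ are comfortably of codimension $\ge 3$ and present no difficulty.
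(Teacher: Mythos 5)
Your proposal is correct and follows essentially the same route as the paper: the paper likewise stratifies the complement of $\mathcal P_5^0$ by the rank of the residual quadric, shows the rank-$4$-with-vertex-on-$L$ stratum has codimension $\geq 2$ (your observation that a node on $P_x$ forces $Q\cap P_x$ to be singular is exactly the mechanism behind the paper's degree-$7$ versus degree-$4$ discriminant comparison in its Sublemma), establishes codimensions $3$, $6$, $10$ for the rank $\leq 3$, $\leq 2$, $\leq 1$ loci via the expected-codimension/transversality argument you flagged as point (i) (the paper settles it by exhibiting the universal quadric as a section of the globally generated bundle $\mathrm{Sym}^2\mathcal F_5^*\otimes\mathcal H^*$), and uses the same fiber dimensions ($3$, $3$, $4$) for lines on the degenerate quadrics. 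Your dimension bookkeeping ($10 \leq 12-2$, with the rank-$4$ stratum the unique sharp case) matches the paper's codimension bookkeeping exactly.
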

\begin{proof}
    The complement $R$ of $\mathcal P^0_5$ in $\mathcal P_5$ is stratified by the following subsets $R_4, R_3, R_2, R_1$, where $R_i$ parametrizes $(L, P_4)\in R$ such that the residual $Q$ is of rank $i$.
        
         \textbf{Analysis of $R_4$.} The stratum $R_4$ parametrizes $(L, P_4)\in \mathcal P_5$ such that  $Q$ is singular at one point (equivalently, the rank of $Q$ is $4$) and that $L$ contains the singular point of $Q$. In this case, $Q$ is a cone from its singular point over a smooth quadric surface. 
        \begin{sublemma}\label{LmmLocusOfSimpleSingularQuadric}
            For any $3$-dimensional linear subspace $L$ contained in $Y$, the set of $4$-dimensional subspaces $P_4\subset \mathbb P^9$ containing $L$ such that $Q$ has a single singular point lying on $L$ has codimension at least $2$ in $\mathbb P^5=$\{$4$-dimensional subspaces $P_4\subset \mathbb P^9$ containing $L$\}.
        \end{sublemma}
        \begin{proof}
            Without loss of generality, let us assume $L=\{(x_0,\ldots, x_3, 0,\ldots, 0)\}\subset \mathbb P^9$. The fact that $L\subset Y$ implies that the defining equation of $Y$ is of the form
            \[
            f(x_0,\ldots, x_9)=\sum_{i=4}^9 x_iQ_i(x_0, \ldots, x_9),
            \]
            where $Q_i$ is a quadratic polynomial in the variables $x_0, \ldots, x_9$, for each $i\in \{4,\ldots, 9\}$. Let $\mathbf a:=(a_0, a_1, \ldots, a_5)\in \mathbb P^5$. Then $\mathbf a$ determines a dimension $4$ linear subspace $P_4$ containing $L$ as 
            \[
            P_4=\{(x_0, x_1, x_2, x_3, ta_0, ta_1,\ldots, ta_5): (x_0, x_1, x_2, x_3, t)\in \mathbb P^4\}.
            \]
            The corresponded residual quadric hypersuface $Q_{\mathbf a}$ is thus defined by 
            \[
            Q_{\mathbf a}(x_0, x_1, x_2, x_3, t)=\sum_{i=4}^9 a_{i-4} Q_i(x_0, x_1, x_2, x_3, ta_0, ta_1,\ldots, ta_5).
            \]
            We can identify the quadratic form $Q_{\mathbf a}(x_0, x_1, x_2, x_3, t)$ with a $5\times 5$ symmetric matrix that we still denote as $Q_{\mathbf a}$. Viewed as a function of $\mathbf a\in \mathbb P^5$, the matrix $Q_{\mathbf a}$ is a section of $\mathrm{Sym}^2(\mathcal O(1)_{\mathbb P^5}\oplus \mathcal O_{\mathbb P^5}^4)\otimes \mathcal O_{\mathbb P^5}(1)$, and thus the locus of $\mathbf a\in\mathbb P^5$ where $Q_{\mathbf a}$ degenerates is a degree $7$ hypersurface in $\mathbb P^5$. On the other hand, the quadratic polynomial of $Q_{\mathbf a}\cap L$ is $Q_{\mathbf a}(x_0, x_1, x_2, x_3, t=0)$, which, viewed as a $4\times 4$ symmetric matrix varing with $\mathbf a$, is a section of $\mathrm{Sym}^2( \mathcal O_{\mathbb P^5}^4)\otimes \mathcal O_{\mathbb P^5}(1)$, so that the degenerate locus of  $Q_{\mathbf a}\cap L$ is of degree $4$. Therefore, there exists a point $\mathbf a\in \mathbb P^5$ such that $Q_{\mathbf a}$ is singular whereas $Q_{\mathbf a}\cap L$ is smooth. Therefore, the locus of $\mathbf a\in\mathbb P^5$ such that $Q_{\mathbf a}$ is singular at exactly one point and that this point lies in $L$ is strictly contained, as a closed subset, in the locus where $Q_{\mathbf a}$ is singular at only one point. The latter locus is of codimension $1$ in $\mathbb P^5$ since it is defined by the vanishing of the determinant of the matrix $Q_{\mathbf a}$. Hence, the set of $4$-dimensional subspaces $P_4\subset \mathbb P^9$ containing $L$ such that $Q$ has a single singular point lying on $L$ has codimension at least $2$ in $\mathbb P^5$, as desired.
        \end{proof}
        \begin{sublemma}\label{LmmFiberOverASimpleSingularQuadric}
            The fiber of $\tilde f: \widetilde{F_{3,1}}\to \mathcal P_5$ over an element $(L,P_4)$ such that the corresponding residual quadric $Q$ has only one singularities is isomorphic to the union 
            \[
            \mathcal P_2^*\cup \mathcal P_2^*,
            \]
            where $\mathcal P_2^*$ is a $\mathbb P^2$-bundle over $\mathbb P^1$, and the two $\mathcal P_2^*$ intersect at a smooth quadric surface.
        \end{sublemma}
        \begin{proof}
            The singular quadric hypersurface $Q$ is a cone from its only singular point $y$ over a smooth quadric surface $Q'$. Each line in $Q'$, together with $x$ determines a plane, and every line in $Q$ lies in some of these planes. There are two pencils of lines in $Q'$, each of which induces a pencil of planes in $Q'$. Each plane contains a $\mathbb P^2$ of lines. Hence, the lines in $Q$ is the union of two $\mathbb P^2$-bundles over $\mathbb P^1$. The intersection of these $\mathbb P^2$-bundles is the set of lines in $Q$ that pass through the singular point $y$, which is in turn isomorphic to $Q'$. 
        \end{proof}
        Sublemma~\ref{LmmLocusOfSimpleSingularQuadric} implies that $R_4\subset \mathcal P_5$ is of codimension at least $2$. Sublemma~\ref{LmmFiberOverASimpleSingularQuadric} shows that the dimension of the fiber of $\tilde f$ over $R_4$ is $3$, the same with that of the general fiber of $\tilde f: \widetilde{F_{3,1}}\to \mathcal P_5$. Hence, the codimension of $\tilde f^{-1}(R_4)$ is at least $2$. This complete the analysis of $R_4$.
        
        \textbf{Analysis of $R_3$.} The stratum $R_3$ parametrizes $(L, P_4)\in \mathcal P_5$ such that $Q$ is of rank $3$. In this case, $Q$ is singular along a line and is a cone from one of its singular point over a quadric cone surface.
        \begin{sublemma}\label{LmmCodimOfStrata}
            The stratum $R_3, R_2, R_1$ is of codimension $3, 6, 10$ in $\mathcal P_5$. In particular, $R_1=\emptyset$.
        \end{sublemma}
        \begin{proof}
            Let $\mathcal E_4$ and $\mathcal Q_6$ be the tautological subbundle, respectively, tautological quotient bundle over $F_3(Y)$. Then $\mathcal P_5$ is nothing but the projectivization of $\mathcal Q_6$. Let $\mathcal H$ be the Hopf bundle of $\mathbb P(\mathcal Q_6)$, which is a subbundle of $\pi^*\mathcal Q_6$. Let $\mathcal F_5$ be the kernel of the composite of the canonical maps $V_{10}\to \pi_*\mathcal Q_6\to \pi_*\mathcal Q_6/\mathcal H$, where the first map is the universal quotient map of bundles of $\mathbb P(\mathcal Q_6)$. Then there is a natural exact sequence of vector bundles on $\mathbb P(\mathcal Q_6)$:
            \[
            0\to \pi^*\mathcal E_4\to \mathcal F_5\to \mathcal H\to 0.
            \]
            The defining equation of $Y\subset \mathbb P^9$ gives a section of the bundle $\mathrm{Sym}^3\mathcal F_5^*$ whose image in $\pi^*\mathrm{Sym}^3\mathcal E_4^*$ vanishes since the fibers of $\mathbb P(\mathcal E_4)$ are by definition $3$-dimensional linear subspaces in $Y$. Hence, the universal residual quadric hypersurface is defined by a section of the bundle $\mathrm{Sym}^2\mathcal F_5^*\otimes \mathcal H^*$. Notice that $\mathrm{Sym}^2\mathcal F_5^*\otimes \mathcal H^*$ is generated by global sections. Since $Y$ is general, the locus where such a quadratic form is of rank $\leq r$ is $\binom{6-r}{2}$, for $r\in\{0,\ldots, 4\}$, as desired.
        \end{proof}
        \begin{sublemma}\label{LmmFiberOverR3}
            The fiber of $\tilde f: \widetilde{F_{3,1}}\to \mathcal P_5$ over a point $(L, P_4)$ in $R_3$ is of dimension $3$.
        \end{sublemma}
        \begin{proof}
            The residual quadric hypersurface $Q$ is singular along a line and is a cone from one of its singular point over a quadric cone surface. By the same argument as in Sublemma~\ref{LmmFiberOverASimpleSingularQuadric}, the variety of lines in $Q$ is isomorphic to a $\mathbb P^2$-bundle over a smooth conic, thus has dimension $3$.
        \end{proof}
        By Sublemma~\ref{LmmCodimOfStrata} and Sublemma~\ref{LmmFiberOverR3}, the codimension of $\tilde f^{-1}(R_3)$ is $3$. This complete the analysis of $R_3$.
        
        \textbf{Analysis of $R_2$.} The stratum $R_2$ parametrizes $(L, P_4)\in \mathcal P_5$ such that $Q$ is of rank $2$. In this case, $Q$ is a union of two $3$-dimensional linear subspaces intersecting along a plane. It is clear that in this case, the variety of lines in $Q$ is isomorphic to the union of two $\mathrm{Gr}(2,4)$ intersecting along a $\mathbb P^2$. Hence, the dimension of fibers of $\tilde f$ over $R_2$ is $4$. By Sublemma~\ref{LmmCodimOfStrata}, the codimension of $\tilde f^{-1}(R_2)$ is $5$. This complete the analysis of $R_2$.
        
        \textbf{Analysis of $R_1$.} The stratum $R_1$ parametrizes $(L, P_4)\in \mathcal P_5$ such that $Q$ is of rank $1$. By Sublemma~\ref{LmmCodimOfStrata}, this case does not happen.
        
        The case where $Q$ is of rank $0$ does not happen, since in this case, $P_4$ is contained in $Y$, which cannot happen if $Y$ is a general cubic $8$-fold.
\end{proof}

\begin{lemma}\label{LmmSurjectivity}
    For any $(P, P_4)$ in $\mathcal P_5^0$, with associated $3$-dimensional quadric $Q$, the natural morphism 
\[
CH_1(F_1(P\cap Q))_{\mathbb Q}\to CH_1(F_1(Q))_{\mathbb Q}
\]
is surjective.
\end{lemma}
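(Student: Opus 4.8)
The plan is to treat separately the two strata of $\mathcal P_5^0$, using in each the explicit description of the fibre established above, and to reduce surjectivity to identifying explicit generators of $CH_1(F_1(Q))_{\mathbb Q}$. First I would record that in \emph{both} strata $P\cap Q$ is a smooth quadric surface: in case (a) it is a hyperplane section of a smooth quadric threefold, and in case (b), since $P$ avoids the vertex $y$ of the cone $Q$, projection from $y$ identifies $P\cap Q$ with the smooth base quadric. Hence $F_1(P\cap Q)=\mathbb P^1\sqcup\mathbb P^1$ is the disjoint union of the two rulings, $CH_1(F_1(P\cap Q))_{\mathbb Q}=\mathbb Q^2$, and the image of the map in question is spanned by the two classes $\sigma_A,\sigma_B$ obtained by pushing the two ruling families of $P\cap Q$ forward into $F_1(Q)$.

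In the smooth case (a) one has $F_1(Q)\cong\mathbb P^3$, so $CH_1(F_1(Q))_{\mathbb Q}=\mathbb Q$ is generated by the class of a line. Each ruling of the smooth surface $P\cap Q$ is a one–parameter family of pairwise distinct lines of $Q$, so the induced morphism $\mathbb P^1\to F_1(Q)=\mathbb P^3$ is non-constant and its image is a curve of positive degree; its class is a nonzero, hence generating, element of $\mathbb Q$. This settles case (a), which is the case relevant to the application (the spreading Lemma~\ref{ThmSpreading} supports the cycle on finitely many fibres, which one may move into the smooth stratum).

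For the nodal case (b) I would use $F_1(Q)=A\cup_{Q'}B$ from Sublemma~\ref{LmmFiberOverASimpleSingularQuadric}, with $A,B$ two $\mathbb P^2$-bundles over $\mathbb P^1$ glued along the smooth quadric $Q'\cong\mathbb P^1\times\mathbb P^1$ of lines through the vertex, and exploit the right-exact Mayer--Vietoris sequence
\[
CH_1(Q')_{\mathbb Q}\longrightarrow CH_1(A)_{\mathbb Q}\oplus CH_1(B)_{\mathbb Q}\longrightarrow CH_1(F_1(Q))_{\mathbb Q}\longrightarrow 0 .
\]
Since the degree-one Chow group of a $\mathbb P^2$-bundle over $\mathbb P^1$ is freely generated by a section class and a fibre-line class, and since each ruling of $P\cap Q$ lies plane by plane in a single ruling of $Q$ and so defines a \emph{section} $\sigma_A$ of $A$ (respectively $\sigma_B$ of $B$), the task reduces to showing that $\sigma_A,\sigma_B$ together with the image of $CH_1(Q')$ span $CH_1(A)\oplus CH_1(B)$. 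Concretely one computes the gluing maps $i_{A*},i_{B*}$ on the two rulings of $Q'$ and tries to express the fibre-line classes $f_A,f_B$ in terms of $\sigma_A,\sigma_B$ modulo these relations.

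The main obstacle is precisely this bookkeeping on the reducible fibre: one must pin down the four classes $\sigma_A,\sigma_B,f_A,f_B$ through intersection numbers—for instance the Plücker degrees of the corresponding families of lines, or the relative $\mathcal O(1)$-degrees of the sections—and check that the ruling sections coming from $F_1(P\cap Q)$ do generate the quotient. The delicate point is the compatibility of the two rulings across the gluing locus $Q'$, where the classes coming from lines through the vertex intervene; this is where I expect the real work to lie, and a clean treatment may well require passing to the fibre of $\tilde f$ itself rather than to $F_1(Q)$ directly. Once this is verified, surjectivity in case (b) follows, and combined with case (a) it proves the lemma.
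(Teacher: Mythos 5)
Your case (a) is correct and is the paper's argument (the paper invokes connectedness and Fano-ness of $F_1(Q)$ to get $CH_1(F_1(Q))_{\mathbb Q}=\mathbb Q$; your positivity remark just makes the nonvanishing of the image explicit). But your parenthetical dismissal of case (b) is not available: the points $b_i$ produced by Lemma~\ref{ThmSpreading} are dictated by the cycle and cannot be moved off the nodal divisor, which is exactly why $\mathcal P_5^0$ is defined to include stratum (b) and why the paper must treat it.

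In case (b) your proposal has a genuine gap, and it sits exactly where you flag it. You set up the same decomposition $F_1(Q)=A\cup_{Q'}B$ and the right-exact Mayer--Vietoris sequence as the paper, but you stop before the decisive verification that $\sigma_A,\sigma_B$ together with $CH_1(Q')$ generate; the paper completes this step by asserting that the two generators $h^2\cdot p_1^*(\ell_1)$ and $h\cdot p_1^*(pt)$ of $CH_1(A)$ map to the classes $z_1$ and $z_2$ (the fibre line of $A$ being represented by the pencil of lines through the vertex inside a plane of the first family, a curve lying in $Q'$ which is a section of $B$). Your worry about the bookkeeping is, however, substantive rather than routine: writing $f_A,f_B$ for the fibre-line classes, the pencil through the vertex has Pl\"ucker degree $1$ while a ruling of the smooth quadric $P\cap Q$ is a conic in the Grassmannian, of degree $2$, so the identification ``$f_A=z_2$'' cannot hold on the nose. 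Degree-correct bookkeeping gives, in $CH_1(A)$ and $CH_1(B)$ respectively, that the pencil-sections equal $z_1-f_A$ and $z_2-f_B$, whence the two Mayer--Vietoris relations (coming from the two rulings of $Q'$) read $f_A=z_2-f_B$ and $z_1-f_A=f_B$. These determine only $z_1=z_2=f_A+f_B$ in $CH_1(F_1(Q))_{\mathbb Q}\cong \mathbb Q f_A\oplus\mathbb Q f_B$, and do \emph{not} place $f_A$ and $f_B$ separately in the span of $z_1,z_2$: the ruling classes span a rank-one subspace of a rank-two group. So the computation you postponed does not merely ``remain to be done'' --- carried out with the correct degrees it obstructs the very generation statement your plan (and the paper's terse identification) relies on, and your closing instinct that a repair should pass to the fibre of $\tilde f$ itself, or otherwise enlarge the source of the map, is pointing at the right place.
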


\begin{proof}
\textbf{Case (a).} In this case, $CH_1(F_1(Q))_{\mathbb Q}=\mathbb Q$ since $F_1(Q)$ is a connected Fano manifold. Therefore, $CH_1(F_1(L\cap Q))_{\mathbb Q}\to CH_1(F_1(Q))_{\mathbb Q}$ has to be surjective. 

\textbf{Case (b).} In this case, $Q$ is a cone from its singular point $y$ over a smooth quadric surface $Q'$. By Lemma~\ref{LmmFiberOverASimpleSingularQuadric}, the Fano variety of lines $F_1(Q)$ is isomorphic to $\mathcal P_2^*\cup \mathcal P_2^*$ where $\mathcal P_2^*$ is a $\mathbb P^2$-bundle over $\mathbb P^1$ and the two $\mathbb P_2^*$ intersect along a smooth quadric surface $Q'$. The variety $F_1(L\cap Q)$ is a disjoint union of two lines $\ell_1$ and $\ell_2$, each representing a pencil of lines on the smooth quadric surface $L\cap Q$. Hence, $CH_1(F_1(L\cap Q))$ is generated freely by two cycles $z_1$ and $z_2$ where $z_i$ represents $\ell_i$ for $i=1,2$. Let us analyze $CH_1(F_1(Q))$. We know that $F_1(Q)=\mathcal P_2^*\cup \mathcal P_2^*$ where each $\mathcal P_2^*$ is a $\mathbb P^2$-bundle over $\ell_i$ respectively. Let us consider the first $\mathcal P_2^*$ which is a $\mathbb P^2$-bundle over $\ell_1$. Let $p_1: \mathcal P_2^*\to \ell_1$ be the corresponding map, and let $h$ be the auti-tautological class of this projective bundle. The Chow group $CH_1(\mathcal P_2^*)$ is then generated by $h^2\cdot p_1^*(\ell_1)$ and $h\cdot p_1^*(pt)$. The image of the class $h^2\cdot p_1^*(\ell_1)$ in $CH_1(F_1(Q))$ is the same as the class of $z_1$ in $CH_1(F_1(Q))$ and the image of the class $h\cdot p_1^*(pt)$ is the as the class of $z_2$ in $CH_1(F_1(Q))$. Hence, the image of $CH_1(F_1(L\cap Q))\to CH_1(F_1(Q))$ contains the image of $CH_1(\mathcal P_2^*)\to CH_1(F_1(Q))$ for the first $\mathcal P_2^*$. Similarly, the image of $CH_1(F_1(L\cap Q))\to CH_1(F_1(Q))$ contains the image of $CH_1(\mathcal P_2^*)\to CH_1(F_1(Q))$ for the second $\mathcal P_2^*$. Therefore, $CH_1(F_1(L\cap Q))\to CH_1(F_1(Q))$ is surjective.
\end{proof}

We finally conclude the proof of Proposition~\ref{ThmF31ToF1}.
\begin{proof}[Proof of Proposition~\ref{ThmF31ToF1}]
    Let $z\in CH_1(\widetilde{F_{3,1}})_{\mathbb Q}$ be a $1$-cycle such that $\tilde f_*(z)=0\in CH_1(\mathcal P_5)_{\mathbb Q}$. Lemma~\ref{LmmCodim} and the discussion above shows that Lemma~\ref{ThmSpreading} can be applied to the map $\tilde f:\widetilde{F_{3,1}}\to \mathcal P_5$ with $\mathcal P_5^0$ the open dense subset, so that we conclude that $z$ is supported on fibers of $\tilde f$ over $\mathcal P_5^0$. Write $z=z_1+\ldots +z_r\in CH_1(\widetilde{F_{3,1}})_{\mathbb Q}$ where $z_i$ is supported on $F_1(Q_i)$ with $Q_i$ is a residual quadric hypersurface coming from a pair $(L_i, P_{4,i})$ in $\mathcal P_5^0$. By Lemma~\ref{LmmSurjectivity}, for each $i$, the the natural morphism $CH_1(F_1(Q_i\cap L_i))_{\mathbb Q}\to CH_1(F_1(Q_i))_{\mathbb Q}$ is surjective. Hence, $\tilde q_*(z)$ lies in the sum of the images of $CH_1(F_1(L_i))_{\mathbb Q}\to CH_1(F_1(Y))_{\mathbb Q}$. By Corollary~\ref{CorImOfFanoOf3SpacesDoesNotDependOn3Spaces}, the cycle $\tilde q_*(z)$ is a multiple of $\Delta^*$, as desired.
\end{proof}

We next prove that all $\Delta_P^*$ has the same Chow class in $CH_1(F_1(Y))_{\mathbb Q}$.

\begin{proposition}\label{PropLinesOfLinesAreConstant}
    Let $P\subset Y$ be a plane. Then $\Delta_P^*=\Delta^*$ in $CH_1(F_1(Y))_{\mathbb Q}$.
\end{proposition}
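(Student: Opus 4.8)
The plan is to first establish that $\Delta_P^*$ is a \emph{rational multiple} of $\Delta^*$ for every plane $P\subset Y$, and then to pin the multiple down to $1$ by a cohomological computation. For the second point, observe that the planes in $Y$ are parametrized by the irreducible variety $X=F_2(Y)$, so the homology class $[\Delta_P^*]\in H_2(F_1(Y),\mathbb Q)$ is independent of $P$: writing $\mathcal L=\{(P,\ell):\ell\subset P\}\subset X\times F_1(Y)$ with a relative hyperplane class $\eta$ on the $\check{\mathbb P}^2$-bundle $pr_1:\mathcal L\to X$, one has $[\Delta_P^*]=pr_{2*}(\eta\cdot pr_1^*[P])$, and $[P]\in H_0(X)$ does not depend on $P$. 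In particular $[\Delta^*]=[\Delta_P^*]$ for all $P$. Moreover $[\Delta^*]\neq 0$, since the Plücker polarization $g$ satisfies $g\cdot\Delta_{P,x}^*=1$: the pencil $\Delta_{P,x}^*$ is a line in $F_1(P)\cong\check{\mathbb P}^2$, and $g$ restricts to $\mathcal O(1)$ there. Hence once we know $\Delta_P^*=c\,\Delta^*$ in $CH_1(F_1(Y))_{\mathbb Q}$, the cycle class map gives $(c-1)[\Delta^*]=0$, whence $c=1$.

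To produce the relation $\Delta_P^*\in\mathbb Q\,\Delta^*$ I would use the correspondence $\widetilde{F_{3,1}}$ and Proposition~\ref{ThmF31ToF1}. By Lemma~\ref{LmmP3CoverADivisor} the $\mathbb P^3$'s in $Y$ sweep out a divisor $D$, and $P\cap D\neq\emptyset$; choose a general point $x\in P\cap D$ and a general $L\cong\mathbb P^3\subset Y$ through $x$, so that $L\cap P=\{x\}$. Consider the curve $C=\{(L,\ell):\ell\subset P,\ x\in\ell\}\subset F_{3,1}$, which lies in the fibre $p^{-1}(L)$. For $\ell\in C$ we have $\ell\cap L=\{x\}$ and $\ell\not\subset L$, so $C$ avoids the indeterminacy locus of $f$ and lifts isomorphically to $\tilde C\subset\widetilde{F_{3,1}}$. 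Since $q|_C$ is the identity on lines, $\tilde q_*[\tilde C]=\Delta_P^*$. On the other hand $\tilde f(L,\ell)=\langle L,\ell\rangle$ runs over the $\mathbb P^4$'s with $L\subset\mathbb P^4\subset\langle L,P\rangle$, and since $\langle L,P\rangle\cong\mathbb P^5$ this is a line $\lambda$ inside the fibre $\mathbb P^5_L=\pi^{-1}(L)$ of $\pi:\mathcal P_5\to F_3(Y)$, with $\tilde f|_{\tilde C}:\tilde C\to\lambda$ an isomorphism; thus $\tilde f_*[\tilde C]=[\lambda]$ is the class of a line in a fibre of $\pi$.

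Next I would construct a correction cycle over the same $\lambda$. For $t\in\lambda$ write $\mathbb P^4_t$ for the corresponding $4$-plane and $Q_t$ for the residual quadric threefold, $\mathbb P^4_t\cap Y=L\cup Q_t$; the surface $S_t:=Q_t\cap L$ is a quadric in $L$. Choosing algebraically a ruling line $\ell_t\subset S_t\subset Q_t$ for each $t$ determines, via the identification of the fibres of $\tilde f$ with the families of lines in $Q_t$, a section $z'\subset\widetilde{F_{3,1}}$ of $\tilde f$ over $\lambda$. Then $\tilde f_*[z']=[\lambda]$, while $\tilde q_*[z']$ is the class of the curve $\{\ell_t\}\subset F_1(L)\subset F_1(Y)$; by Corollary~\ref{CorImOfFanoOf3SpacesDoesNotDependOn3Spaces} the image of $CH_1(F_1(L))_{\mathbb Q}\to CH_1(F_1(Y))_{\mathbb Q}$ is spanned by $\Delta^*$, so $\tilde q_*[z']\in\mathbb Q\,\Delta^*$. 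Now $\tilde f_*(\tilde C-z')=[\lambda]-[\lambda]=0$, so Proposition~\ref{ThmF31ToF1} applies and yields $\tilde q_*(\tilde C-z')=\alpha\,\Delta^*$ for some $\alpha\in\mathbb Q$. Therefore $\Delta_P^*=\tilde q_*[\tilde C]=\alpha\,\Delta^*+\tilde q_*[z']\in\mathbb Q\,\Delta^*$, and combined with the first paragraph this gives $\Delta_P^*=\Delta^*$.

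The main obstacle is the construction of the correction cycle $z'$: one needs a $1$-cycle supported over a single $\mathbb P^3\subset Y$ whose image under $\tilde f$ is exactly a fibre line (so that it cancels $[\lambda]$) but whose constituent lines all lie in $L$ (so that its $\tilde q$-image is controlled by Corollary~\ref{CorImOfFanoOf3SpacesDoesNotDependOn3Spaces}). Since these lines lie in the indeterminacy locus of the rational map $f$, the construction must be carried out on the resolution $\widetilde{F_{3,1}}$ using the description of its fibres as families of lines in the residual quadrics $Q_t$; checking that the ruling assignments $t\mapsto\ell_t\subset S_t$ patch to an honest section of $\tilde f$ over $\lambda$ (with the correct degree) is the delicate point, and it is where one must be careful about the geometry of the desingularization $\tau$.
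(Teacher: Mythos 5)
Your framework is the paper's own --- lift a pencil of lines to $\widetilde{F_{3,1}}$, kill its $\tilde f$-image in $CH_1(\mathcal P_5)_{\mathbb Q}$, and invoke Proposition~\ref{ThmF31ToF1} --- and much of your proposal is sound: the homological constancy of $[\Delta_P^*]$ over the connected family $F_2(Y)$ together with $g\cdot \Delta_{P,x}^*=1$ correctly normalizes the constant (this plays the same role as the paper's ``$\alpha=0$ by degree reasons''), and your computation that $\tilde C$ avoids the indeterminacy of $f$ with $\tilde f_*[\tilde C]=[\lambda]$ is correct. The genuine gap is the correction cycle $z'$, which you flag as ``delicate'' but which in fact fails as stated, in two layers. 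First, the algebraic choice $t\mapsto \ell_t$ of a ruling line of $S_t$ over $\lambda\cong\mathbb P^1$ does not exist: the surfaces $S_t=Q_t\cap L$ form a pencil of quadrics in $L\cong\mathbb P^3$ (the restriction to $L$ of the residual quadric depends linearly on $t$), and the two rulings of a smooth member are exchanged by monodromy around the degenerate members; for a general pencil the discriminant has four simple roots, so the parameter space of rulings is an irreducible double cover of $\lambda$ of genus one, which has no section, and a fortiori the family of ruling lines has no section. This layer is repairable with $\mathbb Q$-coefficients, replacing the section by a multisection (e.g.\ all lines of $S_t$ meeting a fixed general line $m\subset L$, of degree $4$ over $\lambda$) and cancelling $d\tilde C$ against $z'$ with $\tilde f_*z'=d[\lambda]$. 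Second, and unrepaired by that trick: every such line satisfies $\ell_t\subset L$, which is exactly the indeterminacy locus of $f$, since there $\langle L,\ell_t\rangle=L$ is not a $\mathbb P^4$. So $z'$ must be built inside the exceptional locus of the resolution $\tau$, and you must show that over each pair $(L,\ell_t)$ there is a point of $\widetilde{F_{3,1}}$ whose $\tilde f$-image is the prescribed $t\in\lambda$, and that the resulting curve has the claimed pushforwards under $\tilde f$ and $\tilde q$. The paper asserts nothing about $\tau$ beyond its existence, so there is nothing to quote; this step needs an argument you have not given.

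The paper's proof sidesteps the exceptional locus entirely by a two-plane cancellation: take a second general plane $P'$ meeting $P$ in a line $l$, and an $L\in F_3(Y)$ with $y:=l\cap L$ a point (via Corollary~\ref{CorLineIntersectFiniteP3}), chosen so that $L$ meets each of $P,P'$ only at $y$. The two pencils $\Delta_{P,y}^*$ and $\Delta_{P',y}^*$ lift to curves $Z_1, Z_1'$ consisting of pairs $(L,\ell)$ with $\ell\not\subset L$, where $f$ is already a morphism; their $\tilde f$-images are two lines in the same $\mathbb P^5$-fiber of $\mathcal P_5\to F_3(Y)$, hence $\tilde f_*([Z_1]-[Z_1'])=0$, and Proposition~\ref{ThmF31ToF1} plus a degree argument gives $\Delta_P^*=\Delta_{P'}^*$. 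The identification with $\Delta^*$ is then obtained not by a correction cycle but by the triangle trick: the span of $P$ and $P'$ is a $\mathbb P^3$ cutting $Y$ in three planes $P,P',P''$, so Proposition~\ref{PropTrianglePlanesConstant} yields $3\Delta_P^*=3\Delta^*$. If you want to salvage your single-plane variant, the missing ingredient is precisely a description of the fibers of $\tilde f$ over the locus $\{(L,\ell):\ell\subset L\}$; the symmetric two-pencil construction makes that analysis unnecessary.
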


\begin{proof}
    We may assume $P\subset Y$ is general. Let $P'\subset Y$ be another general plane intersecting $P$ along a line $l$. By Corollary~\ref{CorLineIntersectFiniteP3}, there is a $3$-dimensional linear space $L\subset Y$ such that $l$ intersects $L$ at a point $y$. As $P$ and $P'$ are general, we may assume that both $P$ and $P'$ intersect $L$ at only one point $y$. The line of lines in $P$ passing through $y$ naturally lifts to a curve $Z_1\subset \widetilde{F_{3,1}}$ contained in the fiber of $\pi\circ\tilde f$ over the point $l_3$ of $F_3(Y)$ parameterizing $L_3$. We consider the $1$-cycle $z:=[Z_1] - [Z_1']\in CH_1(\widetilde{F_{3,1}})_{\mathbb Q}$. It is clear that $\tilde f_*(z)=0$ in $CH_1(\mathcal P_5)_{\mathbb Q}$ since $\tilde f_*([Z_1])$ and $\tilde f_*([Z_1'])$ is represented by two lines in the fiber of the $\mathbb P^5$-bundle $\mathcal P_5$ over the point $L\in F_3(Y)$. By Proposition~\ref{ThmF31ToF1}, we have $\tilde q_*(z)=\alpha \Delta^*$ in $CH_1(F_1(Y))_{\mathbb Q}$. But it is clear that $\tilde q_*([Z_1])=\Delta_P^*$ and $\tilde q_*([Z_1'])=\Delta_{P'}^*$, and that $\alpha=0$ by degree reasons. Hence, $\Delta_P^*=\Delta_{P'}^*$. There exists a $P_3\cong\mathbb P^3$ containing $P$ and $P'$ and the intersection $P_3\cap Y$ is the union of three planes $P, P', P''$. The same argument as above shows that $\Delta_P^*=\Delta_{P'}^*=\Delta_{P''}^*$. Finally, Proposition~\ref{PropTrianglePlanesConstant} shows that $\Delta_P^*=\Delta^*$, as desired.
\end{proof}

We now conclude the proof of Theorem~\ref{ThmChowOneOfF1} in this section. 
\begin{proof}[Proof of Theorem~\ref{ThmChowOneOfF1}]
It is not hard to show that $H^2(F_1(Y),\mathbb Q)=\mathbb Q$ since the restriction map $H^2(\mathrm{Gr}(2,10),\mathbb Q)\to H^2(F_1(Y),\mathbb Q)$ is an isomorphism \cite{DebarreManivel}. It suffices to prove that $CH_1(F_1(Y))_{\mathbb Q, hom}=0$. Let $\alpha\in CH_1(F_1(Y))_{\mathbb Q, hom}$ and let $z=\tilde q^*\alpha\in CH_1(\widetilde{F_{3,1}})_{\mathbb Q, hom}$. Since $\tilde q_*(z)=\deg \tilde q\cdot \alpha$, it suffices to prove that $\tilde q_*(z)=0$. 

Since $\mathcal P_5$ is a $\mathbb P^5$-bundle over $F_3(Y)$, we have $CH_1(\mathcal P_5)=h^5\cdot \pi^*CH_1(F_3(Y))\oplus h^4\cdot \pi^*CH_0(F_3(Y))$. Lemma~\ref{LmmMorphismF1F31} shows that $\tilde p_*z = 0 \in CH_1(F_3(Y))_{\mathbb Q, hom}$. Hence, $\tilde f_*(z)\in h^4\cdot \pi^*CH_0(F_3(Y))_{\mathbb Q, hom}$. Write $\tilde f_*(z)=w_1+\ldots+w_s$ where $w_i$ is a $1$-cycle supported on the fiber of $\pi$ over a point $L_i\in F_3(Y)$. Let $P_i$ be a plane in $Y$ that intersects with $L_i$ at only one point $y_i$. Let $P_{5,i}$ be the $5$-dimensional linear subspace spanned by $L_i$ and $P_i$. Let $z_i$ be the class in $CH_1(\widetilde{F_{3,1}})$ represented by the variety $Z_i:=\{(L_i, l): y_i\in l\subset P_i\}$. Since the fibers of $\pi: \mathcal P_5\to F_3(Y)$ are projective spaces $\mathbb P^5$, the cycle $w_i$ is proportional to the class represented by the variety $\{(L_i, P_4)\in \mathcal P_5: P_4\subset P_{5,i}\}$, which is the image of $Z_i$ under $\tilde f$. Hence, with an appropriate choice of coefficients $a_i\in\mathbb Q$, we have \[
\tilde f_*(z)=\sum_i w_i=\sum_i a_i \tilde f_*z_i=\tilde f_*(\sum_i a_iz_i).
\] 
By Proposition~\ref{ThmF31ToF1}, we conclude that 
\[\tilde q_*(z)=\tilde q_*(\sum_i a_iz_i)=\sum_i a_i\Delta_{P_i}^*=a \Delta^*,
\]
where $a=\sum_ia_i$. The last equality is due to Proposition~\ref{PropLinesOfLinesAreConstant}. Since $\tilde q_*(z)$ is homologue to $0$, the coefficient $a=0$. Hence, $\tilde q_*(z)=0$, as desired. This terminates the proof of Theorem~\ref{ThmChowOneOfF1}.
\end{proof}

\subsection{Proof of Theorem B}
We prove in this section Theorem B from the introductiono. Putting together Theorem~\ref{ThmDecOfActionOfPsiGeneralCase}, Proposition~\ref{PropVanishingOfc3}, Proposition~\ref{PropVanishingOfc2} and Theorem~\ref{ThmChowOneOfF1}, we conclude that formula (\ref{EqDecOfTheActionOfPsi}) becomes 
\begin{equation}
    \Psi_*z = -8z + \gamma' I_2^*z
\end{equation}
for any cycle $\gamma' = ac_1^3 + b'c_1c_2 + c'c_3$ on $X$, where the number $a$ is determined by the class $\gamma$ of (\ref{EqDecOfTheActionOfPsi}) by $\gamma = ac_1^3 + bc_1c_2 + cc_3$. We take for $\gamma'$ a multiplee of the class of the fixed locus of $F$ of $\Psi$. Indeed, Proposition~\ref{ThmChowClassOfF} proved in Section~\ref{SectionChowClassOfFInX} says that the class of $F$ in $\mathrm{CH}^3(X)$ has a nonzero coefficient in $c_1^3$. Theorem B then follows form
\begin{theorem}\label{ThmFixedLocusIsConstantCycle}
     The fixed locus $F$ is a contant cycle subvariety in $X$.
 \end{theorem}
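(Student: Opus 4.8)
The plan is to make the geometry of $F$ completely explicit, reduce the statement to a rational-equivalence assertion with $\mathbb Q$-coefficients, and then propagate that equivalence through the ambient $X$ by a spreading argument of the type used above for $CH_1(F_1(Y))$.

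First I would describe $F$ concretely. By Proposition~\ref{PropEigenPoly} a general point of $F$ is a plane $P\subset Y$ carrying a unique $\Pi\cong\mathbb P^3$ with $\Pi\cap Y=3P$; writing $P=\{\ell=0\}\subset\Pi$, this says $f|_\Pi=\ell^3$. Hence $F$ is the closure of $W=\{\Pi\in\mathrm{Gr}(4,10):f|_\Pi\text{ is a perfect cube}\}$, the plane $P$ being recovered as the cube root. On the flag bundle $\pi\colon\mathbb P(\mathcal S^*)\to\mathrm{Gr}(4,10)$ parametrising pairs $P\subset\Pi$ — where $\mathcal S$ is the rank-$4$ tautological subbundle and $\mathcal O(-1)=\langle\ell\rangle$ — the variety $F$ is the main component of the zero locus of the section of $\mathcal K:=\pi^*\mathrm{Sym}^3\mathcal S^*/\mathcal O(-3)$ induced by $f$. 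For general $Y$ this is smooth of dimension $8$, and this description also feeds the class computation of Proposition~\ref{ThmChowClassOfF}. A computation I would record at once: with $\sigma_1=c_1(\mathcal S^*)$ the Plücker class and $\xi=c_1(\mathcal O(1))$, adjunction gives $K_F=(4\pi^*\sigma_1-\xi)|_F$; but $\mathcal O(-3)|_F=\langle\ell^3\rangle=\langle f|_\Pi\rangle$ is trivial, so $\xi|_F=0$ in $CH^1(F)_\mathbb Q$ and therefore $K_F=4\,\sigma_1|_F$ is ample. Thus $F$ is of general type, and in particular the constant-cycle property cannot be deduced from any rational connectedness of $F$ itself.

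Next I would reduce to rational equivalence with $\mathbb Q$-coefficients. As in the derivation of Conjecture~\ref{conjFixedLocusConstantCycle} from Conjecture~\ref{PropConditionalActionOfPsiOnChow} in the introduction, it suffices to prove that any two points of $F$ are equal in $CH_0(X)_\mathbb Q$, and then to pass to $\mathbb Z$-coefficients using that $CH_0(X)_{hom}$ is torsion free, by Roitman~\cite{Roitman} and $\mathrm{Alb}(X)=0$. For the $\mathbb Q$-statement, since $F$ is of general type it carries no covering family of rational curves, so the equivalences must be produced by positive-dimensional constant-cycle subvarieties moving inside $X$. I would proceed in the spirit of Proposition~\ref{PropTrianglePlanesConstant}, where the constancy of a class in $CH_1(F_1(Y))$ was obtained by spreading it over a rationally connected auxiliary space. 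Here the natural auxiliary data are the residual quadrics: for $\Pi\supset P$ one has $\Pi\cap Y=P+Q_\Pi$ with $Q_\Pi$ a quadric surface, and the Fano varieties of linear subspaces in smooth quadrics have trivial $CH_0$. I would build an incidence correspondence dominating $F$ with rationally connected, hence $CH_0$-trivial, fibres cut out from these quadrics, apply the Bloch--Srinivas spreading of Lemma~\ref{ThmSpreading} to force the point classes to be constant along the fibres, and then glue using connectedness of the parameter space. As in Lemma~\ref{LmmCodim} and its sublemmas, one must control the loci where $Q_\Pi$ drops rank, where $\Pi\subset Y$, or where the cube degenerates, checking that they have codimension $\geq2$ and carry fibres of expected dimension so that the hypotheses of Lemma~\ref{ThmSpreading} hold.

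The main obstacle is exactly this construction. Because $F$ is of general type one cannot invoke triviality of $CH_0(F)$, nor connect its points by rational curves lying in $F$ or meeting a single rational subvariety of $X$: for instance the loci $\check L\cong\mathbb P^3\subset X$ coming from $3$-planes $L\subset Y$ cover only a $7$-dimensional subset of $X$, and their classes vary, since the very same adjunction computation shows $K_{F_3(Y)}=5\,\sigma_1|_{F_3(Y)}$ is ample and $F_3(Y)$ is again of general type. The delicate point is therefore to exhibit a genuinely rationally connected parameter space dominating the $8$-dimensional $F$ whose fibres sweep it out and whose associated $0$-cycle classes are forced to coincide, while controlling the degenerations of the residual quadric along that family; this is where the substance of the proof lies.
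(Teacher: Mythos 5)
Your proposal contains a genuine gap, and you name it yourself: the entire mechanism that would actually produce rational equivalences between points of $F$ is left unconstructed (``this is where the substance of the proof lies''). Your preliminary material is sound and even illuminating --- the flag-bundle description of $F$ as the locus where $f|_\Pi$ is a perfect cube, the adjunction computation $K_F=(4\pi^*\sigma_1-\xi)|_F$ with $\xi|_F=0$ in $\mathbb Q$-coefficients, hence $F$ of general type, correctly rules out any argument via triviality of $CH_0(F)$; and the Roitman reduction to $\mathbb Q$-coefficients matches the paper. But the sketched remedy faces a structural obstruction beyond being vague: the residual quadrics $Q_\Pi$ with $\Pi\cap Y=P_x+Q_\Pi$ are \emph{surfaces}, so their Fano varieties parametrize lines, and any incidence correspondence with fibres ``cut out from these quadrics'' naturally moves $1$-cycles in $F_1(Y)$, not points of $X=F_2(Y)$ --- a quadric surface contains no planes, so spreading over such fibres cannot by itself connect two points of $F$ in $CH_0(X)$. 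Worse, on $F$ itself the residual quadric of the distinguished tangent $\Pi$ degenerates to the double plane $2P_x$, i.e.\ your family degenerates completely exactly where you need it. You are missing a transfer mechanism converting statements about cycles in $F_1(Y)$ back into statements in $CH_0(X)$.

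The paper supplies precisely such a mechanism, in two steps, neither of which appears in your proposal. First, the decomposition of the Voisin map's action (Theorem~\ref{ThmDecOfActionOfPsiGeneralCase}, refined by Propositions~\ref{PropVanishingOfc3} and~\ref{PropVanishingOfc2} and Theorem~\ref{ThmChowOneOfF1}) gives $\Psi_*z=-8z+\gamma\cdot I_{2*}z$ in $CH_0(X)_{\mathbb Q}$; since $\Psi_*$ fixes $z=x_1-x_2$ for $x_1,x_2\in F$, it suffices to prove $I_{2*}(x_1-x_2)=0$, and since $I_2={}^t\mathcal P_{2,1}\circ\mathcal P_{2,1}$ (Remark~\ref{RmkI2AsSelfCorr}), this reduces to showing that $x\mapsto P_x^\vee\in CH_2(F_1(Y))_{\mathbb Q}$ is constant on $F$ --- this is the transfer to $F_1(Y)$ you lack. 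Second, that constancy follows from Proposition~\ref{PropPx'-4PxConstant} (the class $P_{x'}^\vee-4P_x^\vee$ is independent of $x$, so $x'=x$ forces $P_x^\vee$ constant), which is proved not by spreading over a rationally connected parameter space but by restriction to cubic \emph{fourfolds} $Y_4=Y\cap H_5$ with $H_5$ containing the tangent $\mathbb P^3$: one applies Voisin's quadratic relation $P^\vee=\alpha D_P^2+\beta D_P\cdot l+\gamma$ (Theorem~\ref{ThmVoisinQuadraticRelationOnDP}), extended by specialization to the $4$-nodal $Y_4$ that actually arise (Corollary~\ref{CorVoisinQuadraticRelationOnDP}), combined with $2D_{P_x}+D_{P_{x'}}=l$ in $CH_3(F_1(Y_4)_{sm})$, the constancy of $j_*D_P$ (Lemma~\ref{Lmmj*DPIsConstant}), and control of the four node surfaces (Lemmas~\ref{LmmIrreducibilityOfTheFourSurfaces} and~\ref{LmmChowClassOfTheFourSurfaces}). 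Then $z=-8z$ gives $9z=0$ and Roitman concludes. So while your instinct to exploit residual intersections and specialization was pointing in a reasonable direction, the working proof routes everything through divisor classes on varieties of \emph{lines} in fourfold sections, together with the global decomposition of $\Gamma_\Psi$, rather than through quadric-fibred rationally connected families dominating $F$.
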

 Indeed, Theorem~\ref{ThmFixedLocusIsConstantCycle} says that the nautral morphism $CH_0(F)_{hom}\to CH_0(X)_{hom}$ is zero.
 \begin{remark}
     Theorem~\ref{ThmFixedLocusIsConstantCycle} had been proved in~\cite{0CycleHK} in the case $r=1$.
 \end{remark}

\subsubsection{Proof of Theorem~\ref{ThmFixedLocusIsConstantCycle}}
Let $\mathcal P_{2,1}:=\{(x, l)\in X\times F_1(Y): l\subset P_x\}$ be the incidence variety. 

\begin{remark}\label{RmkI2AsSelfCorr}
    We have $I_2 = ^t\mathcal P_{2,1}\circ \mathcal P_{2,1}$ as self-correspondence of $X$.
\end{remark}

Let $x\in X=F_2(Y)$ be a general point and let $x'=\Psi(x)$ where $\Psi: X\dashrightarrow X$ is the Voisin map. In what follows, we use the following notation: for a plane $P$, the dual of $P$, defined as the set of lines in $P$, is denoted by $P^\vee$.
 \begin{proposition}\label{PropPx'-4PxConstant}
     The cycle $P_{x'}^{\vee} - 4 P_x^{\vee}\in CH_2(F_1(Y))_{\mathbb Q}$ does not depend on the choice of $x\in X$.
 \end{proposition}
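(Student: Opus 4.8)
The plan is to realize both $P_x^\vee$ and $P_{x'}^\vee$ as components of the Fano surface of lines of a suitable \emph{cubic threefold} cut out on $Y$, and to deduce the asserted constancy from the conservation of $2$-cycles in a family over a rational base, in the same spirit as the rational connectedness of the space of cone cubic surfaces was exploited in Proposition~\ref{PropTrianglePlanesConstant}. The starting geometric observation is that $P_x$ and $P_{x'}$ both lie in the $3$-plane $\Pi_x\cong\mathbb P^3$ with $\Pi_x\cap Y=2P_x+P_{x'}$, so for any $4$-plane $W\supset\Pi_x$ the cubic threefold $T_W:=W\cap Y$ contains both planes; hence its Fano surface $F_1(T_W)$, which is $2$-dimensional, contains $P_x^\vee$ and $P_{x'}^\vee$ as irreducible components.

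Concretely, I would introduce the rational parameter space $\mathbb W=\mathrm{Gr}(5,10)$ of $4$-planes $W\subset\mathbb P^9$, the universal cubic threefold $\mathcal T\to\mathbb W$, and the relative variety of lines $\mathcal F_1\to\mathbb W$ together with the evaluation map $g\colon\mathcal F_1\to F_1(Y)$. Over the open locus where $T_W$ is a smooth cubic threefold the fibre of $\mathcal F_1$ is the smooth Fano surface, and since $\mathbb W$ is rational (hence rationally connected) any two such fibres are rationally equivalent in $\mathcal F_1$; pushing forward by $g$ shows that $\Phi:=g_*[F_1(T_W)]\in CH_2(F_1(Y))_{\mathbb Q}$ is one fixed class, \emph{independent of $W$}, and in particular independent of $x$. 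Degenerating $W$ to a $4$-plane $W_0\supset\Pi_x$, the flat limit of these smooth Fano surfaces is a $2$-cycle supported on $P_x^\vee\cup P_{x'}^\vee$ together with a residual surface $R_x$ of lines of $T_{W_0}$ not contained in $P_x\cup P_{x'}$, so one obtains an identity
\[
\Phi=a\,[P_x^\vee]+b\,[P_{x'}^\vee]+[R_x]\qquad\text{in }CH_2(F_1(Y))_{\mathbb Q},
\]
with $a,b\in\mathbb Z_{>0}$ the multiplicities with which the two planes occur in the limit.

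The coefficient $4$ should fall out of a local computation of these multiplicities. Because $\Pi_x$ is tangent to $Y$ along $P_x$ — the double plane in $\Pi_x\cap Y=2P_x+P_{x'}$, reflected in the normal form $f|_{\Pi_x}=\ell_1^2\ell_2$ used in Section~\ref{SectionActionOnForms} — the threefold $T_{W_0}$ is tangent to $Y$, and its Fano scheme is non-reduced, along $P_x^\vee$, whereas it is reduced along $P_{x'}^\vee$. This is the same factor-of-two phenomenon responsible for the eigenvalue $-2$ in Proposition~\ref{PropEigenPoly}, and I expect it to force the ratio between $a$ and $b$ that produces the coefficient $4=(-2)^2$, the sign being fixed by the residual bookkeeping. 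I would compute this multiplicity either by deformation to the normal cone of $P_x^\vee$ inside the relative Grassmannian bundle $\mathrm{Gr}(2,\Pi)$, on which the restriction of $f$ is precisely the squared section $\ell_1^2\ell_2$ of $\mathrm{Sym}^3\mathcal S^*$, or by applying Fulton's residual–intersection formula to that section.

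The main obstacle is the residual surface $R_x$: the bare conservation identity above carries the unknown, a priori $x$-dependent, class $[R_x]$, whereas the content of the proposition is that \emph{only} the combination $[P_{x'}^\vee]-4[P_x^\vee]$ may vary with $x$. I would control this by replacing the family of generic $4$-planes through $\Pi_x$ by the family of \emph{cone} cubic threefolds over the cubic surfaces $\Pi\cap Y$, paralleling the cone construction of Proposition~\ref{PropTrianglePlanesConstant}: there the residual lines are exactly the lines through the vertex, which are governed by the surface $\Pi\cap Y$ alone, and the space of such cones is rationally connected by~\cite{GraberHarrisStarr}. Showing that this residual contribution is itself independent of $x$ — equivalently, that after subtracting it the surviving $x$-dependence is precisely $[P_{x'}^\vee]-4[P_x^\vee]$ — is the crux of the argument; once it is in place, the constancy of $\Phi$ in $x$ yields the proposition at once.
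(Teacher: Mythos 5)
Your setup is sound as far as it goes: the auxiliary family over a rational base, the constancy of $\Phi=g_*[F_1(T_W)]$, and the flat-limit decomposition $\Phi=a[P_x^\vee]+b[P_{x'}^\vee]+[R_x]$ are all correct, and the idea of realizing both dual planes inside the Fano variety of an auxiliary linear section is genuinely in the spirit of the paper. But the argument has a structural gap that cannot be repaired by the multiplicity computation you propose. A flat limit has \emph{nonnegative} multiplicities, so your one conservation identity reads $a[P_x^\vee]+b[P_{x'}^\vee]=\Phi-[R_x]$ with $a,b\in\mathbb Z_{>0}$; no choice of $a,b$ produces the combination $[P_{x'}^\vee]-4[P_x^\vee]$ with its negative coefficient. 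Even with the expected multiplicities $a=2$, $b=1$ coming from the tangency of $\Pi_x$ along $P_x$, you would be left needing to show that $[R_x]+6[P_x^\vee]$ is constant in $x$ --- a statement of exactly the same depth as the proposition itself, since $[P_x^\vee]$ is \emph{not} constant (if it were, Theorem~\ref{ThmFixedLocusIsConstantCycle} would be immediate). Your closing paragraph acknowledges this is the crux, but offers only the hope that a cone construction controls $[R_x]$; one linear conservation relation among the three unknown, $x$-dependent classes $[P_x^\vee]$, $[P_{x'}^\vee]$, $[R_x]$ is simply not enough information, and the "factor-of-two, sign fixed by bookkeeping" heuristic does not supply the missing second relation.

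The paper's proof shows where that second relation actually comes from, and it is quadratic, not a limit multiplicity. One cuts $Y$ with a generic $\mathbb P^5$ containing the $3$-plane $H$ with $H\cap Y=2P_x+P_{x'}$, obtaining a cubic fourfold $Y_4$ which unavoidably has four nodes (Lemma~\ref{LmmSingularityOfY4}). On $F_1(Y_4)$ one has the linear relation $2D_{P_x}+D_{P_{x'}}=l$ (from $2P_x+P_{x'}=h^2$ in $CH_2(Y_4)$) together with Voisin's relation $P^\vee=\alpha D_P^2+\beta D_P\cdot l+\gamma$ with universal constants (Theorem~\ref{ThmVoisinQuadraticRelationOnDP}, extended to the nodal case by specialization in Corollary~\ref{CorVoisinQuadraticRelationOnDP}). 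Substituting $D_{P_{x'}}=l-2D_{P_x}$ into the quadratic relation is what produces the coefficient $4=(-2)^2$; the linear terms in $D_{P_x}$ are then constant by Lemma~\ref{Lmmj*DPIsConstant}, and the remaining ambiguity, supported on the four surfaces of lines through the nodes, is killed by their irreducibility, the constancy of their classes, and a cohomological degree count. So the essential missing ingredient in your proposal is a Chow-theoretic relation expressing $P^\vee$ quadratically in the divisor $D_P$ --- a deep input about cubic fourfolds with no evident analogue for the cubic threefolds $T_W$ your family uses --- and without it the residual class $[R_x]$ remains an uncontrolled unknown absorbing precisely the quantity you are trying to compute.
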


Admitting Proposition~\ref{PropPx'-4PxConstant}, we conclude the proof of Theorem~\ref{ThmFixedLocusIsConstantCycle}.
\begin{proof}[Proof of Theorem~\ref{ThmFixedLocusIsConstantCycle}]
    By Remark~\ref{RmkI2AsSelfCorr}, we have 
    \[
    \mathcal P_{2,1}^*\circ \mathcal P_{2,1*} = I_{2*}: CH_0(X)\rightarrow CH_3(X).
    \]
     If $x\in F\subset X$, then in the statement of Proposition~\ref{PropPx'-4PxConstant}, $x' = x$ and thus $P_x^{\vee}\in CH_2(F_1(Y))_{\mathbb Q}$ is independent of the choice of $x\in F$. Hence, for any $x_1, x_2\in F$, we have $\mathcal P_{2,1*}(x_1-x_2) = P_{x_1}^{\vee} - P_{x_2}^{\vee} = 0 \in CH_2(F_1(Y))_{\mathbb Q}$. Therefore, $I_{2*}(x_1-x_2) = \mathcal P_{2,1}^*\mathcal P_{2,1*}(x_1 - x_2) = 0\in CH_0(X)_{\mathbb Q}$. Now, if in Equation (\ref{EqDecOfTheActionOfPsi})
     \[
     \Psi_*z = -8z + \gamma\cdot I_{2*}(z),
     \]
     we take $z = x_1 - x_2$, we get $z = -8z\in CH_0(X)_{\mathbb Q}$. Therefore, $z = 0 \in CH_0(X)$ as $CH_0(X)$ is torsion-free. This implies $x_1 = x_2 \in CH_0(X)$. Since $x_1, x_2\in F$ are arbitrarily chosen, we conclude that $F$ is a constant cycle subvariety.
\end{proof}

The rest of this section is devoted to the proof of Proposition~\ref{PropPx'-4PxConstant}. The proof relies on the geometry of cycles in a cubic fourfold $Y_4$ and its variety of lines $F_1(Y_4)$, which has been studied in~\cite{0CycleHK} and \cite{ChowHK}. the following relation is established in~\cite{0CycleHK} (see also \cite{ChowHK}).

\begin{theorem}[Voisin~\cite{0CycleHK}]\label{ThmVoisinQuadraticRelationOnDP}
    For a cubic fourfold $Y_4 \subset \mathbb{P}^5$ containing a plane $P$, let $P^\vee \subset F_1(Y_4)$ denote the variety of lines within $P$, and let $D_P \subset F_1(Y_4)$ represent the divisor comprising lines in $Y_4$ intersecting $P$. With $l \subset CH^1(F_1(Y_4))$ being the restriction of the Plücker line bundle class from $\mathrm{Gr}(2,6)$, there exist constants $\alpha, \beta \in \mathbb{Q}$, and $\gamma \in CH^2(F_1(Y_4))_{\mathbb{Q}}$, where $\alpha \neq 0$ and $\gamma$ is a restriction of a class $\delta \in CH^2(\mathrm{Gr}(2,6))_{\mathbb{Q}}$ that is independent of the chose of the plane $P$ and the cubic fourfold $Y_4$, such that:
    \[
    P^\vee = \alpha D_P^2 + \beta D_P \cdot l + \gamma
    \]
    within $CH^2(F_1(Y_4))_{\mathbb{Q}}$.
\end{theorem}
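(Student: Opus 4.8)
The plan is to prove the identity first in cohomology, where it pins down $\alpha$, $\beta$ and the class of $\gamma$, and then to upgrade it to rational equivalence by spreading over the family of pairs $(Y_4,P)$.

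\emph{Geometric set-up.} I would exploit the projection of $Y_4$ from the plane $P$. Blowing up $P$ resolves $Y_4\dashrightarrow\mathbb{P}^2$ to a quadric surface fibration $\phi:\widetilde{Y_4}\to\mathbb{P}^2$ with discriminant a plane sextic. Under this fibration a line of $Y_4$ meeting $P$ lies in some $\mathbb{P}^3=\langle P,y\rangle$, hence in a fibre $Q_y=\phi^{-1}(y)$, so that $D_P$ is governed by the relative lines of $\phi$ (the two rulings of the smooth fibres), while $P^\vee$ sits over the degenerate locus. This dictionary describes $D_P$, $l$ and $[P^\vee]$ through tautological data of $\phi$ and of $\mathrm{Gr}(2,6)$, and in particular makes the relevant intersection numbers computable.

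\emph{Cohomology.} Here $F=F_1(Y_4)$ is a hyper-Kähler fourfold of $K3^{[2]}$-type, for which the Verbitsky--Bogomolov theorem gives an isomorphism $H^4(F,\mathbb{Q})=\mathrm{Sym}^2 H^2(F,\mathbb{Q})$. For a general $Y_4$ containing a plane one has $\mathrm{NS}(F)_{\mathbb{Q}}=\mathbb{Q}\, l\oplus\mathbb{Q}\,[D_P]$, so the rational $(2,2)$-classes lie in the span of $l^2$, $l\cdot D_P$, $D_P^2$ together with the canonical Beauville--Bogomolov class proportional to $c_2(F)$. I would therefore write
\[ [P^\vee]=\alpha\, D_P^2+\beta\, D_P\cdot l+c\,l^2+d\,c_2(F) \]
and solve for the coefficients by pairing both sides, via the Beauville--Bogomolov form and Fujiki's relation, against $l^2,\, l D_P,\, D_P^2,\, c_2(F)$. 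This determines $\alpha\neq 0$, $\beta$, $c$, $d$, and since both $l^2$ and $c_2(F)$ are restrictions of classes from $\mathrm{Gr}(2,6)$, the class $\gamma=c\,l^2+d\,c_2(F)$ is universal.

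\emph{Lift to $CH^2$ and universality.} The delicate point is that a homological identity need not be a rational one on a hyper-Kähler fourfold, where $CH^2$ is infinite dimensional. I would argue by spreading. Let $\mathcal{B}$ be the parameter space of pairs $(Y_4,P)$; it fibres in projective spaces over $\mathrm{Gr}(3,6)$ and is therefore rational, in particular rationally connected. Over $\mathcal{B}$ form the universal Fano fourfold $\mathcal{F}\to\mathcal{B}$ with its universal classes $\mathcal{P}^\vee$, $\mathcal{D}$, $\ell$. The fibrewise cohomological identity shows that $\xi:=\mathcal{P}^\vee-\alpha\mathcal{D}^2-\beta\,\mathcal{D}\ell$ is fibrewise homologically trivial modulo classes coming from $\mathrm{Gr}(2,6)$. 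Since $P^\vee\cong\mathbb{P}^2$ is rationally connected, hence a constant-cycle subvariety, a Bloch--Srinivas type decomposition applies to $\xi$ over the rationally connected base $\mathcal{B}$: $\xi$ is rationally equivalent to a class pulled back from $\mathcal{B}$ plus the restriction of a fixed class on $\mathrm{Gr}(2,6)$. Restricting to a single fibre, the $\mathcal{B}$-part becomes constant and is absorbed into the universal term, yielding the asserted identity in $CH^2(F)_{\mathbb{Q}}$ with $\gamma$ independent of $(Y_4,P)$.

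\emph{Main obstacle.} The cohomological computation is linear algebra once the Fujiki products are in hand; the genuine difficulty is this last step, where one must \emph{simultaneously} show that the homologically trivial residual $\xi$ is rationally trivial and that the correction descends to a universal Grassmannian class. Both rest on the spreading argument, and the subtle issue is to control $\xi$ uniformly in the family, ensuring the correction does not vary with $(Y_4,P)$. This is exactly where the rational connectedness of the space of planes-in-cubics and the constant-cycle nature of $P^\vee$ must be used carefully.
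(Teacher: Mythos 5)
First, note that the paper contains no proof of this statement: Theorem~\ref{ThmVoisinQuadraticRelationOnDP} is imported from~\cite{0CycleHK} (see also~\cite{ChowHK}), so your attempt can only be measured against Voisin's original argument. Your geometric set-up (the quadric surface fibration obtained by projecting from $P$) and your cohomological step are sound in outline: for a very general pair $(Y_4,P)$ one indeed has $H^4(F,\mathbb Q)=\mathrm{Sym}^2H^2(F,\mathbb Q)$, the degree-$4$ rational Hodge classes are spanned by $l^2$, $l\cdot D_P$, $D_P^2$ and $c_2(F)$, and the coefficients are determined by Fujiki-type intersection numbers --- though you still owe the actual numbers (in particular $\alpha\neq 0$, which is what the paper's application requires), as well as the passage from the very general member to an arbitrary pair $(Y_4,P)$, which at the Chow level is exactly what the universal argument must deliver (compare the specialization step via~\cite[Proposition 1.4]{VoisinUnirational} that the paper itself uses in Corollary~\ref{CorVoisinQuadraticRelationOnDP} to extend the relation to mildly singular $Y_4$).

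The genuine gap is in your lift to $CH^2$. There is no ``Bloch--Srinivas type decomposition over a rationally connected base'' of the kind you invoke: a cycle on the total space of a family that is fiberwise homologically trivial need not be fiberwise rationally trivial, however rational the base is. (Take the difference of two sections of a family of K3 surfaces, or of the fourfolds $F$ themselves, whose $CH_0$ is infinite-dimensional: it is fiberwise homologically trivial over a rational base and essentially never fiberwise trivial in $CH_0$.) Rational connectedness of $\mathcal B$ controls $CH_0(\mathcal B)$, not $CH^2$ of the total space; and the constant-cycle property of $P^\vee\cong\mathbb P^2$ concerns $0$-cycles of $F$, which has no bearing on the class $[P^\vee]\in CH^2(F)$. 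What actually makes the spreading work --- both in~\cite{0CycleHK} and in the analogous argument the present paper runs for Theorem~\ref{ThmDecOfActionOfPsiGeneralCase}, modeled on~\cite[Theorem 4.16]{VoisinCitrouille} --- is a computation of the Chow groups of the universal total space: the universal Fano family $\mathcal F\subset \mathcal B\times\mathrm{Gr}(2,6)$ maps to the incidence variety of pairs $(l,P)$ in $\mathrm{Gr}(2,6)\times\mathrm{Gr}(3,6)$, stratified by $\dim(l\cap P)$, with projective-space (linear-system) fibers over each stratum; hence $CH^*(\mathcal F)_{\mathbb Q}$ is generated by the hyperplane class of $\mathcal B$ together with tautological classes pulled back from these strata. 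Only because of this does every universal cycle --- in particular the universal $\mathcal P^\vee$, whose class carries an excess-intersection contribution along the deepest stratum $\{l\subset P\}$, where the linear conditions $f|_l=0$ become vacuous given $f|_P=0$ --- decompose a priori into $\mathcal D$-, $\mathcal L$- and Grassmannian classes, after which your fiberwise cohomological computation pins down $\alpha$, $\beta$ and $\gamma$ and shows $\gamma$ is universal. As written, your step from homological to rational equivalence is precisely the point that needs proof, and the two ingredients you cite for it do not supply one.
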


We will partially generalize this relation to the case where $Y_4$ has mild singularities.
\begin{corollary}\label{CorVoisinQuadraticRelationOnDP}
    Consider a cubic hypersurface $Y_4 \subset \mathbb{P}^5$ with at most simple double points as singularities, containing a plane $P$. Let $F_1(Y_4)_{sm}$ denote the smooth part of $F_1(Y_4)$. Define $P^\vee \subset F_1(Y_4)_{sm}$, $D_P \subset F_1(Y_4)_{sm}$, and $l \subset CH^1(F_1(Y_4)_{sm})$ as in Theorem~\ref{ThmVoisinQuadraticRelationOnDP}, but restricted to the smooth part of $F_1(Y_4)$. There exist constants $\alpha, \beta \in \mathbb{Q}$, and $\gamma \in CH^2(F_1(Y_4)_{sm})_{\mathbb{Q}}$, with $\alpha \neq 0$ and $\gamma$ as a restriction of a class $\delta \in CH^2(\mathrm{Gr}(2,6))_{\mathbb{Q}}$ that is independent of the chose of the plane $P$ and the cubic fourfold $Y_4$, such that:
    \begin{equation}\label{EqVoisinQuadraticRelationOnDP}
         P^\vee = \alpha D_P^2 + \beta D_P \cdot l + \gamma
    \end{equation}
    in $CH^2(F_1(Y_4)_{sm})_{\mathbb{Q}}$.
\end{corollary}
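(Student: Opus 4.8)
The plan is to deduce Corollary~\ref{CorVoisinQuadraticRelationOnDP} from Theorem~\ref{ThmVoisinQuadraticRelationOnDP} by a specialization/degeneration argument, exploiting the fact that the relation in the smooth case holds with universal constants $\alpha,\beta$ and a universal class $\delta\in CH^2(\mathrm{Gr}(2,6))_{\mathbb Q}$. The essential point is that everything in the statement of Theorem~\ref{ThmVoisinQuadraticRelationOnDP} can be spread out over the parameter space of pairs $(Y_4,P)$ consisting of a cubic fourfold together with a contained plane, and the desired relation over the singular locus is obtained by restricting the universal relation to a nearby smooth part.

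First I would set up the universal family: let $\mathcal B\subset \mathbb P H^0(\mathbb P^5,\mathcal O(3))\times \mathrm{Gr}(3,6)$ be the incidence variety of pairs $(Y_4,P)$ with $P\subset Y_4$, and let $\mathcal B^{sm}\subset \mathcal B$ be the open dense locus where $Y_4$ is smooth. Over $\mathcal B$ one has the universal variety of lines $\mathcal F\to \mathcal B$, and the universal divisor $\mathcal D_P\subset \mathcal F$ of lines meeting $P$, the universal $P^\vee\subset \mathcal F$, and the universal Plücker class $l$, all pulled back or restricted from $\mathrm{Gr}(2,6)$ in the appropriate way. Over $\mathcal B^{sm}$ Theorem~\ref{ThmVoisinQuadraticRelationOnDP} gives the fiberwise relation $P^\vee=\alpha D_P^2+\beta D_P\cdot l+\gamma$ with fixed $\alpha,\beta$ and $\gamma=\delta|_{F_1(Y_4)}$; I would promote this to a relation in $CH^2(\mathcal F^{sm})_{\mathbb Q}$ up to classes pulled back from the base, using that the constants and $\delta$ are genuinely independent of the fiber. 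Concretely, the cycle $P^\vee-\alpha\mathcal D_P^2-\beta\mathcal D_P\cdot l-\delta$ restricts to zero on every smooth fiber, hence by the Bloch–Srinivas type spreading argument (or simply because a family of cycles that is fiberwise trivial differs from a base-pulled-back class) it is a class supported appropriately; this is the mechanism that lets the single relation propagate.

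Next I would take a pencil, or more simply a one-parameter family inside $\mathcal B$, degenerating a smooth $(Y_4,P)$ to the target $(Y_4,P)$ with at worst ordinary double points, arranging that $P$ avoids the nodes. Writing $F_1(Y_4)_{sm}$ for the smooth locus of the special fiber's variety of lines, I would restrict the universal relation to a small analytic or étale neighborhood and then to $F_1(Y_4)_{sm}$: since $F_1(Y_4)_{sm}$ is an open subset of the flat limit of the smooth fibers $F_1(Y_t)$, and since the four cycles $P^\vee$, $D_P^2$, $D_P\cdot l$, $\delta$ all extend as cycles over the total space of the family and restrict on $F_1(Y_4)_{sm}$ to the expected classes, the relation $P^\vee=\alpha D_P^2+\beta D_P\cdot l+\gamma$ passes to the limit on $F_1(Y_4)_{sm}$ with the same $\alpha,\beta$ and with $\gamma=\delta|_{F_1(Y_4)_{sm}}$. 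The key compatibility to check is that restriction to the smooth part of the special fiber commutes with the specialization map on Chow groups for each of the four cycles, which holds because $P^\vee$, $D_P$ and $l$ are defined by the same incidence/Plücker conditions on the open set where the family of lines behaves well.

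The main obstacle I anticipate is controlling the behavior of the cycles $P^\vee$ and especially $D_P$ across the degeneration precisely on $F_1(Y_4)_{sm}$: although $Y_4$ acquires nodes, $F_1(Y_4)$ is itself singular, and one must ensure that the universal divisor $\mathcal D_P$ and the class $P^\vee$ do not pick up correction terms supported over the singular locus of $F_1(Y_4)$ when restricted to the smooth part. Since we only claim the relation on $F_1(Y_4)_{sm}$ and we have arranged $P$ to miss the nodes of $Y_4$, the cycles restricted away from the singular lines agree with the flat limits of their smooth-fiber counterparts, so the correction terms, being supported on $F_1(Y_4)\setminus F_1(Y_4)_{sm}$, disappear upon restriction. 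Making this localization clean — i.e.\ verifying that the singular locus of $F_1(Y_4)$ is contained in the complement where the restriction kills any ambiguity — is where the hypothesis of at worst simple double points with $P$ avoiding them is used, and is the delicate step of the argument.
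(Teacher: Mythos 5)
Your overall strategy is the same as the paper's: spread the relation of Theorem~\ref{ThmVoisinQuadraticRelationOnDP} out over a universal family of pairs (plane, cubic), use flatness to specialize, and restrict to $F_1(Y_4)_{sm}$ where $D_P^2$ makes sense. However, there is a genuine gap: you insert the extra hypothesis that $P$ avoids the nodes of $Y_4$, and your final paragraph explicitly leans on it (``we have arranged $P$ to miss the nodes\dots is where the hypothesis\dots is used''). The corollary makes no such assumption, and it cannot be ``arranged'': the pair $(Y_4, P)$ is the given datum, not something you may move. Worse, the assumption fails in precisely the situation the corollary is used for in this paper: in the proof of Proposition~\ref{PropPx'-4PxConstant}, Lemma~\ref{LmmSingularityOfY4} exhibits the four nodes of $Y_4$ as the solutions of $x_3 = x_4 = x_5 = Q_1 = Q_2 = 0$, so all of them lie \emph{on} the plane $P_x$. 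As written, your argument proves a strictly weaker statement that would not feed into the application.

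The hypothesis is in fact unnecessary, and the paper's route shows why. Take the base $B$ to be pairs $(P, f)$ with $f|_P = 0$ and $Y_f$ having at most simple double points; the family of Fano varieties of lines is then flat (each fiber is the zero locus, of expected dimension, of a section of $\mathrm{Sym}^3\mathcal{E}^*$ on $\mathrm{Gr}(2,6)$, so the Hilbert polynomial is constant by the Koszul resolution) with no condition on the position of $P$ relative to the nodes. One forms the single universal cycle $\mathcal{Z} = \mathcal{P}^\vee - \alpha\,\mathcal{D}^2 - \beta\,\mathcal{D}\cdot\mathcal{L} - \Gamma$, observes it restricts to zero on every smooth fiber by Theorem~\ref{ThmVoisinQuadraticRelationOnDP}, and applies specialization of rational equivalence (\cite[Proposition 1.4]{VoisinUnirational}) to conclude $\mathcal{Z}|_{\mathcal{F}_t} = 0$ for \emph{all} $t \in B$; restriction to $F_1(Y_4)_{sm}$ enters only because $D_P$ may pass through the singular locus of $F_1(Y_4)$, so that $D_P^2$ is defined only there, and this restriction kills anything supported on the singular locus regardless of where $P$ sits. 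Two further points of looseness in your write-up would need repair even in the nodes-off-$P$ case: restriction ``to a small analytic or étale neighborhood'' is not an operation on Chow groups, and should be replaced by the algebraic specialization homomorphism along a one-parameter subfamily (this is what the citation above encapsulates); and your Bloch--Srinivas spreading step only produces the universal relation over a dense open subset of the base, modulo cycles supported over a proper closed subset --- which could a priori contain exactly the singular fibers you are degenerating to, so it cannot by itself justify the passage to the limit. Specializing the explicit universal cycle directly, as the paper does, avoids both issues.
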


\begin{proof}
Let us consider the construction of the family $\mathcal{F}$ over $B$, where $B$ parametrizes pairs $(P, f)$ with $P$ being a plane in $\mathbb{P}^5$ and $f$ a cubic polynomial such that the hypersurface $Y_f$ defined by $f$ has at most simple double points as singularities, together with the condition that $f_{|P} = 0$. Let \[\mathcal F = \{
    ((P, f), l)\in B\times \mathrm{Gr}(2, 6): f|_l = 0\}
    \}.
    \]
In such a way, we make the Fano variety of lines $F_1(Y_f)$ into family over $B$.

 \begin{lemma}
        The family $p: \mathcal F\to B$ is flat.
    \end{lemma}
    \begin{proof}
        Each fiber of $p$ is a subvariety that is defined as the zero locus of the vector bundle $\mathrm{Sym}^3\mathcal E^*$ on $\mathrm{Gr}(2,6)$, with the expected dimension, where $\mathcal E$ is the tautological subbundle of $\mathrm{Gr}(2,6)$. By Koszul's resolution, each fiber has the same Hilbert polynomial. This implies that the family is flat. 
    \end{proof}
 Within the family $\mathcal F$, we have the following subvarieties. 
    $\mathcal P^\vee := \{ 
    ((P, f), l)\in B\times \mathrm{Gr}(2,6): l\subset P
    \}$ and $\mathcal D : = \{((P, f), l)\in \mathcal F: l\cap P\neq \emptyset\}$, representing lines within $P$ and lines intersecting $P$, respectively. Additionally, let $q: \mathcal F\to \mathrm{Gr}(2,6)$ be the second projection. Let $\mathcal L = q^*\mathcal O_{\mathrm{Gr}(2,6)}(1)$ be the pull-back of the Plücker line bundle. Let $\Gamma = q^*\delta\in CH^2(\mathcal F)_{\mathbb Q}$, where $\delta\in CH^2(\mathrm{Gr}(2,6))_{\mathbb Q}$ be the constant class as defined in Theorem~\ref{ThmVoisinQuadraticRelationOnDP}. Let $\alpha, \beta\in\mathbb Q$ be as in Theorem~\ref{ThmVoisinQuadraticRelationOnDP}.
    We consider the algebraic cycle $\mathcal Z = \mathcal P^{*} - \alpha \mathcal D^2 - \beta \mathcal D\cdot \mathcal L - \Gamma$. Theorem~\ref{ThmVoisinQuadraticRelationOnDP} implies that $\mathcal Z|_{\mathcal F_t} = 0 \in CH^2(\mathcal F_t)_{\mathbb Q}$ for $t\in B$ with smooth fibers. By the specialization of algebraic cycles~\cite[Proposition 1.4]{VoisinUnirational}, we conclude that $\mathcal Z|_{\mathcal F_t} = 0\in CH^2(\mathcal F_t)$ for all $t\in B$. For a singular fiber $\mathcal F_t$, we can restrict further to the smooth part of $\mathcal F_t$ and we get the desired result.
\end{proof}

\begin{remark}
    The reason we do not achieve the relation (\ref{EqVoisinQuadraticRelationOnDP}) for the whole of $F_1(Y_4)$ is that the divisor $D_P$ might encompass the singular locus of $F_1(Y_4)$, rendering it not a Cartier divisor, hence $D_P^2$ is not well-defined. However, upon restriction to the smooth part, all components are well-defined, and the restriction of $\mathcal{D}^2$ equates to $D_P^2$.
\end{remark}

We will also need the following

\begin{lemma}\label{Lmmj*DPIsConstant}
    Let $Y\subset \mathbb P^9$ be a general cubic eightfold and let $P\subset Y$ be a general plane contained in $Y$. Let $H_5$ be a general linear subspace of $\mathbb P^9$ containing $P$ such that $H_5\cap Y =: Y_4$ is a cubic hypersurface containing the plane $P$. Let $j: F_1(Y_4)\hookrightarrow F_1(Y)$ be the natural inclusion map. Let $D_P\subset F_1(Y_4)$ be as in Theorem~\ref{ThmVoisinQuadraticRelationOnDP}. Then the class $j_*D_P\in CH_3(F_1(Y))$ is independent of the choice of the plane $P\subset Y$ and of the linear subspace $H_5\subset \mathbb P^9$.
\end{lemma}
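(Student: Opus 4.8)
The plan is to realize $j_*D_P$ as the fiber class of a flat family of cycles over a parameter space of admissible pairs $(P,H_5)$ and to exploit the principle that the fiber class of a flat family is constant along any rational curve in the base (restrict the family to the curve; the fibers over two points of $\mathbb{P}^1$ are rationally equivalent, cf. the Bloch--Srinivas argument in \cite{Voisin}). Concretely, let $B$ be the variety of pairs $(P,H_5)$ with $P\subset Y$ a plane, $H_5\cong\mathbb{P}^5\supset P$, and $Y_4:=Y\cap H_5$ a cubic fourfold with at most simple double points, and set
\[
\mathcal{D}:=\{((P,H_5),l)\in B\times F_1(Y): l\subset Y_4,\ l\cap P\neq\emptyset\},
\]
with projections $\mu:\mathcal{D}\to B$ and $\rho:\mathcal{D}\to F_1(Y)$. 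The fiber $\mu^{-1}(P,H_5)$ is exactly $D_P\subset F_1(Y_4)$, and $\rho_*\mu^{-1}(P,H_5)=j_*D_P$. As in the flatness lemma inside the proof of Corollary~\ref{CorVoisinQuadraticRelationOnDP}, each fiber of $\mu$ is cut out in the relevant Grassmann bundle as the zero locus of a section of a bundle built from $\mathrm{Sym}^3$ of a tautological subbundle, with the expected dimension, so $\mu$ is flat and the assignment $(P,H_5)\mapsto j_*D_P$ is constant on any rationally connected subvariety of $B$.

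First I would settle the dependence on $H_5$. The natural projection $B\to F_2(Y)=X$ has fiber over $P$ the space of $\mathbb{P}^5$'s containing $P$, which is $\mathrm{Gr}(3,7)$ (a $6$-dimensional subspace of $\mathbb{C}^{10}$ containing the fixed $3$-dimensional one), hence rational. Restricting $\mathcal{D}$ to such a fiber and applying the principle above shows $j_*D_P$ is independent of $H_5$; this legitimately defines $\psi(P):=j_*D_P\in CH_3(F_1(Y))$ as a function of $P\in X$ alone.

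The hard part will be showing $\psi(P)$ is independent of $P$, and this is where the genuine obstacle lies: $X=F_2(Y)$ is a strict Calabi--Yau manifold, hence \emph{not} rationally connected, so no rational curve (nor RC subvariety of $B$) joins two distinct planes, and the naive spreading fails. To get around this I would, for two general planes $P,P'\subset Y$, take $H_5:=\langle P,P'\rangle$, which is a $\mathbb{P}^5$ for generic disjoint $P,P'$ (since $2+2+1=5$), so that $Y_4=Y\cap H_5$ is a single cubic fourfold (with mild singularities for $Y$ and the pair general) containing \emph{both} planes. Using the $H_5$-independence from the previous step, $\psi(P)$ and $\psi(P')$ are then the $j$-pushforwards of the two divisors $D_P,D_{P'}\subset F_1(Y_4)$ attached to one and the same fourfold, and the whole problem reduces to the equality $j_*D_P=j_*D_{P'}$ in $CH_3(F_1(Y))$ for two planes in a common slice.

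To prove this last equality I would place the pair $(Y_4;P,P')$ in the rationally connected family of abstract cubic fourfolds in $\mathbb{P}^5$ with at most simple double points carrying two planes (the analogue of the flat family $\mathcal{F}$ of Corollary~\ref{CorVoisinQuadraticRelationOnDP}, now with two marked planes; its base fibers over the rational $\mathrm{Gr}(3,6)\times\mathrm{Gr}(3,6)$ with linear-system fibers, hence is rationally connected), and transport the resulting rational equivalence of the divisors into $F_1(Y)$ via $j$ together with specialization of cycles \cite{VoisinUnirational} to handle the singular members. Finally I would propagate the equality $\psi(P)=\psi(P')$, valid on the dense set of pairs spanning a good $\mathbb{P}^5$, to all planes by irreducibility of $F_2(Y)$ and a specialization argument. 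I expect the main obstacle to be precisely this comparison of $D_P$ and $D_{P'}$ for two fixed, isolated planes in one fourfold: it cannot be reduced to rational connectedness of the obvious bases, and it is the step that genuinely uses the freedom to vary the slice $H_5$ inside $F_1(Y)$ while keeping control of the singularities of $Y_4$.
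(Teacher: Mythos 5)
Your reduction steps are fine as far as they go: the flatness of the family of $D_P$'s, the $H_5$-independence via the rational fiber $\mathrm{Gr}(3,7)$ of $B\to F_2(Y)$, and the reduction to comparing $j_*D_P$ and $j_*D_{P'}$ for two planes in the common slice $Y_4 = Y\cap\langle P,P'\rangle$ are all sound. The genuine gap is in the final step, and it is fatal as stated: the rational equivalence you propose to produce by moving $(Y_4;P,P')$ in the family of \emph{abstract} cubic fourfolds with two marked planes lives in the total space of the universal Fano family over that base (or, after projection, in $\mathrm{Gr}(2,6)$), not in $F_1(Y)$. Once $Y_4$ is deformed away from being a linear section of the fixed $Y$, there is no morphism from the family to $F_1(Y)$, hence nothing to ``transport via $j$.'' Nor can you hope for the equivalence fiberwise: $D_P$ and $D_{P'}$ are in general \emph{not} rationally equivalent in $F_1(Y_4)$ itself --- the paper's relation $2D_{P_x}+D_{P_{x'}}=l$ already shows these are distinct divisor classes when $P\neq P'$ --- so the equality $j_*D_P=j_*D_{P'}$ genuinely uses the pushforward to $F_1(Y)$, and the only families whose cycles push forward to $F_1(Y)$ are families of linear sections of the fixed $Y$. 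Their parameter space is governed by pairs of points of $X=F_2(Y)$, which is not rationally connected: this is exactly the obstacle you yourself identified, and the detour through the abstract family does not circumvent it, it only relocates it.

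What is missing is an input about the Chow groups of the fixed eightfold $Y$, and this is how the paper's proof (which is entirely different in mechanism) proceeds. It works in the incidence variety $\mathcal P_{1,0}\subset F_1(Y)\times Y$ with projections $p,q$. The $P$-dependence is killed at the outset by Otwinowska's theorem~\cite{Otwinowska} that $CH_2(Y)=\mathbb Z\cdot[P]$, so that $\Sigma_P=\mathcal P_{1,0}^*(P)$ and $\tilde\Sigma_P=q^{-1}(P)$ have classes independent of $P$; note this is a statement about low-degree hypersurfaces that no amount of rational connectedness of parameter spaces of planes in $Y$ will give you. The $H_5$-dependence is then killed by a vector-bundle computation: the four linear equations of $H_5$ give a section of $\mathcal E^{*\oplus 4}$ on $\tilde\Sigma_P$ which, because $P\subset H_5$, lifts through the sequence $0\to(\det\mathcal E^*\otimes\mathcal H)|_{\tilde\Sigma_P}\to\mathcal E^*|_{\tilde\Sigma_P}\to\mathcal H^*|_{\tilde\Sigma_P}\to 0$ to a section of $(\det\mathcal E^*\otimes\mathcal H)^{\oplus 4}|_{\tilde\Sigma_P}$ whose zero locus $\tilde D_P$ is the set of pairs $(l,y)$ with $l\subset H_5$, $l\cap P=y$; hence $[\tilde D_P]=[\tilde\Sigma_P]\cdot c_1(\det\mathcal E^*\otimes\mathcal H)^4$ and $j_*D_P=p_*[\tilde D_P]$ is manifestly independent of both $P$ and $H_5$. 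If you want to salvage your spreading framework, you should replace your step on $P$-dependence by this use of $CH_2(Y)=\mathbb Z$ (your $H_5$-step can stay, or be replaced by the Euler-class argument); without some such Chow-theoretic input on $Y$ the comparison of two isolated planes cannot be carried out.
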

\begin{proof}
    Let $q: \mathcal P_{1,0}\to Y$ and $p: \mathcal P_{1,0}\to F_1(Y)$ be the universal {correspondence} of $Y$ and $F_1(Y)$.
    Let $\Sigma_P\subset F_1(Y)$ be the variety of lines in $Y$ that intersects the plane $P$. The class $\Sigma_P\in CH^5(F_1(Y))$ does not depend on the choice of $P\subset Y$, since $\Sigma_P = \mathcal P_{1,0}^*(P)$ and since $CH_2(Y) = \mathbb ZP$ by~\cite{Otwinowska}. Let $\tilde\Sigma_P\subset \mathcal P_{1,0}$ be the preimage of $P\subset Y$ via $q$. Then similarly, the Chow class of $\tilde\Sigma_P$ in $\mathcal P_{1,0}$ does not depend on the choice of $P$. Let us define two vector bundles $\mathcal E$ and $\mathcal H$ on $\mathcal P_{1,0}$ as follows. $\mathcal E$ is the pull-back of the universal subbundle over $F_1(Y)\subset \mathrm{Gr}(2,10)$ via $p$ and $\mathcal H$ is the pull-back of the Hopf bundle over $Y\subset \mathbb P^9$ via $q$. On $\tilde\Sigma_P$, we have a natural inclusion map $\mathcal H|_{\tilde\Sigma_P}\hookrightarrow \mathcal E|_{\tilde\Sigma_P}$ that induces a sujective morphism of vector bundles $\phi: \mathcal E|_{\tilde\Sigma_P}^*\to \mathcal H|_{\tilde\Sigma_P}^*$ that fits into the short exact sequence
    \begin{equation}\label{EqShortExactSequenceInCubicFourfold}
         0\to (\det\mathcal E^*\otimes \mathcal H)|_{\tilde\Sigma_P} \to \mathcal E|_{\tilde\Sigma_P}^*\to \mathcal H|_{\tilde\Sigma_P}^* \to 0.
    \end{equation}
   
    Over $\tilde\Sigma_P$, the defining equations of $H_5\subset \mathbb P^9$ induces a section $s$ of $\mathcal E^*|_{\tilde\Sigma_P}^{\oplus 4}$ that is zero when projected to $H^0(\tilde\Sigma_P, \mathcal H^*|_{\tilde\Sigma_P}^{\oplus 4})$. Hence, we can view $s$ as a section $\sigma_s$ of $(\det\mathcal E^*\otimes \mathcal H)|_{\tilde\Sigma_P}^{\oplus 4}$. Let $\tilde D_P$ be the zero locus of $\sigma_s$. Then $\tilde D_P$ parametrizes the pairs $(l, y)\in F_1(Y)\times Y$ such that $l\subset H_5$ and $l\cap P = y$. By the projection formula, we find that the Chow class of $\tilde D_P$ in $\mathcal P_{1,0}$ is $\tilde \Sigma_P\cdot c_1((\det\mathcal E^*\otimes \mathcal H))^4$, which is independent of the choice of $P$ and $H_5$. Since $D_P = p_*\tilde D_P$, the Chow class of $D_P$ in $F_1(Y)$ is independent of the choice of $P$ and $H_5$ as well.
\end{proof}

 For a plane $P\subset Y$, let $P^{\vee}\subset F_1(Y)$ be the subvariety of lines contained in $P$. Let $l\in CH^1(F_1(Y))$ be the restriction of the Plücker line bundle class of $\mathrm{Gr}(2, 10)$. Let $Y_4\subset Y$ be a linear section of $Y$ that has at most simple double points as singularities. Let $\Sigma\subset F_1(Y_4)$ be the singular locus of $F_1(Y_4)$ and let $j: F_1(Y_4)_{sm}\hookrightarrow F_1(Y)-\Sigma=: F_1(Y)^0$ be the inclusion map. Let $D_P$ be defined as in Corollary~\ref{CorVoisinQuadraticRelationOnDP}. Then Corollary~\ref{CorVoisinQuadraticRelationOnDP} and Lemma~\ref{Lmmj*DPIsConstant} imply the following
 \begin{corollary}\label{CorPIsConstantUpToDP2}
      In $CH_2(F_1(Y)^0)_{\mathbb Q}$, we have the following relation
     \[
     P^{\vee} = \alpha j_*(D_P^2) + c|_{F_1(Y)^0},
     \]
     where $\alpha\neq 0$ is a rational number and $c\in CH_2(F_1(Y))_{\mathbb Q}$ is a Chow class that is independent of the choice of the plane $P$.
 \end{corollary}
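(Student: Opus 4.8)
The plan is to transport the quadratic relation of Corollary~\ref{CorVoisinQuadraticRelationOnDP}, which holds in $CH_2(F_1(Y_4)_{sm})_{\mathbb Q}$, to $F_1(Y)^0$ by proper pushforward, and to show that all terms except $\alpha\, j_*(D_P^2)$ combine into the restriction of a single class on $F_1(Y)$ that is independent of $P$. Write $\bar j\colon F_1(Y_4)\hookrightarrow F_1(Y)$ for the closed embedding; then $j_*$ on the open parts is the restriction of $\bar j_*$, since proper pushforward commutes with restriction to the open set $F_1(Y)^0=F_1(Y)-\Sigma$. Applying $j_*$ to
\[
P^\vee=\alpha D_P^2+\beta D_P\cdot l+\gamma
\]
yields, in $CH_2(F_1(Y)^0)_{\mathbb Q}$,
\[
j_*P^\vee=\alpha\,j_*(D_P^2)+\beta\,j_*(D_P\cdot l)+j_*\gamma.
\]
Since every line in $P$ lies in $Y_4$ and, for general $P$, the surface $P^\vee$ avoids $\Sigma$, the left-hand side is exactly the class $P^\vee\subset F_1(Y)^0$ of the statement. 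It remains to analyze the last two terms.

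For $\beta\, j_*(D_P\cdot l)$, the Plücker class $l$ on $F_1(Y_4)_{sm}$ is the restriction of the Plücker class on $F_1(Y)$, so the projection formula gives $\bar j_*(D_P\cdot l)=(\bar j_*D_P)\cdot l$ in $CH_2(F_1(Y))_{\mathbb Q}$. By Lemma~\ref{Lmmj*DPIsConstant} the class $\bar j_*D_P\in CH_3(F_1(Y))$ is independent of $P$ and of $H_5$; hence $\beta\, j_*(D_P\cdot l)$ is the restriction to $F_1(Y)^0$ of a $P$-independent class.

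The essential term is $j_*\gamma$, and I expect this to be the main obstacle. Here $\gamma=\delta|_{F_1(Y_4)}$ with $\delta\in CH^2(\mathrm{Gr}(2,6))_{\mathbb Q}$ a fixed class. The key point is that the restriction map $CH^2(\mathrm{Gr}(2,10))\to CH^2(\mathrm{Gr}(2,6))$ is surjective, both groups being spanned by $\sigma_2$ and $\sigma_{1,1}$, so $\delta=\tilde\delta|_{\mathrm{Gr}(2,6)}$ for some $\tilde\delta\in CH^2(\mathrm{Gr}(2,10))_{\mathbb Q}$. As the embedding $F_1(Y_4)\hookrightarrow \mathrm{Gr}(2,6)\hookrightarrow \mathrm{Gr}(2,10)$ factors through $F_1(Y)$, we get $\gamma=\bar j^*(\tilde\delta|_{F_1(Y)})$, whence by the projection formula $\bar j_*\gamma=(\tilde\delta|_{F_1(Y)})\cdot \bar j_*[F_1(Y_4)]$. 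I would then identify $\bar j_*[F_1(Y_4)]$: the sub-Grassmannian $\mathrm{Gr}(2,6)\subset \mathrm{Gr}(2,10)$ parametrizing lines in $H_5$ has class the Schubert cycle $\sigma_{4,4}$, of codimension $8$, and for general $H_5\supset P$ the intersection $F_1(Y)\cap \mathrm{Gr}(2,6)=F_1(Y_4)$ is dimensionally transverse (by Kleiman's theorem, using the transitive action of $GL_{10}$ on sub-Grassmannians of this type). Therefore $\bar j_*[F_1(Y_4)]=\sigma_{4,4}|_{F_1(Y)}$, a fixed Schubert class independent of $H_5$ and $P$, so $\bar j_*\gamma=(\tilde\delta\cdot\sigma_{4,4})|_{F_1(Y)}$ is again the restriction of a $P$-independent class.

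Combining, the class
\[
c:=\beta\,(\bar j_*D_P)\cdot l+(\tilde\delta\cdot\sigma_{4,4})|_{F_1(Y)}\ \in\ CH_2(F_1(Y))_{\mathbb Q}
\]
is independent of $P$, and restricting the displayed identity to $F_1(Y)^0$ gives $P^\vee=\alpha\, j_*(D_P^2)+c|_{F_1(Y)^0}$, with $\alpha\neq 0$ carried over from Corollary~\ref{CorVoisinQuadraticRelationOnDP}. The main difficulty lies in the $\gamma$-term: one must simultaneously lift $\delta$ to a class on the ambient Grassmannian and compute the pushforward of the fundamental class $[F_1(Y_4)]$, the latter relying on a transversality statement for the linear section that holds cleanly only for general $H_5$.
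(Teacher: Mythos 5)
Your proposal follows the derivation the paper intends: the paper states this corollary with no written proof beyond ``Corollary~\ref{CorVoisinQuadraticRelationOnDP} and Lemma~\ref{Lmmj*DPIsConstant} imply the following,'' and your pushforward-plus-projection-formula argument, with Lemma~\ref{Lmmj*DPIsConstant} absorbing the $\beta\, D_P\cdot l$ term, supplies exactly the routine part left to the reader. Two details need repair. First, your aside that for general $P$ the surface $P^\vee$ avoids $\Sigma$ is false: by Bertini the singular points of $Y_4$ lie on the base locus $P$ of the system $\{H_5'\cap Y:\ P\subset H_5'\}$ (compare Lemma~\ref{LmmSingularityOfY4}, where the four nodes satisfy $x_3=x_4=x_5=0$ and hence lie on $P_x$), so $P^\vee\cap\Sigma$ contains the pencils of lines in $P$ through the nodes. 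This is harmless for your argument --- the identity lives in $CH_2(F_1(Y)^0)_{\mathbb Q}$, and $j_*\bigl[P^\vee\cap F_1(Y_4)_{sm}\bigr]$ is by definition the restriction of $[P^\vee]$ to $F_1(Y)^0$, with no disjointness needed --- but the claim itself should be deleted.

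Second, and more substantively, your justification of $\bar j_*[F_1(Y_4)]=\sigma_{4,4}|_{F_1(Y)}$ via Kleiman does not apply as stated: $GL_{10}$ does act transitively on the sub-Grassmannians $\mathrm{Gr}(2,H_5)$, but your $H_5$ is constrained to contain $P$ (and, in the application in Proposition~\ref{PropPx'-4PxConstant}, to contain the fixed $3$-plane $H$), and the stabilizer of $P$ does not act transitively on $\mathrm{Gr}(2,10)$; moreover the corollary is needed for the actual nodal $Y_4$ arising there, not merely for a general one, so genericity in $H_5$ cannot be invoked at all. The fix is direct and needs no transversality input: scheme-theoretically $F_1(Y)\cap\mathrm{Gr}(2,H_5)=F_1(Y_4)$, since the section of $\mathrm{Sym}^3\mathcal E^*$ cutting $F_1(Y)$ out of $\mathrm{Gr}(2,10)$ restricts to the one cutting $F_1(Y_4)$ out of $\mathrm{Gr}(2,6)$; equivalently, $F_1(Y_4)$ is the zero scheme on $F_1(Y)$ of the section of $(\mathcal E^*)^{\oplus 4}$ given by four linear forms defining $H_5$. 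This zero scheme has the expected dimension $4$ whenever $Y_4$ has isolated singularities, and it is generically reduced because its singular locus $\Sigma$ has dimension $2$; hence all multiplicities are $1$ and $\bar j_*[F_1(Y_4)]=c_2(\mathcal E^*)^4|_{F_1(Y)}=\sigma_{1,1}^4|_{F_1(Y)}=\sigma_{4,4}|_{F_1(Y)}$, manifestly independent of $(P,H_5)$. This Euler-class argument is precisely the technique the paper itself uses in Lemma~\ref{Lmmj*DPIsConstant} and Lemma~\ref{LmmChowClassOfTheFourSurfaces}; with it, your proof is complete and coincides with the intended one.
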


 \begin{proof}[Proof of Proposition~\ref{PropPx'-4PxConstant}]
     By the definition of the Voisin map, there is a unique linear subspace $H\subset \mathbb P^9$ of dimension $3$, such that $H\cap Y = 2 P_x + P_{x'}$. Let $H_5\subset \mathbb P^9$ be a linear subspace containing $H$ (thus containing both $P_x$ and $P_{x'}$), and let $Y_4 = Y\cap H_5$. 
     \begin{lemma}\label{LmmSingularityOfY4}
         For a general choice of $H_5$, the cubic hypersurface $Y_4$ has $4$ simple double points as the only singularities.
     \end{lemma}
     \begin{proof}
         The base locus of the linear system $L = \{H_5'\cap Y: H\subset H_5'\subset \mathbb P^9\}$ is $P_x\cup P_{x'}$. Hence, by Bertini's theorem, for a general $H_5$, the cubic hypersurface $Y_4 := Y\cap H_5$ is smooth outside $P_x\cup P_{x'}$. Let us write $H_5 = \{(x_0: x_1: \ldots: x_5)\}$ and we can assume that $H = \{(x_0: x_1: x_2: x_3: 0: 0)\}$, $P_x = \{(x_0: x_1: x_2: 0: 0: 0)\}$ and $P_{x'} = \{(0: x_1: x_2: x_3: 0: 0)\}$. The fact that $H\cap Y_4 = 2 P_x + P_{x'}$ means that the defining equation of $Y_4\subset H_5$ can be written as
         \[
         f(x_0,\ldots, x_5) = x_3^2x_0 + x_4Q_1(x_0, \ldots, x_5) + x_5Q_2(x_0, \ldots, x_5),
         \]
         where $Q_1$ and $Q_2$ are quadratic polynomials. Let $y\in Y_4$ be a singular point. Since $y\in H$, one must have $x_4(y) = x_5(y) = 0$. Writing $f_{x_i} = \frac{\partial f}{\partial x_i}$, we find by direct calculations that $f_{x_0}(y) = x_3(y)^2$,  $f_{x_1}(y) = f_{x_2}(y) = 0 $, $f_{x_3}(y) = 2x_0(y)x_3(y)$, $f_{x_4}(y) = Q_1(y)$ and $f_{x_5}(y) = Q_2(y)$. The fact $y\in Y_4$ is singular implies that $f_{x_i}(y) = 0$ for any $x_i$. Taken together, we find that $y\in H_5$ satisfies the following system of equations
         \[
         \left\{\begin{array}{cc}
              x_4 &= 0   \\
               x_5 &= 0 \\
               x_3 &= 0 \\
               Q_1(x_0, \ldots, x_5) &= 0 \\
               Q_2(x_0, \ldots, x_5) &= 0
         \end{array}\right.
         \]
         By the generality of $Y\subset \mathbb P^9$ and $H_5$, the solutions of this system of equations are four points in $H_5$. To show that the four singular points are simple double points. We do a local check. Let $y$ be one of the singular points. Up to a change of coordinates of $P_x$, we may assume without loss of generality that $x_0(y) \neq 0$ and $x_1(y) = x_2(y) =0$, namely, $y = (1: 0: 0: 0: 0: 0)\in H_5$. On the open affine subset $U_0$ of $H_5$ defined by $x_0 = 1$, the cubic hypersurface $Y_4\cap U_0$ is defined by the equation $x_3^2 + x_4Q_1(1, x_1, \ldots, x_5) + x_5Q_2(1, x_1, \ldots, x_5)$. The Taylor expansion of this polynomial around the point $y = (0, 0, 0,0,0)\in \mathbb A^5\cong U_0$ is the polynomial itself. The fact that this polynomial does not have degree $1$ term is simply because $y$ is a singular point. To show that $y$ is a double point, we only need to make sure that the degree $2$ term of the expression $x_3^2 + x_4Q_1(1, x_1, \ldots, x_5) + x_5Q_2(1, x_1, \ldots, x_5)$ is non-degenerate, and this condition is clearly satisfied for general $Q_1$ and $Q_2$.
     \end{proof}
     Let $y_1, y_2, y_3, y_4$ be the four singular points of $Y_4$. Let $\Sigma\subset F_1(Y_4)$ be the singular locus of $F_1(Y_4)$. Then $\Sigma$ is the union of four surfaces $\Sigma_1$, $\Sigma_2$, $\Sigma_3$ and $\Sigma_4$, parametrizing the lines in $Y_4$ that pass through the point $y_1, y_2, y_3$ and $y_4$, respectively. Let $F_1(Y)^0= F_1(Y)-\Sigma$
     Let $j: F_1(Y_4)_{sm}\hookrightarrow F_1(Y)^0$ be the natural inclusion. By Corollary~\ref{CorPIsConstantUpToDP2}, we have \begin{equation}\label{EqPxDPx}
         P_x^{*} = \alpha j_*(D_{P_x}^2) + c'|_{F_1(Y)^0}\in CH_2(F_1(Y))_{\mathbb Q}
     \end{equation}
     and 
     \begin{equation}\label{EqPx'DPx'}
         P_{x'}^{*} = \alpha j_*(D_{P_{x'}}^2) + c'|_{F_1(Y)^0}\in CH_2(F_1(Y)^0)_{\mathbb Q}
     \end{equation}
     for some constant $c'\in CH_2(F_1(Y))_{\mathbb Q}$. Now let $\mathcal P = \{(l, y)\in F_1(Y_4)_{sm}\times Y_4: y\in l\}$ be the {incidence} {correspondence}, then for any plane $P\subset Y_4$, we have $D_P = \mathcal P^*(P)$. It is clear that $2P_x + P_{x'} = h^2$ in $CH_2(Y_4)$, so $2D_{P_x} + D_{P_{x'}} = \mathcal P^*(h^2) = l\in CH_3(F_1(Y_4)_{sm})$. Taken into account of the equations (\ref{EqPxDPx}) and (\ref{EqPx'DPx'}), we find that 
     \[
     P_{x'}^{*} = 4 P_x^{*} - 4\alpha j_*D_{P_x}\cdot l + \alpha l^2 - 3 c' \in CH_2(F_1(Y)^0)_{\mathbb Q}.
     \]
     Taking into account of Lemma~\ref{Lmmj*DPIsConstant}, $P_{x'}^{*} - 4 P_x^{*} = c|_{F_1(Y)^0}\in CH_2(F_1(Y)^0)_{\mathbb Q}$, where $c\in CH_2(F_1(Y))_{\mathbb Q}$ is a constant $2$-cycle on $F_1(Y)$. Hence, in $F_1(Y)$, the cycle $P_{x'}^{*} - 4 P_x^{*} - c$ is supported on $\Sigma$, the singular locus of $F_1(Y_4)$. Thus, we need to understand the geometry and Chow classes of $\Sigma$. 

     Following Lemma~\ref{LmmSingularityOfY4}, let $y_1, y_2, y_3, y_4$ be the four singular points of $Y_4$. Then $\Sigma$ is the union of four surfaces $\Sigma_1$, $\Sigma_2$, $\Sigma_3$ and $\Sigma_4$, parametrizing the lines in $Y_4$ that pass through the point $y_1, y_2, y_3$ and $y_4$, respectively. We have the following two lemmas about the geometry of the surfaces $\Sigma_i$ that we will prove later. 
     \begin{lemma}\label{LmmIrreducibilityOfTheFourSurfaces}
         The surfaces $\Sigma_1$, $\Sigma_2$, $\Sigma_3$ and $\Sigma_4$ are irreducible. In particular, $CH_2(\Sigma) = \oplus_{i=1}^4 CH_2(\Sigma_i)$.
     \end{lemma}
     \begin{lemma}\label{LmmChowClassOfTheFourSurfaces}
         Let $H_5\subset \mathbb P^9$ be a general linear subspace of dimension $5$ such that the linear section $Y_4:= Y\cap H_5$ has only simple double singularities and let $y\in Y_4$ be a singular point. Let $S$ be the surface of lines in $Y_4$ that pass through the point $y$. Then the Chow class of $S$ in $F_1(Y)$ does not depend on the choice of $H_5$ and $y$.
     \end{lemma}
     By Lemma~\ref{LmmIrreducibilityOfTheFourSurfaces} and the fact that $P_{x'} - 4P_x - c$ is supported on $\Sigma$, we conclude that $P_{x'} - 4P_x - c = \sum_{i=1}^4 a_i\Sigma_i$, with $a_i\in\mathbb Q$. The cohomological class of $P_{x'} - 4P_x - c$ is clearly a constant, thus the cohomological class of $\sum_{i=1}^4 a_i\Sigma_i$ is constant. Lemma~\ref{LmmChowClassOfTheFourSurfaces} then implies that the Chow class of $\sum_{i=1}^4 a_i\Sigma_i$ is also constant. Therefore, $P_{x'} - P_x = c + \sum_{i = 1}^4 a_i\Sigma_i$ is a constant in $CH_2(F_1(Y))_{\mathbb Q}$, as desired. This concludes the proof of Proposition~\ref{PropPx'-4PxConstant}.
 \end{proof}
 \begin{proof}[Proof of Lemma~\ref{LmmIrreducibilityOfTheFourSurfaces}]
         To check the irreducibility, we return to the proof of Lemma~\ref{LmmSingularityOfY4}. Using the notation there, let 
         \[g(x_1, \ldots, x_5) = f(0, x_1, \ldots, x_5) =  x_4Q_1(0, x_1, \ldots, x_5) + x_5 Q_2(0, x_1, 
         \ldots, x_5)\]
         and let $q(x_1, \ldots, x_5)$ be the degree $2$ part of the polynomial $x_3^2 + x_4Q_1(1, x_1, \ldots, x_5) + x_5Q_2(1, x_1, \ldots, x_5)$. Then
         the surface $\Sigma_1$ of lines passing through the singular point $y = (1: 0: 0: 0: 0: 0)$ is the subvariety in $\mathbb P^4 = \{(x_1: \ldots: x_5)\}$ cut by the  equations $g(x_1, \ldots, x_5) = 0$ and $q(x_1, \ldots, x_5) = 0$. From the expression, we see that $g(x_1, \ldots, x_5)$ depends on the coefficients of the terms $x_ix_j$ with $1\leq i, j\leq 5$ in $Q(x_0, x_1, \ldots, x_5)$ whereas $q(x_1, \ldots, x_5)$  depends on the coefficients of the terms $x_0x_i$ for $1\leq i\leq 5$. So the choice of coefficients of $g(x_1, \ldots, x_5)$ and $q(x_1, \ldots, x_5)$ does not have influence on each other. Now we fix one choice of smooth $Q=\{q=0\}$. Then varying $C = \{g = 0\}$, the base points of $C\cap Q$ is the line $(s: t: 0: 0: 0)$ corresponding to the lines on the plane $P_x$ passing through the point $y$. Hence, for a general choice of $q$ and $g$, the surface $\Sigma_1 = C\cap Q$ is smooth outside the line $(s: t: 0: 0: 0)$. Writing $Q_1(x_0, \ldots, x_5) = \sum_{i, j = 0}^5 a_{ij}x_ix_j$ and $Q_2(x_0, \ldots, x_5) = \sum_{i, j = 0}^5 b_{ij}x_ix_j$, the Jacobian matrix of the polynomials $q(x_1, \ldots, x_5)$ and $g(x_1, \ldots, x_5)$ at the point $(s: t: 0: 0: 0)$ is
    \[
    \begin{pmatrix}
        0 & 0 & 0 &  a_{01} s + a_{02} t &  b_{01} s + b_{02} t\\
        0 & 0 & 0 & 2 a_{12} st & 2 b_{12} st 
    \end{pmatrix}.
    \]
    This matrix does not have full rank only if $s = 0$ or $t = 0$ or $(a_{01} - b_{01}) s + (a_{02} - b_{02}) t = 0$, corresponding to the three lines passing through the other three singular points. Therefore, the singular locus of $\Sigma_1$ is of codimension $2$. But a reducible complete intersection of dimension $\geq 1$ in the projective space always have codimension $1$ singular locus by the Fulton–Hansen connectedness theorem~\cite{Connectedness}. Hence, $\Sigma_1$ is irreducible, as desired.
    \end{proof}

\begin{proof}[Proof of Lemma~\ref{LmmChowClassOfTheFourSurfaces}]
         Let $H$ be the projective tangent space of $Y$ at point $y$. It is a linear subspace of dimension $8$ in $\mathbb P^9$. The linear section $Y_4$ being singular at $y$ is equivalent to the relation $H_5\subset H$. Let $F_{1, y}(Y)$ be the variety of lines in $Y$ that passes through $y$. Let $\mathcal P = \{(l, y)\in F_{1}(Y)\times Y: y\in l\}$ be the {incidence} variety. Then $F_{1,y}(Y)$ can be identified with a subvariety of $\mathcal P$ given by the preimage of the point $y$ under the projection map $q: \mathcal P\to Y$.
         Since every line in $Y$ passing through $y$ is contained in $H$, the subvariety $S$ of $F_{1, y}(Y)$ parametrizing the lines that is furthermore included in $H_5$ is given by the zero locus of a section $\sigma$ of the vector bundle $(\mathcal E^*)^{\oplus 3}$. However, as the point $y$ already lies in $H_5$, if we restrict $\sigma$ on $y$ via the following morphism
         $(\mathcal E^*)^{\oplus 3} \to \mathcal O^{\oplus 3}$,
         we get zero. Thus, $S$ can be viewed as the zero locus of a section of $(\det(\mathcal E^*))^3$, with expected dimension. Therefore, the Chow class of $S$ in $F_1(Y)$ is given by $c_1(\mathcal E^*)^3\cdot F_{1, y}(Y)$. Note that $F_{1, y}(Y) = \mathcal P^*(y) \in CH(F_1(Y))$ with $\mathcal P$ the canonical {correspondence} between $F_1(Y)$ and $Y$. Since $CH_0(Y)$ is trivial, the class $F_{1, y}(Y)\in CH(F_1(Y))$ is independent of $y$. In conclusion, the constant Chow class of $S$ in $F_1(Y)$ is independent of the choice of $H_5$ and $y$. 
     \end{proof}

\subsubsection{Chow Class of $F$ in $X$}\label{SectionChowClassOfFInX}
In this section, we determine the Chow class of $F$ within $X$. Consider $\mathcal{F}$ as the tautological subbundle of $\mathrm{Gr}(4, 10)$ (resp. $\mathrm{Gr}(r+2, n+1)$ in the general case), and $\mathcal{E}$ as the tautological subbundle of $X = F_2(Y)$ (resp. $X = F_r(Y)$ in the general case). Let $c_i$ represent $c_i(\mathcal{E}^*)$ over $X$. In the case $r = 2$, we have

\begin{proposition}\label{ThmChowClassOfF}
The Chow class of $F$ within $X$ is expressed as $-20 c_1^3 + 110c_1c_2 + 49 c_3$ in $CH^3(X)$.
\end{proposition}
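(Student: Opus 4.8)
The plan is to realize $F$ as the image of the zero locus of an explicit section of a vector bundle on a flag bundle over $X$, and thereby reduce the statement to a Chern/Segre class computation. Work over the projective bundle $\pi:\mathbb{P}\to X$ whose fiber over a plane $P$ (a point $x\in X$) is the space of $3$-dimensional linear subspaces $\Pi\subset\mathbb{P}^9$ with $P\subset\Pi$; concretely $\mathbb{P}=\mathbb{P}(\mathcal N)$ with $\mathcal N=\mathcal O^{10}/\mathcal E$, so the fibers are $\mathbb{P}^6$. On $\mathbb{P}$ one has the tautological flag $\mathcal E\subset\mathcal F$ of ranks $3$ and $4$ (with $\mathcal E$ pulled back from $X$), together with the tautological line bundle $\mathcal L=\mathcal F/\mathcal E=\mathcal O_{\mathbb P}(-1)$; write $\xi=c_1(\mathcal L^*)$. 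The content of the fixed-locus proposition is that a generic fixed plane $P$ carries a \emph{unique} $\Pi$ with $\Pi\cap Y=3P$, so $x\mapsto(x,\Pi_x)$ identifies $F$ birationally with a subvariety $\tilde F\subset\mathbb P$, and hence $[F]=\pi_*[\tilde F]$.

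Next I describe $\tilde F$ as a degeneracy locus. Restricting the cubic $f$ to $\Pi$ gives a section of $\mathrm{Sym}^3\mathcal F^*$; since $P\subset Y$ it lands in $K:=\ker(\mathrm{Sym}^3\mathcal F^*\to\mathrm{Sym}^3\mathcal E^*)$, the rank-$10$ bundle of cubics on $\Pi$ vanishing on $P$. The condition $\Pi\cap Y=3P$ says precisely that $f|_\Pi$ lies in the sub-line-bundle $\mathcal L^{*\otimes 3}\subset K$ spanned by $\ell^3$, where $\ell$ cuts out $P$ in $\Pi$. Thus $\tilde F$ is the main component of the zero locus of the induced section $\bar f$ of $\mathcal G:=K/\mathcal L^{*\otimes 3}$, a bundle of rank $9=\dim\mathbb P-\dim\tilde F$. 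One must note that $Z(\bar f)$ also contains $\{(P,\Pi):\Pi\subset Y\}$ (where $f|_\Pi=0$); but since $\dim F_3(Y)=4$ this locus has dimension $4+3=7$, strictly below the expected $8=\dim X-3$, so it contributes nothing to the codimension-$9$ part of the localized top Chern class. Granting that $\bar f$ is generically transverse (equivalently, that $\tilde F$ appears with multiplicity one), which I will verify by a local computation at a generic fixed point using the normal form $f=x_3^3+\sum_{i\ge 4}x_iQ_i$ already exploited in Proposition~\ref{PropEigenPoly}, I obtain $[\tilde F]=c_9(\mathcal G)$ and hence $[F]=\pi_*c_9(\mathcal G)$.

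Finally I compute. Multiplication by $\ell$ gives $K\cong\mathcal L^*\otimes\mathrm{Sym}^2\mathcal F^*$, whence $\mathcal G\cong\mathcal L^*\otimes(\mathrm{Sym}^2\mathcal F^*/(\mathcal L^*)^{\otimes 2})$, and the tautological filtration shows $\mathcal G$ has the same total Chern class as $(\mathcal L^*\otimes\mathrm{Sym}^2\mathcal E^*)\oplus((\mathcal L^*)^{\otimes 2}\otimes\mathcal E^*)$. Writing $a_1,a_2,a_3$ for the Chern roots of $\mathcal E^*$ (so that the elementary symmetric functions are $c_i=c_i(\mathcal E^*)$), this yields
\[
c_9(\mathcal G)=\prod_{1\le i\le j\le 3}(\xi+a_i+a_j)\cdot\prod_{i=1}^{3}(2\xi+a_i).
\]
Expanding and keeping only the powers $\xi^6,\dots,\xi^9$ (lower powers push forward to $0$), and using the projective-bundle formula $\pi_*(\xi^{6+i})=s_i(\mathcal N)=c_i(\mathcal E)=(-1)^ic_i$ — valid because $c(\mathcal E)c(\mathcal N)=1$, so $\pi_*\xi^6=1$, $\pi_*\xi^7=-c_1$, $\pi_*\xi^8=c_2$, $\pi_*\xi^9=-c_3$ — a direct expansion gives $[F]=-20c_1^3+110c_1c_2+49c_3$, as claimed.

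The main obstacle is the middle step: pinning down the degeneracy-locus description while controlling the excess, i.e. checking that $\tilde F$ is the only component of $Z(\bar f)$ of the expected dimension and that $\bar f$ is generically reduced along it, so that the top Chern class computes $[F]$ exactly and not merely a positive multiple. The Chern-root bookkeeping in the last paragraph is lengthy but entirely mechanical.
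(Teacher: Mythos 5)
Your proposal is correct and is essentially the paper's own argument repackaged: your projective bundle $\mathbb{P}(\mathcal N)$ is exactly the paper's first stratum $\tilde{\tilde X}\subset X\times \mathrm{Gr}(4,10)$ (the zero locus of the section of $pr_1^*\mathcal E^*\otimes V_{10}/pr_2^*\mathcal F$), and your single rank-$9$ bundle $\mathcal G=K/\mathcal L^{*\otimes 3}$ has precisely the graded pieces $\mathcal L^*\otimes \mathrm{Sym}^2\mathcal E^*$ and $\mathcal L^{*\otimes 2}\otimes \mathcal E^*$ that appear in the paper's product of Euler classes once one notes $\mathcal F^*-\mathcal E^*=\mathcal L^*$ in K-theory on the flag bundle, so you push forward the same class, merely by hand (via $\pi_*\xi^{6+i}=s_i(\mathcal N)=(-1)^i c_i$) rather than by Macaulay2. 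Your mechanical step checks out — the $\xi^9,\xi^8,\xi^7,\xi^6$ coefficients are $8$, $36c_1$, $56c_1^2+42c_2$, $36c_1^3+116c_1c_2+57c_3$, giving $-20c_1^3+110c_1c_2+49c_3$ — and your explicit treatment of the excess component $\{(x,\Pi):\Pi\subset Y\}$ and of generic transversality addresses points the paper leaves implicit; for the latter, note that for general $Y$ multiplicity one follows from the paper's own universal setup, since $\tilde{\mathcal I}$ in Lemma~\ref{LmmVarietyMathcalI} is a linear-space bundle over $\mathrm{Fl}$, hence smooth, and generic smoothness of $\tilde p$ in characteristic zero makes the general fiber reduced.
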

\begin{proof}
    Let us start by giving the general method for any $r$ and then specialize to $r=2$ for explicit calculations. Consider the following stratification of $X\times \mathrm{Gr}(r+2,n+1)$:
    \[
    \begin{tikzcd}
        X\times \mathrm{Gr}(r+2, n+1)\arrow[d, hookleftarrow, "\gamma"]\\
        \tilde{\tilde X}\arrow[d, hookleftarrow, "\beta"] :=\{(x, P_{r+1})\in X\times \mathrm{Gr}(r+2,n+1)| P_{r+1}\cap Y\supset P_x\}\\
        \tilde X \arrow[d, hookleftarrow,"\alpha"] :=\{(x,P_{r+1})\in X\times \mathrm{Gr}(r+2, n+1)| P_{r+1}\cap Y\supset 2P_x\}\\
        \tilde F:=\{(x, P_{r+1})\in X\times \mathrm{Gr}(r+2,n+1)|P_{r+1}\cap Y\supset 3P_x\}
    \end{tikzcd}
    \]
    Let $pr_1: X\times \mathrm{Gr}(r+2,n+1)\to X$ and $pr_2: X\times \mathrm{Gr}(r+2, n+1)\to \mathrm{Gr}(r+2, n+1)$ be the projection maps. It is not hard to see that $F=(pr_1\circ\gamma\circ\beta\circ\alpha)(\tilde F)$. We will see shortly that the Chow class of $\tilde F$ in $X\times \mathrm{Gr}(r+2, n+1)$ lies in a subring generated by the Chern classes of $pr_1^*\mathcal E^*$ and $pr_2^*\mathcal F^*$, say 
    \begin{equation}\label{EqExpOfTildeF}
        [\tilde F]=\sum_i pr_1^*P_i(c_k(\mathcal E^*))\cdot pr_2^*Q_i(c_l(\mathcal F^*))
    \end{equation}
    in $CH^{r+1+(n-r-1)(r+2)}(X\times \mathrm{Gr}(r+2, n+1))$, where $P_i$ and $Q_i$ are polynomials with adequate degrees. Then the class of $F$ in $X$ is $pr_{1*}([\tilde F])=\sum_iP_i(c_k(\mathcal E^*))\cdot pr_{1*}pr_2^*Q_i(c_l(\mathcal F^*))$. In this expression, 
    $pr_{1*}pr_2^*Q_i(c_l(\mathcal F^*))\neq 0$ only if the weighted degree of $Q_i(c_l(\mathcal F^*))$ is $(n-r-1)(r+2)$, namely the dimension of $\mathrm{Gr}(r+2, n+1)$ (or equivalently, the weignted degree of $P_i(c_k(\mathcal E^*))$ is $r+1$). In this case, in $CH_0(\mathrm{Gr}(r+2,n+1))$, the class $Q_i(c_l(\mathcal F^*))$ is a multiple of a point $o\in \mathrm{Gr}(r+2, n+1)$. The coefficient, denoted by $q_i$, is calculable using Schubert calculus, once we know the expression of $Q_i(c_l(\mathcal F^*))$. Taken together, the class of $F$ in $CH^{r+1}(X)$ is given by
    \[
    [F]=\sum_{i: \deg P_i=r+1} q_i P_i(c_k(\mathcal E^*)).
    \]
    Now let us prove (\ref{EqExpOfTildeF}), i.e., $[\tilde F]$ is generated by $pr_1^*\mathcal E^*$ and $pr_2^*\mathcal F^*$. The subvariety $\tilde{\tilde X}$ of $X\times \mathrm{Gr}(r+2, n+1)$ is defined by the condition $P_x\subset P_{r+1}$. Hence, we can view $\tilde{\tilde X}$ as the locus where the composite map of vector bundles $pr_1^*\mathcal E\to V_{n+1}\to V_{n+1}/pr_2^*\mathcal F $ is zero, where $V_{n+1}$ is the trivial bundle of rank $n+1$ over $X\times \mathrm{Gr}(r+2, n+1)$. Hence, $\tilde{\tilde X}$ is the zero locus of a section of $pr_1^*\mathcal E^*\otimes V_{n+1}/pr_2^*\mathcal F$. Now over $\tilde{\tilde X}$, we have a natural injection $pr_1^*\mathcal E|_{\tilde{\tilde X}}\to pr_2^*\mathcal F|_{\tilde{\tilde X}}$. The defining polynomial $f$ of $Y$ induces a section of $pr_2^*\mathrm{Sym}^3\mathcal F^*|_{\tilde{\tilde X}}$. Since $P_x\subset Y$ for any $x\in X$, this section vanishes when passing to $pr_1^*\mathrm{Sym}^3\mathcal E^*|_{\tilde{\tilde X}}$. Hence, the section is actually a section $\tilde{\tilde f}$ of $pr_2^*\mathrm{Sym}^2\mathcal F^*|_{\tilde{\tilde X}}\otimes (pr_2^*\mathcal F|_{\tilde{\tilde X}}/pr_1^*\mathcal E|_{\tilde{\tilde X}})^*$. The subvariety $\tilde X$ is  the locus of $\tilde{\tilde X}$ where, when passing to $pr_1^*\mathrm{Sym}^2\mathcal E^*|_{\tilde{\tilde X}}\otimes (pr_2^*\mathcal F|_{\tilde{\tilde X}}/pr_1^*\mathcal E|_{\tilde{\tilde X}})^*$, the section $\bar{\tilde{\tilde f}}$ is zero. Hence, $\tilde X$ is the zero locus of a section of the vector bundle $pr_1^*\mathrm{Sym}^2\mathcal E^*|_{\tilde{\tilde X}}\otimes (pr_2^*\mathcal F|_{\tilde{\tilde X}}/pr_1^*\mathcal E|_{\tilde{\tilde X}})^*$. Similar argument shows that $\tilde F$ is the zero locus of a section of the vector bundle $pr_1^*\mathcal E^*|_{\tilde X}\otimes (pr_2^*\mathcal F|_{\tilde X}/pr_1^*\mathcal E|_{\tilde X})^{*\otimes 2}$. Taken together, the class of $\tilde F$ in $CH(X\times \mathrm{Gr}(r+2,n+1))$ is given by the following class
    \[
    e(pr_1^*\mathcal E^*\otimes (V_{n+1}-pr_2^*\mathcal F))\cdot e((pr_2^*\mathcal F^*-pr_1^*\mathcal E^*)\otimes pr_1^*\mathrm{Sym}^2\mathcal E^*)\cdot e((pr_2^*\mathcal F^*-pr_1^*\mathcal E^*)^{\otimes 2}\otimes pr_1^*\mathcal E^*)
    \]
    where $e(...)$ is the Euler class and we have written the vector bundles as their class in K groups to avoid any confusions. This gives an expression of the class $\tilde F$ as generated by the Chern classes of $pr_1^*\mathcal E^*$ and $pr_2^*\mathcal F^*$.
 The explicit calculations in the case $r=2$ is given in Appendix~\ref{SectionCalculations} using Macaulay2~\cite{Macaulay2} (and its Schubert2 package), and is also verified using IntersectionTheory package~\cite{IntersectionTheory} (which uses the Singular computer algebra system~\cite{Singular}). 
\end{proof}
\begin{remark}
    Using the same argument (see~\cite[Lemma 5.2]{FixedLocusII}), one finds that for the Fano variety of lines of cubic fourfold, the fixed locus of the Voisin map is $21c_2$—a result that coincides with the result of~\cite[Theorem A]{FixedLocus}.
\end{remark}

\section{Proof of Theorem C}\label{SectionProofOfC}

Let us analysis the action of the Voisin map $\Psi$ on the Chern classes of $X$ and on the divisor class of $X$ respectively. Note that for $r\geq 2$, $CH^1(X)$ is generated by only one divisor $h$, which is the restriction of Plücker line bundle on $\mathrm{Gr}(r+1, n+1)$.
\begin{lemma}\label{LmmActionOfPsiOnDivisor}
    For any $r\geq 1$, we have $\Psi^*h=(3r+4)h$.
\end{lemma}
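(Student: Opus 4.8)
The plan is to compute $\Psi^*h$ by writing $h$, $\Psi^*h$ and the Plücker class of the tangent planes $H_x$ as first Chern classes of tautological bundles, and to extract two linear relations among these classes from the two geometric inputs defining the Voisin map. I would work throughout over the open locus $U = X\setminus\mathrm{Ind}(\Psi)$; since $X$ is smooth, $\mathrm{Ind}(\Psi)$ has codimension $\geq 2$, so restriction gives an isomorphism $CH^1(X)\xrightarrow{\sim}CH^1(U)$ and it suffices to prove the identity on $U$. Over $U$ there are three honest subbundles of the trivial bundle $V\otimes\mathcal O$, $V=\mathbb C^{n+1}$: the tautological $\mathcal E$ of rank $r+1$ (fibre $P_x$), the rank $r+2$ bundle $\mathcal F$ with fibre $H_x$, and $\mathcal E':=\Psi^*\mathcal E$ with fibre $P_{\Psi(x)}$, fitting into $\mathcal E\subset\mathcal F\supset\mathcal E'$. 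As the Plücker class is $c_1$ of the determinant of the dual tautological bundle, $h=c_1(\mathcal E^*)$ and $\Psi^*h=c_1(\mathcal E'^*)$, so everything reduces to expressing $c_1(\mathcal E')$ in terms of $h$.

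\emph{First relation (the residual cubic).} The defining property $H_x\cap Y=2P_x+P_{x'}$ holds over the whole family, i.e. in the $\mathbb P^{r+1}$-bundle $\mathbb P(\mathcal F)\to U$ the divisor cut out by $f$ equals $2\,\mathbb P(\mathcal E)+\mathbb P(\mathcal E')$. Writing $\xi=c_1(\mathcal O_{\mathbb P(\mathcal F)}(1))$, the zero divisor of $f$ (a section of $\mathcal O(3)$) has class $3\xi$, while the sub-bundle hyperplanes satisfy $[\mathbb P(\mathcal E)]=\xi+\pi^*c_1(\mathcal F/\mathcal E)$ and $[\mathbb P(\mathcal E')]=\xi+\pi^*c_1(\mathcal F/\mathcal E')$. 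Comparing the $\pi^*$-coefficients yields $2c_1(\mathcal F/\mathcal E)+c_1(\mathcal F/\mathcal E')=0$, that is
\[
3\,c_1(\mathcal F)=2\,c_1(\mathcal E)+c_1(\mathcal E').
\]

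\emph{Second relation (tangency).} Here I would pin down $c_1(\mathcal F)$ using that $H_x$ is the unique $(r+1)$-plane with $H_x\cap Y\supset 2P_x$. Expanding $f$ along a complementary direction $v$ to $\mathcal E$ inside $\mathcal F$, order-two vanishing along $P_x$ is equivalent to the vanishing of the second fundamental form $Q_{\bar v}=\nabla f(\cdot)\cdot v\in\mathrm{Sym}^2\mathcal E^*$, which depends only on $\bar v\in\mathcal N:=(V\otimes\mathcal O)/\mathcal E$. This globalises to a bundle map $\phi:\mathcal N\to\mathrm{Sym}^2\mathcal E^*$ whose kernel at $x$ is the tangent direction $\mathcal F/\mathcal E$. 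The relation $n+1=\binom{r+3}{2}$ gives $\mathrm{rk}\,\mathcal N=n-r=\binom{r+2}{2}+1=\mathrm{rk}\,\mathrm{Sym}^2\mathcal E^*+1$, and uniqueness of $H_x$ for $x\in U$ forces $\dim\ker\phi_x=1$; hence $\phi$ is surjective over $U$ and $0\to\mathcal F/\mathcal E\to\mathcal N\xrightarrow{\phi}\mathrm{Sym}^2\mathcal E^*\to 0$ is exact. Taking $c_1$ and using $c_1(\mathcal N)=-c_1(\mathcal E)=h$ together with $c_1(\mathrm{Sym}^2\mathcal E^*)=(r+2)\,c_1(\mathcal E^*)=(r+2)h$ (splitting principle) gives $c_1(\mathcal F/\mathcal E)=-(r+1)h$, whence $c_1(\mathcal F)=c_1(\mathcal E)+c_1(\mathcal F/\mathcal E)=-(r+2)h$.

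Substituting into the first relation, $c_1(\mathcal E')=3c_1(\mathcal F)-2c_1(\mathcal E)=-3(r+2)h+2h=-(3r+4)h$, so $\Psi^*h=c_1(\mathcal E'^*)=-c_1(\mathcal E')=(3r+4)h$, as claimed. The main obstacle is the justification of the second relation: one must verify that for general $Y$ the second fundamental form map $\phi$ is genuinely surjective over the locus where $\Psi$ is defined, equivalently that the tangent plane $H_x$ is unique and reduced, so that $\mathcal F/\mathcal E=\ker\phi$ is an honest line subbundle and the exact sequence — hence the Chern-class identity — is valid; everything else is formal manipulation. As a consistency check, for $r=0$ the formula gives $\Psi^*h=4h$, matching $\Psi=[-2]$ on the elliptic curve.
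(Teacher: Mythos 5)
Your proposal is correct and takes essentially the same route as the paper: the paper also works on an open set with complement of codimension $\geq 2$, computes $c_1(\mathcal F)=-(r+2)h$ from the sequence $0\to\mathcal F\to V_{n+1}\otimes\mathcal O_X\to\mathrm{Sym}^2\mathcal E^*\to 0$ (your relation via $\mathcal N=(V\otimes\mathcal O)/\mathcal E$ is the same sequence after quotienting by $\mathcal E$), and your divisor identity $3\xi=2[\mathbb P(\mathcal E)]+[\mathbb P(\mathcal E')]$ in $\mathbb P(\mathcal F)$ is exactly the divisorial reformulation of the paper's exact sequence $0\to\Psi^*\mathcal E\to\mathcal F\to(\mathcal F/\mathcal E)^{*\otimes 2}\to 0$, obtained there from the factorization of $f|_{H_x}$ as (linear form cutting $P_x$)${}^2\cdot$(linear form cutting $P_{x'}$). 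The surjectivity issue you flag is dealt with in the paper just as you suggest: one removes $\mathrm{Ind}_0$ (where $H_x$ fails to be defined as the unique kernel line, codimension $2$) and $\mathrm{Ind}_1$ (where the residual linear form vanishes, i.e.\ $H_x\subset Y$, codimension $r+2$), both of codimension $\geq 2$, so the $CH^1$ computation is unaffected.
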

\begin{proof}
    It is not hard to see that the codimension of $\mathrm{Ind}_0\subset X$ is $2$. Hence, the result will not change if we replace $X$ by $X':=X-\mathrm{Ind}_0$. We use the same notations for the $\mathcal O_X$-modules restricted to $X'$. Then $\mathcal F$ becomes a vector bundle of rank $r+2$ on $X'$ which fits in the following short exact sequence
    \begin{equation}\label{EqShortExactDefF}
        0\to \mathcal F\to V_{n+1}\otimes \mathcal O_X\to \mathrm{Sym}^2\mathcal E^*\to 0.
    \end{equation}
    From this short exact sequence, we deduce that 
    \begin{equation}
        c_1(\mathcal F)=-(r+2)h.
    \end{equation}
    The homogeneous polynomial $f$ induces a section $\sigma_f$ of the vector bundle $\mathrm{Sym}^3\mathcal F^*$, and the fact that $\mathbb P(\mathcal F|_x)$ is the $(r+1)$-space that is tangent to $Y$ along $P_x$ shows that $\sigma_f$ can be viewed as an element in $H^0(X', (\mathcal F/\mathcal E)^*\otimes (\mathcal F/\mathcal E)^*\otimes \mathcal F^*)$ via the following short exact sequences
    \[
    0\to (\mathcal F/\mathcal E)^*\otimes \mathrm{Sym}^2\mathcal F^*\to \mathrm{Sym}^3\mathcal F^*\to \mathrm{Sym}^3E^*\to 0,
    \]
    \[
    0\to (\mathcal F/\mathcal E)^*\otimes (\mathcal F/\mathcal E)^*\otimes \mathcal F^*\to (\mathcal F/\mathcal E)^*\otimes \mathrm{Sym}^2\mathcal F^*\to (\mathcal F/\mathcal E)^*\otimes \mathrm{Sym}^2\mathcal E^*\to 0.
    \]
    Therefore, $\sigma_f$ induces a morphism $\phi: \mathcal F\to (\mathcal F/\mathcal E)^*\otimes (\mathcal F/\mathcal E)^*$. Then the locus of $X$ where $\phi$ is not surjective is $\mathrm{Ind}_1$. It is not hard to see that the codimension of $\mathrm{Ind}_1$ in $X$ is $r+2$ if $r\geq 2$ and $\mathrm{Ind}_1 = \emptyset$ if $r = 1$.
    Therefore, the result will not change if we replace $X'$ by $X''=X-\mathrm{Ind}_0-\mathrm{Ind}_1$ and we use the same notations for the restrictions of $\mathcal O_X$-modules to $X''$. On $X''$, the vector bundle $\Psi^*\mathcal E$ is exactly the kernel of $\phi$ and $\Psi^*\mathcal E$ fits into the following short exact sequence
    \[
    0\to \Psi^*\mathcal E\to \mathcal F\to (\mathcal F/\mathcal E)^*\otimes (\mathcal F/\mathcal E)^*\to 0.
    \]
    Therefore, $\Psi^*c_1(\mathcal E)=c_1(\Psi^*\mathcal E)=c_1(\mathcal F)-2c_1((\mathcal F/\mathcal E)^*)=-(r+2)h-2(-(-(r+2)h+h))=-(3r+4)h$. Since $c_1(\mathcal E)=-h$, we find that $\Psi^*h=(3r+4)h$, as desired.
\end{proof}

\begin{remark}
    When $r=1$ and $n=5$, $Y$ is a cubic fourfold and $X$ is a hyper-Kähler fourfold. Our result recovers the result of~\cite{Amerik} and~\cite[Proposition 21.4]{ChowHK} which states $\Psi^*h=7h$.
\end{remark}

\begin{lemma}\label{LmmActionOfPsiOnTangent}
    Let $X^0 = X-\mathrm{Ind}$ be the locus where $X$ is defined. We have $(\Psi^*T_X)|_{X^0} = T_{X^0}$.
\end{lemma}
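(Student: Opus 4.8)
The plan is to prove the stronger statement that $\Psi$ is \emph{\'etale} on $X^0$, after which the canonical comparison map $d\Psi\colon T_{X^0}\to(\Psi^*T_X)|_{X^0}$ is itself the asserted isomorphism. The whole argument runs off Theorem A(i) together with the triviality of $K_X$, so no new analysis of the bundles $\mathcal E,\mathcal F$ is needed.

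First I would fix a generator $\omega$ of the one-dimensional space $H^0(X,K_X)$; since $X$ is $K$-trivial, $\omega$ is a \emph{nowhere-vanishing} holomorphic $N$-form. On $X^0$ the map $\Psi$ is an honest morphism $\Psi\colon X^0\to X$, and because $\mathrm{Ind}$ has codimension $\ge 2$ in the smooth variety $X$, the operator $\Psi^*$ on $H^0(X,K_X)$ is by construction the genuine pullback of forms along $\Psi|_{X^0}$, extended over $\mathrm{Ind}$ by Hartogs. Hence the identity of Theorem A(i), $\Psi^*\omega=(-2)^{r+1}\omega$, is an equality of holomorphic $N$-forms valid on the dense open set $X^0$.

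Next I would convert nowhere-vanishing into \'etaleness. Writing $\omega=g\,dy_1\wedge\cdots\wedge dy_N$ in local coordinates on $X$, with $g$ nowhere zero, the pulled-back form is $\Psi^*\omega=(g\circ\Psi)\,\det(d\Psi)\,dx_1\wedge\cdots\wedge dx_N$, so $\Psi^*\omega$ vanishes at $x\in X^0$ precisely when $\det(d\Psi_x)=0$, i.e.\ when $d\Psi_x\colon T_{X^0,x}\to(\Psi^*T_X)_x=T_{X,\Psi(x)}$ fails to be an isomorphism. By the previous step $\Psi^*\omega=(-2)^{r+1}\omega$ is nowhere zero on $X^0$, and therefore $d\Psi_x$ is an isomorphism for every $x\in X^0$. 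Thus $d\Psi$ is a fibrewise isomorphism of vector bundles on $X^0$, giving $(\Psi^*T_X)|_{X^0}\cong T_{X^0}$.

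I do not expect a genuine obstacle: the content is essentially the observation that a top-degree form pulls back to a nowhere-vanishing form exactly when the morphism is \'etale. The two points deserving a line of care are the identification of the cohomological operator $\Psi^*$ on $H^0(K_X)$ with the analytic pullback on $X^0$ (legitimate by the codimension-$\ge 2$ Hartogs argument), and the remark that the resulting statement---$\Psi|_{X^0}$ \'etale of degree $4^{r+1}>1$ over the simply connected $X$---is not paradoxical, since $\Psi|_{X^0}$ is not proper and hence need not be a topological covering.
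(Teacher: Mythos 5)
Your proof is correct, but it takes a genuinely different route from the paper's. The paper passes to a resolution $\tau:\tilde X\to X$, $\tilde\Psi:\tilde X\to X$ of the indeterminacies and cites \cite[Lemma 4]{KCorr}: by $K$-triviality the exceptional divisor of $\tau$ coincides with the ramification divisor of $\tilde\Psi$, hence $\Psi|_{X^0}$ is \'etale and the lemma follows --- no use is made of Theorem A. You instead deduce \'etaleness directly on $X^0$ from the identity $\Psi^*\omega=(-2)^{r+1}\omega$ of Theorem A(i): since the definition of $\Psi^*$ on $H^0(X,K_X)$ agrees with honest pullback of forms on $X^0$ (the codimension-$\geq 2$/Hartogs point you correctly flag, and which matches the correspondence-theoretic definition used in the paper), the pulled-back form is nowhere zero, so $\det(d\Psi_x)\neq 0$ at every $x\in X^0$ and $d\Psi$ is the asserted bundle isomorphism. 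Both arguments rest on the same geometric fact --- a dominant self-map of a $K$-trivial manifold is unramified on its domain of definition --- but yours is self-contained within the paper (Theorem A(i) is proved earlier in Section 2, independently of this lemma, so there is no circularity), whereas the paper outsources the key step to the ramification-divisor identity on the resolution, which buys the stronger statement that the ramification of $\tilde\Psi$ is \emph{exactly} the exceptional divisor, not merely disjoint from $X^0$. Note also that your argument only uses $\lambda\neq 0$, not the precise value $(-2)^{r+1}$: nonvanishing of $\lambda$ already follows from dominance of $\Psi$ (a dominant map is a local isomorphism at a general point), so what you have written is in effect a direct proof of the $X^0$-restriction of the cited \cite[Lemma 4]{KCorr}. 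Your closing remark that an \'etale non-proper map of degree $4^{r+1}$ onto a simply connected target is not paradoxical is correct and worth keeping.
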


\begin{proof}
    This result has been implicitly proven in~\cite{KCorr}. Let $\tau: \tilde X\to X$, $\tilde\Psi: \tilde X\to X$ be the desingularisation of the indeterminacies of the Voisin map $\Psi: X\dashrightarrow X$. Since $X$ is a $K$-trivial variety, the exceptional divisor of $\tau$ coincides with the ramification locus of $\tilde\Psi$ (see~\cite[Lemma 4]{KCorr}). Therefore, the Voisin map restricted to the defined domain $\Psi|_{X^0}: X^0\to X$ is étale, which implies that the relative tangent bundle $T_{X^0/X}$ by the map $\Psi|_{X^0}$ is zero. Hence the result. 
\end{proof} 

\begin{proof}[Proof of Theorem C]
    We only consider the case $r=2$. Let $M\in CH_0(X)_{hom}$ be a class generated by $h$ and by $c_i(X)$. Let us assume $M$ is a monomial of the form $h^k \prod c_i(X)$. By Lemma~\ref{LmmActionOfPsiOnDivisor} and Lemma~\ref{LmmActionOfPsiOnTangent}, we have
    \begin{equation}\label{EqPsiOnM1}
        (\Psi^*M)|_{X^0} = 10^k\cdot M|_{X^0},
    \end{equation} 
    where $X^0 = X-\mathrm{Ind}$ is the locus where $\Psi$ is defined. On the other hand, by Theorem B, we have \begin{equation}\label{EqPsiOnM2}
        \Psi^*M = -8 M.
    \end{equation} 
    By (\ref{EqPsiOnM1}) and (\ref{EqPsiOnM2}), we find that $M|_{X^0} = 0$, which means that $M$ is supported on $\mathrm{Ind}$, as desired.
\end{proof}

\appendix
\section{Detailed Calculations for Proposition~\ref{ThmChowClassOfF}}\label{SectionCalculations}
The following two code snippets calculate the Chow class of $F\subset X$. 

The first snippet, utilizing Macaulay2~\cite{Macaulay2} with the Schubert2 package, is as follows. Similar calculations can also be found in~\cite[Lemma 5.2]{FixedLocusII}.

\lstset{
  basicstyle=\ttfamily\small, 
  breaklines=true,            
}

\begin{lstlisting}
    i1 : needsPackage "Schubert2";
    
    i2 : G = flagBundle({3}, 10); E = dual (bundles G)_1;
    
    i4 : GxG = flagBundle({4}, 10*OO_G); F = dual (bundles GxG)_1;
    
    i6 : pE = E*OO_GxG; pF = F;
    
    i8 : (GxG/G)_* (ctop(dual pE * (10*OO_GxG-pF)) * ctop((dual pF - dual pE)*symmetricPower_2 dual pE) * ctop((dual pF-dual pE)*(dual pF-dual pE)*dual pE))
    
              3
    o8 = - 20H    + 110H   H    + 49H
              1,1       1,1 1,2      1,3
    
    i9 : chern dual E
    
    o9 = 1 + H    + H    + H
              1,1    1,2    1,3
\end{lstlisting}

The second snippet, using the IntersectionTheory package~\cite{IntersectionTheory}, is provided below:
\lstset{
  basicstyle=\ttfamily\small,  
  breaklines=true,             
  mathescape=true              
}

\begin{lstlisting}
    julia> using IntersectionTheory
    
    julia> r = 2; n = binomial(r+3,2)-1;
    
    julia> Gr1 = grassmannian(r+1,n+1); E = bundles(Gr1)[1]; Gr2 = grassmannian(r+2,n+1); F = bundles(Gr2)[1];
    
    julia> GxG = Gr1*Gr2; pE = E*OO(GxG); pF = F*OO(GxG);
    
    julia> pushforward(GxG -> Gr1, ctop(dual(pE) * ((n+1) * OO(GxG) - pF)) * ctop((dual(pF) - dual(pE)) * symmetric_power(2, dual(pE))) * ctop((dual(pF) - dual(pE))^2 * dual(pE)))
    # Output: 20*$c_1^3$ - 110*$c_1c_2$ - 49*$c_3$
    
    julia> chern(dual(E))
    # Output: 1 - $c_1$ + $c_2$ - $c_3$
\end{lstlisting}
In the second snippet, for simplicity in coding, the symbol $c_i$ denotes the Chern classe of $\mathcal{E}$ rather than $\mathcal{E}^*$, accounting for a sign difference.

Both snippets confirm the results as stated in Proposition~\ref{ThmChowClassOfF}.

\end{document}